\def\eod{\vrule height 6pt width 5pt depth 0pt}
\newenvironment{proof}{\noindent {\bf Proof:} \hspace{.2em}}
                      {\hspace*{\fill}{\eod}}
\newcommand{\floor}[1]{\left\lfloor #1 \right\rfloor}
\newcommand{\pkk}{ \mathrm{pk}}
\newcommand{\vly}{ \mathrm{val}}
\newcommand{\Pkk}{ \mathrm{Peak}}
\newcommand{\Vly}{ \mathrm{Valley}}
\newcommand{\altr}{ \mathrm{altruns}}
\newcommand{\rev}{ \mathrm{Rev}}
\newcommand{\comp}{ \mathrm{Compl}}
\newcommand{\sflip}{ \mathrm{FlipSgn}}
\newcommand{\id}{ \mathsf{id}}
\newcommand{\SR}{ \mathrm{SgnAltrun}}
\newcommand{\SBR}{ \mathrm{SgnBAltrun}}
\newcommand{\SDR}{ \mathrm{SgnDAltrun}}
\newcommand{\SSS}{\mathfrak{S}}
\newcommand{\AAA}{\mathcal{A}}
\newcommand{\BB}{\mathfrak{B}}
\newcommand{\DD}{\mathfrak{D}}
\newcommand{\Alt}{ \mathrm{Alt}}
\newcommand{\inv}{\mathrm{inv}}
\newcommand{\pos}{\mathrm{pos}}
\newcommand{\Negs}{\mathrm{Negs}}
\newcommand{\sign}{\mathrm{sign}}
\newcommand{\nmhalf}{\lfloor (n-1)/2\rfloor}
\newcommand{\ol}[1]{\overline{#1}}
\newcommand{\Snake}{ \mathrm{Snake}}
\newtheorem{theorem}{Theorem}
\newtheorem{corollary}[theorem]{Corollary}
\newtheorem{remark}[theorem]{Remark}
\newtheorem{lemma}[theorem]{Lemma}
\newtheorem{example}[theorem]{Example}
\newcommand{\comment}[1]{}
\newcommand{\magenta}[1]{\textcolor{magenta}{#1}}
\newcommand{\ogreen}[1]{\textcolor{OliveGreen}{#1}}
\newcommand{\blue}[1]{\textcolor{blue}{#1}}
\begin{document}
\title{Signed Alternating-runs enumeration in Classical Weyl Groups}

\author{Hiranya Kishore Dey\\
Department of Mathematics\\
Indian Institute of Technology, Bombay\\
Mumbai 400 076, India.\\
email: hkdey@math.iitb.ac.in
\and
Sivaramakrishnan Sivasubramanian\\
Department of Mathematics\\
Indian Institute of Technology, Bombay\\
Mumbai 400 076, India.\\
email: krishnan@math.iitb.ac.in
}

\maketitle

\section{Introduction}
\label{sec:intro}

For a positive integer $n$, let $[n] = \{1,2,\ldots,n\}$.  Let $\SSS_n$ 
be the symmetric group on $[n]$.  
For $\pi \in \SSS_n$, let $\Pkk(\pi)=\{ 2 \leq i \leq n-1 : 
\pi_{i-1} < \pi_i > \pi_{i+1} \}$ be its set of peaks and 
$\Vly(\pi)=\{ 2 \leq i \leq n-1 : \pi_{i-1} > \pi_i < \pi_{i+1} \}$  
be its set of valleys. Let $\pkk(\pi) = |\Pkk(\pi)|$ and 
$\vly(\pi)= |\Vly(\pi)|.$ 
For an index $i$ with  $2 \leq i \leq n-1$, we say that $\pi$ 
changes direction at $i$ if $i \in \Pkk(\pi) \cup \Vly(\pi)$.
Further, we say 
that $\pi$ has $k$ alternating runs denoted by $\altr(\pi) = k$ if 
$\pi$ changes directions at $k- 1$ indices.  Thus, for 
$\pi \in \SSS_n$, we have 
$\altr(\pi) = \pkk(\pi) + \vly(\pi) + 1$.
For example, the permutation  $\pi=2,3,1,4,6,7,5$ has $2$ peaks, 
$1$ valley and hence $4$ alternating runs. 
Define the alternating runs polynomial in $\SSS_n$ as follows:
\begin{equation}
\label{eqn:altrun_unsigned_over_Sn}
R_n(t)= \sum_{\pi \in \SSS_n}t^{\altr(\pi)} = \sum_{k=1}^{n-1}R_{n,k}t^k.
\end{equation} 

The polynomial $R_n(t)$ is well studied and we refer the reader to 
B\'{o}na's book \cite{bona-combin-permut} for a nice introduction
to alternating runs.   The first papers to study $R_n(t)$ go back
more than a 130 years.  The earliest reference we know for 
the polynomial $R_n(t)$ is by 
Andr{\'e} \cite{Andrealtrun}.
More recently, $R_n(t)$ has been the subject of 
study in several papers.  See for example Canfield and Wilf
\cite{canfieldwilfalternatingrun}, Chow and Ma
\cite{chowmaaltruntypeb}, Ma
\cite{maaltrun} and Stanley
\cite{stanleyalternatingsequence} and the references
therein.

A simple recurrence exists for $R_n(t)$ and hence  for
$R_{n,k}$, the coefficient of $t^k$ in $R_n(t)$.  However,
a {\it nice formula} for $R_n(t)$ does not seem to be known.
Similarly, several formulae for $R_{n,k}$ exist, but they 
are all somewhat 
complicated and involved a non-trivial summation. 
Stanley in \cite{stanleyalternatingsequence}, Canfield 
and Wilf in \cite{canfieldwilfalternatingrun} and 
later Ma in \cite{maaltrun} present 
explicit formulae for $R_{n,k}$.  We are interested in enumerating 
alternating runs in the alternating group $\AAA_n \subseteq \SSS_n$.  
This leads us to enumerate the 
alternating runs polynomial in $\SSS_n$ with sign taken into account. 
Suprisingly, when enumeration is done with signs, we get
a {\sl compact formula}.
Define the following signed version of \eqref{eqn:altrun_unsigned_over_Sn}:
\begin{equation}
\label{eqn:signed_alt_univ_poly}
\SR_n(t)=\sum_{\pi \in \SSS_n}(-1)^{\inv(\pi)} t^{\altr(\pi)}.
\end{equation}
Our results actually go through with the more general bivariate 
version defined below.
\begin{equation}
\label{eqn:signed_alt_biv_poly}
\SR_n(p,q)=\sum_{\pi \in \SSS_n}(-1)^{\inv(\pi)} p^{\pkk(\pi)}q^{\vly(\pi)}.
\end{equation}

To the best of our knowledge, the polynomial $\SR_n(p,q)$
does not seem to have been 
considered before.  The first main result of this paper 
is the following neat formula. 
 
\begin{theorem}
\label{thm:signed_peak_valley-enumeration}
For positive integers $k$, 
let $n=4k$ and $n=4k+1$.  Then,  
$$\SR_{4k}(p,q)=\SR_{4k+1}(p,q)=2(1-p)(1-q)(1-pq)^{2(k-1)}.$$
When $n=4k+2$ and $n=4k+3$, we have
$$\SR_{4k+2}(p,q)=\SR_{4k+3}(p,q)=0.$$
 \end{theorem}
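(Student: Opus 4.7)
The plan is to handle the cases $n\equiv 2,3\pmod 4$ via a sign-reversing involution, and the cases $n\equiv 0,1\pmod 4$ by an inductive combinatorial argument.

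\emph{Vanishing cases.} I would deploy the reversal involution $\rev(\pi)=(\pi_n,\pi_{n-1},\ldots,\pi_1)$. A peak (valley) at position $i$ of $\pi$ becomes a peak (valley) at position $n+1-i$ of $\rev(\pi)$, so $\pkk$ and $\vly$ are preserved and the monomial $p^{\pkk(\pi)}q^{\vly(\pi)}$ is unchanged. On the other hand, $\inv(\rev(\pi))=\binom{n}{2}-\inv(\pi)$, so the sign $(-1)^{\inv(\pi)}$ flips exactly when $\binom{n}{2}$ is odd, which occurs precisely for $n\equiv 2,3\pmod 4$. Since a permutation of distinct entries can never equal its reversal, $\rev$ is fixed-point-free on $\SSS_n$ for $n\geq 2$, and pairing $\pi$ with $\rev(\pi)$ immediately gives $\SR_n(p,q)=0$.

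\emph{Nonvanishing cases.} For $n=4k$ and $n=4k+1$, reversal is sign-preserving and so does not force any cancellation. I would proceed by induction on $k$. The base case $k=1$ is verified by direct summation over $\SSS_4$ (and $\SSS_5$), which yields $\SR_4(p,q)=\SR_5(p,q)=2(1-p)(1-q)$. The inductive step naturally splits into two identities: (i) the equality $\SR_{4k+1}(p,q)=\SR_{4k}(p,q)$, and (ii) the recurrence $\SR_{4(k+1)}(p,q)=(1-pq)^2\,\SR_{4k}(p,q)$. For (i), I would parameterize $\pi\in\SSS_{4k+1}$ by the position $j$ of the largest entry $4k+1$: deleting that entry gives some $\sigma\in\SSS_{4k}$ with $\inv(\pi)=\inv(\sigma)+(4k+1-j)$, so the signs alternate in $j$. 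A local analysis at positions $j-1,j,j+1$ describes how the peak/valley structure changes under insertion, and combining this with global cancellations across different $\sigma$'s should yield the claimed identity. For (ii), I would construct a $(\pkk,\vly)$-preserving sign-reversing involution on $\SSS_{4(k+1)}$ whose fixed-point contribution is exactly $(1-pq)^2\,\SR_{4k}(p,q)$; the factor $(1-pq)^2$ is suggestive of two independent swap-type involutions, each contributing a factor of $1-pq$.

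The main obstacle is the involution underlying identity (ii). Generic adjacent swaps of two consecutive values alter the peak/valley count, so the candidate involution must be fine-tuned to act only on configurations for which the swap is both $(\pkk,\vly)$-preserving and inversion-parity-flipping. A Foata--Strehl-style action applied at a carefully chosen valley, or a decomposition based on the placement of the largest (and/or smallest) few values in the permutation, is my leading approach; verifying that the fixed points assemble into the desired product form $(1-pq)^2\,\SR_{4k}(p,q)$ is the principal technical step.
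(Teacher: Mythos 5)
Your treatment of the vanishing cases $n\equiv 2,3\pmod 4$ is correct and is essentially the argument in the paper: $\rev$ preserves $(\pkk,\vly)$ and flips the sign exactly when $\binom{n}{2}$ is odd, and it has no fixed points for $n\geq 2$, so everything cancels. The problem is the nonvanishing half, which is where all the real content of the theorem lies: what you have written there is a plan whose two load-bearing steps, the identity $\SR_{4k+1}(p,q)=\SR_{4k}(p,q)$ and the recurrence $\SR_{4(k+1)}(p,q)=(1-pq)^2\SR_{4k}(p,q)$, are both left unproved, and you yourself flag the second as "the principal technical step." As it stands this is not a proof but a statement of what would need to be proved.

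The missing idea is a refinement: the aggregate polynomial $\SR_n(p,q)$ does not by itself satisfy a tractable recurrence under insertion of the letter $n$, because the effect of an insertion on $(\pkk,\vly)$ and on the parity of $\inv$ depends on whether the insertion site is interior, at either end, or adjacent to a peak, and on whether the permutation begins/ends with an ascent or a descent. The paper resolves this by splitting $\SSS_n$ into the four classes $\SSS_{n,a,a},\SSS_{n,a,d},\SSS_{n,d,a},\SSS_{n,d,d}$ according to the first and last pairs, pairing off the insertions immediately before and after each peak (these give equal $(\pkk,\vly)$ but opposite signs, so they cancel), and then counting the surviving gaps by parity; this yields a coupled system of recurrences for the four refined polynomials, from which both of your identities (i) and (ii) fall out after four single-letter insertions. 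In particular, even identity (i) is obtained in the paper only at the level of the four refined polynomials together with the already-computed inductive values, not by a direct aggregate argument of the kind you sketch; and no single jump-by-four involution with fixed-point weight $(1-pq)^2\SR_{4k}(p,q)$ is constructed anywhere (the clean ``append two letters, gain a factor $(1-pq)$'' mechanism you are reaching for is what happens in the type B/D setting of the paper, where the two largest letters are forced to sit at the end after cancellation, but that forcing is absent in type A). Without the four-way refinement, or some substitute for it, your induction cannot be closed.
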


We generalize our results to the classical Weyl Groups.  Let 
$[\pm n] = \{\pm 1, \pm 2,\ldots, \pm n \}$. 
Let $\BB_n$ denote the group of signed permutations 
$\sigma$ on $[\pm n]$ that satisfy 
$\sigma(-i) = -\sigma(i)$ for all $i \in [n]$.   
Let $\DD_n\subseteq \BB_n$ denote the
subset consisting of those elements of $\BB_n$ which have an even
number of negative entries.  
Chow and Ma in \cite{chowmaaltruntypeb} considered the alternating run 
polynomial over $\BB_n$ and Gao and Sun in \cite{Gaosunaltruntyped} 
considered the alternating run polynomial over $\DD_n.$

For $\pi= \pi_1, \pi_2, \dots, \pi_n \in \SSS_n$, let its 
number of inversions be defined by 
$\inv_A(\pi) = |\{ 1 \leq i < j \leq n: \pi_i > \pi_j   \}|$. 
The alternating group, which we denote by $\AAA_n$, is the subgroup 
consisting of the permutations of $\SSS_n$ with an 
even number of inversions.  
Like $\SSS_n$, both $\BB_n$ and $\DD_n$, are Coxeter groups and so 
come with a length function denoted $\inv_B$ and $\inv_D$ 
respectively on its elements.  
The positive parts in both
$\BB_n$ and $\DD_n$ are then defined as the subset containing
elements with the appropriate even length.  

In this work, we enumerate the signed bivariate peak-valley 
polynomials over Type B and Type D Coxeter Groups.  In both these 
cases, we get a simple formula. 
For Type B  and Type D Weyl groups, our main results are 
Theorem \ref{thm:main_theorem_signed_peakvalley_enumerate_over_type_b} and 
Theorem \ref{thm:main_theorem_signed_peakvalley_enumerate_over_type_d} 
respectively. 
All proofs involve sign reversing involutions which identify sets 
outside which complete cancellation occurs.  Enumerating within 
the set is done as the set
has some inductive structure.

Recall that $\AAA_n \subseteq \SSS_n$ is the Alternating group.
For ease of writing statements later, we let
 $\SSS_n^+ = \AAA_n$ and $\SSS_n^- = \SSS_n - \AAA_n$.
Define the following polynomials: 
\begin{equation}
\label{eqn:altrun_unsigned_over_Sn_pusminus}
R_n^+(t) = \sum_{\pi \in \AAA_n}t^{\altr(\pi)} =
\sum_{k=1}^{n-1}R^+_{n,k}t^k 
\hspace{7 mm} \text{and} \hspace{7 mm}
R_n^-(t) = \sum_{\pi \in \SSS_n- \AAA_n}t^{\altr(\pi)} =
\sum_{k=1}^{n-1}R^-_{n,k}t^k.
\end{equation}

It is clear that $R_n^+(t) + R_n^-(t) = R_n(t)$.
Throughout this paper, when we need to refer to both the polynomials 
$R_n^+(t)$ and $R_n^-(t)$, or to the integers $R_{n,k}^+$ or $R_{n,k}^-$,
we use the notation $R_n^{\pm}(t)$ and $R_{n,k}^{\pm}.$
Using Theorem \ref{thm:signed_peak_valley-enumeration}, we trivially get 
a formula for the univariate version defined in 
\eqref{eqn:signed_alt_univ_poly}.  That apart, 
we give several other applications of our signed peak-valley 
polynomial enumeration.

\subsection{Multiplicity of $(1+t)$ as a factor of $R_n^{\pm}(t)$}

Wilf in \cite{wilfaltrun} proved the following result about 
the multiplicity of $(1+t)$ as a divisor of $R_n(t)$.

\begin{theorem}[Wilf]
\label{thm:divisibilty_of_alternatingrunpolynomial_by_highpower_oneplust}
For positive integers $n \geq 4$, the polynomial $R_n(t)$ is divisible by 
$(1 + t)^m$, where $m = \floor{(n -2)/2}$. 
\end{theorem}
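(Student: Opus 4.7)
The plan is to use the classical recurrence for $R_n(t)$ and pass to the variable $u = 1+t$, under which divisibility by $(1+t)^m$ becomes vanishing of the first $m$ coefficients of the $u$-expansion. The starting point is the standard three-term recursion
\[
R_{n,k} \;=\; k\, R_{n-1,k} \;+\; 2\, R_{n-1,k-1} \;+\; (n-k)\, R_{n-1,k-2},
\]
obtained by inserting the maximal entry $n$ into a permutation of $\SSS_{n-1}$ and tracking the three ways this changes the run count. Assembling this into generating-function form yields
\[
R_n(t) \;=\; t(1-t^2)\, R_{n-1}'(t) \;+\; t\bigl(2 + (n-2)t\bigr)\, R_{n-1}(t).
\]

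Substituting $t = u - 1$, the coefficient polynomials expand as
\[
t(1-t^2) = -2u + 3u^2 - u^3, \qquad t\bigl(2+(n-2)t\bigr) = (n-4) + (6-2n)u + (n-2)u^2.
\]
If $R_{n-1}(t) = \sum_{j \geq m} a_j\, u^j$ with $a_m \neq 0$, then since $du/dt = 1$ we have $R_{n-1}'(t) = \sum_j j\, a_j\, u^{j-1}$, and a short expansion shows that the lowest-order contribution to $R_n(t)$ is $u^m$ with coefficient $(n - 4 - 2m)\, a_m$. Thus the recurrence always transmits $u^m$-divisibility from $R_{n-1}$ to $R_n$, and it picks up a further power of $u$ precisely when $n - 4 - 2m = 0$.

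From here I would induct on $n$, with base cases $n = 2, 3$ where $\lfloor (n-2)/2 \rfloor = 0$ and the claim is vacuous. Set $m_n = \lfloor (n-2)/2 \rfloor$. For odd $n = 2k+1$, one has $m_{n-1} = m_n = k-1$, and the inductive hypothesis together with the automatic transmission suffices. For even $n = 2k$, one has $m_{n-1} = k-2$ and $m_n = k-1$, so an extra power of $u$ must appear in $R_n(t)$; but here $n - 4 - 2 m_{n-1} = 2k - 4 - 2(k-2) = 0$, making the critical coefficient vanish and advancing the induction. The main obstacle, I expect, is precisely this alignment between the jump pattern of $m_n$ and the arithmetic of the recurrence: $\lfloor (n-2)/2 \rfloor$ increases by one only at even $n$, and it is exactly at even $n$ that the critical coefficient $(n - 4 - 2 m_{n-1})$ vanishes. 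Once this alignment is observed, the remaining bookkeeping --- deriving the recurrence and tracking the $u$-expansion --- is routine.
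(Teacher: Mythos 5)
Your argument is correct, and it is essentially the B\'ona--Ehrenborg inductive proof that the paper cites but does not reproduce: Theorem \ref{thm:divisibilty_of_alternatingrunpolynomial_by_highpower_oneplust} is quoted as Wilf's result, with the paper pointing to Wilf's Eulerian-polynomial proof, the B\'ona--Ehrenborg induction, and B\'ona's group-action proof rather than giving one of its own. I checked your computations: the three-term recurrence $R_{n,k}=kR_{n-1,k}+2R_{n-1,k-1}+(n-k)R_{n-1,k-2}$ does assemble into $R_n(t)=t(1-t^2)R_{n-1}'(t)+t(2+(n-2)t)R_{n-1}(t)$, the $u$-expansions of the two coefficient polynomials are as you state, the lowest-order coefficient $(n-4-2m)a_m$ is right, and the alignment $n-4-2m_{n-1}=0$ exactly at even $n$ (where $m_n$ jumps) is the whole point. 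Two small housekeeping remarks: the insertion recurrence is only valid from $n\geq 3$ on (it fails at $n=2$ under the convention $\altr=\pkk+\vly+1$ for $n=1$), which your choice of base cases already sidesteps; and ``picks up a further power precisely when $n-4-2m=0$'' should read ``whenever,'' since extra vanishing of $a_m$ is not excluded a priori --- but neither issue affects the induction, which only needs divisibility, not exact order.
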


Wilf's proof was based on the relation between the Eulerian polynomials and 
$R_n(t)$. Later, B{\'o}na and  
Ehrenborg in \cite{bonaehrenborglogconcavity} gave an inductive 
proof of Theorem 
\ref{thm:divisibilty_of_alternatingrunpolynomial_by_highpower_oneplust}.
The book by  B{\'o}na's \cite{bona-combin-permut} is a 
good reference for this topic. Recently, B{\'o}na in 
\cite{bonaaltrun} gave a proof of Theorem
\ref{thm:divisibilty_of_alternatingrunpolynomial_by_highpower_oneplust}
based on group actions.  Using Theorem 
\ref{thm:signed_peak_valley-enumeration}, we show the 
following.

\begin{theorem}
\label{thm:Rnplusminust_is_divible_by_1plust_m}
For positive integers $n \geq 4$, let $m = \floor{(n -2)/2}$.
When $n \equiv 0,1$ (mod 4), the 
polynomials $R_n^{\pm}(t)$ are divisible by  $(1 + t)^{m-1}$, 
When $n \equiv 2,3$ (mod 4), the polynomials 
$R_n^{\pm}(t)$ are divisible by $(1 + t)^m$.
\end{theorem}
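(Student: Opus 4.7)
The plan is to derive Theorem \ref{thm:Rnplusminust_is_divible_by_1plust_m} directly from Theorem \ref{thm:signed_peak_valley-enumeration} combined with Wilf's Theorem \ref{thm:divisibilty_of_alternatingrunpolynomial_by_highpower_oneplust}, by viewing $R_n^+(t)$ and $R_n^-(t)$ as the symmetric and antisymmetric parts of a decomposition. Concretely, since an element of $\SSS_n$ lies in $\AAA_n$ precisely when $\inv(\pi)$ is even, the univariate signed polynomial of \eqref{eqn:signed_alt_univ_poly} satisfies $\SR_n(t) = R_n^+(t) - R_n^-(t)$. Combined with $R_n(t) = R_n^+(t) + R_n^-(t)$ this gives
\[
R_n^{\pm}(t) \;=\; \tfrac{1}{2}\bigl(R_n(t) \pm \SR_n(t)\bigr),
\]
so the $(1+t)$-adic valuation of $R_n^{\pm}(t)$ is at least the minimum of the valuations of $R_n(t)$ and $\SR_n(t)$.

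Next, I would specialize Theorem \ref{thm:signed_peak_valley-enumeration} at $p = q = t$. Because $\altr(\pi) = \pkk(\pi) + \vly(\pi) + 1$, one has $\SR_n(t) = t \cdot \SR_n(t,t)$. For $n = 4k$ or $n = 4k+1$ this yields
\[
\SR_n(t) \;=\; 2t(1-t)^{2}(1-t^{2})^{2(k-1)} \;=\; 2t(1-t)^{2k}(1+t)^{2(k-1)},
\]
while for $n = 4k+2$ or $n = 4k+3$ we have $\SR_n(t) = 0$. Thus the multiplicity of $(1+t)$ in $\SR_n(t)$ equals $2(k-1)$ in the first case and is formally infinite in the second.

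Finally, I would match these against Wilf's divisibility $(1+t)^m \mid R_n(t)$, where $m = \lfloor (n-2)/2 \rfloor$. When $n \equiv 0,1 \pmod 4$, writing $n = 4k$ or $4k+1$ gives $m = 2k-1$, so $R_n(t)$ contributes $(1+t)^{2k-1}$ and $\SR_n(t)$ contributes $(1+t)^{2k-2} = (1+t)^{m-1}$; the minimum of these is $(1+t)^{m-1}$, which therefore divides $R_n^{\pm}(t)$. When $n \equiv 2,3 \pmod 4$, the vanishing $\SR_n(t) = 0$ forces $R_n^+(t) = R_n^-(t) = \tfrac{1}{2}R_n(t)$, and Wilf's theorem immediately gives $(1+t)^m \mid R_n^{\pm}(t)$.

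There is no real obstacle here beyond careful bookkeeping of the exponent $m = \lfloor(n-2)/2\rfloor$ in the four residue classes modulo $4$; the entire argument is a two-line algebraic combination once Theorem \ref{thm:signed_peak_valley-enumeration} is in hand. It is worth noting that the statement only asserts divisibility, not sharpness: in the $n \equiv 0,1 \pmod 4$ case the valuations of $R_n(t)$ and $\SR_n(t)$ differ by one, so the argument only guarantees the smaller exponent $m-1$, and showing that $(1+t)^m$ generally fails to divide $R_n^{\pm}(t)$ in this regime would require additional (non-cancellation) input that is not needed for the theorem as stated.
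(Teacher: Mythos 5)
Your proposal is correct and follows essentially the same route as the paper: write $R_n^{\pm}(t)=\tfrac{1}{2}\bigl(R_n(t)\pm\SR_n(t)\bigr)$, compute $\SR_n(t)=2t(1-t)^{2k}(1+t)^{2k-2}$ (resp.\ $0$) from the bivariate formula via $p=q=t$, and compare $(1+t)$-adic valuations against Wilf's theorem. The only cosmetic difference is that the paper also remarks that $R_n(t)$ has even coefficients, which is not actually needed for the divisibility by $(1+t)^{m-1}$ since $(1+t)$ is monic.
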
 

Recall that $R_n^+(t) + R_n^-(t) = R_n(t)$.  Thus, 
when $n \equiv 2,3$ (mod 4), Theorem 
\ref{thm:Rnplusminust_is_divible_by_1plust_m} clearly refines Theorem 
\ref{thm:divisibilty_of_alternatingrunpolynomial_by_highpower_oneplust}. 
When $n \equiv 0,1$ (mod 4), 
Theorem \ref{thm:Rnplusminust_is_divible_by_1plust_m} falls short of 
a refinement of Theorem 
\ref{thm:divisibilty_of_alternatingrunpolynomial_by_highpower_oneplust}
as the exponent is lesser by one. 
However, Example \ref{eg_multiplicity-tight}
shows that our result is the best possible in this case.

We also show 
counterparts of 
Theorem \ref{thm:Rnplusminust_is_divible_by_1plust_m} 
when alternating runs are summed up over the elements with 
positive sign in classical Weyl groups.  
Zhao in \cite{zhaothesis} and Gao and Sun 
in \cite{Gaosunaltruntyped}, gave type B and D refinements 
based on the sign of the first letter.
From these refinements, a type B and type D counterpart of Theorem
\ref{thm:divisibilty_of_alternatingrunpolynomial_by_highpower_oneplust}
can be inferred.  
We prove 
signed counterparts of Theorem 
\ref{thm:Rnplusminust_is_divible_by_1plust_m} to
$\BB_n^{\pm}$ and $\DD_n^{\pm}$.  These are 
presented in Theorem 
\ref{thm:divisibiltyforplusminuspolynomialstypeB} and 
Theorem \ref{thm:divisibiltyfortyped}.  Thus, we refine
in a different sense, type B and type D counterparts of
Theorem 
\ref{thm:Rnplusminust_is_divible_by_1plust_m}.

We also have signed refinements of the refinements of 
Zhao 
and Gao and Sun.  These fit snugly into our type B and type D 
counterpart of the group action based proof of
Theorem 
\ref{thm:divisibilty_of_alternatingrunpolynomial_by_highpower_oneplust}
and a refinement of 
Theorem \ref{thm:Rnplusminust_is_divible_by_1plust_m}.  We 
refer the reader to 
\cite[Theorems 11,17]{dey-siva-grp-action-bona-wilf} 
for this refinement.


\subsection{Refining moment-like identities involving $R^{\pm}_{n,k}$}
Moment-like identities involving the $R_{n,k}$'s were given by Comtet 
\cite{comtetadvanced} as an exercise.
Chow and Ma in \cite{chowmaaltruntypeb} deduced these identities from 
Theorem 
\ref{thm:divisibilty_of_alternatingrunpolynomial_by_highpower_oneplust}.
The result is as follows.

\begin{lemma}
\label{lem:moment-identity-chow-ma}
Let $n,k$ be positive integers with $n \geq 2k+4$,  then
\begin{equation}\label{eqn:momenttypeidentitychowma}
1^kR_{n,1}+3^kR_{n,3}+5^kR_{n,5}+\cdots  =  
2^kR_{n,2}+4^kR_{n,4}+6^kR_{n,6}+\cdots.
\end{equation}
\end{lemma}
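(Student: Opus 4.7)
The plan is to recast the identity as the vanishing of a single derivative of $R_n(t)$ at $t=-1$, and then to extract this vanishing from Wilf's divisibility theorem (Theorem \ref{thm:divisibilty_of_alternatingrunpolynomial_by_highpower_oneplust}). Let $\theta$ denote the operator $t\,\frac{d}{dt}$, so that $\theta(t^j)=j\, t^j$. Iterating gives $\theta^k R_n(t)=\sum_j j^k R_{n,j}\, t^j$; evaluating at $t=-1$ then yields
\[
\theta^k R_n(t)\Big|_{t=-1}=\sum_{j\text{ even}} j^k R_{n,j}-\sum_{j\text{ odd}} j^k R_{n,j}.
\]
Hence the identity \eqref{eqn:momenttypeidentitychowma} is equivalent to the statement $\theta^k R_n(t)\big|_{t=-1}=0$.

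Next, by Theorem \ref{thm:divisibilty_of_alternatingrunpolynomial_by_highpower_oneplust} we may write $R_n(t)=(1+t)^m Q(t)$ for some polynomial $Q(t)$ with integer coefficients, where $m=\lfloor(n-2)/2\rfloor$; the hypothesis $n\geq 2k+4$ ensures $m\geq k+1$. The key observation is that if a polynomial $f$ has $(1+t)^s$ as a factor with $s\geq 1$, then so does its ordinary derivative $f'$, and therefore also $\theta f = t f'$ has $(1+t)^{s-1}$ as a factor. A short induction on $i$ then shows that $\theta^i R_n(t)$ is divisible by $(1+t)^{m-i}$ for every $0\leq i\leq m$. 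Taking $i=k$, the polynomial $\theta^k R_n(t)$ is divisible by $(1+t)^{m-k}$ with $m-k\geq 1$, and hence vanishes at $t=-1$, which is the desired identity.

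The main obstacle is simply the bookkeeping that the exponent of $(1+t)$ drops by at most one under a single application of $\theta$; everything else is a direct translation. The hypothesis $n\geq 2k+4$ is used in exactly one spot, to guarantee $m-k\geq 1$, which is precisely the sharp threshold needed for this argument to produce a factor of $(1+t)$ in $\theta^k R_n(t)$.
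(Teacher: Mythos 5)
Your proof is correct. The paper does not prove this lemma directly: it cites Chow and Ma, who deduce it from Wilf's divisibility theorem via the general principle recorded as Lemma \ref{lem:chow-ma-powers-alt} (if $(1+t)^m$ divides $f$ then the alternating $k$-th moment of its coefficients vanishes for $k\le m-1$). Their route, and the one the authors follow elsewhere in the paper, expands $j^k=\sum_{r=1}^k S(k,r)\,j(j-1)\cdots(j-r+1)$ with Stirling numbers of the second kind, so that the moment sum becomes a combination of ordinary derivatives $R_n^{(r)}(-1)$, each of which vanishes because $(1+t)^{m-r}$ still divides $R_n^{(r)}(t)$. Your version replaces that bookkeeping with the Euler operator $\theta=t\,\frac{d}{dt}$, whose $k$-th iterate produces the weights $j^k$ directly, and the same one-exponent-per-derivative accounting finishes the argument; this is the same underlying mechanism (Wilf's theorem plus the fact that differentiation lowers the multiplicity of the root $t=-1$ by exactly one) but packaged more cleanly, with no Stirling numbers needed. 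Both arguments use the hypothesis $n\ge 2k+4$ in the same place, to guarantee $m-k\ge 1$. One small wording slip: you write that if $(1+t)^s$ divides $f$ ``then so does its ordinary derivative $f'$''; read literally this asserts $(1+t)^s\mid f'$, which is false — what you mean, and what your induction correctly uses, is that $(1+t)^{s-1}\mid f'$ and hence $(1+t)^{s-1}\mid \theta f$.
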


Using Theorem \ref{thm:Rnplusminust_is_divible_by_1plust_m}, 
we have the following counterpart of the above result involving
$R_{n,k}^{\pm}$.


\begin{theorem}
\label{thm:sum_of_kth_powers_of_odd_equals_sum_of_kth_powers_of_even}
For positive integers $n \equiv 0,1$ (mod 4) and positive 
integers $k$ such that $n \geq 2k+6$, we have 
\begin{equation}
1^kR^{\pm}_{n,1}+3^kR^{\pm}_{n,3}+5^kR^{\pm}_{n,5}+\cdots  =  2^kR^{\pm}_{n,2}+4^kR^{\pm}_{n,4}+6^kR^{\pm}_{n,6}+\cdots. \label{eqn:sum_of_kth_powers_of_odd_equals_sum_of_kth_powers_of_even_even_01mod4}
\end{equation}
For positive integers $n \equiv 2,3$ (mod 4) and positive
integers $k$ such that $n \geq 2k+4$, we have 
\begin{equation}
1^kR^{\pm}_{n,1}+3^kR^{\pm}_{n,3}+5^kR^{\pm}_{n,5}+\cdots =  2^kR^{\pm}_{n,2}+4^kR^{\pm}_{n,4}+6^kR^{\pm}_{n,6}+\cdots. \label{eqn:sum_of_kth_powers_of_odd_equals_sum_of_kth_powers_of_even_even}
\end{equation}
\comment{
For $n \geq 2k+6$, we have 
\begin{equation}
\label{eqn:sum_of_kth_powers_of_odd_equals_sum_of_kth_powers_of_even_even}
1^kR^{\pm}_{n,1}+3^kR^{\pm}_{n,3}+5^kR^{\pm}_{n,5}+\cdots  
=  
2^kR^{\pm}_{n,2}+4^kR^{\pm}_{n,4}+6^kR^{\pm}_{n,6}+\cdots.
\end{equation}}
\end{theorem}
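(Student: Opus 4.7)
The plan is to mimic Chow and Ma's deduction of Lemma \ref{lem:moment-identity-chow-ma} from Wilf's divisibility result, but starting instead from Theorem \ref{thm:Rnplusminust_is_divible_by_1plust_m}. Observe that the desired identity is exactly the assertion
$$\left.\left(t\frac{d}{dt}\right)^{\!k} R_n^{\pm}(t)\right|_{t=-1} \;=\; 0,$$
since $(t\frac{d}{dt})^k \bigl(\sum_i R_{n,i}^{\pm} t^i\bigr) = \sum_i i^k R_{n,i}^{\pm} t^i$, which at $t=-1$ is the difference of the two sides of \eqref{eqn:sum_of_kth_powers_of_odd_equals_sum_of_kth_powers_of_even_even_01mod4} (respectively \eqref{eqn:sum_of_kth_powers_of_odd_equals_sum_of_kth_powers_of_even_even}).

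The key elementary lemma I would establish first is: if a polynomial $f(t)$ is divisible by $(1+t)^r$, then for each $0 \leq j \leq r$, the polynomial $(t\frac{d}{dt})^j f(t)$ is divisible by $(1+t)^{r-j}$. This is a quick induction on $j$: writing $f = (1+t)^r g$, the product rule gives $t f'(t) = rt(1+t)^{r-1}g(t) + t(1+t)^r g'(t) = (1+t)^{r-1}\bigl[rt\,g(t) + t(1+t)g'(t)\bigr]$, so divisibility drops by exactly one after each application of $T := t\frac{d}{dt}$. In particular, as long as $j \leq r-1$, one has $(1+t) \mid T^j f(t)$, and hence $T^j f(-1) = 0$.

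Now I would just match parameters. By Theorem \ref{thm:Rnplusminust_is_divible_by_1plust_m}:
\begin{itemize}
\item[(i)] If $n \equiv 0,1 \pmod{4}$, then $(1+t)^{m-1}$ divides $R_n^{\pm}(t)$, where $m = \floor{(n-2)/2}$. Applying the lemma with $r = m-1$, the hypothesis $k \leq r-1 = m-2$ translates, for $n = 4j$ or $n = 4j+1$, to $k \leq 2j-3$, i.e., $n \geq 2k+6$ --- exactly the bound in \eqref{eqn:sum_of_kth_powers_of_odd_equals_sum_of_kth_powers_of_even_even_01mod4}.
\item[(ii)] If $n \equiv 2,3 \pmod{4}$, then $(1+t)^{m}$ divides $R_n^{\pm}(t)$. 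With $r = m$, the condition $k \leq r-1 = m-1$ becomes $n \geq 2k+4$ --- exactly the bound in \eqref{eqn:sum_of_kth_powers_of_odd_equals_sum_of_kth_powers_of_even_even}.
\end{itemize}
In both cases I then apply $T^k$ to $R_n^{\pm}(t)$, conclude it still has $(1+t)$ as a factor, and evaluate at $t=-1$ to obtain the identity.

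There is no real obstacle here; the only thing to be careful about is the bookkeeping of exponents and the off-by-one shift between the two residue classes mod $4$, which is exactly what produces the mismatch between the hypotheses $n \geq 2k+6$ and $n \geq 2k+4$. The proof is otherwise an immediate consequence of Theorem \ref{thm:Rnplusminust_is_divible_by_1plust_m} combined with the behavior of the Euler operator $t\tfrac{d}{dt}$ on $(1+t)$-divisibility.
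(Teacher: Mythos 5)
Your proposal is correct and follows essentially the same route as the paper: the paper deduces the result by combining Theorem \ref{thm:Rnplusminust_is_divible_by_1plust_m} with Lemma \ref{lem:chow-ma-powers-alt} (Chow and Ma's observation that $(1+t)^m \mid f$ forces the moment identities for $k \leq m-1$), and your parameter bookkeeping for the two residue classes matches exactly. The only difference is cosmetic: you give a self-contained proof of that lemma via the Euler operator $t\,\frac{d}{dt}$, whereas the paper cites it as a black box (its underlying argument uses Stirling numbers and ordinary derivatives, which is equivalent).
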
 

For positive elements in Type B and Type D Weyl groups, we get similar
identities.  These are 
Theorem 
\ref{thm:sum_of_kth_powers_of_odd_equals_sum_of_kth_powers_signed_type_b_evenodd} 
and  Theorem 
\ref{thm:moment-ids-typed-pm}.

\subsection{Alternating permutations in $\SSS_n^{\pm}$, 
$\BB_n^{\pm}$ and $\DD_n^{\pm}$}
A permutation $\pi = \pi_1, \pi_2, \cdots, \pi_n \in \SSS_n$ is 
said to be alternating if $\pi_1 > \pi_2 < \pi_3 > \pi_4 < \cdots.$ 
That is, $\pi_i < \pi_{i+1}$ for even indices $i$ and $\pi_i > \pi_{i+1}$ 
for odd indices $i$. Similarly $\pi$ is said to be reverse alternating 
if $\pi_1 < \pi_2 > \pi_3 < \pi_4 > \dots $. 
Let $\Alt_n \subseteq \SSS_n$ denote the set of alternating 
permutations and let $E_n= |\Alt_n|$.  The following 
is a well-known  result of  Andr{\'e} \cite{Andresecx}.
  
\begin{theorem}[Andr{\'e}]
\label{thm:egf_for_alt_perm_symmetricgroup}
Let $E_n$ be the number of alternating permutations in $\SSS_n$.  The
exponential generating function of $E_n$ is as follows.
\begin{equation}
\label{eqn:egf_for_alt_perm_symmetricgroup}
\sum_{n=0}^{\infty} {E_n}\frac{x^n}{n!}	
=  \sec x+ \tan x. 
\end{equation}
\end{theorem}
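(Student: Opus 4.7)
The plan is to follow Andr\'e's classical route: derive a recurrence for $E_n$ by locating the position of the largest entry, translate the recurrence into a separable ODE for the exponential generating function, and integrate.

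First, I would establish the recurrence
$$2E_{n+1} \;=\; \sum_{k=0}^{n} \binom{n}{k}\, E_k\, E_{n-k} \qquad (n \geq 1),$$
with the boundary values $E_0 = E_1 = 1$. To see this, consider permutations of $[n+1]$ that are alternating or reverse alternating. For $n+1 \geq 2$ the two classes are disjoint and equinumerous (the complement map $\pi_i \mapsto n+2-\pi_i$ swaps them), so there are $2E_{n+1}$ such permutations in total. Locate the position of the maximum entry $n+1$; since it is a local maximum, it splits the permutation into a left block of some length $k$ and a right block of length $n-k$. Choosing which $k$-subset of $[n]$ appears on the left contributes $\binom{n}{k}$; the parity conditions imposed by $n+1$ sitting at a peak force each of the two blocks to lie in one of the two alternating classes; and summing over $k$ packages the two cases (alternating and reverse alternating of length $n+1$) into the stated recurrence.

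Next, I would pass to the EGF $f(x) := \sum_{n \geq 0} E_n\, x^n/n!$. The convolution on the right becomes $f(x)^2$, while the index shift on the left becomes $f'(x)$. Collecting the $n = 0$ boundary term yields the initial value problem
$$2 f'(x) \;=\; 1 + f(x)^2, \qquad f(0) = 1.$$
This ODE is separable: writing it as $2\, df/(1+f^2) = dx$ and integrating gives $2\arctan f(x) = x + C$, and the initial condition $f(0) = 1$ forces $C = \pi/2$. Therefore
$$f(x) \;=\; \tan\!\left(\tfrac{x}{2} + \tfrac{\pi}{4}\right),$$
and the half-angle identity $\tan(u + \pi/4) = (1 + \sin 2u)/\cos 2u$ with $u = x/2$ rewrites this as $(1 + \sin x)/\cos x = \sec x + \tan x$, which is the asserted formula.

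The main obstacle is the bookkeeping in the recurrence: one must check that the parity of the position of $n+1$ matches up so that the left and right blocks each fall into one of the two alternating classes, and that the degenerate cases where a block is empty are handled correctly by the conventions $E_0 = E_1 = 1$. Once that is verified, the ODE and its integration are entirely routine.
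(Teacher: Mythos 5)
The paper does not prove this statement at all: it is quoted as a classical result of Andr\'e with a citation, so there is no internal proof to compare against. Your argument is the standard textbook proof of Andr\'e's theorem and it is correct. The recurrence $2E_{n+1}=\sum_{k=0}^{n}\binom{n}{k}E_kE_{n-k}$ for $n\ge 1$, the translation to $2f'=1+f^2$ with $f(0)=1$ (the $+1$ correctly absorbing the failure of the recurrence at $n=0$, where $2E_1=2$ but $E_0^2=1$), and the integration to $\tan(x/2+\pi/4)=\sec x+\tan x$ are all sound. The one place that genuinely needs the care you flag is the recurrence itself: for each position $k+1$ of the maximum, exactly \emph{one} of the two classes (alternating or reverse alternating) of permutations of $[n{+}1]$ can accommodate $n+1$ there (interior peaks sit at odd positions in one class and even positions in the other, and the boundary cases $k=0,n$ are likewise forced), and for that class the left block, read towards $n+1$, and the right block each range over a set of size $E_k$ and $E_{n-k}$ respectively, using the fact that reversal and complementation show all four run-types of a given length are equinumerous. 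This is why the sum $\sum_k\binom{n}{k}E_kE_{n-k}$ counts the union of the two classes exactly once, i.e.\ $2E_{n+1}$, rather than twice that amount. With that check spelled out, your proof is complete.
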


We refer the reader to Stanleys  survey \cite{stanley-survey-alt-perms} 
for a comprehensive coverage of this topic.
Define $E_n^+= |\Alt_n \cap \AAA_n|$ and 
$E_n^-= |\Alt_n \cap (\SSS_n -\AAA_n)|.$ 
When $n=0$, we set $E_{0}^+=1$ and $E_{0}^-=0$. 
Using our proof of Theorem \ref{thm:signed_peak_valley-enumeration},  
we show the following.
  
\begin{theorem}
\label{thm:egf_for_alt_perm_even_and_odd}
The exponential generating functions of $E_n^{\pm}$ are as follows.
\begin{equation}
\label{eqn:egf_for_alt_perm_symmetricgroupplus}
\sum_{n=0}^{\infty} {E_n^ {\pm}}\frac{x^n}{n!}	
=  \frac{1}{2} \bigg(\sec x+ \tan x \pm \cos x \pm x\bigg).
\end{equation}
\end{theorem}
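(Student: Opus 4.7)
The plan is to prove the equivalent identity
\[
\sum_{n \geq 0} (E_n^+ - E_n^-) \frac{x^n}{n!} = \cos x + x,
\]
since $E_n^+ + E_n^- = E_n$ whose EGF is $\sec x + \tan x$ by Theorem~\ref{thm:egf_for_alt_perm_symmetricgroup}. Setting $T_n = E_n^+ - E_n^-$, this amounts to showing $T_0 = T_1 = 1$, $T_n = (-1)^{n/2}$ for even $n \geq 2$, and $T_n = 0$ for odd $n \geq 3$.

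The key observation is that $\Alt_n$ consists precisely of permutations of $[n]$ attaining the maximum value of $\pkk + \vly$; a direct inspection of peak and valley positions shows that a down-up alternating $\pi \in \Alt_n$ has $(\pkk(\pi),\vly(\pi))$ equal to $(2k-1, 2k-1)$, $(2k-1, 2k)$, $(2k, 2k)$, $(2k, 2k+1)$ for $n = 4k, 4k+1, 4k+2, 4k+3$ respectively, while a reverse alternating $\pi' \in \RAlt_n$ has the same pair with coordinates swapped. The strategy is to extract the coefficient of $p^{\pkk}q^{\vly}$ from $\SR_n(p,q)$ given by Theorem~\ref{thm:signed_peak_valley-enumeration}, case by case modulo $4$.

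First I would handle the easy cases. When the pairs for $\Alt_n$ and $\RAlt_n$ differ (i.e., $n \equiv 1, 3 \pmod 4$), the coefficient of the relevant monomial in $\SR_n(p,q)$ equals $T_n$ alone; a short expansion of $2(1-p)(1-q)(1-pq)^{2(k-1)}$ shows this coefficient vanishes for $k \geq 1$, and $\SR_{4k+3}(p,q) = 0$ finishes the other subcase, giving $T_n = 0$ for odd $n \geq 3$. When the pairs coincide (i.e., $n \equiv 0, 2 \pmod 4$), the coefficient equals $\sum_{\Alt_n}(-1)^{\inv} + \sum_{\RAlt_n}(-1)^{\inv}$, and I would invoke the complement bijection $\comp\colon \Alt_n \to \RAlt_n$, which shifts $\inv$ by $\binom{n}{2}$. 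For $n = 4k$, $\binom{n}{2}$ is even, so $\comp$ is sign-preserving, the two sums are equal, and the coefficient (which a quick binomial computation gives as $2$) yields $T_{4k} = 1$.

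The main obstacle will be the case $n = 4k+2$: here $\binom{n}{2}$ is odd, $\comp$ is sign-reversing, the two signed sums are negatives of each other, and $\SR_{4k+2}(p,q) = 0$ gives only this automatic cancellation rather than $T_n$. To handle this I would establish the recursion $T_n = -T_{n-2}$ for even $n \geq 4$, corresponding to the ODE $T''(x) + T(x) = x$ satisfied by $\cos x + x$; combinatorially, this can be approached by a sign-reversing correspondence between $\Alt_n$ and a suitable copy of $\Alt_{n-2}$, or by adapting the involution used in the proof of Theorem~\ref{thm:signed_peak_valley-enumeration} restricted to alternating permutations. Together with the direct computation $T_2 = -1$ (from $\Alt_2 = \{21\}$) and $T_{4k} = 1$ proven above, this gives $T_{4k+2} = -1$ for all $k \geq 0$. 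Assembling the four residue classes yields $\sum_n T_n x^n/n! = \cos x + x$, and combining with Andr\'e's formula completes the proof of the claimed EGFs for $E_n^{\pm}$.
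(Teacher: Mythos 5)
Your reduction to computing $T_n = E_n^+ - E_n^-$ is the same first step the paper takes, and your coefficient extractions for $n\equiv 0,1,3 \pmod 4$ are correct: the permutations realizing the extremal pair $(\pkk,\vly)$ are exactly $\Alt_n\cup\RAlt_n$, the pairs separate $\Alt_n$ from $\RAlt_n$ when $n$ is odd, and for $n=4k$ the complement map lets you halve the coefficient $2$ of $p^{2k-1}q^{2k-1}$ in $2(1-p)(1-q)(1-pq)^{2(k-1)}$ to get $T_{4k}=1$. But there is a genuine gap at the case you yourself flag as the main obstacle, $n\equiv 2\pmod 4$: there $\SR_{4k+2}(p,q)=0$ identically, the coefficient you would extract is $\sum_{\Alt_n}(-1)^{\inv}+\sum_{\RAlt_n}(-1)^{\inv}=0$ for the trivial reason that $\comp$ is sign-reversing, and Theorem \ref{thm:signed_peak_valley-enumeration} gives you no information about $T_{4k+2}$ at all. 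Your proposed fix --- the recursion $T_n=-T_{n-2}$ for even $n$ via ``a sign-reversing correspondence between $\Alt_n$ and a suitable copy of $\Alt_{n-2}$'' --- is stated but not carried out. It is not a routine step: the natural involution (swapping the values $n$ and $n-1$) fails precisely when $n-1$ sits next to $n$, and analyzing the exceptional set and its sign is an independent combinatorial argument of roughly the same weight as the identity you are trying to prove. As written, the proposal proves the theorem only for $n\not\equiv 2\pmod 4$.

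The underlying issue is that you use only the \emph{statement} of Theorem \ref{thm:signed_peak_valley-enumeration}, whereas the paper uses its \emph{proof}: the refined polynomials $\SR_{n,d,d}(t)$ and $\SR_{n,d,a}(t)$ computed in Corollary \ref{thm:nmod4_uni}. Since a down-up alternating permutation of even length lies in $\SSS_{n,d,d}$ while a reverse alternating one lies in $\SSS_{n,a,a}$, the top coefficient $[t^{n-1}]\SR_{n,d,d}(t)$ isolates $\sum_{\Alt_n}(-1)^{\inv}$ with no contribution from $\RAlt_n$ (Remark \ref{rem:Alt_perm_and_highest_coef_in_signedaltrun_poly}); for $n=4k+2$ one reads off $[t^{4k+1}]\bigl(-t(1-t^2)^{2k}\bigr)=-1$ directly, even though the four pieces sum to zero. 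If you replace your extraction from $\SR_n(p,q)$ by an extraction from $\SR_{n,d,d}$ (for $n$ even) and $\SR_{n,d,a}$ (for $n$ odd), the troublesome case disappears and the rest of your argument goes through unchanged.
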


Steingr{\'i}msson in  \cite{steingrimsson-indexed-perms} 
extended the results for colored permutations.
We give type B and D counterparts of Theorem 
\ref{thm:egf_for_alt_perm_even_and_odd} which refine
Steingr{\'i}msson's results.
For Type B and Type D 
Weyl groups, our results are Theorem 
\ref{thm:egf_for_alt_perm_hyperoctahedralgroupplusandtyped} and 
Theorem \ref{thm:egf_for_alt_perm_demihyperoctahedralgroupplusminus}. 

\subsection{Snakes in $\BB_n^{\pm}$ and $\DD_n^{\pm}$}
Snakes of type B were were introduced by Springer 
in \cite{springer-remarks-combin} and are closely related to 
type B alternating permutations.  Their exponential generating 
functions involve well known trigonometric functions.  
Gao and Sun in \cite{Gaosunaltruntyped} 
refined the enumeration of Snakes over $\DD_n$ and $\BB_n - \DD_n$.
Our work gives an alternate proof of their result and 
further refines the enumeration of types $\BB_n^{\pm}$,
$\DD_n^{\pm}$ and $(\BB_n-\DD_n)^{\pm}$ snakes.  Our
main results are Theorem \ref{thm:egfforsnakesb}
and Theorem	\ref{thm:egfforsnakesd}.


\section{Preliminaries}
\label{sec:prelim}

We divide $\SSS_n$ into four disjoint subsets and compute the
signed alternating runs polynomial on these subsets. 
We first partition $\SSS_n$ based on the type of the first and 
the last pairs.  Either pair could be an ascent or a
descent.  When $n \geq 3$, we get the following four sets:

\begin{eqnarray*}
	 \SSS_{n,a,a} & = & \{\pi \in \SSS_n : \pi_1 < \pi_2, \pi_{n-1}< \pi_n \},  \hspace{5 mm}
	 \SSS_{n,a,d}= \{\pi \in \SSS_n : \pi_1 < \pi_2, \pi_{n-1}> \pi_n \}, \\
	 \SSS_{n,d,a} & = &  \{\pi \in \SSS_n : \pi_1 > \pi_2, \pi_{n-1}< \pi_n \},   \hspace{5 mm}
	 \SSS_{n,d,d}= \{\pi \in \SSS_n : \pi_1 > \pi_2, \pi_{n-1}> \pi_n \}. 
\end{eqnarray*}

We will enumerate the contribution to $\SR_n(p,q)$ from each 
of the four sets. Hence, we define the following four 
alternating runs enumerator polynomials:

\begin{enumerate}
	\item $\SR_{n,a,a}(p,q)=\sum_{\pi \in \SSS_{n,a,a}}(-1)^{\inv(\pi)}p^{\pkk(\pi)}q^{\vly(\pi)} ,$
	\item $\SR_{n,a,d}(p,q)=\sum_{\pi \in \SSS_{n,a,d}}(-1)^{\inv(\pi)}p^{\pkk(\pi)}q^{\vly(\pi)} ,$
	\item $\SR_{n,d,a}(p,q)=\sum_{\pi \in \SSS_{n,d,a}}(-1)^{\inv(\pi)}p^{\pkk(\pi)}q^{\vly(\pi)} ,$
	\item $\SR_{n,d,d}(p,q)=\sum_{\pi \in \SSS_{n,d,d}}(-1)^{\inv(\pi)}p^{\pkk(\pi)}q^{\vly(\pi)} .$ 
\end{enumerate}

We also define the following sets to keep track of the case when
$\pi \in \AAA_{n}$. 
Define
\begin{eqnarray*}
	\AAA_{n,a,a} & = & \{\pi \in \AAA_n : \pi_1 < \pi_2, \pi_{n-1}< \pi_n \},  \hspace{5 mm}
	\AAA_{n,a,d}= \{\pi \in \AAA_n : \pi_1 < \pi_2, \pi_{n-1}> \pi_n \}, \\
	\AAA_{n,d,a} & = &  \{\pi \in \AAA_n : \pi_1 > \pi_2, \pi_{n-1}< \pi_n \},   \hspace{5 mm}
	\AAA_{n,d,d}= \{\pi \in \AAA_n : \pi_1 > \pi_2, \pi_{n-1}> \pi_n \}. 
\end{eqnarray*}

It is easy to see the following relation between $\pkk(\pi)$ and 
$\vly(\pi)$ depending on which of the four sets $\pi$ lies in.  
When $\pi \in \SSS_{n,a,a}$ and when
$\pi \in \SSS_{n,d,d}$, we have $\pkk(\pi) = \vly(\pi)$.
On the other hand, when  $\pi \in \SSS_{n,a,d}$, we have
$\pkk(\pi) = 1 + \vly(\pi)$  and when
$\pi \in \SSS_{n,d,a}$, we have $\vly(\pi) = \pkk(\pi) + 1$. 
Thus, for all $\pi$, we have $|\pkk(\pi) - \vly(\pi)| \leq 1$.

For $\pi \in \SSS_n,$ let $\pi'$ be obtained from $\pi$ by deleting
the letter $n$.  Equivalently, $\pi'$ can be obtained by restricting
$\pi$ to the set $[n-1]$.  Suppose $\pi \in \AAA_n$.  Then, it is easy 
to see that both $\pi' \in \AAA_{n-1}$ and $\pi' \in \SSS_{n-1} - \AAA_{n-1}$ 
are possible.  An identical statement is true  when we get $\pi'$ 
from $\pi \in \SSS_n - \AAA_n$.

We will use induction and hence add the letter $n$ to permutations 
in $\SSS_{n-1}$. We need to keep track 
of the position of the letter $n$ in $\pi$ as we want to take 
the sign of the permutations into account.   
Let $\pi= \pi_1, \pi_2, \ldots, \pi_{n-1} \in \SSS_{n-1}$.
We term the left-most position before $\pi_1$ as the initial 
or the $0$-th gap and the right-most position after $\pi_{n-1}$ 
as the final or $(n-1)$-th gap.
For  $1 \leq i \leq n-2$, we denote the gap between $\pi_i$ and 
$\pi_{i+1}$ as the $i$-th gap. 
For $\pi' \in \SSS_{n-1}$, we call the $i$-th gap of $\pi'$ to 
be even if $n-1-i$ is even and the $i$-th gap of $\pi'$ to be 
odd if $n-1-i$ is odd.  The following remark about the number of
odd and even gaps based on the parity of $n$ is easy to
see.

\begin{remark}
\label{rem:parity _of_gaps}
If $\pi' \in \SSS_{n-1}$ with $n-1$ being even, the total 
number of gaps is $n$ which is odd.  One can check that the 
number of even gaps is $\floor{n/2}+1 $ and the number of 
odd gaps is $\floor{n/2}$.  

On the other hand, if $\pi' \in \SSS_{n-1}$ with $n-1$ being
odd, the total number of gaps is $n$ which is even. Here, we
will have $n/2$ even and odd gaps. 
\end{remark}

By definition, since the gap set is $\{0,1,\ldots,n-1\}$, we 
define $[n-1]_{0} = \{0,1,2,\dots,n-1\}$.

\subsection{Two easy bijections and some properties} 

In this Section, we mention some easy bijections which we 
will use several times in this work. 
Let, $\pi= \pi_1, \pi_2, \dots, \pi_n \in \SSS_n$. Define the
following bijections:
Define the  {\sl complementation bijection} 
$\comp: \SSS_n \mapsto \SSS_n$ by 
$$\comp(\pi_1,\pi_2,\dots,\pi_n)= n+1-\pi_1, n+1-\pi_2, \dots, n+1- \pi_n. $$
Define the {\sl reverse bijection} $\rev: \SSS_n \mapsto \SSS_n$ 
by $$\rev(\pi_1,\pi_2,\dots,\pi_n)= \pi_n, \pi_{n-1}, \dots, \pi_1. $$

\begin{lemma}
\label{lem:properties_of_complementation}
The bijection $\comp$ has the following properties:
\begin{enumerate}
	\item[P1:]
\label{itm:prop1ofC} 
For $\pi \in \SSS_n$, we 
have $\Vly(\pi)= \Pkk(\comp(\pi))$ and $\Pkk(\pi)= \Vly(\comp(\pi))$.
Thus, $\vly(\pi)= \pkk(\comp(\pi))$ and $\pkk(\pi)= \vly(\comp(\pi))$.

\item[P2:] 
	\label{itm:prop2ofC} Let $\pi \in \AAA_n$. Then, 
$\comp(\pi) \in \AAA_n$	if and only if $n \equiv 0,1$ (mod 4). 
Similarly, if $\pi \in \SSS_n - \AAA_n,$ then $\comp(\pi) 
	\in \SSS_n-\AAA_n$ if and only if $n \equiv 0,1$ (mod 4).
\end{enumerate}
\end{lemma}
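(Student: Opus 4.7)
The plan is to prove the two parts independently, both by direct computation from the definition of $\comp$.

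For P1, I would work pointwise at each interior index. Fix $i$ with $2\leq i\leq n-1$ and let $\sigma=\comp(\pi)$, so $\sigma_j = n+1-\pi_j$ for all $j$. The map $x\mapsto n+1-x$ is strictly order-reversing on $[n]$, so $\pi_{i-1}<\pi_i>\pi_{i+1}$ holds if and only if $\sigma_{i-1}>\sigma_i<\sigma_{i+1}$ holds, and similarly $\pi_{i-1}>\pi_i<\pi_{i+1}$ if and only if $\sigma_{i-1}<\sigma_i>\sigma_{i+1}$. Hence $\Pkk(\pi)=\Vly(\comp(\pi))$ and $\Vly(\pi)=\Pkk(\comp(\pi))$, and the cardinality statement follows.

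For P2, the key computation is that complementation turns each inversion into a non-inversion and vice versa: for any pair $1\leq i<j\leq n$, we have $\pi_i>\pi_j$ iff $n+1-\pi_i<n+1-\pi_j$. Therefore
\[
\inv(\comp(\pi)) \;=\; \binom{n}{2} - \inv(\pi).
\]
Consequently $\inv(\comp(\pi))\equiv \inv(\pi)\pmod{2}$ precisely when $\binom{n}{2}=n(n-1)/2$ is even, which occurs exactly when $n\equiv 0,1\pmod{4}$. If $\pi\in\AAA_n$, so $\inv(\pi)$ is even, then $\comp(\pi)\in\AAA_n$ iff $\inv(\comp(\pi))$ is even iff $\binom{n}{2}$ is even iff $n\equiv 0,1\pmod{4}$. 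The same parity computation applies verbatim to $\pi\in\SSS_n-\AAA_n$.

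There is no real obstacle here; both parts are essentially one-line observations, P1 from the fact that $\comp$ is order-reversing on values (preserving positions) and P2 from the complementary count of inversions together with the parity of $\binom{n}{2}$. The only care needed is remembering that $\comp$ leaves positions alone and acts only on values, so the location of each peak/valley index is preserved while its type is flipped.
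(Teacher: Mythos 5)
Your proposal is correct and follows exactly the paper's argument: P1 via the order-reversing nature of $x\mapsto n+1-x$ at each interior index, and P2 via the identity $\inv(\pi)+\inv(\comp(\pi))=\binom{n}{2}$ together with the observation that $\binom{n}{2}$ is even precisely when $n\equiv 0,1\pmod 4$. The paper's own proof is just a terser version of what you wrote.
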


\begin{proof}
It is easy to see that the peak and the valley sets switch when
we complement the permutation $\pi$. 
Thus, P1 is proved.
For the second part, we note that $\inv_A(\pi) + \inv_A(\comp(\pi)) 
= \binom{n}{2}$. As $\binom{n}{2}$ is even if and only if 
$n \equiv 0,1$ (mod 4), $\inv_A(\pi)$ and $\inv_A(\comp(\pi))$ have
the same parity if and only if $n \equiv 0,1$ (mod 4). 
This completes the proof of P2. 
\end{proof}

\begin{lemma}
\label{lem:01mod4basic}
For positive integers $n \equiv 0,1$ (mod 4), the following hold.
\begin{eqnarray}
\label{eqn:flip-pk-val-01-mod-4-ad}
	\SR_{n,a,d}(p,q) & = & \SR_{n,d,a}(q,p),  \\
\label{eqn:flip-pk-val-01-mod-4-aa}
	\SR_{n,a,a}(p,q) & = & \SR_{n,d,d}(p,q).
\end{eqnarray}
\end{lemma}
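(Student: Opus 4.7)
The plan is to prove both identities using the complementation bijection $\comp$ introduced in the preliminaries, exploiting both properties P1 and P2 of Lemma~\ref{lem:properties_of_complementation} together with the parity hypothesis $n \equiv 0,1 \pmod 4$.

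First I would observe that $\comp$ swaps ascents and descents at every adjacent position: if $\pi_i < \pi_{i+1}$, then $\comp(\pi)_i = n+1-\pi_i > n+1-\pi_{i+1} = \comp(\pi)_{i+1}$. Consequently $\comp$ restricts to bijections
\[
\comp: \SSS_{n,a,d} \longleftrightarrow \SSS_{n,d,a}, \qquad \comp: \SSS_{n,a,a} \longleftrightarrow \SSS_{n,d,d}.
\]
Under the assumption $n \equiv 0,1 \pmod 4$, property P2 gives $(-1)^{\inv(\pi)} = (-1)^{\inv(\comp(\pi))}$ for every $\pi$, and property P1 gives $\pkk(\comp(\pi)) = \vly(\pi)$ and $\vly(\comp(\pi)) = \pkk(\pi)$.

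For the first identity, I would reindex the sum defining $\SR_{n,a,d}(p,q)$ by setting $\sigma = \comp(\pi)$:
\[
\SR_{n,a,d}(p,q) = \sum_{\pi \in \SSS_{n,a,d}} (-1)^{\inv(\pi)} p^{\pkk(\pi)} q^{\vly(\pi)} = \sum_{\sigma \in \SSS_{n,d,a}} (-1)^{\inv(\sigma)} p^{\vly(\sigma)} q^{\pkk(\sigma)} = \SR_{n,d,a}(q,p),
\]
which is \eqref{eqn:flip-pk-val-01-mod-4-ad}.

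The second identity requires one extra observation. The same change of variables yields $\SR_{n,a,a}(p,q) = \SR_{n,d,d}(q,p)$. But as noted right after the definitions of these four sets, every $\sigma \in \SSS_{n,d,d}$ satisfies $\pkk(\sigma) = \vly(\sigma)$, so each monomial $p^{\pkk(\sigma)} q^{\vly(\sigma)}$ appearing in $\SR_{n,d,d}(p,q)$ is a power of $pq$, and in particular $\SR_{n,d,d}(q,p) = \SR_{n,d,d}(p,q)$. Combining gives \eqref{eqn:flip-pk-val-01-mod-4-aa}. There is no real obstacle here; the only point to be careful about is invoking the $\pkk = \vly$ equality on the correct one of the two sets (on $\SSS_{n,d,d}$, not on $\SSS_{n,a,a}$, though symmetry makes this moot) to recover a genuine equality $\SR_{n,a,a}(p,q) = \SR_{n,d,d}(p,q)$ rather than just $\SR_{n,a,a}(p,q) = \SR_{n,d,d}(q,p)$.
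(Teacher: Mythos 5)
Your proposal is correct and follows essentially the same route as the paper: both use the complementation bijection $\comp$, with property P1 to swap peaks and valleys, property P2 to preserve the sign when $n \equiv 0,1 \pmod 4$, and the observation that $\pkk = \vly$ on $\SSS_{n,d,d}$ to remove the variable swap in the second identity. Your write-up is in fact slightly more explicit than the paper's about the reindexing step and about why no $(q,p)$ swap survives in \eqref{eqn:flip-pk-val-01-mod-4-aa}.
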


\begin{proof}
We first consider \eqref{eqn:flip-pk-val-01-mod-4-ad}.
When $n \equiv 0,1$ (mod 4), 
by Lemma \ref{lem:properties_of_complementation}, 
$\comp$ is a 
map from $\AAA_n$ to $\AAA_n$. Further, it is clear that
$\comp$ is 
a bijection from $\AAA_{n,a,d}$ to $ \AAA_{n,d,a}$ that 
swaps the peak set and the valley set.  It is also easy to
see that if $\pi \in \SSS_{n,a,d}$ then 
$\pkk(\pi) = \vly(\pi)+1$.  
This proves 
\eqref{eqn:flip-pk-val-01-mod-4-ad}.

The proof of \eqref{eqn:flip-pk-val-01-mod-4-aa}
is similar.  We note that if $\pi \in \SSS_{n,a,a} \cup 
\SSS_{n,d,d}$, then, $\pkk(\pi) = \vly(\pi)$.  
Thus, there is no need to swap the variables $p$ and $q$.  
This completes the proof.  
\end{proof}

We next move onto similar properties of the reversing bijection.

\begin{lemma}
\label{lem:properties_of_reverse}
The bijection $\rev$ has the following properties.
\begin{enumerate}
\item[R1:] 
For $\pi \in \SSS_n$, we have 
$\vly(\pi)= \vly(\rev(\pi))$ and $\pkk(\pi)= \pkk (\rev(\pi))$.
		
\item[R2:] 
Let $\pi \in \AAA_n$. Then, 
$\rev(\pi) \in \AAA_n$ if and only if $n \equiv 0,1$ (mod 4).
	\end{enumerate}
\end{lemma}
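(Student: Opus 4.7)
The plan is to establish R1 directly from the definition of $\rev$ by identifying which index of $\rev(\pi)$ corresponds to a given peak or valley index of $\pi$, and to establish R2 via a standard inversion-count identity for the reversal.

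For R1, I would fix an index $i$ with $2 \leq i \leq n-1$ and observe that the triple $(\pi_{i-1},\pi_i,\pi_{i+1})$ appears in $\rev(\pi)$ at positions $(n+2-i,n+1-i,n-i)$ in reversed order, so $\rev(\pi)_{n-i} = \pi_{i+1}$, $\rev(\pi)_{n+1-i}=\pi_i$, $\rev(\pi)_{n+2-i}=\pi_{i-1}$. Since the inequalities $\pi_{i-1}<\pi_i>\pi_{i+1}$ and $\pi_{i+1}<\pi_i>\pi_{i-1}$ are equivalent, $i \in \Pkk(\pi)$ if and only if $n+1-i \in \Pkk(\rev(\pi))$. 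The same reasoning with reversed inequalities handles valleys. Thus the map $i \mapsto n+1-i$ is a bijection $\Pkk(\pi) \to \Pkk(\rev(\pi))$ and $\Vly(\pi) \to \Vly(\rev(\pi))$, giving R1.

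For R2, I would use the standard fact that the pairs $(i,j)$ with $i<j$ split according to whether $\pi_i>\pi_j$ or $\pi_i<\pi_j$, and that an inversion of $\pi$ at positions $(i,j)$ becomes a non-inversion of $\rev(\pi)$ at positions $(n+1-j,n+1-i)$ and vice versa. Hence $\inv_A(\pi) + \inv_A(\rev(\pi)) = \binom{n}{2}$. Therefore $\inv_A(\pi)$ and $\inv_A(\rev(\pi))$ have the same parity exactly when $\binom{n}{2}$ is even, and a direct check shows $\binom{n}{2} \equiv 0 \pmod 2$ iff $n \equiv 0,1 \pmod 4$. This yields R2.

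There is no real obstacle here; the lemma is the reversal analogue of Lemma \ref{lem:properties_of_complementation}, and both parts reduce to a one-line position-tracking argument and a one-line parity argument respectively. The only bookkeeping to be careful about is the index shift $i \mapsto n+1-i$ in R1, to ensure that interior indices of $\pi$ map to interior indices of $\rev(\pi)$, which is immediate since $2 \leq i \leq n-1$ iff $2 \leq n+1-i \leq n-1$.
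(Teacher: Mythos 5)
Your proof is correct and matches the paper's intended argument: the paper omits the proof of this lemma, stating only that it is analogous to the proof of Lemma \ref{lem:properties_of_complementation}, which is exactly the route you take (index-tracking under $i \mapsto n+1-i$ for R1, and the identity $\inv_A(\pi)+\inv_A(\rev(\pi))=\binom{n}{2}$ together with the parity of $\binom{n}{2}$ for R2). Nothing further is needed.
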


As the proof of Lemma \ref{lem:properties_of_reverse} is similar to 
the proof of Lemma \ref{lem:properties_of_complementation}, 
we omit its proof. Using this map $\rev$, we directly get the following.

\begin{lemma}
\label{lem:23mod4}
For positive integers $n \equiv 2,3$ (mod 4), the following hold.
\begin{eqnarray}
\label{eqn:23_mod4_ad_zero}
\SR_{n,a,d}(p,q)& = & 0 \mbox { and } \SR_{n,d,a}(p,q)=0, \\
\label{eqn:23_mod4_aa_neg}
\SR_{n,a,a}(p,q)&  =& -\SR_{n,d,d}(p,q), \\
\label{eqn:23_mod4_total_zero}
\SR_{n}(p,q) & = & 0. 
\end{eqnarray}
\end{lemma}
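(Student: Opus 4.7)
The plan is to use the reverse bijection $\rev$ from Lemma \ref{lem:properties_of_reverse} together with a simple parity observation. The key point is that for $\pi \in \SSS_n$, one has $\inv_A(\pi) + \inv_A(\rev(\pi)) = \binom{n}{2}$, and $\binom{n}{2}$ is \emph{odd} precisely when $n \equiv 2,3 \pmod 4$. In this regime, therefore, $\rev$ flips the inversion sign: $(-1)^{\inv(\rev(\pi))} = -(-1)^{\inv(\pi)}$. By property R1, $\rev$ also preserves both $\pkk$ and $\vly$. So $\rev$ will be the engine that either sets up a sign-reversing involution (forcing a sum to vanish) or matches up two signed sums as negatives of each other.

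The second step is to track the action of $\rev$ on the four subsets defined in Section~\ref{sec:prelim}. Reversing $\pi = \pi_1, \ldots, \pi_n$ swaps the roles of the first and the last adjacent pair, turning an ascent into a descent and vice versa in each of the two pairs. A direct inspection gives the orbit structure
\[
\rev(\SSS_{n,a,d}) = \SSS_{n,a,d}, \qquad \rev(\SSS_{n,d,a}) = \SSS_{n,d,a}, \qquad \rev(\SSS_{n,a,a}) = \SSS_{n,d,d}.
\]
Thus $\rev$ restricts to an involution on each of $\SSS_{n,a,d}$ and $\SSS_{n,d,a}$, and to a bijection $\SSS_{n,a,a} \to \SSS_{n,d,d}$. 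Moreover, $\rev$ has no fixed points on $\SSS_n$ for $n \geq 2$, since $\rev(\pi) = \pi$ would force $\pi_i = \pi_{n+1-i}$, impossible for a permutation.

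Combining the two ingredients gives the three claims. On $\SSS_{n,a,d}$ and $\SSS_{n,d,a}$, the fixed-point-free involution $\rev$ flips the sign while preserving $p^{\pkk}q^{\vly}$, so every pair cancels and we obtain \eqref{eqn:23_mod4_ad_zero}. For \eqref{eqn:23_mod4_aa_neg}, the bijection $\rev: \SSS_{n,a,a} \to \SSS_{n,d,d}$ sends the signed monomial of $\pi$ to the negative of the signed monomial of $\rev(\pi)$, yielding
\[
\SR_{n,d,d}(p,q) \;=\; \sum_{\pi \in \SSS_{n,a,a}} -(-1)^{\inv(\pi)} p^{\pkk(\pi)} q^{\vly(\pi)} \;=\; -\SR_{n,a,a}(p,q).
\]
Finally, since $\SR_n(p,q) = \SR_{n,a,a}(p,q) + \SR_{n,a,d}(p,q) + \SR_{n,d,a}(p,q) + \SR_{n,d,d}(p,q)$ (the four subsets partition $\SSS_n$ for $n \geq 3$), the previous two identities combine to give \eqref{eqn:23_mod4_total_zero}. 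There is no real obstacle here; the only things to be careful about are the bookkeeping of which subsets $\rev$ maps where and verifying the absence of fixed points of $\rev$ so that the sign-reversing involution argument on $\SSS_{n,a,d}$ and $\SSS_{n,d,a}$ goes through cleanly.
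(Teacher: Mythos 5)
Your proof is correct and follows essentially the same route as the paper: the paper also uses the reverse map $\rev$, citing property R2 (equivalently your $\inv(\pi)+\inv(\rev(\pi))=\binom{n}{2}$ parity computation) to see that $\rev$ flips the sign when $n\equiv 2,3 \pmod 4$ while preserving peaks and valleys, that it maps $\SSS_{n,a,d}$ and $\SSS_{n,d,a}$ to themselves (forcing those signed sums to vanish) and $\SSS_{n,a,a}$ onto $\SSS_{n,d,d}$ (giving the negation), and then sums the pieces. Your explicit check that $\rev$ is fixed-point-free is a harmless extra detail the paper leaves implicit.
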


\begin{proof}
We first consider \eqref{eqn:23_mod4_ad_zero}.
When $n \equiv 2,3$ (mod 4), by Lemma 
\ref{lem:properties_of_reverse}, 
$\rev$ is a bijection from $\AAA_n$ to $\SSS_n - \AAA_n$. 
Further, by Lemma \ref{lem:properties_of_reverse}, $\rev$ 
is a bijection from $\AAA_{n,a,d}$ to $\SSS_{n,a,d} - \AAA_{n,a,d}$
which preserves the number of peaks and valleys.  
A similar argument 
shows \eqref{eqn:23_mod4_aa_neg}.  
Summing \eqref{eqn:23_mod4_ad_zero} and \eqref{eqn:23_mod4_aa_neg}
gives \eqref{eqn:23_mod4_total_zero}.
\end{proof}

\begin{remark}
\label{rem:evennessofcoefficientsofRnt}
Consider the bijection $\comp:\SSS_n \mapsto \SSS_n$.  It is 
easy to note that
$\pi$ and $\comp(\pi)$ have the same number of alternating runs. 
Thus, when $n\geq 2$, each coefficient of $R_n(t)$ is even. 
\end{remark}

\begin{lemma}
	\label{lem:inserting_n_beforeandafter_peak}
Let $\pi'= \pi_1, \pi_2, \dots, \pi_{n-1} \in \SSS_{n-1}$ with
$\Pkk(\pi') = k_1, k_2, \dots, k_r$.  Then, for $1 \leq i \leq r$, 
the two following two permutations 
$\psi_1 =  \pi_1, \pi_2, \dots, \pi_{k_i-1}, n, \pi_{k_i}, \dots,\pi_{n-1}$ 
and 
$\psi_2 =  \pi_1, \pi_2, \dots, \pi_{k_i-1}, \pi_{k_i}, n,  \dots,\pi_{n-1}$ 
have the same number of peaks and valleys but have different signs. 
\end{lemma}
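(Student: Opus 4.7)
The plan is to verify both assertions—equal peak/valley counts and opposite signs—by a direct local analysis near the position $k_i$. Since $n$ is the largest entry, any comparison involving $n$ is automatically ``$n$ is bigger,'' so inserting $n$ only affects the peak/valley status of positions immediately adjacent to the insertion point; everything further away stays exactly as it was in $\pi'$.

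First I would compare $\psi_1$ with $\pi'$. In $\psi_1$ the letter $n$ sits at position $k_i$, flanked by $\pi_{k_i-1}$ on the left and $\pi_{k_i}$ on the right; since $n$ exceeds both, position $k_i$ is automatically a peak of $\psi_1$. The letter $\pi_{k_i}$, which was the peak of $\pi'$, is now at position $k_i+1$ of $\psi_1$ with neighbors $n$ and $\pi_{k_i+1}$. Since $n > \pi_{k_i} > \pi_{k_i+1}$, this is a double descent, so $\pi_{k_i}$ no longer contributes a peak. The peak/valley status of $\pi_{k_i-1}$ is unchanged because its left neighbor is still $\pi_{k_i-2}$ and its right neighbor (now $n$) is still larger than $\pi_{k_i-1}$, exactly as $\pi_{k_i}$ was in $\pi'$. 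The status of $\pi_{k_i+1}$ and everything to its right is untouched. Thus $\pkk(\psi_1)=\pkk(\pi')$ and $\vly(\psi_1)=\vly(\pi')$.

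Next I would repeat this for $\psi_2$, where $n$ is inserted just after $\pi_{k_i}$. Here $n$ sits at position $k_i+1$ between $\pi_{k_i}$ and $\pi_{k_i+1}$, so it is a new peak. The letter $\pi_{k_i}$ remains at position $k_i$ but is now flanked by $\pi_{k_i-1} < \pi_{k_i} < n$, a double ascent that destroys the old peak. As before, the statuses of $\pi_{k_i-1}$ and $\pi_{k_i+1}$ (whose right, respectively left, neighbor is now $n$ instead of $\pi_{k_i}$ or $\pi_{k_i}$—but in both cases the swapped-in neighbor is still larger than the entry in question) are preserved. Hence $\pkk(\psi_2)=\pkk(\pi')$ and $\vly(\psi_2)=\vly(\pi')$, matching $\psi_1$.

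Finally, for the sign I would simply observe that $\psi_1$ and $\psi_2$ differ only by the swap of the adjacent entries $n$ and $\pi_{k_i}$, which changes $\inv$ by exactly one, giving $(-1)^{\inv(\psi_1)} = -(-1)^{\inv(\psi_2)}$. The only step that demands a moment of care is checking that the peak/valley status of $\pi_{k_i-1}$ and $\pi_{k_i+1}$ really is unaffected in each of $\psi_1$ and $\psi_2$; this is the only potential obstacle, and it collapses immediately once one notes that the new neighbor is always $n$, which is larger than any other entry.
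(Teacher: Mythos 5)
Your proof is correct and takes essentially the same direct-verification route as the paper: the local analysis around position $k_i$ that you spell out is exactly what the paper dismisses as ``clear,'' and your observation that $\psi_1$ and $\psi_2$ differ by an adjacent transposition of $n$ and $\pi_{k_i}$ gives the same parity conclusion as the paper's explicit counts $\inv(\psi_1)=\inv(\pi')+n-k_i$ and $\inv(\psi_2)=\inv(\pi')+n-k_i-1$. No gaps; your version is simply more detailed.
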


\begin{proof}
It is clear when $n$ is inserted before and after the 
$k_i$-th index,  that $\psi_1 $ and $\psi_2$ have the same number of 
peaks and valleys.  
Further, we get $\inv (\psi_1)= \inv(\pi')+n-k_i$ 
and $\inv (\psi_2)= \inv(\pi')+n-k_i-1.$  Thus, 
$\inv (\psi_1)$ and $\inv (\psi_2)$ have different parity. 
This completes the proof. 
\end{proof}

\section{Recurrences for the Signed Bivariate Polynomials}

We wish to prove recurrences for the polynomials 
$\SR_{n-1,a,d}(p,q)$, $\SR_{n-1,d,a}(p,q)$, $\SR_{n-1,a,a}(p,q)$ 
and $\SR_{n-1,d,d}(p,q)$. We start with the following remark 
which says that inserting $n$ cannot change the type of both
the starting and the ending run.

\begin{remark}
\label{rem:adding_n_cannot_flip_both_end}
Inserting $n$ at any gap in  $\pi' \in \SSS_{n-1,d,a}$ 
cannot give $\pi \in \SSS_{n,a,d}$ and likewise, 
inserting $n$ at any gap in  $\pi' \in \SSS_{n-1,a,d}$ 
cannot give $\pi \in \SSS_{n,d,a}$. 
Similarly, inserting $n$ at any position in $\pi' \in 
\SSS_{n-1,a,a}$ cannot give $\pi \in \SSS_{n,d,d}$ 
and inserting $n$ at any position in $\pi' \in \SSS_{n-1,d,d}$ 
cannot give $\pi \in \SSS_{n,a,a}$ . 
\end{remark}

Our recurrence relation for the polynomials depends on the parity of $n$ and so we bifurcate
the remaining part into two cases.

\subsection{When $n$ is odd}
\begin{theorem}
\label{thm:ad_equals_da_for_nminusone}
For odd positive integers $n$, we have $$q\SR_{n-1,a,d}(p,q)=p\SR_{n-1,d,a}(p,q).$$
\end{theorem}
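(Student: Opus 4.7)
The plan is to do a case analysis based on $n \pmod 4$, noting that since $n$ is odd we have either $n\equiv 1$ or $n\equiv 3 \pmod 4$, which correspond to $n-1 \equiv 0$ or $n-1 \equiv 2 \pmod 4$ respectively. The case $n \equiv 3 \pmod 4$ is immediate: by Lemma \ref{lem:23mod4} applied to $n-1 \equiv 2 \pmod 4$, both $\SR_{n-1,a,d}(p,q)$ and $\SR_{n-1,d,a}(p,q)$ vanish, so the identity holds trivially.

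For the substantive case $n \equiv 1 \pmod 4$ (so $n-1 \equiv 0 \pmod 4$), I would first rewrite each side using the forced relation between peaks and valleys noted in the Preliminaries. For any $\pi \in \SSS_{n-1,a,d}$ we have $\pkk(\pi) = \vly(\pi)+1$, so $p^{\pkk(\pi)}q^{\vly(\pi)} = p\,(pq)^{\vly(\pi)}$. Similarly, for $\pi \in \SSS_{n-1,d,a}$ we have $\vly(\pi) = \pkk(\pi)+1$, giving $p^{\pkk(\pi)}q^{\vly(\pi)} = q\,(pq)^{\pkk(\pi)}$. Consequently, the claimed identity $q\SR_{n-1,a,d}(p,q) = p\SR_{n-1,d,a}(p,q)$ reduces to
\begin{equation*}
\sum_{\pi \in \SSS_{n-1,a,d}} (-1)^{\inv(\pi)} (pq)^{\vly(\pi)} \;=\; \sum_{\sigma \in \SSS_{n-1,d,a}} (-1)^{\inv(\sigma)} (pq)^{\pkk(\sigma)}.
\end{equation*}

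To establish this reduced identity I would invoke the complementation bijection $\comp$. It restricts to a bijection $\SSS_{n-1,a,d} \to \SSS_{n-1,d,a}$ (complementing an ascent produces a descent and vice versa, flipping both end-types simultaneously). By property P1 of Lemma \ref{lem:properties_of_complementation}, $\comp$ sends valleys to peaks, so $\vly(\pi) = \pkk(\comp(\pi))$. By property P2, since $n-1 \equiv 0 \pmod 4$, the map $\comp$ preserves the parity of $\inv$, i.e. $(-1)^{\inv(\pi)} = (-1)^{\inv(\comp(\pi))}$. Substituting $\sigma = \comp(\pi)$ on the right-hand side of the reduced identity matches it term-by-term with the left-hand side, closing the argument.

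I do not expect any real obstacle here. The only point requiring care is verifying that the complementation bijection really does interchange the sets $\SSS_{n-1,a,d}$ and $\SSS_{n-1,d,a}$ and simultaneously preserves signs when $n-1 \equiv 0 \pmod 4$; both of these facts follow cleanly from Lemma \ref{lem:properties_of_complementation}. The rest of the argument is a straightforward rewriting using $\pkk - \vly = \pm 1$ on the two sets.
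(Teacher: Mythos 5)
Your proposal is correct and follows essentially the same route as the paper's proof: the $n-1\equiv 2 \pmod 4$ case is dispatched by the vanishing from Lemma \ref{lem:23mod4}, and the $n-1\equiv 0 \pmod 4$ case uses the complementation bijection together with properties P1 and P2 of Lemma \ref{lem:properties_of_complementation} and the relations $\pkk-\vly=\pm 1$ on the two sets. The only difference is cosmetic: you rewrite both sides as sums of powers of $pq$ before matching terms, whereas the paper matches the monomials $q\,p^{\pkk(\pi)}q^{\vly(\pi)}$ and $p\,p^{\pkk(\comp(\pi))}q^{\vly(\comp(\pi))}$ directly.
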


\begin{proof}
If $n$ is odd, $n-1$ is either $0 \mbox{ or } 2$ (mod 4). 
If $n-1 \equiv 0$ (mod 4), consider the bijection $\comp$ from  
$\SSS_{n-1,a,d}$ to $\SSS_{n-1,d,a}$.  
We have $\pkk(\pi) = \vly(\comp(\pi))$ and $\vly(\pi) = 
\pkk(\comp(\pi))$.  Moreover, if
$\pi \in \SSS_{n-1,a,d}$, then  
$\pkk(\pi)=\vly(\pi)+1=\pkk(\comp(\pi))+1$. Similarly, we get
$\vly(\pi)=\pkk(\pi)-1=\vly(\comp(\pi))-1$. 
Thus, if $\pi \in \SSS_{n-1,a,d}$,  then,
	\begin{equation}
	\label{eqn:pkvlyrelation}
	q \times p^{\pkk(\pi)}q^{\vly(\pi)} = p \times p^{\pkk(\comp(\pi))}
q^{\vly(\comp(\pi))}.
	\end{equation}
As equation \eqref{eqn:pkvlyrelation} is true for all
$\pi \in \SSS_{n-1,a,d}$, summing over elements of $\SSS_{n-1,a,d}$
completes the proof. 
	
If  $n-1 \equiv 2$ (mod 4), then both $\SR_{n-1,a,d}(p,q)=0$ and 
$\SR_{n-1,d,a}(p,q)=0$ by the map $\rev$. The proof is complete. 
\end{proof}

From Lemma \ref{lem:01mod4basic} and Lemma \ref{lem:23mod4}, we know
that instead of getting a recurrence for the four polynomials, it
suffices to get a recurrence for just the two polynomials 
$\SR_{n,a,d}(p,q)$ and 
$\SR_{n,a,a}(p,q)$.

\begin{theorem}
\label{thm:peakvalleyrecurrencesfornodd}
For odd positive integers $n \geq 3$, the following recurrence 
relations hold:
	\begin{eqnarray}
	\SR_{n,a,d}(p,q) &  = & -pq\SR_{n-1,a,d}(p,q)  -p\SR_{n-1,a,a}(p,q) \nonumber 
	\\ & & -p\SR_{n-1,d,d}(p,q),  \label{eqn:nad_n_odd} \\
	\SR_{n,a,a}(p,q) & = & q\SR_{n-1,a,d}(p,q) - 
	p\SR_{n-1,d,a}(p,q) \nonumber 
	\\ & & +\SR_{n-1,a,a}(p,q)\label{eqn:naa_n_odd}  \\
	&= &   \SR_{n-1,a,a}(p,q) \label{eqn:naa_n_odd_simplified} 
	\end{eqnarray}
\end{theorem}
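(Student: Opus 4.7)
The plan is to build each $\pi \in \SSS_n$ by inserting the letter $n$ into a unique $\pi' \in \SSS_{n-1}$ at some gap $i \in [n-1]_0$, and to track how the sign and the pair $(\pkk, \vly)$ change. For each target type ($aa$ or $ad$) and each starting type of $\pi'$, I first determine which gap positions $i$ place $\pi$ in the target type, and then, for each fixed $\pi'$, cancel as many of the induced contributions as possible.

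Since $\inv(\pi) = \inv(\pi') + (n-1-i)$ and $n$ is odd so that $n-1$ is even, the sign contribution of a gap-$i$ insertion is $(-1)^i$. A brief local case analysis on the pattern of ascents/descents at gaps $i-1, i, i+1$ of $\pi'$ shows that, whenever the insertion preserves the starting and ending run type, both $\pkk$ and $\vly$ change by the same amount $c_i \in \{0,1\}$; moreover, $c_i = 0$ precisely when position $i$ or position $i+1$ of $\pi'$ is a peak, and $c_i = 1$ otherwise.

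The heart of the cancellation is then Lemma \ref{lem:inserting_n_beforeandafter_peak}. For each peak $p$ of $\pi'$, the two insertions at gaps $p-1$ and $p$ produce permutations $\psi_1, \psi_2$ with identical $(\pkk, \vly)$ and opposite signs, so they cancel. Since peaks of $\pi'$ are pairwise at distance at least two, these peak-pairs are disjoint. What remains is $pq$ times $\sum_{i \in I}(-1)^i$ over the allowed gap range $I$ (the peak-pair sign sums already equal $0$, so adding them back changes nothing). For $\pi' \in \SSS_{n-1,a,a}$ mapping to $\SSS_{n,a,a}$, the range is $I = \{1,\ldots,n-3\}$ and the sum is $0$ since $n-3$ is even; for $\pi' \in \SSS_{n-1,a,d}$ mapping to $\SSS_{n,a,d}$, the range is $I = \{1,\ldots,n-2\}$ and the sum equals $-1$, producing the $-pq\,\SR_{n-1,a,d}(p,q)$ term of \eqref{eqn:nad_n_odd}.

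The remaining gap insertions are those that flip a starting or ending run type; each is handled by a direct calculation of $(\Delta\pkk, \Delta\vly)$. For \eqref{eqn:nad_n_odd}: gap $n-2$ from $\pi' \in \SSS_{n-1,a,a}$ contributes $-p\,\SR_{n-1,a,a}(p,q)$ and gap $1$ from $\pi' \in \SSS_{n-1,d,d}$ contributes $-p\,\SR_{n-1,d,d}(p,q)$. For \eqref{eqn:naa_n_odd}: gap $n-1$ from $\pi' \in \SSS_{n-1,a,a}$ contributes $\SR_{n-1,a,a}(p,q)$, gap $n-1$ from $\pi' \in \SSS_{n-1,a,d}$ contributes $q\,\SR_{n-1,a,d}(p,q)$, and gap $1$ from $\pi' \in \SSS_{n-1,d,a}$ contributes $-p\,\SR_{n-1,d,a}(p,q)$. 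Summing yields \eqref{eqn:nad_n_odd} and \eqref{eqn:naa_n_odd}; the simplification \eqref{eqn:naa_n_odd_simplified} then follows at once from Theorem \ref{thm:ad_equals_da_for_nminusone}, which gives $q\,\SR_{n-1,a,d}(p,q) = p\,\SR_{n-1,d,a}(p,q)$. The main obstacle will be the local $(\Delta\pkk, \Delta\vly)$ tabulation -- verifying that $\Delta\pkk = \Delta\vly$ at every interior insertion and that $c_i$ depends only on whether position $i$ or $i+1$ of $\pi'$ is a peak -- together with carefully enumerating which gaps produce $\pi$ of each target type.
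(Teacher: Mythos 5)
Your proposal is correct and follows essentially the same route as the paper: decompose by the type of $\pi'$, cancel insertions adjacent to peaks via Lemma \ref{lem:inserting_n_beforeandafter_peak}, handle the boundary/type-changing gaps individually, and derive \eqref{eqn:naa_n_odd_simplified} from Theorem \ref{thm:ad_equals_da_for_nminusone}. The only (cosmetic) difference is that you evaluate the residual contribution as an alternating sum $\sum_{i\in I}(-1)^i$ over the full allowed gap range, using that the peak-pairs contribute zero to it, whereas the paper counts the even and odd gaps remaining in the set $A$ directly; both give the same $-pq$, $0$, etc.
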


\begin{proof}
We first prove \eqref{eqn:nad_n_odd}. 
By Remark \ref{rem:adding_n_cannot_flip_both_end}, only 
the following three polynomials contribute to $\SR_{n,a,d}(p,q)$. 
\begin{eqnarray}
\SR_{n,a,d}(p,q) & = & \hspace{-3 mm} 
\sum_{\pi \in \SSS_{n,a,d}, \pi' \in \SSS_{n-1,a,d}} 
(-1)^{\inv(\pi)} p^{\pkk(\pi)}q^{\vly(\pi)} \nonumber \\ 
& & + \sum_{\pi \in \SSS_{n,a,d}, \pi' \in \SSS_{n-1,a,a}} 
(-1)^{\inv(\pi)}p^{\pkk(\pi)}q^{\vly(\pi)} \nonumber \\ 
& & + \sum_{\pi \in \SSS_{n,a,d}, \pi' \in \SSS_{n-1,d,d}} 
(-1)^{\inv(\pi)} p^{\pkk(\pi)}q^{\vly(\pi)} \nonumber 
\end{eqnarray}
We consider the contribution from each of the three terms 
to $\SR_{n,a,d}(p,q)$ separately.

\begin{enumerate}
\item Suppose $\pi' \in \SSS_{n-1,a,d}$ and we want to  get 
all $\pi \in \SSS_{n,a,d}$ by inserting $n$ in 
different gaps of $\pi'$. 
We claim that we cannot insert $n$ in the initial or final gap, 
as by such an insertion, we will not get $\pi \in \SSS_{n,a,d}$. 
By inserting $n$ in 
all other gaps of  $\pi',$ we will get $\pi \in \SSS_{n,a,d}$. 
Let $k_1, k_2, \dots,  k_r$ be the peaks of $\pi'$. 
By Lemma \ref{lem:inserting_n_beforeandafter_peak}, inserting $n$ 
at the $k_i-1$ and  $k_i$-th gap gives permutations with equal 
number of peaks and valleys but with different sign. 
Therefore, inserting $n$ 
at the gaps $k_1-1, k_1, k_2-1, k_2 \dots,  k_r-1, k_r$ 
contributes $0$ in total. 
		
The remaining gaps are $A = [n-1]_{0} - \{0,k_1-1, k_1, k_2-1, k_2, 
\dots,  k_r-1, k_r,n-1\}$.  
It is easy to see that inserting $n$ at any of gaps from the set $A$
increases both the number of peaks and valleys by $1$. Moreover, by 
Remark \ref{rem:parity _of_gaps}, the total number of even and 
odd gaps are $\floor{n/2}+1$ and $\floor{n/2}$ respectively.  
As $n-1$ is even, both $0$ and $n-1$ are even gaps.  
Hence, the number of even gaps available in the set $A$ 
is $ \floor{n/2}+1 - (r+2)$. Similarly, the number of 
odd gaps available in the set $A$ 
is $\floor{n/2}- r$.

Thus, $A$ has one more odd gap than even gap.  
Hence, the contribution from $\pi' \in \SSS_{n-1,a,d} $ to  
$\SR_{n,a,d}(p,q)$ is $(-pq) \times (-1)^{\inv(\pi')}p^{\pkk(\pi')}q^{\vly(\pi')}$.

\item To get  $\pi \in \SSS_{n,a,d}$ from 
$\pi' \in \SSS_{n-1,a,a},$ there is only one choice:
insert $n$ at the $(n-2)$-nd gap. 
By inserting $n$ at the $(n-2)$-nd gap in $\pi'$, 
the number of peaks will increase by $1$.  
Further, the number of inversions will also increase by $1$. 
As $\inv(\pi)=\inv(\pi')+1$,  $\pi$ and $\pi'$ 
are of opposite sign. 
Thus, the contribution 
from $\pi' \in \SSS_{n-1,a,a} $ to  $\SR_{n,a,d}(p,q)$ 
is $(-p) \times (-1)^{\inv(\pi')}p^{\pkk(\pi')}q^{\vly(\pi')}$. 
		
\item To get $\pi \in \SSS_{n,a,d}$ from  
$\pi' \in \SSS_{n-1,d,d},$ there is again only one choice:
insert $n$ at the first gap. 
By this, the number of peaks will increase by $1$.  Further, 
the number of inversions will increase by $n-2$ which is odd.
Hence $\pi$ and $\pi'$ are of opposite sign.  Thus, 
the contribution from $\pi' \in \SSS_{n-1,d,d} $ to 
$\SR_{n,a,d}(p,q)$ is $(-p) \times (-1)^{\inv(\pi')}p^{\pkk(\pi')}
q^{\vly(\pi')}$.
\end{enumerate}
	
With these, the proof of \eqref{eqn:nad_n_odd} is complete. 
We now consider  \eqref{eqn:naa_n_odd}.

By Remark \ref{rem:adding_n_cannot_flip_both_end}, 
$\SR_{n,a,a}(p,q)$ can be written as a sum of the following three 
polynomials.  
	
\begin{eqnarray}
\SR_{n,a,a}(p,q) & = & \hspace{-3 mm} \sum_{\pi \in \SSS_{n,a,a}, \pi' \in \SSS_{n-1,a,a}} 
(-1)^{\inv(\pi)} p^{\pkk(\pi)} q^{\vly(\pi)} \nonumber \\ 
& & + \sum_{\pi \in \SSS_{n,a,a}, \pi' \in \SSS_{n-1,a,d}} 
(-1)^{\inv(\pi)} p^{\pkk(\pi)} q^{\vly(\pi)} \nonumber \\ 
& & + \sum_{\pi \in \SSS_{n,a,a}, \pi' \in \SSS_{n-1,d,a}} 
(-1)^{\inv(\pi)} p^{\pkk(\pi)} q^{\vly(\pi)} \nonumber 
\end{eqnarray}

We again consider the contribution to $\SR_{n,a,a}(p,q)$ from each of the three terms. 
\begin{enumerate}
\item Suppose $\pi' \in \SSS_{n-1,a,a}$ and we want to get all 
$\pi \in \SSS_{n,a,a}$ by inserting $n$ in different gaps of $\pi'$. We clearly 
cannot insert $n$ in the initial or the penultimate gap, as then the resulting 
$\pi \not\in \SSS_{n,a,a}$. Inserting $n$ at the final gap of $\pi' \in 
\SSS_{n-1,a,a}$ does not change the number of peaks and valleys. 
Let $k_1, k_2, \dots,  k_r$ be the peaks of $\pi$. By Lemma 
\ref{lem:inserting_n_beforeandafter_peak},  
the sum of the contributions of inserting $n$ at $k_1-1, k_1, k_2-1, k_2 \dots,  k_r-1, k_r$ 
contributes $0$ in total.
		
The remaining gaps are $A = [n-1]_{0} - \{0,k_1-1, 
k_1, k_2-1, k_2, \dots,  k_r-1, k_r,n-2,n-1\}$. 
Again, it is easy to see that inserting $n$ at any gap from the set $A$ increases 
the number of peaks
and valleys by $1$. By Remark \ref{rem:parity _of_gaps}, the number of even gaps 
is $\floor{n/2}+1$ and the number of odd gaps is $\floor{n/2}$.  
Moreover, $0$ and $n-1$ are even gaps and $n-2$ is an odd gap. Hence, 
the number of even gaps available in the set $A$ is 
$ \floor{n/2}+1 - (r+2)$. Similarly, the number of odd gaps 
available in the set $A$ is $\floor{n/2}- (r+1)$.  Thus, the number
of odd and even gaps in $A$ are equal.  Hence, only the insertion at 
the final gap contributes to 
$\SR_{n,a,a}(p,q)$  Clearly, the contribution from 
$\pi' \in \SSS_{n-1,a,a} $ to  $\SR_{n,a,a}(p,q)$ is 
$(-1)^{\inv(\pi')}p^{\pkk(\pi')}q^{\vly(\pi')}$. 
		
\item To get a permutation of $\SSS_{n,a,a}$ from a permutation $\pi'$ of 
$\SSS_{n-1,a,d},$ there is only one choice: insert $n$ at the end of $\pi'$. 
Clearly, doing this increases the number of valleys by $1$. Further,
the number of inversions remains unchanged and therefore 
$\pi$ and $\pi'$ have same sign. 
Thus, the contribution from $\pi' \in \SSS_{n-1,a,d} $ to  
$\SR_{n,a,a}(p,q)$ is $q \times (-1)^{\inv(\pi')}p^{\pkk(\pi')}q^{\vly(\pi')}$. 
		
\item To get a permutation of $\SSS_{n,a,a}$ from a permutation $\pi'$ of 
$\SSS_{n-1,d,a},$ there is again only one choice:  insert $n$ at the 
first gap. 
Clearly, doing this increases the number of peaks by $1$.
Clearly, the number of inversions increases by $n-2$ which is 
odd and hence $\pi$ and $\pi'$ are of opposite sign. 
Thus, the contribution from $\pi' \in \SSS_{n-1,d,a} $ to  $\SR_{n,a,a}(p,q)$ 
is $(-p) \times (-1)^{\inv(\pi')}p^{\pkk(\pi')}q^{\vly(\pi')}$.
\end{enumerate}

	The proof of \eqref{eqn:naa_n_odd} is complete. 
Theorem \ref{thm:ad_equals_da_for_nminusone}  and \eqref{eqn:naa_n_odd}, 
give \eqref{eqn:naa_n_odd_simplified}. The proof of the Theorem is complete. 
\end{proof}

\subsection{When $n$ is even}
When $n$ is even, we have the following counterpart of 
Theorem \ref{thm:peakvalleyrecurrencesfornodd}.

\begin{theorem}
\label{thm:peakvalleyrecurrencesforneven}	
For even positive integers $n \geq 2$, we have: 
\begin{eqnarray}
\SR_{n,a,d}(p,q) &  = & -p\SR_{n-1,a,a}(p,q)  +p\SR_{n-1,d,d}(p,q)     \label{eqn:nad_n_is_even} \\
	\SR_{n,a,a}(p,q) & = & (1+pq)\SR_{n-1,a,a}(p,q)+q\SR_{n-1,a,d}(p,q)  \nonumber\\
	& &+p\SR_{n-1,d,a}(p,q) \label{eqn:naa_n_even} 
	\end{eqnarray}
\end{theorem}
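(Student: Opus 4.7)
The plan is to mirror the case analysis used to prove Theorem \ref{thm:peakvalleyrecurrencesfornodd}, now with $n$ even. I build $\pi \in \SSS_n$ from $\pi' \in \SSS_{n-1}$ by inserting the letter $n$ at one of its $n$ gaps and track three quantities at each insertion site: the type (ascent or descent) of the first and last pair of $\pi$, the change in $\pkk$ and $\vly$, and the sign change, which equals $(-1)^{n-1-i}$ when $n$ is inserted at the $i$-th gap. Since $n$ is even, $n-1$ is odd, so by Remark \ref{rem:parity _of_gaps} the $n$ gaps split as $n/2$ even (sign-preserving) and $n/2$ odd (sign-flipping) gaps, a balanced count that is the source of the differences with the odd case. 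Throughout, peak-gap pairs $\{k_j-1,k_j\}$ cancel by Lemma \ref{lem:inserting_n_beforeandafter_peak}, each pair removing one gap of each parity.

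By Remark \ref{rem:adding_n_cannot_flip_both_end}, $\SR_{n,a,d}(p,q)$ receives contributions only from $\pi' \in \SSS_{n-1,a,d} \cup \SSS_{n-1,a,a} \cup \SSS_{n-1,d,d}$, and $\SR_{n,a,a}(p,q)$ only from $\pi' \in \SSS_{n-1,a,a} \cup \SSS_{n-1,a,d} \cup \SSS_{n-1,d,a}$. For \eqref{eqn:nad_n_is_even}, the $\SSS_{n-1,a,d}$ contribution vanishes: after excluding the forbidden gaps $0$ (odd) and $n-1$ (even) and the $r$ peak-pairs, the residual set has equal even and odd counts. For $\pi' \in \SSS_{n-1,a,a}$ the only valid insertion is at gap $n-2$, which creates one new peak, and since $n-1-(n-2)=1$ is odd the sign flips, yielding $-p\SR_{n-1,a,a}(p,q)$. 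For $\pi' \in \SSS_{n-1,d,d}$ the only valid insertion is at gap $1$, which creates one new peak, and since $n-1-1=n-2$ is even the sign is preserved, yielding $+p\SR_{n-1,d,d}(p,q)$.

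For \eqref{eqn:naa_n_even}, the contributions from $\pi' \in \SSS_{n-1,a,d}$ and $\pi' \in \SSS_{n-1,d,a}$ again each come from a single insertion site, namely the final gap and the first gap respectively; both are sign-preserving in the even case, yielding $q\SR_{n-1,a,d}(p,q)$ and $p\SR_{n-1,d,a}(p,q)$. The contribution from $\pi' \in \SSS_{n-1,a,a}$ decomposes into an insertion at the final gap $n-1$ that changes no statistic and contributes $1$, plus internal insertions on the set $A \subseteq \{1,\ldots,n-3\}$ obtained after deleting the $r$ canceling peak-pairs.

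The main obstacle is the parity count on this internal set. Gaps $0$ and $n-2$ are odd while gap $n-1$ is even, so of the three sites that are not counted among the internal $pq$-producing insertions, two are odd and one is even. Consequently, inside $\{1,\ldots,n-3\}$ there is one more even gap ($n/2-1$) than odd gap ($n/2-2$), and removing the $r$ balanced peak-pairs preserves this surplus of one. Each gap of $A$ contributes a factor of $pq$ with the appropriate sign, so the net internal contribution is exactly $+pq$, which combined with the $1$ from the final-gap insertion gives the coefficient $1+pq$ in front of $\SR_{n-1,a,a}(p,q)$ in \eqref{eqn:naa_n_even}. In the odd case the analogous count is balanced and only the $1$ survives; this is precisely the structural change caused by the parity flip from $n$ odd to $n$ even.
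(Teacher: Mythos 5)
Your proposal is correct and follows essentially the same route as the paper: building $\pi$ from $\pi'$ by inserting $n$, cancelling the peak-pair gaps via Lemma \ref{lem:inserting_n_beforeandafter_peak}, and counting the surplus of even (sign-preserving) over odd (sign-flipping) gaps among the remaining insertion sites; your parity counts (balanced residual set for the $\SSS_{n-1,a,d}$ term of \eqref{eqn:nad_n_is_even}, surplus of one even gap giving the $+pq$ in \eqref{eqn:naa_n_even}) match the paper's exactly.
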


\begin{proof}
The moves we make in this proof are similar to those in the 
proof of Theorem \ref{thm:peakvalleyrecurrencesfornodd}.  Thus, we are
a little brief in our proofs of \eqref{eqn:nad_n_is_even} 
and \eqref{eqn:naa_n_even}. Clearly, we have 
\begin{eqnarray}
\SR_{n,a,d}(p,q) & = & \hspace{-3 mm} 
\sum_{\pi \in \SSS_{n,a,d}, \pi' \in \SSS_{n-1,a,d}} 
(-1)^{\inv(\pi)} p^{\pkk(\pi)}q^{\vly(\pi)} \nonumber \\ 
& &+ \sum_{\pi \in \SSS_{n,a,d}, \pi' \in \SSS_{n-1,a,a}} 
(-1)^{\inv(\pi)} p^{\pkk(\pi)}q^{\vly(\pi)} \nonumber \\ 
& & + \sum_{\pi \in \SSS_{n,a,d}, \pi' \in \SSS_{n-1,d,d}} 
(-1)^{\inv(\pi)} p^{\pkk(\pi)}q^{\vly(\pi)} \nonumber 
\end{eqnarray}
	We now record the contribution to $\SR_{n,a,d}(p,q)$ from the three terms separately.

\begin{enumerate}
\item Let, $\pi' \in \SSS_{n-1,a,d}$. To get $\pi \in \SSS_{n,a,d}$,
we cannot insert $n$ in the initial or final gap, 
as otherwise $\pi \not\in \SSS_{n,a,d}$.  Inserting $n$ in all the 
other gaps of  $\pi',$ results in $\pi \in \SSS_{n,a,d}$. 
As before let $k_1, k_2, \dots,  k_r$ be the peaks of $\pi$. 
By Lemma \ref{lem:inserting_n_beforeandafter_peak}, we can ignore the 
gaps in the set $k_1-1, k_1, k_2-1, k_2 \dots,  k_r-1, k_r$ as 
putting $n$ in these gaps contributes $0$ in total. 
		
The remaining gaps are $A = [n-1]_{0} - 
\{0,k_1-1, k_1, k_2-1, k_2, \dots,  k_r-1, k_r,n-1\}$. 
Inserting $n$ at any of those gaps increases both the number of 
peaks and valleys to increase by $1$.  By 
Remark \ref{rem:parity _of_gaps}, the number of even gaps is $n/2$ 
and the number of odd gaps is $n/2$.  Further, $0$ is an odd gap 
and $n-1$ is an even gap as $n-1$ is odd. Hence, the number of even gaps 
in $A$ is $ n/2 - r-1 $ and the number of odd gaps in $A$ is 
also $ n/2 - r-1 $. Thus, the contribution from 
$\pi' \in \SSS_{n-1,a,d} $ to  $\SR_{n,a,d}(p,q)$ is $0$.

\item Suppose $\pi' \in \SSS_{n-1,a,a}$ and we want to get $\pi \in 
\SSS_{n,a,d}$ by inserting $n$.
It is easy to see that this can be done in only one way: by inserting $n$ 
at the penultimate or $(n-2)$-nd gap.   This insertion increases the 
number of peaks by $1$. 
The number of inversions is also increased by $1$ and hence 
$\pi$ and $\pi'$ are of opposite sign. 
Thus, the contribution from $\pi' \in \SSS_{n-1,a,a} $ 
to  $\SR_{n,a,d}(p,q)$ is 
$(-p) \times (-1)^{\inv(\pi')}p^{\pkk(\pi')}q^{\vly(\pi')}$.
		
\item To get $\pi \in \SSS_{n,a,d}$ from $\pi' \in \SSS_{n-1,d,d},$ again
there is only one choice: insert $n$ at the first gap. This procedure
increases the number of peaks by $1$. 
As seen before, the number of inversions increases by $n-2$ which 
is even and hence $\pi$ and $\pi'$ are of same sign. 
Thus, the contribution from 
$\pi' \in \SSS_{n-1,d,d} $ to  $\SR_{n,a,d}(p,q)$ is 
$p \times (-1)^{\inv(\pi')}p^{\pkk(\pi')}q^{\vly(\pi')}$. 
\end{enumerate}
	
This completes the proof of \eqref{eqn:nad_n_is_even}. We now move on 
to prove \eqref{eqn:naa_n_even}. 
By Remark \ref{rem:adding_n_cannot_flip_both_end}, $\SR_{n,a,a}(p,q) $ 
is the sum of the following three polynomials.  
	
\begin{eqnarray}
\SR_{n,a,a}(p,q) & = & \hspace{-3 mm} 
\sum_{\pi \in \SSS_{n,a,a}, \pi' \in \SSS_{n-1,a,a}} 
(-1)^{\inv(\pi)} p^{\pkk(\pi)} q^{\vly(\pi)} \nonumber 
\\ & & + \sum_{\pi \in \SSS_{n,a,a}, \pi' \in \SSS_{n-1,a,d}} 
(-1)^{\inv(\pi)} p^{\pkk(\pi)} q^{\vly(\pi)} \nonumber \\ 
& & + \sum_{\pi \in \SSS_{n,a,a}, \pi' \in \SSS_{n-1,d,a}} 
(-1)^{\inv(\pi)} p^{\pkk(\pi)} q^{\vly(\pi)} \nonumber 
\end{eqnarray}
We record the contribution to $\SR_{n,a,a}(p,q)$ from these
three terms below. 
	
\begin{enumerate}
\item To get  $\pi \in \SSS_{n,a,a}$ from $\pi' \in \SSS_{n-1,a,a}$ 
we 
cannot insert $n$ in the initial or penultimate gap, as 
otherwise $\pi \not \in \SSS_{n,a,a}$. Inserting $n$ at the final gap
of $\pi' \in \SSS_{n-1,a,a}$ is fine and this does not change 
the number of peaks and valleys. 
As before, let $k_1, k_2, \dots,  k_r$ be the peaks of $\pi$. 
By Lemma \ref{lem:inserting_n_beforeandafter_peak}, inserting $n$ in
the gaps $k_1-1, k_1, k_2-1, k_2 \dots,  k_r-1, k_r$ contributes 
$0$.
		
The remaining gaps are $A = [n-1]_{0} - \{0,k_1-1, k_1, k_2-1, k_2, 
\dots,  k_r-1, k_r,n-2,n-1\}$. 
We can insert $n$ at any of those gaps except the final gap 
and insertion increases the number of peaks and valleys by $1$. 
By Remark \ref{rem:parity _of_gaps}, both the number of even 
and odd gaps are $n/2$. As $0$ and $n-2$ are odd gaps and $n-1$ is 
an even gap, the number of even gaps in $A$ is $ n/2 - (r+1)$
while 
the number of odd gaps in $A$ is 
$n/2- (r+2)$. Thus, the contribution from 
$\pi' \in \SSS_{n-1,a,a} $ to  $\SR_{n,a,a}(p,q)$ is 
$(1+pq) \times (-1)^{\inv(\pi')}p^{\pkk(\pi')}q^{\vly(\pi')}$. 
		
\item To get $\pi \in \SSS_{n,a,a}$ from $\pi' \in \SSS_{n-1,a,d},$ 
there is only one choice: insert $n$ at the end. 
As a result, the number of valleys increases by $1$.  Clearly,
the number of inversions remains unchanged and therefore 
$\pi$ and $\pi'$ have same sign.  Thus, 
the contribution from $\pi' \in \SSS_{n-1,a,d} $ to  
$\SR_{n,a,a}(p,q)$ is $q \times (-1)^{\inv(\pi')}p^{\pkk(\pi')}q^{\vly(\pi')}$. 
		
\item To get $\pi \in \SSS_{n,a,a}$ from $\pi' \in \SSS_{n-1,d,a},$ 
there is only one choice: insert $n$ at the first gap. 
Clearly, the number of peaks increases by 1.  Further, the 
number of inversions increase by $n-2$ which is even 
and hence $\pi$ and $\pi'$ are of same sign.
Thus, the contribution from $\pi' \in \SSS_{n-1,d,a} $ to  
$\SR_{n,a,a}(p,q)$ is $p \times (-1)^{\inv(\pi')}p^{\pkk(\pi')}q^{\vly(\pi')}$.
	\end{enumerate} 
	
The proof of \eqref{eqn:naa_n_even} is complete. 
\end{proof}

\section{Proof of Theorem \ref{thm:signed_peak_valley-enumeration}}

Towards proving Theorem \ref{thm:signed_peak_valley-enumeration}, 
we prove the following. 

\begin{theorem}
\label{thm:n0mod4biv}
For positive integers $n=4k,$ the following hold:
\begin{eqnarray*}
\SR_{4k,a,a}(p,q) & = & \SR_{4k,d,d}(p,q)=(1+pq)(1-pq)^{2(k-1)}, \\
\SR_{4k,a,d}(p,q) & = & -2p(1-pq)^{2(k-1)}, \\
\SR_{4k,d,a}(p,q) & = & -2q(1-pq)^{2(k-1)}, \\
\SR_{4k}(p,q) & = & 2(1-p)(1-q)(1-pq)^{2(k-1)}.
\end{eqnarray*}
\end{theorem}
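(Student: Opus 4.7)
The plan is to prove Theorem \ref{thm:n0mod4biv} by induction on $k$, but since the recurrences in Theorems \ref{thm:peakvalleyrecurrencesfornodd} and \ref{thm:peakvalleyrecurrencesforneven} increase $n$ by $1$, the induction must carry auxiliary formulas for $n=4k+1,4k+2,4k+3$ and cycle through all four residues mod $4$ before closing the loop at $n=4(k+1)$. Thus, in addition to the $n=4k$ formulas, I will conjecture and prove simultaneously the following intermediate values: at $n=4k+1$, the same four polynomials as at $n=4k$; at $n=4k+2$, $\SR_{4k+2,a,a}=-\SR_{4k+2,d,d}=(1-pq)^{2k}$ and $\SR_{4k+2,a,d}=\SR_{4k+2,d,a}=0$; and at $n=4k+3$, the same as at $n=4k+2$. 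The vanishing of the $a,d$ and $d,a$ polynomials in the $n\equiv 2,3\pmod 4$ cases is already given by Lemma \ref{lem:23mod4}.

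The base case is $k=1$, $n=4$: direct enumeration over the $24$ elements of $\SSS_4$ verifies $\SR_{4,a,a}(p,q)=\SR_{4,d,d}(p,q)=1+pq$, $\SR_{4,a,d}=-2p$, $\SR_{4,d,a}=-2q$, so $\SR_4(p,q)=2(1-p)(1-q)$, matching the claim at $k=1$.

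For the inductive step, assuming the formulas hold at $n=4k$, I would cycle through the four recurrence applications in order. Step A ($n=4k+1$): by \eqref{eqn:naa_n_odd_simplified}, $\SR_{4k+1,a,a}=\SR_{4k,a,a}=(1+pq)(1-pq)^{2(k-1)}$, and \eqref{eqn:nad_n_odd} yields $\SR_{4k+1,a,d}=-pq(-2p)(1-pq)^{2(k-1)}-2p(1+pq)(1-pq)^{2(k-1)}=-2p(1-pq)^{2(k-1)}$ after the cancellation $2p^2q-2p-2p^2q=-2p$; Lemma \ref{lem:01mod4basic} then yields the $d,a$ and $d,d$ counterparts by swapping $p,q$. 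Step B ($n=4k+2$): \eqref{eqn:nad_n_is_even} gives $\SR_{4k+2,a,d}=-p(1+pq)(1-pq)^{2(k-1)}+p(1+pq)(1-pq)^{2(k-1)}=0$, and \eqref{eqn:naa_n_even} gives $\SR_{4k+2,a,a}=(1-pq)^{2(k-1)}\bigl[(1+pq)^2-4pq\bigr]=(1-pq)^{2k}$, using the key identity $(1+pq)^2-4pq=(1-pq)^2$; Lemma \ref{lem:23mod4} supplies $\SR_{4k+2,d,d}$ and $\SR_{4k+2,d,a}$.

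Step C ($n=4k+3$): again \eqref{eqn:naa_n_odd_simplified} gives $\SR_{4k+3,a,a}=(1-pq)^{2k}$, and Lemma \ref{lem:23mod4} handles the rest. Step D ($n=4(k+1)$): \eqref{eqn:nad_n_is_even} gives $\SR_{4(k+1),a,d}=-p(1-pq)^{2k}-p(1-pq)^{2k}=-2p(1-pq)^{2k}$, and \eqref{eqn:naa_n_even} gives $\SR_{4(k+1),a,a}=(1+pq)(1-pq)^{2k}$, since the $\SR_{4k+3,a,d}$ and $\SR_{4k+3,d,a}$ contributions vanish. Lemma \ref{lem:01mod4basic} then supplies the $d,a$ and $d,d$ pieces, and summing the four contributions gives $\SR_{4(k+1)}(p,q)=2(1-pq)^{2k}\bigl[(1+pq)-p-q\bigr]=2(1-p)(1-q)(1-pq)^{2k}$, closing the induction. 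The only delicate point is bookkeeping: each step relies critically on the previous two residue classes and on the factorization $(1+pq)^2-4pq=(1-pq)^2$, but no step is computationally difficult once the intermediate conjectures are in hand.
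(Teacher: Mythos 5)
Your proposal is correct and follows essentially the same route as the paper: induction on $k$ with base case $n=4$, cycling through the residues $4k+1,4k+2,4k+3$ via the odd/even recurrences of Theorems \ref{thm:peakvalleyrecurrencesfornodd} and \ref{thm:peakvalleyrecurrencesforneven} together with Lemmas \ref{lem:01mod4basic} and \ref{lem:23mod4}, and closing the loop at $4(k+1)$ with the same cancellations (in particular $(1+pq)^2-4pq=(1-pq)^2$). The only difference is organizational: the paper first proves $\SR_{4m+1,\cdot,\cdot}=\SR_{4m,\cdot,\cdot}$ and then chains the four recurrence applications backward from $4m+4$ in a single display, whereas you state and verify the intermediate values at each residue explicitly.
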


\begin{proof}
We use induction on $k$. When $k=1$, the four statements can easily be
verified. We assume all the four statements to be true for $k=m$ and 
prove them for $k=m+1$. Let $n=4m+4$.  
We prove a seemingly 
digressive result first.  We claim that
\begin{eqnarray}
\SR_{4m+1,a,a}(p,q)=\SR_{4m,a,a}(p,q),  \label{eqn:nminus3_andnminus4aa} 
\\
	\SR_{4m+1,d,d}(p,q)=\SR_{4m,d,d}(p,q), \label{eqn:nminus3_andnminus4dd} 
\\
	\SR_{4m+1,a,d}(p,q)=\SR_{4m,a,d}(p,q), 	\label{eqn:nminus3_andnminus4ad} 
\\
	\SR_{4m+1,d,a}(p,q)=\SR_{4m,d,a}(p,q). \label{eqn:nminus3_andnminus4da} 
	\end{eqnarray}
We note that both  \eqref{eqn:nminus3_andnminus4aa}  and
\eqref{eqn:nminus3_andnminus4dd} follow from Theorem 
\ref{thm:peakvalleyrecurrencesfornodd}. By 
Theorem \ref{thm:peakvalleyrecurrencesfornodd}, we have
\begin{eqnarray}
\SR_{4m+1,a,d}(p,q) &  = & (-pq) \times \SR_{4m,a,d}(p,q) -p \times 
\SR_{4m,a,a}(p,q) \nonumber 
\\ & & -p \times \SR_{4m,d,d}(p,q) \nonumber  \\
& = & (-pq)(-2p)(1-pq)^{2(m-1)} -2p(1+pq)(1-pq)^{2(m-1)} \nonumber \\
& = & (-2p)(1-pq)^{2(m-1)} = \SR_{4m,a,d}(p,q) 
\end{eqnarray}
The second equality above follows by induction. Thus, 
\eqref{eqn:nminus3_andnminus4ad} holds. As the proof of 
\eqref{eqn:nminus3_andnminus4da} is identical, we omit it.
We now consider $n=4m+4$. We have
	\begin{eqnarray}
	\label{eqn:formulanaabiv}
	\SR_{4m+4,a,a}(p,q) & = & (1+pq)\SR_{4m+3,a,a}(p,q) \nonumber \\
	& = &  (1+pq)\SR_{4m+2,a,a}(p,q) \nonumber \\
	& = & (1+pq)\big[(1+pq)\SR_{4m+1,a,a}(p,q) \nonumber \\  & & + q\SR_{4m+1,a,d}(p,q)  +p\SR_{4m+1,d,a}(p,q)\big]  \nonumber \\
	& = &  (1+pq)\big[(1+pq)\SR_{4m,a,a}(p,q) \nonumber \\  & & + q\SR_{4m,a,d}(p,q) \nonumber    \nonumber +p\SR_{4m,d,a}(p,q)\big] \nonumber \\
	& = &  (1+pq)\big[(1+pq)(1+pq)(1-pq)^{2(m-1)} \nonumber \\ & &+ q(-2p)(1-pq)^{2(m-1)}  +p(-2q)(1-pq)^{2(m-1)}\big] \nonumber  \\& = &(1+pq)(1-pq)^{2m} .
	\end{eqnarray}
The first equality follows by Theorem 
\ref{thm:peakvalleyrecurrencesforneven} and Lemma \ref{lem:23mod4}. 
The second equality uses Theorem \ref{thm:peakvalleyrecurrencesfornodd}. In the third equality we again use  Theorem \ref{thm:peakvalleyrecurrencesforneven}. 
The fourth equality follows from 
\eqref{eqn:nminus3_andnminus4aa}, \eqref{eqn:nminus3_andnminus4ad} and 
\eqref{eqn:nminus3_andnminus4da}.  The fifth equality uses the 
induction hypothesis with $n=4m$. Similarly, we get
\begin{eqnarray}
\label{eqn:formulanadbiv}
\SR_{4m+4,a,d}(p,q) & = & (-p)\SR_{4m+3,a,a}(p,q) +p\SR_{4m+3,d,d}(p,q)\nonumber \\
& = &  -2p\SR_{4m+3,a,a}(p,q) \nonumber \\
& = &  -2p\SR_{4m+2,a,a}(p,q) \nonumber \\
& = & -2p(1-pq)^{2m} . 
\end{eqnarray}
Here too, the first equation follows from Theorem 
\ref{thm:peakvalleyrecurrencesforneven}.  The second equation uses 
Lemma \ref{lem:23mod4}. The third equation uses Theorem 
\ref{thm:peakvalleyrecurrencesfornodd}. The fourth equation uses the 
fact that $\SR_{4m+2,a,a}(p,q)=(1-pq)^{2m}$ 
which we already computed while proving \eqref{eqn:formulanaabiv}. 
In a similar way one can show that
\begin{eqnarray}
\label{eqn:formulandabiv}
\SR_{4m+4,d,a}(p,q) & = & -2q(1-pq)^{2m}.
\end{eqnarray}
Summing up \eqref{eqn:formulanaabiv}, \eqref{eqn:formulanadbiv} and
\eqref{eqn:formulandabiv} we get
\begin{eqnarray}
\SR_{4m+4}(p,q) & = & 2 \SR_{4m+4,a,a}(p,q) + \SR_{4m,a,d}(p,q) + \nonumber\\
& & + \SR_{4m,a,d}(p,q) \nonumber \\
& = &  2(1-p)(1-q)(1-pq)^{2m}.
\end{eqnarray}
The proof is complete. 
\end{proof}

From the proof of Theorem \ref{thm:n0mod4biv}, we get the 
following corollary. 
\begin{corollary}
\label{cor:signed_alt_run_in_4k_equald_signed_altrun_in_4kplus1}
For positive integers $k$, when $n=4k+1$, we have 
\begin{eqnarray*}
\SR_{4k+1,a,a}(p,q) & = & \SR_{4k,d,d}(p,q)=(1+pq)(1-pq)^{2(k-1)}, \\
\SR_{4k+1,a,d}(p,q) & = & -2p(1-pq)^{2(k-1)}, \\
\SR_{4k+1,d,a}(p,q) & = & -2q(1-pq)^{2(k-1)}, \\
\SR_{4k+1}(p,q) & = & 2(1-p)(1-q)(1-pq)^{2(k-1)}.
\end{eqnarray*}

\end{corollary}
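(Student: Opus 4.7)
The plan is to derive this corollary as a direct consequence of computations already carried out inside the proof of Theorem \ref{thm:n0mod4biv}. Indeed, in the course of that inductive argument, the four intermediate identities \eqref{eqn:nminus3_andnminus4aa}--\eqref{eqn:nminus3_andnminus4da} were established, which state precisely that
\[
\SR_{4k+1,\star,\star}(p,q) \;=\; \SR_{4k,\star,\star}(p,q)
\]
for each of the four choices $\star,\star \in \{(a,a),(d,d),(a,d),(d,a)\}$. So first I would simply invoke these identities (applied with $m = k$) and then substitute the closed-form evaluations for $n = 4k$ that are furnished by Theorem \ref{thm:n0mod4biv}.

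More explicitly: for $\SR_{4k+1,a,a}(p,q)$, equation \eqref{eqn:naa_n_odd_simplified} of Theorem \ref{thm:peakvalleyrecurrencesfornodd} gives $\SR_{4k+1,a,a}(p,q) = \SR_{4k,a,a}(p,q) = (1+pq)(1-pq)^{2(k-1)}$. For $\SR_{4k+1,a,d}(p,q)$, equation \eqref{eqn:nad_n_odd} of the same theorem, combined with Theorem \ref{thm:n0mod4biv}, yields
\[
\SR_{4k+1,a,d}(p,q) = -pq(-2p)(1-pq)^{2(k-1)} - 2p(1+pq)(1-pq)^{2(k-1)} = -2p(1-pq)^{2(k-1)},
\]
and an entirely symmetric computation (using the $\rev$ and $\comp$ symmetry, or equally well the explicit counterpart of \eqref{eqn:nad_n_odd} for the $(d,a)$ case) gives $\SR_{4k+1,d,a}(p,q) = -2q(1-pq)^{2(k-1)}$.

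Finally, for the total $\SR_{4k+1}(p,q)$, I would use Lemma \ref{lem:01mod4basic}, which is applicable since $4k+1 \equiv 1 \pmod 4$, to conclude $\SR_{4k+1,d,d}(p,q) = \SR_{4k+1,a,a}(p,q)$. Summing the four contributions,
\[
\SR_{4k+1}(p,q) = 2(1+pq)(1-pq)^{2(k-1)} - 2p(1-pq)^{2(k-1)} - 2q(1-pq)^{2(k-1)},
\]
and factoring $2(1-pq)^{2(k-1)}$ together with the identity $(1+pq) - p - q = (1-p)(1-q)$ produces the asserted closed form $2(1-p)(1-q)(1-pq)^{2(k-1)}$. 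There is no real obstacle here: the entire argument is essentially a repackaging of the $n=4m+1$ step that already appeared during the induction of Theorem \ref{thm:n0mod4biv}, followed by a short algebraic simplification.
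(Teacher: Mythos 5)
Your proposal is correct and follows essentially the same route as the paper: the paper states that the corollary ``follows from the proof of Theorem \ref{thm:n0mod4biv},'' meaning precisely the identities \eqref{eqn:nminus3_andnminus4aa}--\eqref{eqn:nminus3_andnminus4da} established there (which equate the $n=4k+1$ polynomials with their $n=4k$ counterparts) combined with the closed forms of Theorem \ref{thm:n0mod4biv}. Your explicit verification of the $(a,d)$ case via \eqref{eqn:nad_n_odd} and the final factorization $(1+pq)-p-q=(1-p)(1-q)$ just spell out what the paper leaves implicit.
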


\noindent
{\bf Proof of Theorem 
\ref{thm:signed_peak_valley-enumeration} : }
The proof of Theorem follows directly from Theorem \ref{thm:n0mod4biv}, 
Corollary \ref{cor:signed_alt_run_in_4k_equald_signed_altrun_in_4kplus1}
and Lemma \ref{lem:23mod4}. 
{\hspace*{\fill}{\eod}}

From Theorem \ref{thm:signed_peak_valley-enumeration}, we immediately 
have the following corollary.

\begin{corollary}
\label{cor:signed_peak_equals_signed_valley_equals_0}
For positive integers $n$, we have
\begin{equation}
\sum_{\pi \in \SSS_n}(-1)^{\inv(\pi)}p^{\pkk(\pi)} = 0 = \sum_{\pi \in \SSS_n}(-1)^{\inv(\pi)}q^{\vly(\pi)}. 
\end{equation}
\end{corollary}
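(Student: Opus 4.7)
The plan is to obtain both identities as immediate specializations of Theorem \ref{thm:signed_peak_valley-enumeration}. First I would observe that the left sum is exactly $\SR_n(p,1)$ and the right sum is exactly $\SR_n(1,q)$, where $\SR_n(p,q)$ is the bivariate polynomial defined in \eqref{eqn:signed_alt_biv_poly}. So the corollary reduces to evaluating the closed form from the main theorem at $q=1$ and at $p=1$ respectively.

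The argument then splits along the residue of $n$ modulo $4$. When $n \equiv 2, 3 \pmod 4$, Theorem \ref{thm:signed_peak_valley-enumeration} gives $\SR_n(p,q) = 0$ identically, so both specializations vanish trivially. When $n = 4k$ or $n = 4k+1$ with $k \geq 1$, the theorem yields
\[
\SR_n(p,q) = 2(1-p)(1-q)(1-pq)^{2(k-1)}.
\]
The factor $(1-q)$ forces $\SR_n(p,1) = 0$, giving the first identity, and the factor $(1-p)$ forces $\SR_n(1,q) = 0$, giving the second. Thus each of the two desired sums is zero in every residue class.

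There is essentially no obstacle here beyond noting the two specializations and reading off the vanishing factor in each case; the small values $n=2, 3$ (not covered by the $k \geq 1$ hypothesis) can be verified by direct inspection, as can the trivial remark that they also fall under the $n \equiv 2,3 \pmod 4$ cancellation regime handled by Lemma \ref{lem:23mod4}.
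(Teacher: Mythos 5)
Your proposal is correct and is essentially the paper's own argument: the paper simply asserts that the corollary follows immediately from Theorem \ref{thm:signed_peak_valley-enumeration}, which amounts to the specializations $\SR_n(p,1)$ and $\SR_n(1,q)$ and reading off the factors $(1-q)$ and $(1-p)$ (or the identical vanishing of $\SR_n$ when $n\equiv 2,3 \pmod 4$), exactly as you do. The only caveat, shared equally by the paper's statement, is the degenerate case $n=1$, where the single permutation contributes $1$ and the sums do not vanish; your explicit check of the small cases $n=2,3$ is a sensible addition.
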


It is easy to see that peaks and valleys are equidistributed 
over $\SSS_n$. By Corollary 
\ref{cor:signed_peak_equals_signed_valley_equals_0}, we get that 
they are equidistributed over $\AAA_n$ and $\SSS_n - \AAA_n$.

\subsection{Multiplicity of $(1+t)$ as a factor of $R_n^{\pm}(t)$} 
Before we get the multiplicity of $(1+t)$ as a factor of 
the univariate polynomial $R_n^{\pm}(t)$, we get the signed 
univariate polynomial.  

\begin{corollary}
\label{thm:nmod4_uni}
For positive integers $k$, the following signed enumeration 
results hold.  When $n=4k$ and $n=4k+1$, we have:
\begin{eqnarray*}
\SR_{n,a,a}(t) & = &\SR_{n,d,d}(t)=t(1+t^2)(1-t^2)^{2k-2}, \\
\SR_{n,a,d}(t) & = & \SR_{n,d,a}(t)=-2t^2(1-t^2)^{2k-2}, \\
\SR_{n}(t) & = & 2t(1-t)^{2k}(1+t)^{2k-2}.
\end{eqnarray*}

When $n=4k+2$ and $n=4k+3$, we have:
\begin{eqnarray*}
\SR_{n,a,d}(t) & = & 0   \mbox{ and } \SR_{n,d,a}(t)=0,\\
\SR_{n,a,a}(t) & = & -\SR_{n,d,d}(t) = t(1-t^2)^{2k}, \\
\SR_{n}(t) & = & 0. 
\end{eqnarray*}
\end{corollary}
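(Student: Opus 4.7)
The plan is to deduce this corollary directly from the bivariate results by substituting $p=q=t$ and correcting for the ``+1'' in the identity $\altr(\pi) = \pkk(\pi) + \vly(\pi) + 1$. Since
$$\SR_{n,*,*}(t) = \sum_{\pi \in \SSS_{n,*,*}}(-1)^{\inv(\pi)}t^{\altr(\pi)} = t \cdot \SR_{n,*,*}(t,t),$$
and likewise $\SR_n(t) = t \cdot \SR_n(t,t)$, every formula in the corollary should pop out of the bivariate formulas by this substitution.

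For $n = 4k$, I would substitute $p = q = t$ into the four identities of Theorem~\ref{thm:n0mod4biv}. This gives $\SR_{4k,a,a}(t,t) = \SR_{4k,d,d}(t,t) = (1+t^2)(1-t^2)^{2(k-1)}$, $\SR_{4k,a,d}(t,t) = \SR_{4k,d,a}(t,t) = -2t(1-t^2)^{2(k-1)}$, and $\SR_{4k}(t,t) = 2(1-t)^2(1-t^2)^{2(k-1)}$. Multiplying through by $t$ and rewriting $(1-t)^2(1-t^2)^{2(k-1)} = (1-t)^{2k}(1+t)^{2k-2}$ yields precisely the stated univariate formulas. For $n = 4k+1$, Corollary~\ref{cor:signed_alt_run_in_4k_equald_signed_altrun_in_4kplus1} shows that the bivariate polynomials coincide with those for $n=4k$, so the univariate formulas coincide as well.

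For $n = 4k+2$ and $n = 4k+3$, the vanishing statements $\SR_{n,a,d}(t) = \SR_{n,d,a}(t) = 0$ and $\SR_n(t) = 0$, as well as the identity $\SR_{n,a,a}(t) = -\SR_{n,d,d}(t)$, follow immediately by substituting $p = q = t$ and multiplying by $t$ in Lemma~\ref{lem:23mod4}. The only part requiring slight extra work is the explicit value $\SR_{n,a,a}(t) = t(1-t^2)^{2k}$. For this, I would extract from inside the proof of Theorem~\ref{thm:n0mod4biv} the intermediate identity $\SR_{4m+2,a,a}(p,q) = (1-pq)^{2m}$, which comes by applying the recurrence \eqref{eqn:naa_n_even} of Theorem~\ref{thm:peakvalleyrecurrencesforneven} to $n = 4m+2$ together with the inductive formulas for $n = 4m$. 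Setting $m = k$ then gives $\SR_{4k+2,a,a}(t) = t(1-t^2)^{2k}$. For $n = 4k+3$, equation \eqref{eqn:naa_n_odd_simplified} of Theorem~\ref{thm:peakvalleyrecurrencesfornodd} asserts $\SR_{n,a,a}(p,q) = \SR_{n-1,a,a}(p,q)$ for odd $n$, so $\SR_{4k+3,a,a}(p,q) = (1-pq)^{2k}$ as well.

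There is no real obstacle here: the whole argument is a bookkeeping exercise of substitution. The only mildly non-trivial point is step concerning $n \equiv 2,3 \pmod 4$, where the value of $\SR_{n,a,a}(p,q)$ is not recorded as a standalone statement but is visible inside the computation that establishes Theorem~\ref{thm:n0mod4biv}; I would either cite that computation or re-derive the single-line identity $\SR_{4m+2,a,a}(p,q) = (1-pq)^{2m}$ from the recurrences of Section~3 and the $n=4m$ case.
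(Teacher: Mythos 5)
Your proposal is correct and is essentially the paper's own argument: the printed proof simply cites Theorem \ref{thm:signed_peak_valley-enumeration} together with the observation $\SR_n(t)=t\,\SR_n(p,q)|_{p=q=t}$, and the fuller version you describe (substituting $p=q=t$ in Theorem \ref{thm:n0mod4biv} and Corollary \ref{cor:signed_alt_run_in_4k_equald_signed_altrun_in_4kplus1}, invoking Lemma \ref{lem:23mod4} for $n\equiv 2,3 \pmod 4$, and extracting $\SR_{4k+2,a,a}(p,q)=(1-pq)^{2k}$ from the computation inside the proof of Theorem \ref{thm:n0mod4biv}) matches the authors' intended derivation step for step.
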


\begin{proof}
Follows from Theorem \ref{thm:signed_peak_valley-enumeration}
and the simple observation that $R_n(t) = t R_n(p,q)|_{p=q=t}.$
\comment{
Setting $p=q=t$ in Theorem \ref{thm:n0mod4biv}, 
we get that the results are true when $n=4k$. For $n=4k+1$, the 
proof follows from the proof of 
Equations \eqref{eqn:nminus3_andnminus4aa}, 
\eqref{eqn:nminus3_andnminus4dd}, \eqref{eqn:nminus3_andnminus4ad} 
and \eqref{eqn:nminus3_andnminus4da} which we have already 
proved while proving Theorem \ref{thm:n0mod4biv}. 
While proving Equation \eqref{eqn:formulanaabiv}, we proved that 
	$\SR_{4k+2,a,a}(p,q)=(1-pq)^{2k}$. Therefore,  
$\SR_{4k+2,a,a}(t)=t(1-t^2)^{2k}$.  
Now, the proof is complete using Lemma \ref{lem:23mod4}.}
\end{proof}

\vspace{3 mm}

We are now in a position to prove Theorem 
\ref{thm:Rnplusminust_is_divible_by_1plust_m}.

\noindent
{\bf Proof of Theorem 
\ref{thm:Rnplusminust_is_divible_by_1plust_m}: } 
We have 
\begin{eqnarray*}
R_n^{\pm}(t) & = & \frac{R_n(t) \pm \SR_n(t)}{2}, \\
&  = & \begin{cases}
\displaystyle \frac{R_n(t) \pm 2t(1-t)^{2k}(1+t)^{2k-2}}{2}& 
\text {when $n = 4k, 4k+1.$ }\\
\displaystyle \frac{R_n(t)}{2} & \text {when $n = 4k+2, 4k+3.$ }
\end{cases}
\end{eqnarray*}

In the second line above, we have used Corollary \ref{thm:nmod4_uni}.
Recall that $m = \lfloor (n-2)/2 \rfloor$.  Hence, when 
$n=4k,4k+1$, then $m = 2k-1$.  
By Theorem 
\ref{thm:divisibilty_of_alternatingrunpolynomial_by_highpower_oneplust}, 
the polynomial $R_n(t)$ is divisible by $(1 + t)^{m-1}$ and by 
Remark \ref{rem:evennessofcoefficientsofRnt}, the polynomial 
$R_n(t)$ is divisible by $2$. Thus, the polynomial $2(1 + t)^{m}$ 
divides the polynomial $R_n(t)$ but 
$2(1 + t)^{m-1}$ divides $\SR_n(t)$. Therefore, the polynomial 
$2(1 + t)^{m-1}$ divides $R_n^{\pm}(t)$ when $n \equiv 0,1$ (mod 4). 
Similarly, when $n \equiv 2,3 $ (mod 4), the polynomial 
$2(1 + t)^{m}$ divides the polynomials $R_n(t)$. 
This completes the proof. 
{\hspace*{\fill}{\eod}}

Below, we show some examples 
when $n \equiv 0,1$ (mod 4) where $m-1$ is the largest
exponent of $(1+t)$ that divides $R_n^{\pm}(t)$.  

\begin{example} 
\label{eg_multiplicity-tight}
When $n=4$ and $n=5$, one can check that $(1+t)$ divides $R_4(t) $ and $R_5(t) $, 
but it does not 
divide the polynomials $R_4^+(t) , R_4^-(t), R_5^+(t) $ and $R_5^-(t)$.
        
\vspace{2 mm}    
\begin{tabular}{rl|rl}
$R_4(t)$ & $= 2t + 12t^2 + 10t^3 $  & $R_5(t)$ & = $ 2+28t^2+58t^3+32t^4$        \\
$R_4^+(t)$ & $= 2t + 4t^2 + 6t^3 $  & $R_5^+(t)$ & $=2+12t^2+30t^3+16t^4$        \\
$R_4^-(t)$ & $= 8t^2 + 4t^3 $  & $R_5^-(t)$ & $=16t^2+28t^3+16t^4$       
\end{tabular}
                
\vspace{2 mm}

Similarly when $n=8$, as the following data shows,
$(1+t)^3$ divides $R_8(t)$.
However, only $(1+t)^2$ divides the 
polynomials $R_8^+(t)$ and $ R_8^-(t)$.
        
\vspace{-4 mm}
        
\begin{eqnarray*}
R_8(t) & = & 2t+252t^2+ 2766t^3+9576t^4+ 14622t^5+10332t^6+2770t^7 \\
& = & (1+t)^3\bigg( 2t+246t^2+2022t^3+2770t^4 \bigg), \\
R_8^+(t) & = & 2t+124t^2+ 1382t^3+4792t^4+ 7310t^5+5164t^6+1386t^7 \\ 
& = & (1+t)^2\bigg(2t+120t^2+1140t^3+2392t^4+1386t^5\bigg),\\
R_8^-(t) & = & 128t^2+ 1384t^3+4784t^4+ 7312t^5+5168t^6+1384t^7 \\
& = & (1+t)^2\bigg(128t^2+1128t^3+2400t^4+1384t^5\bigg).
\end{eqnarray*}
\end{example}

\begin{remark}
\label{rem:explicit-formula}
As mentioned in Section \ref{sec:intro}, 
three explicit formulae for $R_{n,\ell}$ are known.  
Using any of these, it is easy to get a formula 
for $R_{n,\ell}^{\pm}$.  Let $F_{n,\ell}$ denote any 
formula for $R_{n,\ell}$.  Combining 
$F_{n,\ell}$ with Corollary \ref{thm:nmod4_uni}, we get 
the following formula for $R_{n,\ell}^{\pm}$.  
For positive integers $n,l$ with $n \geq 1$ and $1 \leq \ell \leq n-1$, 
let $F_{n,\ell}$ be a formula for the numbers $R_{n,\ell}$.
Then, $R_{n,\ell}^{\pm} = \frac{1}{2}\big[ F_{n,\ell} + G_{n,\ell} \big]$ 
where
$$	G_{n,\ell}=\begin{cases}
\displaystyle 	(-4)(-1)^\frac{(\ell-2)}{2} \binom {2k-2}{\frac{\ell-2}{2}} 
	& \text {when $n=4k$ or $4k+1$ and $\ell$ is even}.\\
\displaystyle	2(-1)^\frac{\ell-1}{2} \binom {2k-2}{\frac{\ell-1}{2}} + 
2(-1)^\frac{\ell-3}{2} \binom {2k-2}{\frac{\ell-3}{2}} 
& \text {when $n=4k$ or $4k+1$ and $\ell$ is odd}.\\
		0 & \text {when $n=4k+2$ or $4k+3$}.
		\end{cases}  $$
\end{remark}

\subsection{Moment-like identities}

In this subsection, we prove Theorem
\ref{thm:sum_of_kth_powers_of_odd_equals_sum_of_kth_powers_of_even}
which refines Lemma \ref{lem:moment-identity-chow-ma}.
we use the proof of the result of Chow and Ma
\cite[Corollary 2]{chowmaaltruntypeb}.  We have paraphrased their 
result, but from their proof, this change of form will be clear.

\begin{lemma}[Chow and Ma]
\label{lem:chow-ma-powers-alt}
If $(1+t)^m$ divides $f(t) = \sum_{i=0}^n f_i t^i$, then for 
positive integers $k \leq m-1$, we have 
$$1^k f_1 + 3^k f_3 + \cdots = 2^k f_2 + 4^k f_4 + \cdots.$$
\end{lemma}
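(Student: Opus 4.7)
The plan is to encode the alternating sum $1^k f_1 - 2^k f_2 + 3^k f_3 - \cdots$ as the value of a differential-operator image of $f$ at $t=-1$, and then show that this value vanishes whenever a sufficiently high power of $(1+t)$ divides $f$. First, I would rewrite the desired identity in the equivalent form
$$\sum_{i \geq 0} i^k f_i (-1)^i = 0,$$
which matches the claim after separating by parity (the $i=0$ term drops out since $k \geq 1$).

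Next, I would introduce the Euler-type operator $\theta = t\,\dfrac{d}{dt}$, which satisfies $\theta(t^i) = i\, t^i$ and hence $\theta^k f(t) = \sum_i i^k f_i t^i$. Evaluating at $t = -1$ yields precisely the alternating sum above, so the problem reduces to showing that $(\theta^k f)(-1) = 0$ whenever $(1+t)^m$ divides $f$ and $k \leq m-1$.

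The heart of the argument is the following inductive claim: if $(1+t)^m$ divides $f(t)$, then $(1+t)^{m-k}$ divides $\theta^k f(t)$. The base case $k=0$ is trivial. For the induction step, writing $\theta^k f(t) = (1+t)^{m-k} h(t)$ and applying the product rule, one computes
$$\theta^{k+1} f(t) = t(m-k)(1+t)^{m-k-1} h(t) + t(1+t)^{m-k} h'(t),$$
and both summands are manifestly divisible by $(1+t)^{m-k-1}$. Once this lemma is established, for $k \leq m-1$ we have $m - k \geq 1$, so $(1+t)$ divides $\theta^k f(t)$ and therefore $(\theta^k f)(-1) = 0$, completing the proof.

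I do not anticipate a genuine obstacle; the entire argument is a short computation once the correct operator is identified. The only mildly delicate point is the choice of $\theta = t\,d/dt$ rather than plain $d/dt$: this ensures that ordinary powers $i^k$ (rather than falling factorials) appear in the coefficients of $\theta^k f$, which is exactly what is needed to match the statement of the lemma.
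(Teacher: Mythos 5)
Your argument is correct. Note first that the paper does not actually prove this lemma itself: it is quoted from Chow and Ma, and the version of their argument that the authors use elsewhere (and which survives in a commented-out block of the source) runs through the Stirling-number identity $\sum_{r=1}^{k} S(k,r)\,s(s-1)\cdots(s-r+1)=s^k$: one multiplies by $f_s t^{s-k}$, sums over $s$ to express $\sum_s s^k f_s t^{s-k}$ as $\sum_{r=1}^k t^{r-k} S(k,r) f^{(r)}(t)$, and then sets $t=-1$, using that $(1+t)^{m-r}$ divides $f^{(r)}(t)$ for each $r\leq k\leq m-1$. Your route via the Euler operator $\theta=t\,\frac{d}{dt}$ is a clean repackaging of the same mechanism --- indeed $\theta^k=\sum_{r} S(k,r)\,t^r\left(\frac{d}{dt}\right)^r$, so the two computations are formally equivalent --- but your version dispenses with the Stirling identity entirely and replaces it with a two-line induction showing $(1+t)^{m-k}$ divides $\theta^k f$. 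That makes the proof self-contained and slightly more transparent; the Chow--Ma formulation has the minor advantage of exhibiting the alternating sum explicitly as a combination of the derivatives $f^{(r)}(-1)$, which is sometimes useful when one wants quantitative refinements. Both proofs use the full strength of the hypothesis in the same place (the single surviving factor of $1+t$ when $k=m-1$), and your handling of the $i=0$ term via $k\geq 1$ is the right observation.
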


\noindent
{\bf Proof of Theorem 
\ref{thm:sum_of_kth_powers_of_odd_equals_sum_of_kth_powers_of_even}: }
The proof now follows from Theorem 
\ref{thm:Rnplusminust_is_divible_by_1plust_m} and 
Lemma \ref{lem:chow-ma-powers-alt}.
{\hspace*{\fill}{\eod}}	

\comment{
We consider Equation \eqref{eqn:sum_of_kth_powers_of_odd_equals_sum_of_kth_powers_of_even_even} first.
	We recall that the Stirling numbers of the second kind  $S(k,r)$ satisfies 
	\begin{equation}
	\label{eqn:stirlingnumber_recurrence}
	\sum_{r=1}^{k} S(k,r)s(s-1)\cdots(s-r+1)=s^k.
	\end{equation}
   (For reference of the above equation, see \cite[chapter 1]{EC1}.)
	We multiply both sides of \eqref{eqn:stirlingnumber_recurrence} by $R^{\pm}_{n,s}t^{s-k}$ and then sum them up over $s$ to get \begin{eqnarray}
	\sum_{s \geq 2} s^k R^+_{n,s}t^{s-k} & = & \sum _{r=1}^kt^{r-k}S(k,r) \sum _{s>r} s(s-1)\cdots(s-r+1)R^{\pm}_{n,s}t^{s-r} \nonumber \\
	& = & \sum _{r=1}^k t^{r-k}S(k,r) (R_n^{\pm})^{(r)}(t) \label{eqn:stirlingnumber_recurrence_2}.
	\end{eqnarray} where for a polynomial  $f(t)$, $(f)^{(r)}(t)$ denotes the $r$-th derivative of $f(t)$. Since $n \geq 2k+6$ and $1 \leq r \leq k$, we have $\floor{(n-4)/2} \geq k+1$. Hence the exponents of $(1+t)$ in  the summands of the right hand side of Equation \eqref{eqn:stirlingnumber_recurrence_2} are positive. 
	We can now set $t=-1$ in
	\eqref{eqn:stirlingnumber_recurrence_2} and this yields $$\displaystyle (-1)^{2-k}[\sum_{s>1}(2s)^kR^{\pm}_{n,2s} - \sum_{s>0}(2s+1)^kR^{\pm}_{n,2s+1}]=\sum_{r=1}^{k} (-1)^{r-k}S(k,r)(R_n^{\pm})^{(r)}(-1)=0.$$
	This completes the proof of \eqref{eqn:sum_of_kth_powers_of_odd_equals_sum_of_kth_powers_of_even_even}. 
{\hspace*{\fill}{\eod}}	
}

\comment{
\begin{proof}
Let $[t^l]f(t)$ denote the coefficient of $t^l$ in the polynomial $f(t)$. We first consider the case when $n=4k$ or $4k+1$. 
\begin{eqnarray*}
[t^l]\SR_n(t) & = & [t^l]2t(1-t)^2(1-t^2)^{2k-2} \\
& = & [t^l](2t-4t^2+2t^3)(1-t^2)^{2k-2} \\
& = &\begin{cases}
	(-4)[t^{l-2}](1-t^2)^{2k-2} & \text {when $l$ is even}.\\
	2[t^{l-1}](1-t^2)^{2k-2} + 2[t^{l-3}](1-t^2)^{2k-2}  & \text {when $l$ is odd}.
	\end{cases}
\end{eqnarray*}
The first line follows by Theorem \ref{thm:nmod4_uni}. This takes care of the case when $n=4k$ or $4k+1$. When $n=4k+2$ or $4k+3$, by Theorem \ref{thm:nmod4_uni}, we directly have $D_{n,l}=0$. The proof is complete. 
\end{proof}
}

\subsection{Alternating permutations in $\SSS_n^{\pm}$}
  
Let $[t^k]f(t)$ denote the coefficient of $t^k$ in the polynomial
 $f(t)$.
Alternating permutations in $\SSS_n$ clearly have $n-1$ alternating 
runs and hence we get $E_n = [t^{n-1}] R_n(t)$.
We start with the following remark connecting alternating permutations
in $\AAA_n$ and two of the four sets that we have been working with.
  
\begin{remark}
\label{rem:Alt_perm_and_highest_coef_in_signedaltrun_poly} 
For even positive integers $n$, 
any $\pi \in \Alt_n$ must be in $\SSS_{n,d,d}$.  
In this case, we have 
$E_n^+-E_n^-=[t^{n-1}]\SR_{n,d,d}(t)$.
Similarly, for odd positive integers $n$, 
any $\pi \in \Alt_n$ must be in $\SSS_{n,d,a}$.  
In this case, we have 
$E_n^+-E_n^- =[t^{n-1}]\SR_{n,d,a}$. 
\end{remark}

We begin with the following simple lemma, whose proof follows
easily from Corollary \ref{thm:nmod4_uni}.  Since the proof
is easy, we omit it.

\begin{lemma}
\label{lem:rel_between_alt_and_altplusminus}
For positive integers $n \geq 2$, we have
\begin{eqnarray}
E_n^+ - E_n^-& = & \begin{cases}
1 & \text {if $n=4k$},\\
-1 & \text {if $n=4k+2$},\\
0 & \text {if $n=4k+1$ or $n=4k+3$.}
\end{cases}
\end{eqnarray} 
\end{lemma}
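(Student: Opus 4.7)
The plan is to reduce each residue class mod $4$ to a single coefficient extraction from one of the univariate polynomials listed in Corollary \ref{thm:nmod4_uni}, via Remark \ref{rem:Alt_perm_and_highest_coef_in_signedaltrun_poly}. The key observation is that any alternating permutation $\pi=\pi_1>\pi_2<\pi_3>\cdots$ has the maximum possible number $n-1$ of alternating runs; moreover $\pi$ starts with a descent in all cases, ends with a descent when $n$ is even (so $\pi\in\SSS_{n,d,d}$), and ends with an ascent when $n$ is odd (so $\pi\in\SSS_{n,d,a}$). Hence, for $n\geq 3$,
$$E_n^+-E_n^- \;=\; \begin{cases} [t^{n-1}]\,\SR_{n,d,d}(t) & \text{if } n \text{ is even},\\ [t^{n-1}]\,\SR_{n,d,a}(t) & \text{if } n \text{ is odd},\end{cases}$$
and the small case $n=2$ is checked directly against the unique alternating permutation $21$, which has one inversion.

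For the two nonzero cases, I would just read off the top coefficient. When $n=4k$, Corollary \ref{thm:nmod4_uni} gives $\SR_{n,d,d}(t)=t(1+t^2)(1-t^2)^{2k-2}$, and the only way to build $t^{4k-1}$ is to pair the $t^3$ monomial of $t(1+t^2)$ with the leading $(-1)^{2k-2}t^{4k-4}$ of $(1-t^2)^{2k-2}$, yielding coefficient $+1$. When $n=4k+2$, Corollary \ref{thm:nmod4_uni} gives $\SR_{n,d,d}(t)=-t(1-t^2)^{2k}$, whose $t^{4k+1}$ coefficient is $-(-1)^{2k}=-1$.

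The two vanishing cases are even faster. When $n=4k+1$, we have $\SR_{n,d,a}(t)=-2t^2(1-t^2)^{2k-2}$, a polynomial of degree $4k-2<n-1=4k$, so $[t^{n-1}]\SR_{n,d,a}(t)=0$. When $n=4k+3$, Corollary \ref{thm:nmod4_uni} gives $\SR_{n,d,a}(t)=0$ outright, so the coefficient is trivially zero. There is essentially no obstacle here: all of the substantive content lies in Theorem \ref{thm:signed_peak_valley-enumeration} and its specialization to Corollary \ref{thm:nmod4_uni}, and the present lemma is a routine reading-off of the top coefficient, with the parity-based case split coming straight from Remark \ref{rem:Alt_perm_and_highest_coef_in_signedaltrun_poly}.
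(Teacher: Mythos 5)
Your proposal is correct and follows exactly the route the paper intends (and sketches in its omitted proof): combine Remark \ref{rem:Alt_perm_and_highest_coef_in_signedaltrun_poly} with Corollary \ref{thm:nmod4_uni} and read off the coefficient of $t^{n-1}$ in $\SR_{n,d,d}(t)$ or $\SR_{n,d,a}(t)$ according to the parity of $n$. The coefficient computations in all four residue classes are accurate, and the direct check at $n=2$ is a harmless extra precaution.
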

 
\comment{ 
\begin{proof}
The idea is to use  
Corollary \ref{thm:nmod4_uni} and
Remark \ref{rem:Alt_perm_and_highest_coef_in_signedaltrun_poly} together. 
For $n=4k$, we have
  	
  	\begin{equation}
  	E_n^+ - E_n^- =[t^{n-1}]\SR_{n,d,d}(t)= [t^{4k-1}]t(1+t^2)(1-t^2)^{2(k-1)} = 1. \nonumber
  	\end{equation} 
  	For $n=4k+1$, we have
  	\begin{equation}
  	E_n^+ - E_n^- =[t^{n-1}]\SR_{n,d,a}(t)= [t^{4k}]t(1+t^2)(1-t^2)^{2(k-1)} = 0. \nonumber
  	\end{equation}
  	For $n=4k+2$, we have 
  	\begin{equation}
  	E_n^+ - E_n^- =[t^{n-1}]\SR_{n,d,d}(t)= [t^{4k+1}](-t(1-t^2)^{2k}) = -1. \nonumber
  	\end{equation}
  	For $n=4k+3$, we have
  	\begin{equation}
  	E_n^+ - E_n^- =[t^{n-1}]\SR_{n,d,a}(t)= 0. \nonumber
  	\end{equation}
  	The proof is complete. 
  \end{proof}
 }
  
With this preparation, we can now prove  Theorem 
\ref{thm:egf_for_alt_perm_even_and_odd}.
  
\noindent
{\bf Proof of Theorem \ref{thm:egf_for_alt_perm_even_and_odd} :}
  	We have
\begin{eqnarray*}
\sum_{n=0}^{\infty} {E_n^{\pm}}\frac{x^n}{n!}	
& = & \frac{1}{2}\sum_{n=0}^{\infty} {E_n}\frac{x^n}{n!}  
\pm \frac{1}{2} \sum_{n=0}^{\infty}(E_n^+ - E_n^-) \frac{x^n}{n!} \nonumber \\
&  = & \frac{1}{2}(\sec x+ \tan x) \pm \frac{1}{2}\bigg( 1+x-\frac{x^2}{2!}+\frac{x^4}{4!}+\ldots \bigg) \\
  		& = &\frac{1}{2}(\sec x+\tan x \pm \cos x \pm x). 
  	\end{eqnarray*}
{\hspace*{\fill}{\eod}}

\section{Type B Coxeter Groups}

Recall that $\BB_n$ is the set of permutations of 
$[\pm n] = \{\pm 1, \pm 2,\ldots,\pm n\}$ satisfying $\pi(-i)= - \pi(i)$. 
For $\pi = \pi_1,\pi_2,\ldots,\pi_n \in \BB_n$, define 
$\Negs(\pi) = \{\pi_i : i > 0, \pi_i < 0 \}$ as
the set of elements which occur in $\pi$ with a negative sign.  
As defined in Petersen's book, \cite[Page 294]{petersen-eulerian-nos-book},
define
\begin{equation}
\label{eqn:typeb-inv-defn}
\inv_B(\pi)=  | \{ 1 \leq i < j \leq n : \pi_i > \pi_j \} |+  
| \{ 1 \leq i < j \leq n : -\pi_i > \pi_j \}| +|\Negs(\pi)|.  
\end{equation}
We referi to $\inv_B(\pi)$ alternatively as the {\it length 
of $\pi \in \BB_n$}.
Let $\BB^+_n \subseteq \BB_n$ 
denote the subset of even length elements of $\BB_n$ and 
let $\BB_n^- = \BB_n - \BB_n^+$. For $\pi = \pi_1,\pi_2,\ldots,\pi_n 
\in \BB_n $, let $ \pi_0=0$ and define its set of type B peaks 
and type B valleys to be 
$\Pkk_B(\pi)= \{ i \in [n-1]: \pi_{i-1} < \pi_{i} > \pi_{i+1} \}$ 
and $\Vly_B(\pi)=\{ i \in [n-1]: \pi_{i-1}> \pi_i < \pi_{i+1} \}$ 
respectively. Let $\pkk_B(\pi)= |\Pkk_B(\pi)|$ and 
$\vly_B(\pi)=|\Vly_B(\pi)|$ be the cardinality of these sets. 
For $\pi \in \BB_n$, we say that $\pi$ changes direction at index
$i$ if $i \in \Vly_B(\pi) \cup \Pkk_B(\pi)$.
We say that $\pi \in \BB_n$ has $k$ type B alternating 
runs, denoted as $\altr_B(\pi) = k$ if it changes 
direction a total of $k-1$ times.  For example, 
the permutation $5,1,4,\overline{3}, \overline{6},2 \in \BB_6$ has 
$\pkk_B(\pi) = 2, \vly_B(\pi)=2$ and hence has $\altr_B(\pi)=5.$
Define $R_n^B(t) = \sum_{\pi \in \BB_n} t^{\altr_B(\pi)} 
= \sum_{k=1}^n R_{n,k}^Bt^k.$
Similar to the $\SSS_n$ case, we are interested in 
enumerating the following signed analogue.
\begin{equation}
\label{eqn:type_b_pkvly_over_Bn}
\SBR_{n}(p,q)  =  \sum_{\pi \in \BB_n} (-1)^{\inv_B(\pi)} 
p^{\pkk_B(\pi) }q^{\vly_B(\pi)} .
\end{equation}

Recall that we had partitioned $\SSS_n$ into four sets. We similarly 
partition $\BB_n$  into the following two sets: 
$\BB_{n,-,a}= \{\pi \in \BB_n : \pi_{n-1} < \pi_n \}$ and 
$\BB_{n,-,d}= \{ \pi \in \BB_n: \pi_{n-1} > \pi_n \}$. 
Define the following polynomials: 
\begin{eqnarray}
\label{eqn:type_b_pkvly_over_Bnfinalascent}
\SBR_{n,-,a}(p,q) & = & \sum_{\pi \in \BB_{n,-,a} } (-1)^{\inv_B(\pi)} p^{\pkk_B(\pi)}q^{\vly_B(\pi)}, \\
 \label{eqn:type_b_pkvly_over_Bnfinaldescent}
 \SBR_{n,-,d}(p,q) & = & \sum_{\pi \in \BB_{n,-,d} } (-1)^{\inv_B(\pi)} p^{\pkk_B(\pi)}q^{\vly_B(\pi)}.
\end{eqnarray}
We start with the following type B counterparts of Lemma
\ref{lem:01mod4basic} and Lemma  \ref{lem:23mod4} and connect the
polynomials $\SBR_{n,-,a}(p,q)$ and $\SBR_{n,-,d}(p,q)$. 

\begin{lemma}
\label{lem:relation_between_ending_in_ascent_and_ending_in_descent}
For positive integers $n$, we have
\begin{eqnarray}
\label{eqn:odd_typeb-sbr-relation}
\SBR_{n,-,a}(p,q) & = & - \SBR_{n,-,d}(q,p)  \mbox{ when $n$ is odd.}  \\
\label{eqn:_typeb-sbr-relation}
\SBR_{n,-,a}(p,q) & = & \SBR_{n,-,d}(q,p) \mbox{ when $n$ is even.} 
\end{eqnarray}

\end{lemma}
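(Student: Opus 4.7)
The plan is to exhibit a concrete sign-reversing (or sign-preserving) involution on $\BB_n$ that maps $\BB_{n,-,a}$ onto $\BB_{n,-,d}$ and swaps peaks with valleys, analogous to the role that complementation $\comp$ played in the type A setting (Lemma \ref{lem:properties_of_complementation}). The natural candidate is the \emph{FlipSgn} map
\[
\sflip(\pi_1,\pi_2,\ldots,\pi_n) = (-\pi_1,-\pi_2,\ldots,-\pi_n),
\]
which is clearly an involution on $\BB_n$ since $\BB_n$ is closed under negating every entry.

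First I would verify the combinatorial properties of $\sflip$. Since $\pi_0 = 0 = -\pi_0$ and since negation reverses every strict inequality between real numbers, every ascent $\pi_{i-1}<\pi_i$ becomes a descent $-\pi_{i-1}>-\pi_i$ and vice versa. Consequently $\Pkk_B(\sflip(\pi)) = \Vly_B(\pi)$ and $\Vly_B(\sflip(\pi)) = \Pkk_B(\pi)$ for every $\pi \in \BB_n$, and in particular $\pi_{n-1}<\pi_n$ if and only if $-\pi_{n-1}>-\pi_n$. Thus $\sflip$ restricts to a bijection $\BB_{n,-,a} \to \BB_{n,-,d}$ that interchanges the statistics $\pkk_B$ and $\vly_B$; this is what will force the swap of the two variables $p$ and $q$ on the right-hand side.

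The key calculation is the parity of $\inv_B(\sflip(\pi))$ compared with $\inv_B(\pi)$. Using the definition \eqref{eqn:typeb-inv-defn}, I would argue term by term: for each unordered pair $\{i,j\}$ with $i<j$, exactly one of $\pi_i>\pi_j$ and $-\pi_i>-\pi_j$ holds, and exactly one of $-\pi_i>\pi_j$ and $\pi_i>-\pi_j$ holds; and $|\Negs(\sflip(\pi))| = n-|\Negs(\pi)|$. Summing gives
\[
\inv_B(\pi) + \inv_B(\sflip(\pi)) = 2\binom{n}{2} + n = n^2.
\]
Hence $(-1)^{\inv_B(\sflip(\pi))} = (-1)^{n^2}\,(-1)^{\inv_B(\pi)} = (-1)^n\,(-1)^{\inv_B(\pi)}$.

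With these pieces in place, the lemma falls out by reindexing. Writing $\sigma = \sflip(\pi)$ in \eqref{eqn:type_b_pkvly_over_Bnfinalascent} one obtains
\[
\SBR_{n,-,a}(p,q) = \sum_{\sigma \in \BB_{n,-,d}} (-1)^{n}(-1)^{\inv_B(\sigma)} p^{\vly_B(\sigma)} q^{\pkk_B(\sigma)} = (-1)^n \, \SBR_{n,-,d}(q,p),
\]
which specializes to \eqref{eqn:odd_typeb-sbr-relation} when $n$ is odd and to \eqref{eqn:_typeb-sbr-relation} when $n$ is even. The only delicate step is the parity computation for $\inv_B$, and that reduces to the clean pairing argument above; the boundary convention $\pi_0=0$ causes no trouble because $0$ is fixed by negation, so the swap of peaks and valleys at position $1$ works exactly as at interior positions.
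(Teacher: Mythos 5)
Your proposal is correct and follows essentially the same route as the paper: both use the map $\sflip$ that negates every entry, observe that it bijects $\BB_{n,-,a}$ with $\BB_{n,-,d}$ while swapping $\pkk_B$ and $\vly_B$, and track the parity change of $\inv_B$. The only (minor) difference is that you establish the sign via the self-contained identity $\inv_B(\pi)+\inv_B(\sflip(\pi))=n^2$, whereas the paper cites an external lemma that flipping a single entry's sign flips the parity of $\inv_B$ and applies it $n$ times; both computations are valid.
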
 

\begin{proof}
Let $\pi= \pi_1, \pi_2, \dots, \pi_n \in \BB_n$. 
Define the map $\sflip: \BB_{n,-,a} \mapsto \BB_{n,-,d}$ by
$$\sflip(\pi) = \overline{\pi_1}, \dots, \overline{\pi_2}, \dots,  
\overline{\pi_n}.$$ 
Clearly, $\sflip$ is a bijection from $\BB_{n,-,a}$ to
$\BB_{n,-,d}$. It is easy to see that flipping the sign of a 
single $\pi_i$ changes the parity of $\inv_B$ 
(for example, see \cite[Lemma 3]{siva-sgn_exc_hyp}). Therefore, the 
map $\comp_B$ preserves the parity of $\inv_B$ if and only if $n$ 
is even. Moreover, it is easy to 
see that $\pkk_B(\pi)= \vly_B(\comp_B(\pi))$ and 
$\vly_B(\pi)= \pkk_B(\comp_B(\pi))$. As the remaining portion
of the proof 
is similar to the proof of Lemma \ref{lem:01mod4basic} and 
Lemma  \ref{lem:23mod4}, we omit the details. 
The proof is complete. 
\end{proof}

For $k \in [n]$ and $\pi \in \BB_n$, recall $\pi_k \in [\pm n]$.
We define $\pos_{\pi_k}(\pi) = k$.  
Any $r \in [n]$ appears exactly once as $|\pi_i|$ where $1 \leq i \leq n$.
Define $\pos_{\pm r}(\pi)=k$ if $|\pi_k| = |r|$. For example,
if $\pi = 5,1,4,\ol{3},\ol{6},2$, we have $\pos_{\pm 6}(\pi) = 5$ as
$|\pi_5| = |6|$.

For  
$r \in [\pm n]$, let $\sign(r) \in \{ \pm 1\}$ denote the sign of 
$r$. For example, $\sign(-5)=-1$ but $\sign(5)=1.$
For $\pi=\pi_1,\pi_2, \dots ,\pi_n \in \BB_n$, let $\pi''$ be 
obtained from $\pi$ by deleting the letters $n$ and $n-1$. 
Suppose $\pi \in \BB_{n,-,a}$.  It is easy 
to see that both $\pi'' \in \BB_{n-2,-,a}$ and $\pi'' \in \BB_{n-2,-,d}$ 
are possible.

We partition $\BB_{n,-,a}$ into the following  
$8$ disjoint subsets and will consider the contribution of each 
set to $\SBR_{n,-,a}(p,q)$.  
The idea of partitioning of $\BB_{n,-,a}$  depends on two points.
First, for $\pi \in \BB_{n,-,a}$, whether $\pi'' \in \BB_{n,-,a}$ or 
$\pi'' \in \BB_{n,-,d}$.  The second point is whether the highest 
two letters in absolute value, that is,  
$\pm n$ and $\pm (n-1)$ are consecutive or not. 
As we will see in Lemmas
\ref{lem:contributiontobivpeakvalleyfrom_n_nminus1_separate}, 
\ref{lem:contributiontobivpeakvalleyfrom_n_nminus1_consectutive_but_not_last},
\ref{lemma:contribution_n_n-1_consec} and
\ref{lemma:contribution_n_n-1_consec_second}, seven of these 
terms 
will contribute nothing and hence 
only one of these 8 terms will contribute to $\SBR_{n,-,a}(p,q)$.

\begin{enumerate}
\item $\BB_{n,-,a}^1= \{ \pi \in \BB_{n,-,a} :\pi'' \in \BB_{n-2,-,a} 
\mbox{ and }  |\pos_{\pm n}(\pi) - \pos_{\pm (n-1)}(\pi)| > 1  \}.$
\item $\BB_{n,-,a}^2= \{ \pi \in \BB_{n,-,a} :\pi'' \in \BB_{n-2,-,d} 
\mbox{ and }  |\pos_{\pm n}(\pi) - \pos_{\pm (n-1)}(\pi)| > 1  \}.$
\item $\BB_{n,-,a}^3= \{ \pi \in \BB_{n,-,a} :\pi'' \in \BB_{n-2,-,a}
\mbox{ with } |\pos_{\pm n}(\pi) - \pos_{\pm (n-1)}(\pi)| =1 
\mbox{ and }  \pi_n \notin \{ \pm (n-1), \pm  n  \} \}.$
\item $\BB_{n,-,a}^4= \{ \pi \in \BB_{n,-,a} :\pi'' \in \BB_{n-2,-,d}
\mbox{ with } 
|\pos_{\pm n}(\pi) - \pos_{\pm (n-1)}(\pi)| =1 
\mbox{ and } 
\pi_n \notin \{  \pm (n-1), \pm  n  \} \}.$
\item  $\BB_{n,-,a}^5= \{ \pi \in \BB_{n,-,a} :\pi'' \in \BB_{n-2,-,a}
\mbox{ with } 
|\pos_{\pm n}(\pi) - \pos_{\pm (n-1)}(\pi)|=1 
\mbox{ and } 
 \pi_n \in \{  \pm (n-1), \pm  n  \}, \sign(\pi_{n-1}) \neq \sign(\pi_{n})   \}.$
\item  $\BB_{n,-,a}^6= \{ \pi \in \BB_{n,-,a} :\pi'' \in \BB_{n-2,-,d}
\mbox{ with } 
|\pos_{\pm n}(\pi) - \pos_{\pm (n-1)}(\pi)|=1 
\mbox{ and } 
\pi_n \in \{  \pm (n-1), \pm  n  \}, \sign(\pi_{n-1}) \neq \sign(\pi_{n})   \}.$
\item   $\BB_{n,-,a}^7= \{ \pi \in \BB_{n,-,a} :\pi'' \in \BB_{n-2,-,d} 
\mbox{ with } 
|\pos_{\pm n}(\pi) - \pos_{\pm (n-1)}(\pi)|=1 
\mbox{ and } 
\pi_n \in \{  \pm (n-1), \pm  n  \}, \sign(\pi_{n-1}) = \sign(\pi_{n})   \}.$
\item $\BB_{n,-,a}^8= \{ \pi \in \BB_{n,-,a} :\pi'' \in \BB_{n-2,-,a}
\mbox{ with } 
|\pos_{\pm n}(\pi) - \pos_{\pm (n-1)}(\pi)|=1 
\mbox{ and } 
\pi_n \in \{  \pm (n-1), \pm  n  \}, \sign(\pi_{n-1}) = \sign(\pi_{n})   \}.$	
\end{enumerate}

We start showing that all except one of these sets contribute nothing to 
$\SBR_{n,-,a}(p,q)$.

\begin{lemma}
\label{lem:contributiontobivpeakvalleyfrom_n_nminus1_separate}
For positive integers $n \geq 3$, the contribution of both sets 
$\BB_{n,-,a}^1$ and $\BB_{n,-,a}^2$  to $\SBR_{n,-,a}(p,q)$ is $0$. That is, 
$$\sum_{\pi \in \BB_{n,-,a}^1} (-1)^{\inv_B(\pi)}
p^{\pkk_B(\pi)}q^{\vly_B(\pi)}=0, \hspace{10 mm} 
\sum_{\pi \in \BB_{n,-,a}^2} (-1)^{\inv_B(\pi)}
p^{\pkk_B(\pi)}q^{\vly_B(\pi)}=0.$$
\end{lemma}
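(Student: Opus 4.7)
The plan is to construct on each of the sets $\BB_{n,-,a}^1$ and $\BB_{n,-,a}^2$ a fixed-point-free involution $\phi$ that preserves $\pkk_B$ and $\vly_B$ but flips the parity of $\inv_B$; pairing elements under $\phi$ then cancels every term in the signed sum. For $\pi$ in either set, set $i = \pos_{\pm n}(\pi)$ and $j = \pos_{\pm(n-1)}(\pi)$, and write $\pi_i = \epsilon_1 n$, $\pi_j = \epsilon_2 (n-1)$ with $\epsilon_1, \epsilon_2 \in \{\pm 1\}$; by hypothesis $|i-j| > 1$. Define $\phi(\pi)$ by swapping the absolute values of these two entries while retaining their signs and positions, i.e.\ $\phi(\pi)_i = \epsilon_1 (n-1)$, $\phi(\pi)_j = \epsilon_2 n$, and $\phi(\pi)_k = \pi_k$ for $k \notin \{i,j\}$. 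Then $\phi^2 = \id$ and $\phi$ has no fixed point. Since $\phi$ leaves the other entries alone, $\pi'' = \phi(\pi)''$, so the partition according to whether $\pi'' \in \BB_{n-2,-,a}$ or $\pi'' \in \BB_{n-2,-,d}$ is preserved; a short case check when $i$ or $j$ equals $n-1$ or $n$ (using the forced signs, e.g.\ $\pi_n = n$ when $i=n$ and $\pi_{n-1} = -n$ when $i=n-1$, which follow from $\pi_{n-1} < \pi_n$) confirms $\phi(\pi) \in \BB_{n,-,a}$. Thus $\phi$ restricts to each of $\BB_{n,-,a}^1$ and $\BB_{n,-,a}^2$.

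For peak and valley preservation, the hypothesis $|i-j| > 1$ guarantees that $\pi_{i\pm 1}$ and $\pi_{j \pm 1}$ (when the indices lie in $[n]$) all satisfy $|\pi_k| \leq n-2$. Hence at positions $i$ and $j$ the peak/valley status is governed entirely by $\epsilon_1$ and $\epsilon_2$, not by whether the absolute value is $n$ or $n-1$: a positive entry dominates both neighbors (peak) and a negative one is dominated (valley). At the neighboring positions $i \pm 1, j \pm 1$ the comparison with $\pi_i$ or $\pi_j$ retains its direction after the swap, because any letter of absolute value $\geq n-1$ strictly dominates (or is strictly dominated by) any letter of absolute value $\leq n-2$ according to its sign. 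All other positions are unaffected, so $\Pkk_B(\pi) = \Pkk_B(\phi(\pi))$ and $\Vly_B(\pi) = \Vly_B(\phi(\pi))$.

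The crucial step, and the expected obstacle, is to verify that $(-1)^{\inv_B}$ flips. Inspecting the three summands of \eqref{eqn:typeb-inv-defn}: $|\Negs|$ is unchanged since signs are preserved, and pairs disjoint from $\{i,j\}$ contribute identically. For a pair involving exactly one of $\{i,j\}$, say $(k,i)$, both indicators $[\pi_k > \pi_i]$ and $[-\pi_k > \pi_i]$ are governed solely by $\sign(\pi_i) = \epsilon_1$, since $|\pi_i| \in \{n-1,n\}$ strictly exceeds $|\pi_k| \leq n-2$ in either case; hence these contributions agree for $\pi$ and $\phi(\pi)$. The only net change arises from the pair $\{i,j\}$ itself, and a direct check in each of the four sign cases $(\epsilon_1,\epsilon_2) \in \{\pm 1\}^2$ shows the combined contribution of this pair to the first two sums of \eqref{eqn:typeb-inv-defn} shifts by exactly $\pm 1$. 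Consequently $(-1)^{\inv_B(\phi(\pi))} = -(-1)^{\inv_B(\pi)}$, and summing over orbits of $\phi$ gives $0$ for each of $\BB_{n,-,a}^1$ and $\BB_{n,-,a}^2$.
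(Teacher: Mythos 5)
Your involution $\phi$ (swap the absolute values $n$ and $n-1$ while keeping the sign at each position) is exactly the map $g$ the paper uses --- the paper's two-case definition, depending on whether $\sign(x)=\sign(y)$, unwinds to precisely this description --- and your verifications of peak/valley preservation and of the parity flip of $\inv_B$ are correct. So the proposal is correct and takes essentially the same approach as the paper, with the only cosmetic difference being that you check the parity flip by a direct four-case computation on the pair $\{i,j\}$ rather than invoking the ``flipping one sign flips the parity of $\inv_B$'' lemma.
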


\begin{proof}
We first consider the contribution of $\BB_{n,-,a}^1$.
Let $\pi = \pi_1, \dots ,\pi_{i-1},\pi_i= x, \pi_{i+1}, \dots, 
\pi_j=y, \dots, \pi_n \in \BB_{n,-,a}^1$, where $\{ |x|, |y| \} =\{n,n-1\}$.
Define $g: \BB_{n,-,a}^1 \mapsto \BB_{n,-,a}^1$ by 
$$g(\pi)= \begin{cases}
\pi_1, \dots, ,\pi_{i-1},\pi_i= y, \pi_{i+1}, \dots, \pi_j=x, \dots, \pi_n & \text {if $\sign(x)= \sign(y)$}, \\
\pi_1, \dots, ,\pi_{i-1},\pi_i= \overline{y}, \pi_{i+1}, \dots, \pi_j=\overline{x}, \dots, \pi_n & \text {if $\sign(x) \neq  \sign(y)$}.
\end{cases}$$
The map $g$ clearly preserves the sets $\Pkk_B$ and $\Vly_B$. Thus, we
have $\pkk_B(\pi) = \pkk_B(g(\pi))$ and $\vly_B(\pi) = \vly_B(g(\pi))$. 
When $\sign(x)=\sign(y)$, $g$ flips the parity of $\inv_B$ as the pair 
$(i,j)$ flips being an inversion. When $\sign(x) \neq \sign(y)$, we have
\begin{eqnarray} 
\label{eqn:reversinginginversion_B}
\inv_B(g(\pi)) & =  &  \inv_{B}( \pi_1, \dots, ,\pi_{i-1},\pi_i=\overline{y},\pi_i+1,\dots, \pi_j= \overline{x}, \dots, \pi_n)    \nonumber \\
& \equiv &  \inv_{B}( \pi_1, \dots, ,\pi_{i-1},\pi_i= y, \pi_{i+1}=x, \pi_{i+2}, \dots, \pi_n) \> (\hspace{-4 mm} \mod 2) \nonumber \\
& \equiv & \inv_B(\pi) -1 \>\>\> (\hspace{-4 mm} \mod 2).
\end{eqnarray}
The second step uses the fact that  
flipping the sign of a single $\pi_i$ changes the parity of the 
number of type B inversions (see \cite[Lemma 3]{siva-sgn_exc_hyp}).
Therefore, the map $g$ always reverses the parity of $\inv_B$. 
Thus the set $\BB_{n,-,a}^1$ contributes $0$  
to $\SBR_{n,-,a}(p,q)$. In an identical manner, one can show 
that the contribution of $\BB_{n,-,a}^2$
to $\SBR_{n,-,a}(p,q)$ is $0$. This completes the proof. 
\end{proof}

\begin{lemma}
\label{lem:contributiontobivpeakvalleyfrom_n_nminus1_consectutive_but_not_last}
For positive integers $n \geq 3$, the contribution of the two 
sets $\BB_{n,-,a}^3$ and $\BB_{n,-,a}^4$ to $\SBR_{n,-,a}(p,q)$ 
is $0$. That is, 
$$\sum_{\pi \in \BB_{n,-,a}^3} 
(-1)^{\inv_B(\pi)}p^{\pkk_B(\pi)}q^{\vly_B(\pi)}=0, \hspace{10 mm} 
\sum_{\pi \in \BB_{n,-,a}^4} 
(-1)^{\inv_B(\pi)}p^{\pkk_B(\pi)}q^{\vly_B(\pi)}=0$$
\end{lemma}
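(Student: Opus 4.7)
The plan is to construct a sign-reversing involution on each of $\BB_{n,-,a}^3$ and $\BB_{n,-,a}^4$ that preserves the bistatistic $(\pkk_B, \vly_B)$, just as in the proof of Lemma~\ref{lem:contributiontobivpeakvalleyfrom_n_nminus1_separate}, but now specialized to the case where $\pm n$ and $\pm(n-1)$ sit in two consecutive positions $i, i+1$ (with $i+1 < n$, since $\pi_n \notin \{\pm n, \pm(n-1)\}$). Set $\pi_i = x$ and $\pi_{i+1} = y$ with $\{|x|,|y|\} = \{n, n-1\}$, and define
\[
h(\pi) \;=\; \begin{cases}
\pi_1,\ldots,\pi_{i-1},\, y,\, x,\, \pi_{i+2},\ldots,\pi_n & \text{if } \sign(x)=\sign(y),\\[2pt]
\pi_1,\ldots,\pi_{i-1},\, \overline{y},\, \overline{x},\, \pi_{i+2},\ldots,\pi_n & \text{if } \sign(x)\neq\sign(y).
\end{cases}
\]

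First, I would verify that $h$ is a well-defined involution sending $\BB_{n,-,a}^3$ to itself (and likewise $\BB_{n,-,a}^4$ to itself): $\pi''$ is unaffected (we merely permute and possibly re-sign the letters $\pm n, \pm(n-1)$, both of which are deleted when passing to $\pi''$), the adjacency of the positions of $\pm n$ and $\pm(n-1)$ is preserved, and $\pi_n$ is untouched because $i+1<n$. Next, I would check that $h$ preserves $\pkk_B(\pi)$ and $\vly_B(\pi)$. The key observation is that since $|\pi_{i-1}|,|\pi_{i+2}|\le n-2$, the direction changes at positions $i$ and $i+1$ depend only on the signs of $x$ and $y$ and not on which of $n, n-1$ is which: if $\sign(x)=\sign(y)$, then exactly one of $\{i, i+1\}$ is a peak or a valley in both $\pi$ and $h(\pi)$ (it just moves from $i$ to $i+1$); if $\sign(x)\neq\sign(y)$, then $\{i, i+1\}$ contains both one peak and one valley in each of $\pi$ and $h(\pi)$, with the same assignment.

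The parity count for $\inv_B$ is the point I would handle last. When $\sign(x)=\sign(y)$, $h$ is just the adjacent transposition $s_i$, a Coxeter generator, so $\inv_B$ changes by $\pm 1$ and its parity flips. When $\sign(x)\neq\sign(y)$, applying the adjacent transposition $s_i$ first flips parity, and then flipping the signs of the two entries in positions $i$ and $i+1$ each changes $\inv_B$ parity by $1$ (by \cite[Lemma~3]{siva-sgn_exc_hyp} or the defining formula \eqref{eqn:typeb-inv-defn}); the two sign-flips together preserve parity, so the net effect is still a parity flip. Hence $(-1)^{\inv_B(h(\pi))}=-(-1)^{\inv_B(\pi)}$ in both cases, and pairing $\pi$ with $h(\pi)$ cancels their contributions to $\SBR_{n,-,a}(p,q)$. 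The same argument verbatim handles $\BB_{n,-,a}^4$, only with $\pi'' \in \BB_{n-2,-,d}$.

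The main obstacle I anticipate is the bookkeeping in the peak/valley check in the mixed-sign case: one has to verify for all four sub-cases ($x=\pm n$, $y=\pm(n-1)$ or vice-versa, with $\sign(x)\neq \sign(y)$) that the peak and the valley among the two positions $\{i,i+1\}$ each land in the same position after applying $h$. All of this reduces to the elementary inequalities $|\pi_{i-1}|,|\pi_{i+2}|\le n-2$, so it is routine but the number of cases is the only real annoyance.
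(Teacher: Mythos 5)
Your proposal is correct and is essentially the paper's own proof: the map $h$ you define is exactly the involution $f$ used there (swap the two adjacent large letters if they have the same sign, otherwise swap and negate both), with the same verification that it preserves $\pkk_B$ and $\vly_B$ while flipping the parity of $\inv_B$. The only cosmetic difference is that you spell out the peak/valley bookkeeping via the inequality $|\pi_{i-1}|,|\pi_{i+2}|\le n-2$ and phrase the parity flip via the Coxeter generator, whereas the paper checks the four-letter windows directly and refers back to the parity computation in the preceding lemma.
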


\begin{proof}
We first consider the contribution of $\BB_{n,-,a}^3$ to $\SBR_{n,-,a}(p,q)$.
Let $\pi \in \BB_{n,-,a}^3$. 
Thus, $\pi$ has the following form: 
$$\pi= \pi_1, \dots, ,\pi_{i-1},\pi_i= x, \pi_{i+1}=y, \pi_{i+2}, 
\dots, \pi_n,$$ where $\{|x|,|y| \} = \{n,n-1\}$. 
	Define  $f: \BB_{n,-,a}^3 \mapsto \BB_{n,-,a}^3$ by 
	$$f(\pi)= \begin{cases}
	\pi_1, \dots, ,\pi_{i-1},\pi_i= y, \pi_{i+1}=x, \dots, \pi_n & \text {if $\sign(x)= \sign(y)$}, \\
	\pi_1, \dots, ,\pi_{i-1},\pi_i= \overline{y}, \pi_{i+1}= \overline{x}, \dots, \pi_n & \text {if $\sign(x) \neq  \sign(y)$}.
	\end{cases}$$
By an argument similar to that given in the proof of Lemma 
\ref{lem:contributiontobivpeakvalleyfrom_n_nminus1_separate}, one 
can show that $f$ flips the parity of $\inv_B$. 
Though $f$ does not preserve the sets $\Pkk_B$ and $\Vly_B$, 
it preserves the numbers $\pkk_B$ and $\vly_B$.  We show this
below.
When $\sign(x)=\sign(y)$, changes among 
$\pkk_B(\pi)$ and $\pkk_B(f(\pi))$, clearly, can only occur in 
the $4$-element strings $\pi_{i-1}, \pi_{i}=x, \pi_{i+1}=y, 
\pi_{i+2}$ and   $\pi_{i-1}, \pi_{i}=y, \pi_{i+1}=x, \pi_{i+2}.$   
Both the $4$-element strings have $1$ peak when $x>0$ and 
both of them have no peaks when $x<0$. Similarly, when 
$\sign(x) \neq \sign(y)$, changes in 
$\pkk_B(\pi)$ and $\pkk_B(f(\pi))$ can only occur in the 
$4$-element strings $\pi_{i-1}, \pi_{i}=x, \pi_{i+1}=y, 
\pi_{i+2}$ and   $\pi_{i-1}, \pi_{i}=\ol{y}, 
\pi_{i+1}=\ol{x}, \pi_{i+2}.$ In this case, both  have 
$1$ peak. Thus, $\pkk_B(\pi)=\pkk_B(f(\pi)).$  The arguments
showing that $f$ preserves $\vly_B$ are similar and hence
omitted.  
Thus, $$\sum_{\pi \in \BB_{n,-,a}^3} 
(-1)^{\inv_B(\pi)}p^{\pkk_B(\pi)}q^{\vly_B(\pi)}=0.$$ 
The proof that the set $\BB_{n,-,a}^4$ also contributes $0$ is 
again similar and hence omitted. 
\end{proof}

\begin{lemma}
\label{lemma:contribution_n_n-1_consec}
For positive integers $n \geq 3$, the contribution of the two 
sets $\BB_{n,-,a}^5$ and $\BB_{n,-,a}^6$  to  $\SBR_n(p,q)$ is $0$.
That is,   
$$\sum_{\pi \in \BB_{n,-,a}^5} 
(-1)^{\inv_B(\pi)}p^{\pkk(\pi)}q^{\vly(\pi)}=0, \hspace{10 mm} 
\sum_{\pi \in \BB_{n,-,a}^6} 
(-1)^{\inv_B(\pi)}p^{\pkk(\pi)}q^{\vly(\pi)}=0.$$
\end{lemma}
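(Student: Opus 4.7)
The plan is to build a sign-reversing involution on $\BB_{n,-,a}^5 \cup \BB_{n,-,a}^6$ analogous to the map used in Lemma \ref{lem:contributiontobivpeakvalleyfrom_n_nminus1_consectutive_but_not_last}, but acting on the final two positions instead of an interior adjacent pair. For any $\pi$ in these sets, the hypotheses $\sign(\pi_{n-1})\ne\sign(\pi_n)$ together with $\pi_{n-1}<\pi_n$ force $\pi_{n-1}<0<\pi_n$ with $\{|\pi_{n-1}|,|\pi_n|\}=\{n-1,n\}$. Define
$$h(\pi_1,\dots,\pi_{n-2},\pi_{n-1},\pi_n)=(\pi_1,\dots,\pi_{n-2},-\pi_n,-\pi_{n-1}).$$

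First I would check that $h$ maps each of $\BB_{n,-,a}^5$ and $\BB_{n,-,a}^6$ to itself and is a fixed-point-free involution: $\pi''$ is untouched, so its $a/d$-class is preserved; the new final pair $(-\pi_n,-\pi_{n-1})$ is again a negative followed by a positive with absolute values $\{n-1,n\}$ and opposite signs; $h^2=\id$ is immediate; and $\pi_{n-1}\ne -\pi_n$ because $|\pi_{n-1}|\ne|\pi_n|$, so there are no fixed points.

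Next I would verify that $h$ preserves both $\pkk_B$ and $\vly_B$. Only indices $n-2$ and $n-1$ could be affected. Since $|\pi_{n-2}|\le n-2$ while $\pi_{n-1}$ and $h(\pi)_{n-1}$ are both negative with magnitude at least $n-1$, we have $\pi_{n-2}>\pi_{n-1}$ and $\pi_{n-2}>h(\pi)_{n-1}$. Coupled with the preserved final ascent, this shows that index $n-1$ is a valley in both $\pi$ and $h(\pi)$ and that the direction at index $n-2$ is unchanged.

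The remaining step, which I expect to be the main obstacle, is to show $\inv_B(h(\pi))\not\equiv \inv_B(\pi)\pmod{2}$. Observe that $h(\pi)=\pi\cdot w$ for the fixed element $w\in\BB_n$ with $w(i)=i$ for $i\le n-2$, $w(n-1)=-n$, and $w(n)=-(n-1)$. Since length in any Coxeter group is additive modulo $2$, it suffices to check the parity of $\inv_B(w)$. Counting directly from the definition in \eqref{eqn:typeb-inv-defn}, the standard inversion sum contributes $2n-4$, the second sum contributes $2n-3$, and $|\Negs(w)|=2$, giving $\inv_B(w)=4n-5$, which is odd. Hence $h$ is sign-reversing on both $\BB_{n,-,a}^5$ and $\BB_{n,-,a}^6$, so the signed peak-valley sums over these sets vanish.
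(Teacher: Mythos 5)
Your proof is correct and takes essentially the same approach as the paper: the map $h$ you define is exactly the involution $h_1(\pi_1,\dots,\pi_{n-1}=x,\pi_n=y)=\pi_1,\dots,\pi_{n-1}=\overline{y},\pi_n=\overline{x}$ used there, and the verification that it preserves the sets, preserves $\pkk_B$ and $\vly_B$, and flips the parity of $\inv_B$ matches the paper's argument (your explicit parity computation via right multiplication by a fixed odd-length element is simply a more detailed version of what the paper asserts by reference to its earlier lemmas).
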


\begin{proof}
Define $h_1: \BB_{n,-,a}^5 \mapsto \BB_{n,-,a}^5$ by
$$h_1(\pi_1, \pi_2, \dots, \pi_{n-1}=x, \pi_{n}=y) = \pi_1, \pi_2, \dots, \pi_{n-1}=\overline{y}, \pi_{n}=\overline{x}, $$
where $\{|x|,|y| \} = \{n,n-1 \}.$  The map $h_1$ clearly preserves the statistics
$\pkk_B$ and $\vly_B$ but changes the parity of $\inv_B$. Thus,
$\BB_{n,-,a}^5$ contributes $0$ to $\SBR_{n,-,a}(p,q)$. 
Similarly, one can show that  the set $\BB_{n,-,a}^6$ also contributes $0$ to $\SBR_{n,-,a}(p,q)$. The proof is complete. 
\end{proof}

\begin{lemma}
\label{lemma:contribution_n_n-1_consec_second}
For positive integers $n \geq 3$, the contribution of $\BB_{n,-,a}^7$ to  $\SBR_n(p,q)$ is $0$, that is,   
$$\sum_{\pi \in \BB_{n,-,a}^7} (-1)^{\inv_B(\pi)}p^{\pkk_B(\pi)}q^{\vly_B(\pi)}=0.$$ 
\end{lemma}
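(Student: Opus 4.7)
\textbf{Proof plan for Lemma \ref{lemma:contribution_n_n-1_consec_second}.}

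The plan is to produce a sign-reversing, statistic-preserving involution $\phi: \BB_{n,-,a}^7 \to \BB_{n,-,a}^7$ that pairs the elements into cancelling pairs. I first work out the rigid structure of this set: since $\pi_n \in \{\pm (n-1), \pm n\}$ and the positions of $\pm n, \pm(n-1)$ are consecutive, we must have $\{|\pi_{n-1}|, |\pi_n|\} = \{n-1, n\}$. Combined with $\sign(\pi_{n-1}) = \sign(\pi_n)$ and the ascent condition $\pi_{n-1} < \pi_n$, this forces $\pi$ to end in exactly one of the two patterns
\[
\text{(A)}\quad (\pi_1,\dots,\pi_{n-2},\, n-1,\, n), \qquad
\text{(B)}\quad (\pi_1,\dots,\pi_{n-2},\, -n,\, -(n-1)).
\]
Define $\phi$ by swapping a type (A) tail with a type (B) tail, leaving $\pi'' = (\pi_1,\dots,\pi_{n-2})$ untouched. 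Since $\pi''$ is unchanged, both images still satisfy $\pi'' \in \BB_{n-2,-,d}$ (i.e.\ $\pi_{n-3} > \pi_{n-2}$), and $\phi^2 = \id$, so $\phi$ is indeed a well-defined involution on $\BB_{n,-,a}^7$.

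Next I verify that $\phi$ preserves $\pkk_B$ and $\vly_B$. Peak/valley status at positions $1,\dots,n-3$ depends only on $\pi_1,\dots,\pi_{n-2}$ and is therefore unaffected. The only potentially interesting indices are $n-2$ and $n-1$. In case (A), since $|\pi_{n-2}| \leq n-2$ we have $\pi_{n-3} > \pi_{n-2} < n-1$ (a valley at $n-2$) and $\pi_{n-2} < n-1 < n$ (no peak/valley at $n-1$); in case (B), since $\pi_{n-2} > -n$ we have $\pi_{n-3} > \pi_{n-2} > -n$ (no peak/valley at $n-2$) and $\pi_{n-2} > -n < -(n-1)$ (a valley at $n-1$). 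So both sides contribute exactly one valley and no peak in the tail, giving $\pkk_B(\phi(\pi)) = \pkk_B(\pi)$ and $\vly_B(\phi(\pi)) = \vly_B(\pi)$.

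Finally, I compute the parity change of $\inv_B$ under $\phi$ using the three summands of \eqref{eqn:typeb-inv-defn}. Comparing the (A) tail $(n-1, n)$ with the (B) tail $(-n, -(n-1))$, for each $i \leq n-2$ both pairs $(i,n-1)$ and $(i,n)$ are non-inversions under (A) but become type A inversions under (B) (contributing $2(n-2)$); the same pairs contribute $2(n-2)$ more to the second sum of \eqref{eqn:typeb-inv-defn} (since $|\pi_i| \leq n-2$), plus the pair $(n-1,n)$ now satisfies $-\pi_{n-1} = n > -(n-1) = \pi_n$, giving an extra $+1$; and $|\Negs|$ increases by $2$. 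The total change is $2(n-2) + (2(n-2)+1) + 2 = 4n-5$, which is odd. Hence $(-1)^{\inv_B(\phi(\pi))} = -(-1)^{\inv_B(\pi)}$, and summing over $\phi$-orbits gives
\[
\sum_{\pi \in \BB_{n,-,a}^7} (-1)^{\inv_B(\pi)}\, p^{\pkk_B(\pi)}q^{\vly_B(\pi)} = 0.
\]
The routine but slightly delicate part is the parity bookkeeping above; the conceptual step is recognising that the same-sign condition reduces $\BB_{n,-,a}^7$ to exactly two interchangeable tail patterns.
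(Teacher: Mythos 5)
Your involution is exactly the paper's map $h_1(\ldots,x,y)=(\ldots,\overline{y},\overline{x})$ from Lemma \ref{lemma:contribution_n_n-1_consec}, which the paper reuses verbatim for $\BB_{n,-,a}^7$; you have simply made explicit that on this set it swaps the only two possible tails $(n-1,n)$ and $(-n,-(n-1))$, and your verification that it preserves $\pkk_B,\vly_B$ and changes $\inv_B$ by the odd amount $4n-5$ is correct. So the proposal is correct and takes essentially the same approach as the paper, just with the details the paper leaves as ``easy to see'' written out.
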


\begin{proof}
It is easy to see that the same map $h_1$ defined in the proof 
of Lemma \ref{lemma:contribution_n_n-1_consec} works in this case, completing
the proof.
\end{proof}

Thus, the contribution of $\BB_{n,-,a}^k$ for $1 \leq k \leq 7 $ 
to $\SBR_{n,-,a}(p,q)$ is $0$. Hence 

\begin{equation}
\label{eqn:only_BnA8_contributes_nonzero}
\SBR_{n,-,a}(p,q)= \sum_{\pi \in \BB_{n,-,a}^8} 
(-1)^{\inv_B(\pi)}p^{\pkk_B(\pi)}q^{\vly_B(\pi)}
\end{equation}

Inside each of $\BB_{n,-,a/d}^8$, we define yet another subset 
which has a nice inductive structure for signed enumeration and outside 
which signed enumeration gives zero.  We define
$T_{n,-,a} \subseteq \BB_{n,-,a}^8$ and $T_{n,-,d} \subseteq \BB_{n,-,d}^8$ 
such that outside $T_{n,-,a}$ and $T_{n,-,d}$, the 
signed enumeration equals $0$.

We inductively define the sets $T_{n,-,a}$  and $T_{n,-,d}$
as follows. When $n=1$, let $T_{1,-,a}= \{1\}$ and 
$T_{1,-,d}= \overline{1}$.  When $n=2$, let  $T_{2,-,a} = 
\{12,  \overline{2}\overline{1} \}$ where we have removed the commas
within each permutation for ease of reading. 
Let $T_{2,-,d} = \{21, \overline{1}\overline{2} \}$. 
Consider $\pi \in T_{n-2,-,a}$. 
Using $\pi$, we form two signed permutations 
$\tau_1$ and $\tau_2 \in T_{n,-,a}$ by appending two 
letters as follows: 
\begin{equation*}
	\tau_1 = \pi,n-1,n, \hspace{15 mm}
	\tau_2 = \pi,\overline{n},\overline{n-1}. 
\end{equation*}
In a similar manner, 
define the sets $T_{n,-,d}$ for $n \geq 2$ as follows:
Let $\pi \in T_{n-2,-,d}$. 
Using $\pi$, we form the two signed permutations 
$\tau_3$ and $\tau_4 \in T_{n,-,d}$ as follows: 
\begin{equation*}
	\tau_3 = \pi,n,n-1, \hspace{15 mm}
	\tau_4 = \pi,\overline{n-1},\overline{n}. 
\end{equation*}
In our next lemma, we show that elements of 
$\BB_{n,-,a}^8 - T_{n,-,a}$ contribute zero to   
$\SBR_{n,-,a}(p,q).$

\begin{lemma}
\label{lem:B_minus_T-equals_zero}
For positive integers $n$, the following is true.
\begin{equation}
\label{eqn:yet_another_cancel-a}
\sum _{\pi \in \BB^8_{n,-,a} - T_{n,-,a}} 
(-1)^{\inv_B(\pi)}p^{\pkk_B(\pi)}q^{\vly_B(\pi)}  =  0.
\end{equation}
\end{lemma}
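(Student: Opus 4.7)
The plan is to argue by strong induction on $n$, stepping in twos, using the identification of $\BB^8_{n,-,a}$ with $\BB_{n-2,-,a}$ via the prefix map $\pi\mapsto\pi''$. The base cases $n\in\{1,2,3\}$ are handled by direct verification: in each case $\BB^8_{n,-,a}=T_{n,-,a}$, so the sum to be proved is empty. For the inductive step at $n\geq 4$, every $\pi\in\BB^8_{n,-,a}$ is uniquely determined by a prefix $\pi''\in\BB_{n-2,-,a}$ together with one of the two admissible suffixes $(n-1,n)$ or $(\ol n,\ol{n-1})$; these are the only possibilities because $\pi$ ends in an ascent, the two letters of largest absolute value occupy positions $n-1,n$, and their signs agree. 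Moreover, the recursive definition of $T_{n,-,a}$ shows that $\pi\in T_{n,-,a}$ iff $\pi''\in T_{n-2,-,a}$, so the map $\pi\mapsto\pi''$ is a $2$-to-$1$ surjection from $\BB^8_{n,-,a}-T_{n,-,a}$ onto $\BB_{n-2,-,a}-T_{n-2,-,a}$.

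Next, I compute how $\inv_B$, $\pkk_B$, and $\vly_B$ change under each suffix-extension. Because every letter of $\pi''$ has absolute value at most $n-2$ and $\pi''$ ends in an ascent, appending $n-1,n$ produces no new peaks or valleys and contributes no new terms to any of the three summands in the definition of $\inv_B$, so the parity of $\inv_B$ is preserved. By contrast, appending $\ol n,\ol{n-1}$ creates exactly one new peak at position $n-2$ (since $\pi_{n-3}<\pi_{n-2}>\ol n$) and one new valley at position $n-1$ (since $\pi_{n-2}>\ol n<\ol{n-1}$), while $\inv_B$ increases by $4(n-2)+3$, flipping its parity. Combining the two contributions arising from a fixed $\pi''$ yields
\[
(1-pq)\,(-1)^{\inv_B(\pi'')}\,p^{\pkk_B(\pi'')}\,q^{\vly_B(\pi'')}.
\]

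Summing over $\pi''\in\BB_{n-2,-,a}-T_{n-2,-,a}$ reduces the claim to showing that
\[
\sum_{\pi''\in\BB_{n-2,-,a}-T_{n-2,-,a}}(-1)^{\inv_B(\pi'')}p^{\pkk_B(\pi'')}q^{\vly_B(\pi'')}=0.
\]
For this, Lemmas \ref{lem:contributiontobivpeakvalleyfrom_n_nminus1_separate}, \ref{lem:contributiontobivpeakvalleyfrom_n_nminus1_consectutive_but_not_last}, \ref{lemma:contribution_n_n-1_consec}, and \ref{lemma:contribution_n_n-1_consec_second} applied at level $n-2$ give $\SBR_{n-2,-,a}(p,q)=\sum_{\pi\in\BB^8_{n-2,-,a}}(-1)^{\inv_B(\pi)}p^{\pkk_B(\pi)}q^{\vly_B(\pi)}$, and the inductive hypothesis refines this further to $\SBR_{n-2,-,a}(p,q)=\sum_{\pi\in T_{n-2,-,a}}(-1)^{\inv_B(\pi)}p^{\pkk_B(\pi)}q^{\vly_B(\pi)}$; subtracting the latter expression from the full sum over $\BB_{n-2,-,a}$ produces the needed zero. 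The main obstacle will be the careful bookkeeping of $\inv_B$ for the sign-flipped suffix $(\ol n,\ol{n-1})$, where type-$A$ inversions, type-$B$ inversions, and the $|\Negs|$ term all contribute; once these are tallied the parity comes out odd, which is precisely what makes the cancellation factor $(1-pq)$ emerge with the correct sign.
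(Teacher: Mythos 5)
Your proof is correct and follows essentially the same route as the paper's: induction in steps of two via the two-to-one prefix map $\pi\mapsto\pi''$, the observation that the two admissible suffixes $(n-1,n)$ and $(\ol{n},\ol{n-1})$ contribute the factor $(1-pq)$, and the reduction of the residual sum over $\BB_{n-2,-,a}-T_{n-2,-,a}$ to zero using the earlier cancellation lemmas together with the inductive hypothesis. The only cosmetic difference is that you sum directly over $\BB^8_{n,-,a}-T_{n,-,a}$ whereas the paper shows the sum over all of $\BB^8_{n,-,a}$ equals the sum over $T_{n,-,a}$; your explicit tally $4(n-2)+3$ for the change in $\inv_B$ is a welcome detail the paper leaves implicit.
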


\begin{proof}
We induct on $n$. It is easy to see when $n=1,2$ that 
$T_{1,-,a}= \BB^{8}_{1,-,a}$ and $T_{2,-,a}= \BB^{8}_{2,-,a}$. 
Thus, when $n\leq 2$, we trivially have 
$\sum _{\pi \in \BB^8_{n,-,a} - T_{n,-,a}} 
(-1)^{\inv_B}(\pi)p^{\pkk_B(\pi)}q^{\vly_B(\pi)}=0.$ 
By induction, assume this to be true for any positive integer 
$n=m$.  We will prove it for $n=m+2$. We have
\begin{eqnarray*}
& & \sum _{\pi \in \BB^8_{m+2,-,a} } 
(-1)^{\inv_B(\pi)} p^{\pkk_B(\pi)}q^{\vly_B(\pi)} \\ 
& = & \sum _{\pi \in \BB^8_{m+2,-,a}, \pi '' \in  \BB_{m,-,a} }  
(-1)^{\inv_B(\pi)}p^{\pkk_B(\pi)}q^{\vly_B(\pi)}, \\ 
& =  &   \sum _{\pi \in \BB^8_{m+2,-,a}, \pi '' \in  \BB^8_{m,-,a}  } 
(-1)^{\inv_B(\pi)}p^{\pkk_B(\pi)}q^{\vly_B(\pi)} \\  
&  & + \sum _{\pi \in \BB^8_{m+2,-,a}, \pi '' \in (\BB_{m,-,a}- \BB^8_{m,-,a})  } 
(-1)^{\inv_B(\pi)}p^{\pkk_B(\pi)}q^{\vly_B(\pi)}, \\ 
& = &  (1-pq)\big[ \sum _{\pi \in \BB^8_{m,-,a}  } 
(-1)^{\inv_B(\pi)}p^{\pkk_B(\pi)}q^{\vly_B(\pi)} +  
\sum _{ \pi  \in (\BB_{m,-,a}- \BB^8_{m,-,a})  }   
(-1)^{\inv_B(\pi)}p^{\pkk_B(\pi)}q^{\vly_B(\pi)} \big] ,    \\ 
& = &   (1-pq) \sum _{\pi \in T_{m,-,a} } 
(-1)^{\inv_B(\pi)}p^{\pkk_B(\pi)}q^{\vly_B(\pi)} + 0 =  
\sum _{\pi \in T_{m+2,-,a} } 
(-1)^{\inv_B(\pi)}p^{\pkk_B(\pi)}q^{\vly_B(\pi)}     
\end{eqnarray*}

The second line follows from the definition of $\BB^8_{m+2,-,a}$. 
The penultimate line follows from observing that a permutation 
$\pi \in \BB^8_{m+2,-,a}$ can arise from $\pi'' \in \BB^8_{m,-,a}$ in only 
two ways: either by putting the string $n-1,n$ at last, which does not 
change the number of peaks and valleys, or by putting 
$\overline{n}, \overline{n-1}$  which increases both the number 
of peaks and valleys by $1$ and flips the sign as well. The last 
line follows by the induction hypothesis, the recursive definition 
of $T_{m+2,-,a}$ and \eqref{eqn:only_BnA8_contributes_nonzero}. 
The proof is complete. 
\end{proof}

\begin{remark}
\label{rem:asc_to_des_map}
Using the map $\sflip$, all results involving $B_{n,-,a}^k$ for 
$1\leq k \leq 8$ go through with identically defined sets 
$B_{n,-,d}^k$ with our  definition of the set $T_{n,-,d}$.  
Thus, we can get versions of Lemmas
\ref{lem:contributiontobivpeakvalleyfrom_n_nminus1_separate},
\ref{lem:contributiontobivpeakvalleyfrom_n_nminus1_consectutive_but_not_last},
\ref{lemma:contribution_n_n-1_consec},
\ref{lemma:contribution_n_n-1_consec_second} and 
\ref{lem:B_minus_T-equals_zero}.  
\end{remark}
We are now in a position to give our type B counterpart of Theorem
\ref{thm:signed_peak_valley-enumeration}.

\begin{theorem}
	\label{thm:main_theorem_signed_peakvalley_enumerate_over_type_b}
	For positive integers $n \geq 2$, the following recurrence relations hold: 
	\begin{eqnarray}
	\label{eqn:recurrence_by_jump_2_peakvalley_B_a}
	\SBR_{n,-,a}(p,q) & = & (1-pq)\SBR_{n-2,-,a}(p,q), \\
	 \label{eqn:recurrence_by_jump_2_peakvalley_B_d}
	\SBR_{n,-,d}(p,q) & = &(1-pq)\SBR_{n-2,-,d}(p,q)
	\end{eqnarray}
	Therefore, we have 
\begin{eqnarray*}
 \SBR_{n,-,a}(p,q) & = & \begin{cases}
	(1-q)(1-pq)^{k-1}
			& \text {when $n = 2k$ is even},\\
	(1-pq)^k 
			& \text {when $n = 2k+1$ is odd}.
	\end{cases}  \\
		\SBR_{n,-,d}(p,q) & = & \begin{cases}
		(1-p)(1-pq)^{k-1} 
			& \text {when $n = 2k$ is even},\\
		-(1-pq)^k 
			& \text {when $n = 2k+1$ is odd}.
		\end{cases}  \\
		\SBR_{n}(p,q) & =& \begin{cases}
		(2-p-q)(1-pq)^{k-1} 
			& \text {when $n = 2k$ is even},\\
		0 & \text {when $n$ is odd}.
		\end{cases}  
\end{eqnarray*}
\end{theorem}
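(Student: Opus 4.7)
The plan is to first establish the two recurrences \eqref{eqn:recurrence_by_jump_2_peakvalley_B_a} and \eqref{eqn:recurrence_by_jump_2_peakvalley_B_d} and then solve them from explicit values at $n=1$ and $n=2$.  Almost all the machinery is already packaged into Lemmas \ref{lem:contributiontobivpeakvalleyfrom_n_nminus1_separate}--\ref{lem:B_minus_T-equals_zero} and Remark \ref{rem:asc_to_des_map}; what remains is to weigh the two appendings built into the recursive definitions of $T_{n,-,a}$ and $T_{n,-,d}$.

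For \eqref{eqn:recurrence_by_jump_2_peakvalley_B_a}, Lemmas \ref{lem:contributiontobivpeakvalleyfrom_n_nminus1_separate}--\ref{lemma:contribution_n_n-1_consec_second} combined with \eqref{eqn:only_BnA8_contributes_nonzero} reduce $\SBR_{n,-,a}(p,q)$ to the sum over $\BB_{n,-,a}^8$, and Lemma \ref{lem:B_minus_T-equals_zero} further reduces this to the sum over $T_{n,-,a}$.  By construction, each $\pi \in T_{n,-,a}$ arises from a unique $\pi'' \in T_{n-2,-,a}$ in one of two ways.  Appending the suffix $n-1,n$ introduces no new peaks or valleys, and none of the three terms in \eqref{eqn:typeb-inv-defn} change, so this appending contributes the factor $1$.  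Appending $\ol{n},\ol{n-1}$ creates exactly one new peak at position $n-2$ (since $\pi_{n-2}>-n$) and one new valley at position $n-1$ (since $-n<-(n-1)$); a direct count shows it adds $2(n-2)$ to the first sum in \eqref{eqn:typeb-inv-defn}, $2(n-2)+1$ to the second, and $2$ to $|\Negs(\pi)|$, for a total change of $4n-5$ in $\inv_B$, which is odd.  Thus this appending contributes the factor $-pq$, and summing the two contributions yields \eqref{eqn:recurrence_by_jump_2_peakvalley_B_a}.  Remark \ref{rem:asc_to_des_map} supplies the analogous $(-,d)$ versions of Lemmas \ref{lem:contributiontobivpeakvalleyfrom_n_nminus1_separate}--\ref{lem:B_minus_T-equals_zero}, and the same argument with the suffixes $n,n-1$ and $\ol{n-1},\ol{n}$ proves \eqref{eqn:recurrence_by_jump_2_peakvalley_B_d}.

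For the base cases, $\BB_1=\{1,\ol{1}\}$ has $\inv_B(1)=0$ and $\inv_B(\ol{1})=1$, so $\SBR_{1,-,a}(p,q)=1$ and $\SBR_{1,-,d}(p,q)=-1$; a direct check over the eight elements of $\BB_2$ gives $\SBR_{2,-,a}(p,q)=1-q$ and $\SBR_{2,-,d}(p,q)=1-p$.  Iterating the recurrences from these values yields the closed forms for $\SBR_{n,-,a}$ and $\SBR_{n,-,d}$ stated in the theorem, and summing gives $(2-p-q)(1-pq)^{k-1}$ when $n=2k$ and $(1-pq)^k-(1-pq)^k=0$ when $n$ is odd.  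The one technical step that needs care is the $\inv_B$ count for the $\ol{n},\ol{n-1}$ appending; everything else is routine bookkeeping.
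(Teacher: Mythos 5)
Your proposal is correct and follows essentially the same route as the paper: reduce $\SBR_{n,-,a}(p,q)$ to the sum over $T_{n,-,a}$ via Lemmas \ref{lem:contributiontobivpeakvalleyfrom_n_nminus1_separate}--\ref{lem:B_minus_T-equals_zero}, weigh the two appendings $n-1,n$ and $\ol{n},\ol{n-1}$ to get the factor $(1-pq)$, handle the descent case by symmetry, and iterate from the $n=1,2$ base cases. Your explicit $\inv_B$ count of $4n-5$ for the $\ol{n},\ol{n-1}$ appending is a correct verification of what the paper dismisses as an easy-to-prove sign flip, and your closed forms and their sum match the statement.
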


\begin{proof}
We first consider \eqref{eqn:recurrence_by_jump_2_peakvalley_B_a}.  
By Lemma  \ref{lem:B_minus_T-equals_zero}, we have
\begin{eqnarray}
\SBR_{n,-,a}(p,q) & = & 
\sum_{\pi \in T_{n,-,a}} (-1)^{\inv_B(\pi)}p^{\pkk_B(\pi)}q^{\vly_B(\pi)} \nonumber \\
& = & (1-pq)\sum_{\pi \in T_{n-2,-,a}} (-1)^{\inv_B(\pi)}p^{\pkk_B(\pi)}q^{\vly_B(\pi)} \nonumber 
 \end{eqnarray}
Recall that for
$\pi= \pi_1, \pi_2, \dots, \pi_{n-2} \in T_{n-2,-,a}$, we have
$\tau_1= \pi_1, \pi_2, \dots, \pi_{n-2},n-1,n$  and
$\tau_2= \pi_1, \pi_2, \dots, \pi_{n-2}, \overline{n}, \overline{n-1}$.
The second line follows from the following easy to prove facts. 
We have 
$\pkk(\tau_1) = \pkk(\pi)$ and $\vly(\tau_1) = \vly(\pi)$.
Further, we have 
$\pkk(\tau_2) = \pkk(\pi) + 1$ and $\vly(\tau_2) = \vly(\pi) + 1$.
This completes the proof of \eqref{eqn:recurrence_by_jump_2_peakvalley_B_a}.
The proof of \eqref{eqn:recurrence_by_jump_2_peakvalley_B_d} 
follows from \eqref{eqn:recurrence_by_jump_2_peakvalley_B_a} 
by using the $\sflip$ map.

The following base cases are also easy to see.  When $n=1,$ 
we have $\SBR_{n,-,a}(p,q)=1$ and $\SBR_{n,-,d}(p,q)=-1.$
Similarly, when $n=2$, 
we have $\SBR_{2,-,a}(p,q)= (1-q)$ 
and $\SBR_{2,-,d}(p,q) = (1-p)$.
Using the recurrences \eqref{eqn:recurrence_by_jump_2_peakvalley_B_a}, 
\eqref{eqn:recurrence_by_jump_2_peakvalley_B_d} and Lemma 
\ref{lem:relation_between_ending_in_ascent_and_ending_in_descent}, 
the proof is complete. 
\end{proof}

\subsection{Multiplicity of $(1+t)$ as a factor of $R_n^{B,\pm}(t)$}
 
Setting $p=q=t$ in Theorem 
\ref{thm:main_theorem_signed_peakvalley_enumerate_over_type_b} and 
multiplying the result by $t$, we get the following.

\begin{corollary}
\label{cor:univariate_signed_altrun_over_type_b}
For positive integers $n$, we have
$$\SBR_{n}(t)=\begin{cases}
2t(1-t)(1-t^2)^{k-1} 
	& \text {when $n = 2k$ is even},\\
0 & \text {when $n$ is odd}.
\end{cases}  $$
\end{corollary}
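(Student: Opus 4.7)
The plan is a direct specialization of the bivariate formula already proved in Theorem \ref{thm:main_theorem_signed_peakvalley_enumerate_over_type_b}.

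First, I would record the basic relationship between the univariate and bivariate signed polynomials. Since every $\pi \in \BB_n$ satisfies $\altr_B(\pi) = \pkk_B(\pi) + \vly_B(\pi) + 1$, we have
\begin{equation*}
\SBR_n(t) \;=\; \sum_{\pi \in \BB_n}(-1)^{\inv_B(\pi)} t^{\altr_B(\pi)} \;=\; t \cdot \SBR_n(t,t),
\end{equation*}
where $\SBR_n(p,q)$ is the polynomial defined in \eqref{eqn:type_b_pkvly_over_Bn}. So all I need to do is substitute $p = q = t$ into the third line of Theorem \ref{thm:main_theorem_signed_peakvalley_enumerate_over_type_b} and multiply by $t$.

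In the even case $n = 2k$, the theorem gives $\SBR_{2k}(p,q) = (2-p-q)(1-pq)^{k-1}$. Setting $p = q = t$ yields $(2-2t)(1-t^2)^{k-1} = 2(1-t)(1-t^2)^{k-1}$, so that
\begin{equation*}
\SBR_{2k}(t) \;=\; t \cdot 2(1-t)(1-t^2)^{k-1} \;=\; 2t(1-t)(1-t^2)^{k-1},
\end{equation*}
matching the claimed formula. In the odd case, the theorem gives $\SBR_n(p,q) = 0$ identically, hence $\SBR_n(t) = t \cdot 0 = 0$.

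There is no real obstacle here: the corollary is a one-line specialization, and all the genuine content (the vanishing of the four subpolynomials outside the set $T_{n,-,*}$, the inductive structure of $T_{n,-,*}$, and the resulting recurrences) has already been established in Theorem \ref{thm:main_theorem_signed_peakvalley_enumerate_over_type_b}. The only thing worth highlighting in the write-up is the elementary identity $\SBR_n(t) = t \cdot \SBR_n(t,t)$, which makes the specialization legitimate.
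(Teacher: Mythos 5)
Your proof is correct and is exactly the paper's argument: the paper likewise obtains the corollary by setting $p=q=t$ in Theorem \ref{thm:main_theorem_signed_peakvalley_enumerate_over_type_b} and multiplying by $t$, the justification being the relation $\altr_B(\pi)=\pkk_B(\pi)+\vly_B(\pi)+1$ that you spell out. Nothing is missing.
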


Zhao in \cite{zhaothesis} 
considering the following polynomial
$$R_n^{B,>}(t)= \sum _{\pi \in \BB_n: \pi_1>0} 
t^{\altr_B(\pi)}=\sum_{i=1}^n R_{n,i}^{B,>} t^i.$$ 
In \cite[Theorem 4.3.2]{zhaothesis}, Zhao proved the 
following.

\begin{theorem}[Zhao]
\label{thm:machowaltrunB}
For positive integers $n$, the polynomials $R_n^{B,>}(t)$ are
divisible by $(1+t)^m$ where $m = \nmhalf$. 
\end{theorem}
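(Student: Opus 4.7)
The plan is to adapt the inductive strategy of B\'ona and Ehrenborg \cite{bonaehrenborglogconcavity} (their proof of Theorem \ref{thm:divisibilty_of_alternatingrunpolynomial_by_highpower_oneplust}) to the type B setting under the constraint $\pi_1 > 0$. First, I would set up a recurrence for $R_n^{B,>}(t)$ by considering how permutations in $\BB_n^>$ arise from shorter signed permutations via insertion of the largest letter $\pm n$. Analogous to the Chow--Ma recurrence for the full $R_n^B(t)$ in \cite{chowmaaltruntypeb}, this should produce a recurrence of the form
\begin{equation*}
R_n^{B,>}(t) \;=\; A_n(t)\,R_{n-1}^{B,>}(t) \;+\; B_n(t)\,(1-t^2)\,\frac{d}{dt}R_{n-1}^{B,>}(t) \;+\; C_n(t)\,R_{n-1}^{B,>}(t),
\end{equation*}
where $A_n,B_n,C_n$ are low-degree polynomials in $t$. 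The constraint $\pi_1>0$ is preserved under insertion of $\pm n$ at any gap except possibly the initial one, so the careful bookkeeping happens only at that single gap.

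Next I would iterate this recurrence two steps, from $n-2$ to $n$, and show that the combined operator boosts the multiplicity of $(1+t)$ as a factor by exactly one. The key algebraic fact is that if $(1+t)^{m'} \mid f(t)$, then $(1-t^2) f'(t)$ is also divisible by $(1+t)^{m'}$, so one application of the operator preserves divisibility and a careful pairing argument (analogous to the parity-of-gaps argument used throughout Sections 2--3 of the present paper) across two steps yields the extra factor of $(1+t)$. Combined with appropriate base cases at small $n$ (say $n=2,3,4,5$), this gives the stated multiplicity $m = \nmhalf$.

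An alternative route, which may actually be the one Zhao follows, is to express $R_n^{B,>}(t)$ in terms of a suitable refinement of the type B Eulerian polynomial, using a Wilf-type substitution of the form $t\mapsto (1-t)^2/(1+t)^2$. In that framework, the divisibility of $R_n^{B,>}(t)$ by $(1+t)^m$ becomes transparent from the shape of the substitution, once one establishes the correct closed identity between $R_n^{B,>}$ and the restricted Eulerian polynomial $\sum_{\pi\in\BB_n,\pi_1>0} t^{\mathrm{des}_B(\pi)}$.

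The main obstacle I expect is in the inductive approach: tracking precisely which insertions of $\pm n$ into the first gap preserve the condition $\pi_1>0$, and ensuring that the contributions from such boundary insertions combine cleanly with the bulk contribution rather than producing an uncontrolled remainder. In the Eulerian-polynomial approach, the main obstacle is instead establishing the refined Wilf-type identity in the first place, which requires a careful analysis of how descents in $\BB_n$ interact with the sign of $\pi_1$.
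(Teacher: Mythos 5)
There is no proof of this statement in the paper for you to match: Theorem \ref{thm:machowaltrunB} is imported from Zhao's thesis \cite[Theorem 4.3.2]{zhaothesis} and used as a black box (the authors only derive the easy consequence, Theorem \ref{thm:wilf-type-b-counterpart}, via the $\sflip$ map). So your proposal must stand on its own, and as written it does not: it is a plan in which both decisive steps are left open. The recurrence you posit for $R_n^{B,>}(t)$ is hypothetical --- the coefficients $A_n(t)$, $B_n(t)$, $C_n(t)$ are never determined, and without them nothing can be checked. There is also a structural wrinkle you gloss over: the insertion recurrence is not closed on $\{\pi \in \BB_n : \pi_1 > 0\}$, since an element of $\BB_n^>$ can arise by inserting $n$ into the initial gap of a shorter permutation whose first letter is negative; you would need to couple the recurrence with $R_n^{B,<}(t)$, or first prove $R_n^{B,<}(t) = R_n^{B,>}(t)$ (which does follow from $\sflip$, as the paper notes in its proof of Theorem \ref{thm:wilf-type-b-counterpart}) to close the system.

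More seriously, the entire content of the theorem is your claim that iterating the recurrence two steps gains exactly one factor of $(1+t)$. You assert this follows from ``a careful pairing argument'' but supply none. In the type A argument of B\'ona and Ehrenborg this two-step computation is precisely the nontrivial part: a single application of the operator only preserves the multiplicity, because the zeroth-order coefficient does not vanish at $t=-1$, and the gain of a factor appears only after combining two consecutive steps and checking a specific cancellation. Nothing in your proposal performs or even sets up that cancellation in type B. The alternative route via a Wilf-type substitution has the same status: the ``correct closed identity'' between $R_n^{B,>}(t)$ and a restricted type B Eulerian polynomial is exactly what would have to be proved, and it is not supplied. Either route is plausible as a strategy, but neither is executed, so the proposal does not constitute a proof.
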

  
Further, define the polynomials
$R_n^{B,<}(t)  = \sum_{\pi \in \BB_n, \pi_1<0} 
t^{\altr_B(\pi)}=\sum_{i=1}^n R_{n,i}^{B,<} t^i$.
From Theorem \ref{thm:machowaltrunB}, the following type B
counterpart of Theorem
\ref{thm:divisibilty_of_alternatingrunpolynomial_by_highpower_oneplust}
is easy to infer.  The statement of Theorem 
\ref{thm:wilf-type-b-counterpart} is implicit in the work
of Zhao, but is not made explicitly.
We thus present a short proof for
completeness.


\begin{theorem}[Zhao]
\label{thm:wilf-type-b-counterpart}
For positive integers $n$, the polynomial $R_n^B(t)$ is divisible by 
$(1 + t)^m$, where $m = \nmhalf$. 
\end{theorem}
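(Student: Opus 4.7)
My plan is to observe that the two summands making up $R_n^B(t) = R_n^{B,>}(t) + R_n^{B,<}(t)$ are equal as polynomials, so the divisibility claim for $R_n^B(t)$ follows immediately from Theorem \ref{thm:machowaltrunB}.

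To establish $R_n^{B,>}(t) = R_n^{B,<}(t)$, I would introduce the \emph{negation map} $N: \BB_n \to \BB_n$ defined by $N(\pi_1, \pi_2, \ldots, \pi_n) = (-\pi_1, -\pi_2, \ldots, -\pi_n)$. Since the defining relation $\sigma(-i) = -\sigma(i)$ is preserved under entry-wise negation, $N$ is a well-defined involution on $\BB_n$ and restricts to a bijection between $\{\pi \in \BB_n : \pi_1 > 0\}$ and $\{\pi \in \BB_n : \pi_1 < 0\}$.

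The key verification is that $N$ preserves $\altr_B$. Because $\altr_B(\pi) = \pkk_B(\pi) + \vly_B(\pi) + 1$, it suffices to show that $N$ swaps peaks and valleys, i.e.\ $\Pkk_B(N(\pi)) = \Vly_B(\pi)$ and $\Vly_B(N(\pi)) = \Pkk_B(\pi)$. For any index $2 \leq i \leq n-1$, negating all three entries $\pi_{i-1}, \pi_i, \pi_{i+1}$ reverses both inequalities in $\pi_{i-1} < \pi_i > \pi_{i+1}$, turning a peak into a valley and vice versa. For $i=1$, using the convention $\pi_0 = 0 = -0$, the same sign reversal converts $0 < \pi_1 > \pi_2$ into $0 > -\pi_1 < -\pi_2$, so a peak at index $1$ becomes a valley and vice versa. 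Consequently $\pkk_B(N(\pi)) + \vly_B(N(\pi)) = \pkk_B(\pi) + \vly_B(\pi)$, yielding $\altr_B(N(\pi)) = \altr_B(\pi)$ and hence $R_n^{B,>}(t) = R_n^{B,<}(t)$.

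Combining with Theorem \ref{thm:machowaltrunB}, which gives $(1+t)^m \mid R_n^{B,>}(t)$ for $m = \nmhalf$, we conclude $(1+t)^m \mid R_n^B(t) = 2 R_n^{B,>}(t)$. There is no real obstacle in this argument; the only subtlety is the $i=1$ case, which works out cleanly because the boundary convention value $0$ is self-negating.
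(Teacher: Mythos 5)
Your proposal is correct and is essentially the paper's own argument: the negation map $N$ you introduce is exactly the paper's $\sflip$ map, which is used in the same way to show $R_n^{B,>}(t)=R_n^{B,<}(t)$ and then invoke Theorem \ref{thm:machowaltrunB}. The only difference is that you spell out the peak/valley swap (including the $\pi_0=0$ boundary case), which the paper leaves as an easy observation.
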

\begin{proof}
Using the map $\sflip$, it is easy to see that 
$R_n^{B,<}(t)= R_n^{B,>}(t).$  Combining with 
Theorem \ref{thm:machowaltrunB}, we get that 
both $R_n^{B,<}(t)$ and  $R_n^{B,>}(t)$ 
are divisible by $(1+t)^{\nmhalf}$.
Therefore, $R_n^{B}(t)$  is divisible by  $2(1+t)^{\nmhalf}$.
\end{proof}

Towards refining Theorem \ref{thm:wilf-type-b-counterpart},
we define the polynomials
\begin{equation*}
R_n^{B,+}(t) = \sum_{\pi \in \BB_n^+} 
t^{\altr_B(\pi)}=\sum_{i=1}^n R_{n,i}^{B,+} t^i \hspace{5 mm} \mbox{ and } \hspace{5 mm}
R_n^{B,-}(t)= \sum_{\pi \in \BB_n^-} t^{\altr_B(\pi)}
=\sum_{i=1}^n R_{n,i}^{B,-} t^i. 
\end{equation*}
  
From Corollary \ref{cor:univariate_signed_altrun_over_type_b},
we get the following refinement of Theorem 
\ref{thm:wilf-type-b-counterpart}.
  
\begin{theorem}
\label{thm:divisibiltyforplusminuspolynomialstypeB}
For positive integers $n$, both polynomials $R_n^{B, \pm }(t)$  
are divisible by $(1+t)^m$ where $m = \nmhalf$.
\end{theorem}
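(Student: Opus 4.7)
The plan is to combine the explicit formula for $\SBR_n(t)$ already available from Corollary \ref{cor:univariate_signed_altrun_over_type_b} with Zhao's divisibility result (Theorem \ref{thm:wilf-type-b-counterpart}), exactly mirroring the way Theorem \ref{thm:Rnplusminust_is_divible_by_1plust_m} was deduced in the symmetric-group setting.

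First, I would write the identities $R_n^{B,+}(t) + R_n^{B,-}(t) = R_n^B(t)$ and $R_n^{B,+}(t) - R_n^{B,-}(t) = \SBR_n(t)$, giving
\begin{equation*}
R_n^{B,\pm}(t) = \frac{R_n^B(t) \pm \SBR_n(t)}{2}.
\end{equation*}
Since $R_n^{B,\pm}(t) \in \mathbb{Z}[t]$ and $(1+t)$ is primitive, divisibility of $R_n^{B,\pm}(t)$ by $(1+t)^m$ in $\mathbb{Z}[t]$ is equivalent to divisibility in $\mathbb{Q}[t]$, which in turn is equivalent to $(1+t)^m$ dividing the numerator $R_n^B(t) \pm \SBR_n(t)$.

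Next, I would check the two cases for $m = \lfloor (n-1)/2 \rfloor$ separately. When $n = 2k+1$ is odd, Corollary \ref{cor:univariate_signed_altrun_over_type_b} gives $\SBR_n(t) = 0$, so the numerator equals $R_n^B(t)$, which is divisible by $(1+t)^m = (1+t)^k$ by Theorem \ref{thm:wilf-type-b-counterpart}. When $n = 2k$ is even, the same corollary gives
\begin{equation*}
\SBR_{2k}(t) = 2t(1-t)(1-t^2)^{k-1} = 2t(1-t)^k (1+t)^{k-1},
\end{equation*}
so $(1+t)^{k-1} = (1+t)^m$ already divides $\SBR_n(t)$; combined with the divisibility $(1+t)^m \mid R_n^B(t)$ from Theorem \ref{thm:wilf-type-b-counterpart}, this gives $(1+t)^m \mid R_n^B(t) \pm \SBR_n(t)$.

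Neither step presents any real obstacle: the symmetric-group template (proof of Theorem \ref{thm:Rnplusminust_is_divible_by_1plust_m}) already shows that the only work is to read off the multiplicity of $(1+t)$ in $\SBR_n(t)$ from the closed form, and in the type B case this multiplicity is automatically $\geq m$ in both parities of $n$. Consequently, unlike in Theorem \ref{thm:Rnplusminust_is_divible_by_1plust_m}, there is no drop of one in the exponent when $n$ is even, and a single uniform bound $m = \lfloor (n-1)/2 \rfloor$ suffices for both $R_n^{B,+}(t)$ and $R_n^{B,-}(t)$.
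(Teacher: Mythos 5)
Your proposal is correct and follows essentially the same route as the paper: both write $R_n^{B,\pm}(t) = \bigl(R_n^B(t) \pm \SBR_n(t)\bigr)/2$, read off the multiplicity of $(1+t)$ in $\SBR_n(t)$ from Corollary \ref{cor:univariate_signed_altrun_over_type_b}, and invoke Theorem \ref{thm:wilf-type-b-counterpart} for $R_n^B(t)$. In fact your factorization $\SBR_{2k}(t)=2t(1-t)^k(1+t)^{k-1}$ is the correct one (the paper's proof displays the type A exponents $(1-t)^{2k}(1+t)^{2k-2}$ by an apparent typo), and your explicit handling of the division by $2$ via monicity of $(1+t)^m$ is a slight tightening of the same argument.
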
 
  
\begin{proof} 
When $n$ is odd, by 
Corollary \ref{cor:univariate_signed_altrun_over_type_b}, we have
$R_n^{B,\pm}(t) = \frac{1}{2} R_n^B(t)$.  
When $n=2k$ is even,
by Corollary \ref{cor:univariate_signed_altrun_over_type_b}, we have
\begin{eqnarray*}
R_n^{B, \pm}(t) & = & \frac{R_n^B(t) \pm \SBR_n(t)}{2}
=   \frac{R_n^B(t) \pm 2t(1-t)^{2k}(1+t)^{2k-2}}{2}.
\end{eqnarray*}
The proof in both cases is complete by using 
Theorem \ref{thm:wilf-type-b-counterpart}. 
\end{proof}

\comment{
\blue{Is there a $\pm$ version of Theorem
\ref{thm:machowaltrunB}?  }

We next show the refinement of Theorem \ref{thm:machowaltrunB} to
$\BB_n^{\pm}$ that we alluded to earlier.  Thus, define the 
following $4$ subsets of $\BB_n$. 
\begin{equation*}
\BB_n^{>,+} = \BB_n^> \cap \BB_n^+, \hspace{5 mm} 
\BB_n^{>,-} = \BB_n^> \cap \BB_n^-, \hspace{5 mm} 
\BB_n^{<,+} = \BB_n^< \cap \BB_n^+, \hspace{5 mm} 
\BB_n^{<,-} = \BB_n^< \cap \BB_n^-. 
\end{equation*}

We also define the following corresponding $\altr_B$ enumerating polynomials over these $4$ sets.  

\begin{eqnarray*}
 and a reference to it.R_n^{B,>,+} (t) & = & \sum _{\pi \in \BB_n^{>,+}} t^ {\altr_B(\pi)}= \sum_{i=1}^n R_{n,i}^{B,>,+} t^i, \\
  R_n^{B,>,-} (t)&  = & \sum _{\pi \in \BB_n^{>,-}} t^ {\altr_B(\pi)}=\sum_{i=1}^n R_{n,i}^{B,>,-} t^i, \\
R_n^{B,<,+} (t) & = & \sum _{\pi \in \BB_n^{<,+}} t^ {\altr_B(\pi)}=\sum_{i=1}^n R_{n,i}^{B,<,+} t^i, \\
 R_n^{B,<,-} (t) & = & \sum _{\pi \in \BB_n^{<,-}} t^ {\altr_B(\pi)}=\sum_{i=1}^n R_{n,i}^{B,<,-} t^i. 
\end{eqnarray*}

We start with the following Lemma. 

\begin{lemma}
\label{lem:equality_dbl_rstrctd_poly_b}
For positive integers $n$ we have, 
\begin{eqnarray}
\label{eqn:dbl_rstrct_rltn_even_b}
R_n^{B,>,+} (t) & = & R_n^{B,<,+} (t) \hspace{5 mm} 
\mbox{and} \hspace{5 mm}   
R_n^{B,>,-} (t)  =  R_n^{B,<,-} (t) 
\hspace{7 mm} \mbox{ when $n$ is even.}  \\
\label{eqn:dbl_rstrct_rltn_odd_b}
R_n^{B,>,+} (t) & = & R_n^{B,<,-} (t) \hspace{5 mm} 
\mbox{and} \hspace{5 mm}  
R_n^{B,>,-} (t)  =  R_n^{B,<,+} (t) 
\hspace{7 mm} \mbox{ when $n$ is odd.} 
\end{eqnarray}
\end{lemma}

\begin{proof}
The proof directly follows by using the map $\sflip$ and
noting that $\sflip$ preserves the parity of $\inv_B$ if 
and only if $n$ is even. 
\end{proof}

Therefore, for even positive integers $n$ we immediately have the following result.

\begin{theorem}
\label{thm:dbl_rstrct_div_b_even}
For positive integers $n=2k$, each of the polynomials $R_n^{B,>,+} (t)$, 
$R_n^{B,>,-} (t)$, $R_n^{B,<,+} (t)$ and $R_n^{B,<,-} (t)$ are 
divisible by $(1+t) ^{k-1}$. 
\end{theorem}
\begin{proof}
We have 
\begin{eqnarray*}
R_n^{B,>,+} (t) - R_n^{B,>,-} (t) & = & R_n^{B,<,+} (t) - R_n^{B,<,-} (t) \\
& = & \frac{1}{2}[ R_n^{B,+} (t) - R_n^{B,-} (t)] =  t(1-t)(1-t^2)^{k-1} \\
\end{eqnarray*}
The first line follows from Lemma \ref{lem:equality_dbl_rstrctd_poly_b}. 
Thus, $(1+t)^{k-1}$ divides the polynomials $R_n^{B,>,+} (t)- R_n^{B,>,-}(t)$ 
and  $R_n^{B,>} (t)$.  Therefore, $(1+t)^{k-1}$ divides the polynomials 
$R_n^{B,>,+} (t)$ and $ R_n^{B,>,-} (t)$. In an identical manner 
one can show that $(1+t)^{k-1}$ divides the polynomials 
$R_n^{B,<,+} (t)$ and $ R_n^{B,<,-} (t)$. The proof is complete. 
\end{proof}

A similar result is also true when $n$ is odd, say $n=2k+1$. But the proof is more involved. For this case, we need another two Lemmas.  Let $\id = 1,2,3,4,\dots,n-1,n \in \BB_n^>$ be 
the identity permutation. Consider the set $\BB_n(\id) = 
\{\pi \in \BB_n^> : \pi_1=1 \mbox{ and }  |\pi_i|=i \mbox{ for } i>1 \}$.

\begin{lemma}
\label{lem:outside_id_class_zero}
For positive integers $n=2k+1$ we have 
$$\sum_{\pi \in \BB_n^> - \BB_n(id)} (-1)^{\inv_B(\pi)}t^{\altr_B(\pi)}=0.$$
\end{lemma}

\begin{proof}
For any permutation $\pi \in \BB_n^> - \BB_n(\id)$, let $k$ be the smallest 
index such that $|\pi_k| \neq k$. Then, there exists $r > k$ such that 
$|\pi_r|=k$. Moreover, as $k$ is the smallest index such that $|\pi_k| \neq k$, 
we have $|\pi_i|=i$ for $1 \leq i \leq k-1$.  Therefore, $|\pi_{r-1}| > k$ 
and $|\pi_{r+1}| > k$. We now define 
$f:\BB_n^> - \BB_n(id) \mapsto \BB_n^> - \BB_n(\id)$ by 
$$f(\pi_1, \pi_2, \dots, \pi_r, \dots, \pi_n )= 
\pi_1, \pi_2, \dots, \ol{\pi_r}, \dots, \pi_n. $$ It is easy to see that 
$f$ preserves $\altr_B$ but it flips $\inv_B$. This completes the proof. 
\end{proof}

\begin{lemma}
\label{lem:in_id_class_b}
For positive integers $n=2k+1$, we have 
\begin{equation}
\label{eqn:jumpby2inidclass}
\sum_{\pi \in \BB_{n}(id)} (-1)^{\inv_B(\pi)} t^{\altr_B(\pi)}  =        
(1-t^2) \sum_{\pi \in \BB_{n-2}(id)} (-1)^{\inv_B(\pi)} t^{\altr_B(\pi)}
\end{equation}
Therefore, we have 
$$\sum_{\pi \in \BB_n(id)} (-1)^{\inv_B(\pi)}t^{\altr_B(\pi)}=t(1-t^2)^k.$$
\end{lemma}

\begin{proof}
We consider \eqref{eqn:jumpby2inidclass} first. 
Every permutation $\pi=\pi_1,\pi_2,\dots,\pi_{2k-1} \in \BB_{2k-1}(id)$ gives rise to the following four permutations
$\psi_1$, $\psi_2$, $\psi_3$ and $\psi_4$ in $\BB_{2k+1}(id)$: 
\begin{eqnarray*}
        \psi_1 &= &\pi_1,\pi_2,\dots, \pi_{2k-1}, 2k, 2k+1  \hspace{10 mm}
        \psi_2 = \pi_1,\pi_2,\dots, \pi_{2k-1}, 2k, \ol{2k+1} \\
        \psi_3 &= &\pi_1,\pi_2,\dots, \pi_{2k-1}, \ol{2k}, 2k+1 \hspace{10 mm}
        \psi_4 = \pi_1,\pi_2,\dots, \pi_{2k-1}, \ol{2k}, \ol{2k+1}.
\end{eqnarray*}
If $\pi_{2k-1}=2k-1$, then we have
\begin{enumerate}
\item $\altr_B(\psi_1 )= \altr_B(\pi)$ and $\inv_D(\psi_1) - \inv_D(\pi) \equiv 0 \mod 2$.
        
\item $\altr_B(\psi_2 )= \altr_B(\pi)+1$ and 
$\inv_D(\psi_2) - \inv_D(\pi) \equiv 1 \mod 2$.

\item $\altr_B(\psi_3 )= \altr_B(\pi)+2$ and 
$\inv_D(\psi_3) - \inv_D(\pi) \equiv 1 \mod 2$.
        
\item $\altr_B(\psi_4 )= \altr_B(\pi)+1$ and 
$\inv_D(\psi_4) - \inv_D(\pi) \equiv 0 \mod 2$.
\end{enumerate}
If $\pi_{2k-1}=\ol{2k-1}$, then we have

\begin{enumerate}
\item $\altr_B(\psi_1 )= \altr_B(\pi)+1$ and $\inv_D(\psi_1) - \inv_D(\pi) \equiv 0 \mod 2$.
        
\item $\altr_B(\psi_2 )= \altr_B(\pi)+2$ and 
$\inv_D(\psi_2) - \inv_D(\pi) \equiv 1 \mod 2$.
        
\item $\altr_B(\psi_3 )= \altr_B(\pi)+1$ and 
$\inv_D(\psi_3) - \inv_D(\pi) \equiv 1 \mod 2$.
        
\item $\altr_B(\psi_4 )= \altr_B(\pi)$ and 
$\inv_D(\psi_4) - \inv_D(\pi) \equiv 0 \mod 2$.
\end{enumerate}
Therefore, we get
\begin{equation*}
\sum_{\pi \in \BB_{2k+1}(id)} (-1)^{\inv_B(\pi)} t^{\altr_B(\pi)}  =     
(1-t+t-t^2) \sum_{\pi \in \BB_{2k+1}(id)} (-1)^{\inv_B(\pi)} t^{\altr_B(\pi)} 
\end{equation*}
This completes the proof of \eqref{eqn:jumpby2inidclass}.

The following base case is easy to see: when $k=1$, we have $n=3$ and 
in this case $\sum_{\pi \in \BB_{2k+1}(id)} (-1)^{\inv_B(\pi)} 
t^{\altr_B(\pi)}=t(1-t^2)$.  Using the recurrence 
\eqref{eqn:jumpby2inidclass}, the proof of the Lemma is complete. 
\end{proof}

\begin{theorem}
\label{thm:dbl_rstrct_div_b_odd}
For positive integers $n=2k+1$, all $R_n^{B,>,+} (t)$, $R_n^{B,>,-} (t)$, 
$R_n^{B,<,+} (t)$ and $R_n^{B,<,-} (t)$ are divisible by $(1+t)^k$. 
\end{theorem}

\begin{proof}
        For positive integers $n=2k+1$, by Lemma \ref{lem:outside_id_class_zero} and Lemma \ref{lem:in_id_class_b}, we have 
        $R_n^{B,>,+} (t)- R_n^{B,>,-} (t) = t(1-t^2)^{k}$. 
 Thus, $(1+t)^{k}$ divides the polynomials $R_n^{B,>,+} (t)- R_n^{B,>,-} (t)$ and  $R_n^{B,>} (t)$. Therefore, $(1+t)^{k}$ divides the polynomials $R_n^{B,>,+} (t)$ and $ R_n^{B,>,-} (t)$. In an identical manner one can show that $(1+t)^{k}$ divides the polynomials $R_n^{B,<,+} (t)$ and $ R_n^{B,<,-} (t)$. The proof is complete. 
\end{proof}
}

\subsection{Moment-type identities in $\BB_n^{\pm}$}
\label{subsec:moment-type-typeb}

Chow and Ma in \cite[Corollary 5]{chowmaaltruntypeb} 
used Lemma \ref{lem:chow-ma-powers-alt} and Theorem   
\ref{thm:machowaltrunB}, to infer the following moment
type equality.

\begin{theorem}[Chow and Ma]
\label{thm:sum_of_kth_powers_of_odd_equals_sum_of_kth_powers_signed_type_b}
For positive integers $n$ and positive integers $k$ with $n \geq 2k+3$, 
\begin{eqnarray*}
\label{eqn:sum_of_kth_powers_of_odd_equals_sum_of_kth_powers_of_even_sign_type_b}
1^kR_{n,1}^{B,>}+3^kR_{n,3}^{B,>}+5^kR_{n,5}^{B,>}+\cdots 
& = & 
2^kR_{n,2}^{B,>}+4^kR_{n,4}^{B,>}+6^kR_{n,6}^{B,>}+\cdots, 
\end{eqnarray*}
\end{theorem}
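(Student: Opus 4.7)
The plan is to obtain this theorem as a direct formal consequence of combining the divisibility of $R_n^{B,>}(t)$ by a high power of $(1+t)$ with the generic algebraic criterion already recorded in Lemma~\ref{lem:chow-ma-powers-alt}. Concretely, I would take $f(t) = R_n^{B,>}(t) = \sum_{i=1}^n R_{n,i}^{B,>} t^i$ and invoke Theorem~\ref{thm:machowaltrunB}, which asserts that $(1+t)^m$ divides $f(t)$ with $m = \lfloor (n-1)/2 \rfloor$.

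With this divisibility in hand, Lemma~\ref{lem:chow-ma-powers-alt} applied to $f$ immediately yields
$$1^k R_{n,1}^{B,>} + 3^k R_{n,3}^{B,>} + 5^k R_{n,5}^{B,>} + \cdots = 2^k R_{n,2}^{B,>} + 4^k R_{n,4}^{B,>} + 6^k R_{n,6}^{B,>} + \cdots$$
for every positive integer $k$ satisfying $k \leq m-1$. It remains only to match the hypothesis $n \geq 2k+3$ to this bound on $k$. From $n \geq 2k+3$ one has $n-1 \geq 2k+2$, hence $\lfloor (n-1)/2 \rfloor \geq k+1$, i.e., $m \geq k+1$, which is exactly the inequality $k \leq m-1$ required by Lemma~\ref{lem:chow-ma-powers-alt}. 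A brief check confirms that the bound $n \geq 2k+3$ is in fact tight across both parities of $n$, so no stronger hypothesis is needed.

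There is no real obstacle in the argument; the substantive work has already been carried out in Theorem~\ref{thm:machowaltrunB} (Zhao's refined divisibility of $R_n^{B,>}(t)$) and in Lemma~\ref{lem:chow-ma-powers-alt} (the moment-type identity extracted from divisibility by $(1+t)^m$). The only care required is the elementary bookkeeping between the exponent $m = \lfloor (n-1)/2 \rfloor$ and the hypothesis on $k$, ensuring that the criterion of Lemma~\ref{lem:chow-ma-powers-alt} can indeed be invoked at exponent $k$ whenever $n \geq 2k+3$.
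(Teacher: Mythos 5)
Your proposal is correct and matches the paper's route exactly: the paper presents this result as Chow and Ma's, obtained by applying the generic criterion of Lemma~\ref{lem:chow-ma-powers-alt} to $f(t)=R_n^{B,>}(t)$ together with Zhao's divisibility result (Theorem~\ref{thm:machowaltrunB}), which is precisely your argument. Your bookkeeping $n\geq 2k+3 \Rightarrow \lfloor (n-1)/2\rfloor \geq k+1 \Rightarrow k\leq m-1$ is also correct, so nothing is missing.
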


From Theorem \ref{thm:wilf-type-b-counterpart} and 
Lemma \ref{lem:chow-ma-powers-alt}, the following slightly
weaker corollary follows.

\begin{corollary}
\label{thm:moment_identities_type_b}
For positive integers $n$ and positive integers $k$ with $n \geq 2k+3$, 
\begin{eqnarray*}
\label{eqn:sum_of_kth_powers_of_odd_equals_sum_of_kth_powers_of_even_type_b}
1^kR_{n,1}^{B}+3^kR_{n,3}^{B}+5^kR_{n,5}^{B}+\cdots 
& = & 
2^kR_{n,2}^{B}+4^kR_{n,4}^{B}+6^kR_{n,6}^{B}+\cdots, 
\end{eqnarray*}
\end{corollary}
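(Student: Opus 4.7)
The plan is to deduce Corollary \ref{thm:moment_identities_type_b} as a direct consequence of the two earlier results that the authors flag, namely Theorem \ref{thm:wilf-type-b-counterpart} and Lemma \ref{lem:chow-ma-powers-alt}. There is no need for any new combinatorial argument; the entire content is an arithmetic check that the divisibility exponent supplied by Theorem \ref{thm:wilf-type-b-counterpart} is large enough for Lemma \ref{lem:chow-ma-powers-alt} to apply with the claimed value of $k$.

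First I would invoke Theorem \ref{thm:wilf-type-b-counterpart}, which gives that $(1+t)^m$ divides the polynomial $R_n^B(t) = \sum_i R_{n,i}^B t^i$, with $m = \lfloor (n-1)/2 \rfloor$. Next I would apply Lemma \ref{lem:chow-ma-powers-alt} with $f(t) = R_n^B(t)$ and $f_i = R_{n,i}^B$: the lemma yields the identity
\[
1^k R_{n,1}^B + 3^k R_{n,3}^B + 5^k R_{n,5}^B + \cdots = 2^k R_{n,2}^B + 4^k R_{n,4}^B + 6^k R_{n,6}^B + \cdots
\]
provided $k \leq m-1$, that is, $k \leq \lfloor (n-1)/2 \rfloor - 1$.

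The only step requiring verification is the numerical comparison between the hypothesis $n \geq 2k+3$ and the requirement $k \leq m-1$. If $n \geq 2k+3$, then $n - 1 \geq 2k+2$, so $(n-1)/2 \geq k+1$ and hence $m = \lfloor (n-1)/2 \rfloor \geq k+1$, i.e., $k \leq m-1$. This is exactly the hypothesis of Lemma \ref{lem:chow-ma-powers-alt}, completing the proof.

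There is no real obstacle here. The statement is a weakening of Theorem \ref{thm:sum_of_kth_powers_of_odd_equals_sum_of_kth_powers_signed_type_b} of Chow and Ma (which enumerates only over $\BB_n^>$), and the only subtlety is making sure the Wilf-type divisibility exponent for the full polynomial $R_n^B(t)$ beats $k$ by at least one. The inequality $n \geq 2k+3$ has been chosen precisely for this, so the derivation is essentially a one-line implication once the two earlier results are cited.
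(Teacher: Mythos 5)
Your proposal is correct and follows exactly the paper's route: the corollary is stated as an immediate consequence of Theorem \ref{thm:wilf-type-b-counterpart} and Lemma \ref{lem:chow-ma-powers-alt}, and your arithmetic check that $n \geq 2k+3$ forces $k \leq \lfloor(n-1)/2\rfloor - 1$ is precisely the verification needed. Nothing is missing.
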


From Lemma 
\ref{lem:chow-ma-powers-alt} and Theorem   
\ref{thm:divisibiltyforplusminuspolynomialstypeB}, 
the following refinement of Corollary
\ref{thm:moment_identities_type_b} is straightforward. 

\begin{theorem}
\label{thm:sum_of_kth_powers_of_odd_equals_sum_of_kth_powers_signed_type_b_evenodd}
For positive integers $n$ and positive integers $k$ with $n \geq 2k+3$, 
\begin{equation*}
\label{eqn:sum_of_kth_powers_of_odd_equals_sum_of_kth_powers_of_even_sign_type_b_plus}
1^kR_{n,1}^{B, \pm }+3^kR_{n,3}^{B, \pm}+5^kR_{n,5}^{B, \pm}+\cdots  
=  
2^kR_{n,2}^{B, \pm}+4^kR_{n,4}^{B, \pm}+6^kR_{n,6}^{B, \pm}+\cdots 
\end{equation*}
\end{theorem}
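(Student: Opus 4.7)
The plan is to derive this identity as an immediate consequence of the divisibility result in Theorem \ref{thm:divisibiltyforplusminuspolynomialstypeB}, by feeding it into the algebraic mechanism already packaged as Lemma \ref{lem:chow-ma-powers-alt}. Concretely, Theorem \ref{thm:divisibiltyforplusminuspolynomialstypeB} tells us that $R_n^{B,\pm}(t) = \sum_i R_{n,i}^{B,\pm} t^i$ is divisible by $(1+t)^m$ with $m = \lfloor (n-1)/2 \rfloor$. Applying Lemma \ref{lem:chow-ma-powers-alt} directly to the polynomial $f(t) = R_n^{B,\pm}(t)$ would then yield the claimed equality of signed moment sums.

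The only verification needed is that the hypothesis $n \geq 2k+3$ in the theorem statement is strong enough to guarantee the hypothesis $k \leq m-1$ required by Lemma \ref{lem:chow-ma-powers-alt}. I would check both parities: when $n = 2k+3$ is odd, $m = (n-1)/2 = k+1$, so $m-1 = k$ and the constraint $k \leq m-1$ is met with equality; when $n \geq 2k+4$, the value of $m$ is at least $k+1$, so $m-1 \geq k$ continues to hold. Thus for every $n \geq 2k+3$, Lemma \ref{lem:chow-ma-powers-alt} applies and delivers the identity.

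There is no real obstacle here; the proof is a one-line invocation and exactly mirrors the way Corollary \ref{thm:moment_identities_type_b} was derived from Theorem \ref{thm:wilf-type-b-counterpart}. The substantive work has already been done in establishing the signed divisibility Theorem \ref{thm:divisibiltyforplusminuspolynomialstypeB}, which in turn rests on Corollary \ref{cor:univariate_signed_altrun_over_type_b} and hence on the main type B formula in Theorem \ref{thm:main_theorem_signed_peakvalley_enumerate_over_type_b}. The final proof therefore amounts to: apply Lemma \ref{lem:chow-ma-powers-alt} to the polynomial $R_n^{B,\pm}(t)$ using the divisibility supplied by Theorem \ref{thm:divisibiltyforplusminuspolynomialstypeB}, noting that $n \geq 2k+3$ ensures $k \leq m-1$.
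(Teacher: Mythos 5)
Your proposal is correct and is exactly the paper's proof: the paper derives this theorem by applying Lemma \ref{lem:chow-ma-powers-alt} to $R_n^{B,\pm}(t)$ using the divisibility by $(1+t)^{\nmhalf}$ from Theorem \ref{thm:divisibiltyforplusminuspolynomialstypeB}. Your check that $n \geq 2k+3$ guarantees $k \leq m-1$ is accurate and makes explicit a verification the paper leaves implicit.
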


A signed refinement of Theorem 
\ref{thm:sum_of_kth_powers_of_odd_equals_sum_of_kth_powers_signed_type_b}
is also true.  However, the proof fits in the theme of another of our
work.  We thus refer the reader to 
\cite[Theorem 13]{dey-siva-grp-action-bona-wilf} 
for this.

\comment{
  \begin{proof}
  Recall that the Stirling numbers of the second kind  $S(k,r)$ satisfies 
  \begin{equation}
  \label{eqn:stirlingnumber_recurrence_b}
  \sum_{r=1}^{k} S(k,r)s(s-1)\cdots(s-r+1)=s^k.
  \end{equation}
  We multiply both sides of \eqref{eqn:stirlingnumber_recurrence_b} by $R^{B\pm}_{n,s}t^{s-k}$ and then sum them up over $s$ to get \begin{eqnarray}
  \sum_{s \geq 2} s^k R^{B \pm}_{n,s}t^{s-k} & = & \sum _{r=1}^kt^{r-k}S(k,r) \sum _{s>r} s(s-1)\cdots(s-r+1)R^{B \pm}_{n,s}t^{s-r} \nonumber \\
  & = & \sum _{r=1}^k t^{r-k}S(k,r) (R_n^{B \pm})^{(r)}(t) \label{eqn:stirlingnumber_recurrence_2_b} 
  \end{eqnarray} Here $(R_n^{B \pm})^{(r)}(t)$ denotes the $r$-th derivative of the polynomial $R_n^{B \pm}(t)$. Since $n \geq 2k+3$ and $1 \leq r \leq k$, we have $\floor{(n-1)/2} \geq k+1$. Hence the exponents of $(1+t)$ in  the summands of the right hand side of Equation \eqref{eqn:stirlingnumber_recurrence_2_b} are positive. 
  We can now set $t=-1$ in
  \eqref{eqn:stirlingnumber_recurrence_2_b} and this yields $$\displaystyle (-1)^{2-k}[\sum_{s>1}(2s)^kR^{B \pm}_{n,2s} - \sum_{s>0}(2s+1)^kR^{B \pm}_{n,2s+1}]=\sum_{r=1}^{k} (-1)^{r-k}S(k,r)(R_n^{B \pm})^{(r)}(-1)=0.$$
  This completes the proof. 
  \end{proof}
\blue{
From Theorem \ref{thm:dbl_rstrct_div_b_even} and Theorem \ref{thm:dbl_rstrct_div_b_odd}, 
we immediately get the following moment type identity. 
\begin{theorem}
\label{thm:moment_id_dbl_rstrct_b}
For $n \geq 2k+3$, we have
\begin{eqnarray*}
1^kR_{n,1}^{B,>, \pm}+3^kR_{n,3}^{B,>, \pm }+5^kR_{n,5}^{B,>,\pm}+\cdots 
& = & 
2^kR_{n,2}^{B,>,\pm}+4^kR_{n,4}^{B,>,\pm}+6^kR_{n,6}^{B,>,\pm}+\cdots, \\
1^kR_{n,1}^{B,<, \pm}+3^kR_{n,3}^{B,<, \pm }+5^kR_{n,5}^{B,<,\pm}+\cdots 
& = & 
2^kR_{n,2}^{B,<,\pm}+4^kR_{n,4}^{B,<,\pm}+6^kR_{n,6}^{B,<,\pm}+\cdots.
\end{eqnarray*}
\end{theorem}
}
}

\subsection{Alternating permutations in $\BB_n^{\pm}$}
Our results here on the type B counterparts of Theorem 
\ref{thm:egf_for_alt_perm_even_and_odd}
are very closely related to the type D counterparts.
Hence, we have moved them to Subsection \ref{subsec:alt_perms-bd}.

\section{Type D Coxeter Groups}

Recall that $\DD_n \subseteq \BB_n$ is the subset of type B 
permutations that have an even number of negative signs. 
Let $\pi = \pi_1, \pi_2,\ldots, \pi_n \in \DD_n$.
The following combinatorial definition of type D inversions
is well known (see, for example, Petersen's book 
\cite[Page 302]{petersen-eulerian-nos-book}):
$\inv_D(\pi) = \inv_A(\pi) + | \{1 \leq i < j 
\leq n:  -\pi_i > \pi_j \}|$.
Here $\inv_A(\pi)$ is computed with respect to the usual order
on $\mathbb{Z}$.  
Since these definitions are combinatorial, we can apply
them to elements $\pi \in \BB_n$ or to $\pi \in \DD_n$.  Hence, we also
have $\inv_D(\pi)$ when $\pi \in \BB_n$ and specially if $\pi \in \BB_n-\DD_n$.
From the above definition of $\inv_B(\pi)$ and $\inv_D(\pi)$ we
get the following simple corollary which we will need later. 
Comparing the above definition of $\inv_D$ with 
\eqref{eqn:typeb-inv-defn}, we get the following elementary 
corollary.

\begin{corollary}
\label{cor:invb_invd_ddn_bbn}
Let $\pi \in \BB_n$.  Then, $\inv_B(\pi) = \inv_D(\pi) + |\Negs(\pi)|$.
Hence, if $\pi \in \DD_n$, then 
$\inv_B(\pi) \equiv \inv_D(\pi)$ (mod 2). 
\end{corollary}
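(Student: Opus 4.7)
The plan is to unwind the combinatorial definitions of $\inv_B$ and $\inv_D$ and read off the identity directly; there is essentially nothing to do beyond matching terms, so I do not anticipate any real obstacle.

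First I would write down the definition of $\inv_B(\pi)$ from \eqref{eqn:typeb-inv-defn}, namely
\[
\inv_B(\pi) = |\{1 \le i < j \le n : \pi_i > \pi_j\}| + |\{1 \le i < j \le n : -\pi_i > \pi_j\}| + |\Negs(\pi)|,
\]
and the definition of $\inv_D(\pi)$ given just above the corollary, namely
\[
\inv_D(\pi) = \inv_A(\pi) + |\{1 \le i < j \le n : -\pi_i > \pi_j\}|,
\]
where $\inv_A(\pi) = |\{1 \le i < j \le n : \pi_i > \pi_j\}|$. Comparing the two expressions, the first two summands of $\inv_B(\pi)$ coincide exactly with $\inv_D(\pi)$, so one immediately gets $\inv_B(\pi) = \inv_D(\pi) + |\Negs(\pi)|$, proving the first assertion.

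For the second assertion, recall that $\DD_n$ is defined as the subset of $\BB_n$ consisting of elements with an even number of negative entries, i.e.\ $|\Negs(\pi)|$ is even. Substituting this into the identity $\inv_B(\pi) - \inv_D(\pi) = |\Negs(\pi)|$ gives $\inv_B(\pi) \equiv \inv_D(\pi) \pmod{2}$ for every $\pi \in \DD_n$. The argument is purely definitional and requires no auxiliary lemmas; the only thing to be careful about is that the definitions of $\inv_A$, $\inv_B$, $\inv_D$ are all combinatorial and make sense on all of $\BB_n$, not only on $\DD_n$, so the identity $\inv_B(\pi)=\inv_D(\pi)+|\Negs(\pi)|$ genuinely holds for every $\pi \in \BB_n$ as claimed.
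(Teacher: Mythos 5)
Your proposal is correct and matches the paper's (implicit) argument exactly: the paper derives this corollary simply by comparing the combinatorial definition of $\inv_D$ with the definition of $\inv_B$ in \eqref{eqn:typeb-inv-defn}, which is precisely the term-by-term matching you carry out, followed by the observation that $|\Negs(\pi)|$ is even on $\DD_n$. Nothing further is needed.
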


Let $\DD_n^+ = 
\{\pi \in \DD_n: \inv_D(\pi) \> \mbox{ is even} \}$  
and let $\DD_n^- = \DD_n - \DD_n^+$. 
Let $(\BB_n - \DD_n)^+ = 
\{\pi \in \BB_n - \DD_n: \inv_D(\pi) \> \mbox{ is even} \}$  
and let $(\BB_n-\DD_n)^- = (\BB_n-\DD_n)- (\BB_n-\DD_n)^+$. It is 
easy to see that  $(\BB_n - \DD_n)^+ = \BB_n^- -  \DD_n^-$ 
and  $(\BB_n - \DD_n)^- = \BB_n^+ -  \DD_n^+$.  We will use
this in the proof of Theorem \ref{thm:divisibiltyfortyped}.
For $\pi \in \DD_n$, we have the same definition of peak, valley and 
alternating run as in $\BB_n$. That is, 
$\pkk_D(\pi)= \pkk_B(\pi)$, $\vly_D(\pi)=\vly_B(\pi)$ 
and $\altr_D(\pi)=\altr_B(\pi)$. 
We define the following polynomials.
\begin{eqnarray*}
R_n^{D}(t)  &  = & \sum_{\pi \in \DD_n} t^{\altr_D(\pi)}=\sum_{i=1}^n R_{n,i}^{D} t^i , \hspace{5 mm}
R_n^{B-D}(t)  =  \sum_{\pi \in \BB_n- \DD_n} t^{\altr_D(\pi)}=\sum_{i=1}^n R_{n,i}^{B-D} t^i, \\
R_n^{D,+}(t) & = &\sum_{\pi \in \DD_n^+} t^{\altr_D(\pi)}=\sum_{i=1}^n R_{n,i}^{D,+} t^i , \hspace{5 mm} R_n^{D,-}(t)= \sum_{\pi \in \DD_n^-} t^{\altr_D(\pi)}=\sum_{i=1}^n R_{n,i}^{D,-} t^i, \\
R_n^{B-D,+}(t) & = &\sum_{\pi \in (\BB_n - \DD_n)^+} t^{\altr_D(\pi)} =\sum_{i=1}^n R_{n,i}^{B-D,+} t^i , \\ R_n^{B-D,-}(t) & = &\sum_{\pi \in (\BB_n - \DD_n)^-} t^{\altr_D(\pi)}=\sum_{i=1}^n R_{n,i}^{B-D,-} t^i. 
\end{eqnarray*}

Since we wish to enumerate a signed version, we also define
\begin{equation}
\label{eqn:type_d_pkvly_over_Dn}
\SDR_{n}(p,q)   =  \sum_{\pi \in \DD_n} (-1)^{\inv_D(\pi)} 
p^{\pkk_D(\pi)}q^{\vly_D(\pi)} .
\end{equation}

As in the type B case, we partition $\DD_n$ into the following 
two sets $\DD_{n,-,a}= \{\pi \in \BB_n : \pi_{n-1} < \pi_n \}$ 
and $\DD_{n,-,d}= \{ \pi \in \BB_n: \pi_{n-1} > \pi_n \}$. 
We next define approprite polynomials: 

\begin{eqnarray}
\label{eqn:type_d_pkvly_over_Bnfinalascent}
\SDR_{n,-,a}(p,q) & = & \sum_{\pi \in \DD_{n,-,a} } 
(-1)^{\inv_D(\pi)} p^{\pkk_D(\pi)}q^{\vly_D(\pi)}, \\
\label{eqn:type_d_pkvly_over_Bnfinaldescent}
\SDR_{n,-,d}(p,q) & = & \sum_{\pi \in \DD_{n,-,d} } 
(-1)^{\inv_D(\pi)} p^{\pkk_D(\pi)}q^{\vly_D(\pi)}.
\end{eqnarray}

%

Suppose $\pi \in \DD_{n,-,a}$.  As in the type B case, let
$\pi''$ be obtained from $\pi$ by deleting the largest two
elements in absolute value.
Then, it is easy to see that both $\pi'' \in \DD_{n-2}$ and 
$\pi'' \in \BB_{n-2} - \DD_{n-2}$ are possible.  From 
$\pi'' \in \DD_{n-2}$,  
to get $\pi \in \DD_n$, we have to insert either 
$n$ and $n-1$ or $\overline{n}$ and $\overline{n-1}$.
Similarly, to get $\pi \in \DD_n$ from $\pi'' \in 
(\BB_{n-2}-\DD_{n-2})$, we have to either insert
$n$ and $\overline{n-1}$ or $\overline{n}$ and $n-1$.
A similar statement is true  when we wish to get 
$\pi \in \BB_n - \DD_n$  from either 
$\pi'' \in \BB_{n-2} - \DD_{n-2}$ or from $\pi'' \in \DD_{n-2}$.

Similar to the type B case, we partition $\DD_{n,-,a}$ into 
the following $9$ disjoint subsets and compute the
contribution of each set to $\SDR_{n,-,a}(p,q)$. 
The reason for partitioning $\DD_{n,-,a}$ into $9$ sets 
instead of $8$ sets as done with $\BB_{n,-,a}$ is the following: 
for $\pi \in \DD_{n,-,a}$, $\pi''$ may be outside $\DD_{n-2}$. 
This results in the extra one case. 
As done in the type B case, we will see that only one
set contributes to $\SDR_{n,-,a}(p,q)$.  We will see that the 
proof for seven of the cases carry over from the type B case. 
See Remark \ref{rem:Type_b_maps_going_on_for_type_d}.

\begin{enumerate}
\item $\DD_{n,-,a}^1= \{ \pi \in \DD_{n,-,a} :\pi'' \in \DD_{n-2,-,a} 
\mbox{ with }
 |\pos_{\pm n}(\pi) - \pos_{\pm (n-1)}(\pi)| > 1  \}$.

\item $\DD_{n,-,a}^2= \{ \pi \in \DD_{n,-,a} :\pi'' \in \DD_{n-2,-,d} 
\mbox{ with }
  |\pos_{\pm n}(\pi) - \pos_{\pm (n-1)}(\pi)| > 1  \}$.

\item $\DD_{n,-,a}^3= \{ \pi \in \DD_{n,-,a} :\pi'' \in \DD_{n-2,-,a}
\mbox{ with }
 |\pos_{\pm n}(\pi) - \pos_{\pm (n-1)}(\pi)| =1 
\mbox{ and }
 \pi_n \notin \{ \pm (n-1), \pm  n  \} \} $.

\item $\DD_{n,-,a}^4= \{ \pi \in \DD_{n,-,a} :\pi'' \in \DD_{n-2,-,d}
\mbox{ with }
 |\pos_{\pm n}(\pi) - \pos_{\pm (n-1)}(\pi)| =1 
\mbox{ and }
\pi_n \notin \{  \pm (n-1), \pm  n  \} \} $.

\item  $\DD_{n,-,a}^5= \{ \pi \in \DD_{n,-,a} :\pi'' \in \DD_{n-2,-,a}
\mbox{ with }
|\pos_{\pm n}(\pi) - \pos_{\pm (n-1)}(\pi)|=1, 
 \pi_n \in \{  \pm (n-1), \pm  n  \} 
\mbox{ and }
\sign(\pi_{n-1}) \neq \sign(\pi_{n})   \}$.

\item  $\DD_{n,-,a}^6= \{ \pi \in \DD_{n,-,a} :\pi'' \in \DD_{n-2,-,d} 
\mbox{ with }
|\pos_{\pm n}(\pi) - \pos_{\pm (n-1)}(\pi)|=1, 
 \pi_n \in \{  \pm (n-1), \pm  n  \}
\mbox{ and }
\sign(\pi_{n-1}) \neq \sign(\pi_{n})   \}$.

\item   $\DD_{n,-,a}^7= \{ \pi \in \DD_{n,-,a} :\pi'' \in \DD_{n-2,-,d} 
\mbox{ with }
|\pos_{\pm n}(\pi) - \pos_{\pm (n-1)}(\pi)|=1 , \pi_n \in \{  \pm (n-1), \pm  n  \}
\mbox{ and }
\sign(\pi_{n-1}) = \sign(\pi_{n})   \}$.

\item $\DD_{n,-,a}^8= \{ \pi \in \DD_{n,-,a} :\pi'' \in \DD_{n-2,-,a} 
\mbox{ with }
|\pos_{\pm n}(\pi) - \pos_{\pm (n-1)}(\pi)|=1 , \pi_n \in \{  \pm (n-1), \pm  n  \} 
\mbox{ and }
\sign(\pi_{n-1}) = \sign(\pi_{n})   \}$.

\item $\DD_{n,-,a}^9= \{ \pi \in \DD_{n,-,a} :\pi'' \in \BB_{n-2} - \DD_{n-2} \}$. 
\end{enumerate} 

The following remark settles the contribution of seven of these nine sets. 

\begin{remark}
\label{rem:Type_b_maps_going_on_for_type_d}
Consider the 
maps $g$, $f$ and $h_1$ constructed in the proofs of 
Lemmas  \ref{lem:contributiontobivpeakvalleyfrom_n_nminus1_separate}, 
\ref{lem:contributiontobivpeakvalleyfrom_n_nminus1_consectutive_but_not_last},
\ref{lemma:contribution_n_n-1_consec} and 
\ref{lemma:contribution_n_n-1_consec_second}.  It is clear that
all of them preserve the number 
of negative elements $|\Negs(\pi)|$ and 
we have proved that they flip the parity of $\inv_B(\pi)$.
Thus, by Corollary \ref{cor:invb_invd_ddn_bbn}, when viewed as 
maps from $\DD_{n,-,a} \mapsto \DD_{n,-,a}$, all the four maps 
flip the parity of $\inv_D$. Thus, 
the contribution of $\DD_{n,-,a}^k$  
for $1 \leq k \leq 7 $ to $\SDR_{n,-,a}(p,q)$ is $0$. 
\end{remark}

We next  prove that the contribution of $\DD_{n,-,a}^9$ to 
$\SDR_{n,-,a}(p,q)$ is also $0$. 

\begin{lemma}
\label{lem:contributiontotypedfromnminustwobminusd}
For positive integers $n \geq 3$, the contribution of $\DD_{n,-,a}^9$ 
to  $\SDR_n(p,q)$ is $0$, that is,   
$$\sum_{\pi \in \DD_{n,-,a}^9} 
(-1)^{\inv_D(\pi)}p^{\pkk_D(\pi)}q^{\vly_D(\pi)}=0.$$ 
\end{lemma}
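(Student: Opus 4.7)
I would prove this by exhibiting a sign-reversing involution $\psi$ on $\DD_{n,-,a}^9$ that preserves both $\pkk_D$ and $\vly_D$. The starting observation is a parity argument: since $\pi'' \in \BB_{n-2} - \DD_{n-2}$ has an odd number of negative entries and $\pi \in \DD_n$ has an even number, exactly one of the two entries $\pm n$ and $\pm(n-1)$ in $\pi$ carries a negative sign. Setting $i = \pos_{\pm n}(\pi)$ and $j = \pos_{\pm(n-1)}(\pi)$, every $\pi \in \DD_{n,-,a}^9$ falls into one of two disjoint cases: \emph{Case A}, where $\pi_i = n,\ \pi_j = -(n-1)$, or \emph{Case B}, where $\pi_i = -n,\ \pi_j = n-1$.

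The involution I would define swaps the absolute values at positions $i$ and $j$ while keeping the signs at those positions and fixing all other entries. So in Case A, $\psi(\pi)_i = n-1$ and $\psi(\pi)_j = -n$, while Case B is handled symmetrically. Plainly $\psi^2 = \id$, $\psi$ sends Case A to Case B, and $\psi$ preserves $\pi''$ and the total number of negative entries, so $\psi(\pi) \in \DD_n$. The only delicate point in checking that $\psi$ maps $\DD_{n,-,a}^9$ into itself is verifying the final ascent: configurations such as Case A with $j = n$ would force $\pi_{n-1} < -(n-1)$, which is impossible since every entry of $\pi''$ has absolute value at most $n-2$, and similarly Case B with $i = n$ is excluded. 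The surviving cases all map to valid elements of $\DD_{n,-,a}^9$.

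The heart of the argument is that $\psi$ preserves $\pkk_D$ and $\vly_D$ and flips the parity of $\inv_D$. For peaks and valleys, the crucial extremality observation is that both $\pi_i$ and $\psi(\pi)_i$ share the same sign and are larger (or smaller) than every entry at absolute value $\leq n-2$; the same holds at position $j$. Hence every local comparison of $\pi_i$ or $\pi_j$ with a neighbor from $\pi''$ has the same outcome before and after $\psi$, so whether each position is a peak, a valley, or neither is unchanged. The only potentially tricky situation is when $i, j$ are adjacent, but there too the pair $(\pi_i, \pi_j)$ has the same relative order as $(\psi(\pi)_i, \psi(\pi)_j)$ (large positive vs large negative in either case), so the status at both positions is preserved. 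For the parity flip, I would write $\inv_D = \inv_A + N$ with $N = |\{p<q : -\pi_p > \pi_q\}|$ and check that for every position $k \notin \{i,j\}$ the pairs $(k,i)$ and $(k,j)$ contribute identically in $\pi$ and $\psi(\pi)$: this is again forced by the extremality of $|\pi_i|, |\pi_j|$. The only pair whose contribution changes is $(i,j)$ itself, and an explicit two-line check (Case A: the $N$-contribution changes from $0$ to $1$; Case B: from $1$ to $0$, with $\inv_A$ unchanged in both) shows the net change is $\pm 1$, hence a parity flip.

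With $\psi$ an involution that preserves $\pkk_D, \vly_D$ and flips the parity of $\inv_D$, the terms in the signed sum cancel pairwise, giving
\[
\sum_{\pi \in \DD_{n,-,a}^9} (-1)^{\inv_D(\pi)} p^{\pkk_D(\pi)} q^{\vly_D(\pi)} = 0.
\]
I expect the main obstacle to be the bookkeeping rather than any deep idea: one must patiently handle the boundary cases where $i$ or $j$ equals $1$ or $n$ (so some neighbor does not exist) and the adjacency cases $|i - j| = 1$ (so $\pi_i$ and $\pi_j$ are each other's neighbors), to confirm that in each sub-situation the peak/valley counts are unchanged and the $(i,j)$-contribution to $\inv_D$ flips by exactly one.
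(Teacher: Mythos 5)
Your proposal is correct and is essentially the paper's own proof: the involution you describe (swap the absolute values $n$ and $n-1$ at their two positions while keeping the signs there, using the parity observation that exactly one of these two entries is negative) is precisely the map $h_3$ used in the paper, which likewise preserves $\pkk_D$ and $\vly_D$ and flips the parity of $\inv_D$. Your write-up merely supplies the extremality and boundary-case verifications that the paper leaves as ``easy to check.''
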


\begin{proof}
Let $\pi \in \DD_{n,-,a}^9$  be such that $\pi '' \in \BB_{n-2}- \DD_{n-2}$. 
Thus, $\pi$ either contains $n$ and $\overline{n-1}$ or $\overline{n}$ and 
$n-1$.  Without loss of generality, let 
$$\pi = \pi_1, \dots, ,\pi_{i-1},\pi_i= n, \pi_{i+1}, \dots, \pi_j=\overline{n-1}, \dots, \pi_n .$$ Define 
$h_3: \DD_n^6 \mapsto \DD_n^6$ by 
$h_3(\pi)= \pi_1, \dots, ,\pi_{i-1},\pi_i= n-1, \pi_{i+1}, \dots, \pi_j=\overline{n}, \dots, \pi_n$.
It is easy to check that the map $h_3$ preserves the statistics $\pkk_D$ 
and $\vly_D$ but changes the parity of $\inv_D$. The proof is complete.
\end{proof}

From Remark \ref{rem:Type_b_maps_going_on_for_type_d} and Lemma 
\ref{lem:contributiontotypedfromnminustwobminusd} we have 
$$\SDR_{n,-,a}(p,q)= \sum_{\pi \in \DD_{n,-,a}^8} 
(-1)^{\inv_D(\pi)} p^{\pkk_D(\pi)} q^{\vly_D(\pi)}.$$


\magenta{In our next lemma, we 
claim that bivariate signed enumeration 
of the number of peaks and valleys
over $\BB^8_{n,-,a} - \DD^8_{n,-,a}$, 
is zero. This is a counterpart of Lemma
\ref{lem:B_minus_T-equals_zero} to the type D case.
}

\begin{lemma}
\label{lem:biv_enum_over_D8minusT_equals_0}
For positive integers $n$, we have 
$$\sum _{\pi \in \DD^8_{n,-,a} - T_{n,-,a}} 
(-1)^{\inv_D(\pi)}p^{\pkk_D(\pi)}q^{\vly_D(\pi)}=0.$$
\end{lemma}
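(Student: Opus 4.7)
The plan is to adapt the inductive argument in the proof of Lemma \ref{lem:B_minus_T-equals_zero} to the type D setting. We induct on $n$, stepping down by two via removal of the two letters of largest absolute value. First note that $T_{n,-,a} \subseteq \DD_n$, since each appending $n-1, n$ or $\overline{n}, \overline{n-1}$ adjoins zero or two negative entries. The base cases $n = 1, 2, 3$ are handled by directly checking that $\DD^8_{n,-,a} = T_{n,-,a}$, which makes the asserted sum vacuous.

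For the inductive step with $n = m+2$, observe that for any $\pi \in \DD^8_{m+2,-,a}$ the reduced word $\pi''$ lies in $\DD_{m,-,a}$, and conversely each $\pi'' \in \DD_{m,-,a}$ produces exactly two preimages in $\DD^8_{m+2,-,a}$: $\tau_1 = \pi'', m+1, m+2$ and $\tau_2 = \pi'', \overline{m+2}, \overline{m+1}$ (no other appending keeps both the $\DD$-membership and the $\DD^8_{n,-,a}$-constraints). Exactly as in Lemma \ref{lem:B_minus_T-equals_zero}, $\tau_1$ preserves $\pkk_D$, $\vly_D$, and the parity of $\inv_B$, while $\tau_2$ increases each of $\pkk_D$, $\vly_D$ by one and flips the parity of $\inv_B$. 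Since $\pi''$, $\tau_1$, $\tau_2$ all lie in $\DD$, Corollary \ref{cor:invb_invd_ddn_bbn} transports these parity statements from $\inv_B$ to $\inv_D$. Hence each $\pi''$ contributes $(1-pq)(-1)^{\inv_D(\pi'')} p^{\pkk_D(\pi'')} q^{\vly_D(\pi'')}$ to the sum over $\DD^8_{m+2,-,a}$.

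Next, decompose $\DD_{m,-,a} = \bigcup_{k=1}^9 \DD^k_{m,-,a}$. Remark \ref{rem:Type_b_maps_going_on_for_type_d} shows that the classes $\DD^k_{m,-,a}$ for $k = 1, \ldots, 7$ contribute zero, and Lemma \ref{lem:contributiontotypedfromnminustwobminusd} handles the novel ninth class. So only $\DD^8_{m,-,a}$ survives, and by the induction hypothesis its signed sum equals the signed sum over $T_{m,-,a}$. Combined with the recursive build of $T_{m+2,-,a}$ from $T_{m,-,a}$ via the very same appendings $\tau_1, \tau_2$, we obtain
$$\sum_{\pi \in \DD^8_{m+2,-,a}} (-1)^{\inv_D(\pi)} p^{\pkk_D(\pi)} q^{\vly_D(\pi)} = \sum_{\pi \in T_{m+2,-,a}} (-1)^{\inv_D(\pi)} p^{\pkk_D(\pi)} q^{\vly_D(\pi)},$$
and subtraction yields the claim for $n = m+2$.

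The main obstacle relative to the type B version is the extra class $\DD^9_{m,-,a}$, which arises precisely when $\pi'' \in \BB_m - \DD_m$ and has no type B counterpart; this is where it would be easy to lose the cancellation, but Lemma \ref{lem:contributiontotypedfromnminustwobminusd} is tailor-made to neutralize it. Once that is accounted for and Corollary \ref{cor:invb_invd_ddn_bbn} is invoked to carry all sign bookkeeping from $\inv_B$ to $\inv_D$, the proof runs completely parallel to Lemma \ref{lem:B_minus_T-equals_zero}.
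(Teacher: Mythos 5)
Your proposal is correct and follows essentially the same route as the paper: the paper simply notes that $T_{n,-,a}\subseteq \DD^8_{n,-,a}$ and declares that the inductive argument of Lemma \ref{lem:B_minus_T-equals_zero} carries over verbatim, which is precisely the argument you have written out (with the extra ninth class killed by Lemma \ref{lem:contributiontotypedfromnminustwobminusd} and the sign bookkeeping transferred from $\inv_B$ to $\inv_D$ via Corollary \ref{cor:invb_invd_ddn_bbn}). Your version is just a more explicit rendering of the paper's one-line proof.
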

\begin{proof}
The proof of this Lemma is identical to the proof of Lemma 
\ref{lem:B_minus_T-equals_zero}.  It is simple to note 
that  $T_{1,-,a} \subseteq \DD^8_{1,-,a}$ and 
$T_{2,-,a} \subseteq \DD^8_{2,-,a}$ defined just 
before Lemma 
\ref{lem:B_minus_T-equals_zero} are subsets of 
$\DD_{n,-,1}^8$.  That is, $T_{n,-,a} \subseteq \DD^8_{n,-,a}$.  
Now, the proof of Lemma 
\ref{lem:B_minus_T-equals_zero} carries over identically.
\end{proof}

\begin{remark}
\label{rem:asc_to_des_d-type}
Using the map $\sflip$, all results involving $D_{n,-,a}^k$ for 
$1\leq k \leq 9$ go through with identically defined sets 
$D_{n,-,d}^k$.  We have also defined the set $T_{n,-,d}$.  
Thus, we can get versions of Lemmas
\ref{lem:contributiontobivpeakvalleyfrom_n_nminus1_separate},
\ref{lem:contributiontobivpeakvalleyfrom_n_nminus1_consectutive_but_not_last},
\ref{lemma:contribution_n_n-1_consec},
\ref{lemma:contribution_n_n-1_consec_second} and 
\ref{lem:B_minus_T-equals_zero}.  
\end{remark}
With this preparation, we can now prove the following:

\begin{theorem}
\label{thm:main_theorem_signed_peakvalley_enumerate_over_type_d}
For positive integers $n$, we have 
\begin{eqnarray*}
\SDR_{n,-,a}(p,q) & = &\begin{cases}
(1-q)(1-pq)^{k-1} 
	& \text {when $n = 2k$ is even},\\
(1-pq)^k 
	& \text {when $n = 2k+1$ is odd}.
\end{cases}  \\
\SDR_{n,-,d}(p,q) & = &\begin{cases}
(1-p)(1-pq)^{k-1} 
	& \text {when $n = 2k$ is even},\\
0 & \text {when $n$ is odd}.
\end{cases}  
\end{eqnarray*}
\end{theorem}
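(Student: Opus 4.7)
The plan is to assemble the cancellation results already established into a closed sum over $T_{n,-,a}$ and $T_{n,-,d}$, and then extract a clean recurrence from the inductive construction of these sets. First, combining Remark~\ref{rem:Type_b_maps_going_on_for_type_d}, Lemma~\ref{lem:contributiontotypedfromnminustwobminusd}, and Lemma~\ref{lem:biv_enum_over_D8minusT_equals_0}, the identity
$$\SDR_{n,-,a}(p,q) \;=\; \sum_{\pi \in T_{n,-,a}} (-1)^{\inv_D(\pi)} p^{\pkk_D(\pi)} q^{\vly_D(\pi)}$$
is immediate, and by Remark~\ref{rem:asc_to_des_d-type} the analogous identity holds with $T_{n,-,d}$ in place of $T_{n,-,a}$.

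Next I would derive the recurrence. Each $\pi \in T_{n-2,-,a}$ produces exactly two elements of $T_{n,-,a}$, namely $\tau_1 = \pi, n-1, n$ and $\tau_2 = \pi, \overline{n}, \overline{n-1}$, both lying in $\DD_n$ because the pair of new letters contributes an even number of negative signs in each case. For $\tau_1$, the two trailing letters extend the final ascent of $\pi$ without introducing any new peak or valley, and since $n-1,n$ are positive and exceed every letter of $\pi$ in absolute value, no new contribution to either term of $\inv_D$ arises; hence $\pkk_D(\tau_1)=\pkk_D(\pi)$, $\vly_D(\tau_1)=\vly_D(\pi)$, and $(-1)^{\inv_D(\tau_1)}=(-1)^{\inv_D(\pi)}$. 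For $\tau_2$, the position of $\pi_{n-2}$ becomes a peak and $\overline{n}$ becomes a valley, so $\pkk_D(\tau_2)=\pkk_D(\pi)+1$ and $\vly_D(\tau_2)=\vly_D(\pi)+1$. Using $\inv_D(\sigma)=\inv_A(\sigma)+|\{i<j:-\sigma_i>\sigma_j\}|$, a direct count shows that inserting $\overline{n}$ at position $n-1$ contributes $2(n-2)$ to $\inv_D$ while inserting $\overline{n-1}$ at position $n$ contributes $(n-2)+(n-1)=2n-3$; the total is odd, so $(-1)^{\inv_D(\tau_2)}=-(-1)^{\inv_D(\pi)}$. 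Summing the two contributions yields
$$\SDR_{n,-,a}(p,q) \;=\; (1-pq)\,\SDR_{n-2,-,a}(p,q),$$
and the identical construction on $T_{n-2,-,d}\to T_{n,-,d}$ (appending $n,n-1$ and $\overline{n-1},\overline{n}$) produces $\SDR_{n,-,d}(p,q) = (1-pq)\,\SDR_{n-2,-,d}(p,q)$.

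Finally, I would verify the base cases by direct computation. For $n=1$, $\DD_1=\{1\}$ gives $\SDR_{1,-,a}(p,q)=1$ and $\SDR_{1,-,d}(p,q)=0$. For $n=2$, enumerating $\DD_2=\{12,\ \overline{2}\overline{1},\ 21,\ \overline{1}\overline{2}\}$ and checking peak/valley counts and parities of $\inv_D$ yields $\SDR_{2,-,a}(p,q)=1-q$ and $\SDR_{2,-,d}(p,q)=1-p$. Iterating the recurrence then produces the claimed closed forms, with the vanishing of $\SDR_{2k+1,-,d}(p,q)$ inherited from the base case $\SDR_{1,-,d}(p,q)=0$.

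The main obstacle is the parity computation for $\inv_D(\tau_2)-\inv_D(\pi)$: unlike the type~B case, where Corollary~\ref{cor:invb_invd_ddn_bbn} lets one read off the effect of adding two negative letters through $|\Negs|$, here one must analyse the two summands defining $\inv_D$ separately for each inserted letter and then combine. A related (minor) subtlety is confirming that the cancellation lemmas quoted in Remark~\ref{rem:asc_to_des_d-type} for $\DD^k_{n,-,d}$ really do go through; since $\sflip$ only preserves $\DD_n$ in even degree, one should view the type-D descent analogues as being established by rerunning the involutions $g,f,h_1,h_3$ directly on the $\DD^k_{n,-,d}$ rather than transporting from the ascent case.
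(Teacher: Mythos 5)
Your proposal is correct, and it reaches the closed forms by a genuinely different final step than the paper. Both arguments share the essential reduction: the cancellation involutions leave only the sets $T_{n,-,a}$ and $T_{n,-,d}$. From there the paper recomputes nothing in type D: since $T_{n,-,a}\subseteq \DD_n$ and Corollary \ref{cor:invb_invd_ddn_bbn} gives $\inv_B\equiv\inv_D \pmod 2$ on $\DD_n$, the sum over $T_{n,-,a}$ with $(-1)^{\inv_D}$ weights is literally the type B sum already evaluated in Theorem \ref{thm:main_theorem_signed_peakvalley_enumerate_over_type_b}; the descent case is then handled by the $\sflip$ bijection (into $\DD_{n,-,a}$ for even $n$, into $\BB_{n,-,a}-\DD_{n,-,a}$ for odd $n$, whence the vanishing). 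You instead rerun the two-step extension $T_{n-2,-,a}\to T_{n,-,a}$ and verify by a direct $\inv_D$ count (your $2(n-2)+(2n-3)=4n-7$, odd) that appending $\overline n,\overline{n-1}$ flips the sign while adding one peak and one valley, giving $\SDR_{n,-,a}=(1-pq)\SDR_{n-2,-,a}$, and you treat the descent side by the parallel recurrence on $T_{n,-,d}$ with base cases $\SDR_{1,-,d}=0$ and $\SDR_{2,-,d}=1-p$; your parity count and base cases are correct. The trade-off is that the paper's route is shorter because the congruence makes the type B parity computation reusable verbatim, while your route is more self-contained and, as you rightly note, sidesteps the fact that $\sflip$ preserves $\DD_n$ only for even $n$ by establishing the descent-side cancellations via the involutions directly (which do preserve $|\Negs|$ and hence $\DD_n$). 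One small gloss: for $T_{n,-,d}$ the roles of the two extensions are interchanged --- it is $\pi,n,n-1$ that creates the new peak and valley and flips the sign, while $\pi,\overline{n-1},\overline n$ changes nothing --- but the net factor is still $(1-pq)$, so your conclusion stands.
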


\begin{proof}
Consider $\SDR_{n,-,a}(p,q)$.  Both results when $n$ is odd and even 
follow from Theorem 
\ref{thm:main_theorem_signed_peakvalley_enumerate_over_type_b}, Corollary
\ref{cor:invb_invd_ddn_bbn} and Lemma
\ref{lem:biv_enum_over_D8minusT_equals_0}.  

For  $\SDR_{n,-,d}(p,q)$, we make 
use of the map $\sflip$. 
First consider the case when $n = 2k$ is even. It is easy to see
that $\sflip: \DD_{n,-,d} \mapsto\DD_{n,-,a}$ is a bijection.
Thus,  we have 
$$\SDR_{n,-,d}(p,q)= \SDR_{n,-,a}(q,p)=(1-p)(1-pq)^{k-1}.$$
When $n$ is odd, it is again clear that 
$\sflip: \DD_{n,-,d} \mapsto \BB_{n,-,a} - \DD_{n,-,a}$
is a bijection. In this case, we have 
\begin{eqnarray*} 
\SDR_{n,-,d}(p,q) & = & \sum_{\pi \in \DD_n, \pi_{n-1}> \pi_n}
(-1)^{\inv_D(\pi)}p^{\pkk_D(\pi)}q^{\vly_D(\pi)}, \\
& = & \sum_{\pi \in \BB_n-\DD_n, \pi_{n-1}< \pi_n}
(-1)^{\inv_D(\pi)} p^{\pkk_D(\pi)}q^{\vly_D(\pi)}, \\
& = & \SBR_{n,-,a}(p,q)-\SDR_{n,-,a}(p,q) 
=  0.
\end{eqnarray*}
The proof is complete.
\end{proof}

\subsection{Multiplicity of $(1+t)$ as a factor of $R_n^{D,\pm}(t)$}

Setting $p=q=t$ in Theorem 
\ref{thm:main_theorem_signed_peakvalley_enumerate_over_type_d}, 
and multiplying by $t$, we get the following corollary.

\begin{corollary}
	\label{cor:univariate_signed_altrun_over_type_d}
	For positive integers $n$, we have
	$$\SDR_{n}(t)=\begin{cases}
	2t(1-t)(1-t^2)^{k-1} 
			& \text {when $n = 2k$ is even},\\
	t(1-t^2)^k 
			& \text {when $n = 2k+1$ is odd}.
	\end{cases}  $$
\end{corollary}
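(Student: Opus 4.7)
The plan is to derive this corollary as a direct specialization of Theorem \ref{thm:main_theorem_signed_peakvalley_enumerate_over_type_d}. The key observation is the univariate-to-bivariate link: since $\altr_D(\pi) = \pkk_D(\pi) + \vly_D(\pi) + 1$ for every $\pi \in \DD_n$, we have
\begin{equation*}
\SDR_n(t) \;=\; \sum_{\pi \in \DD_n} (-1)^{\inv_D(\pi)} t^{\pkk_D(\pi)+\vly_D(\pi)+1} \;=\; t \cdot \SDR_n(p,q)\big|_{p=q=t}.
\end{equation*}
Moreover, partitioning $\DD_n$ according to whether $\pi_{n-1}<\pi_n$ or $\pi_{n-1}>\pi_n$ gives $\SDR_n(p,q) = \SDR_{n,-,a}(p,q) + \SDR_{n,-,d}(p,q)$.

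First I would handle the even case $n=2k$. Substituting $p=q=t$ in Theorem \ref{thm:main_theorem_signed_peakvalley_enumerate_over_type_d} yields $\SDR_{n,-,a}(t,t) = (1-t)(1-t^2)^{k-1}$ and $\SDR_{n,-,d}(t,t) = (1-t)(1-t^2)^{k-1}$, so adding and multiplying by $t$ gives $\SDR_n(t) = 2t(1-t)(1-t^2)^{k-1}$, as desired. Next I would handle the odd case $n=2k+1$: the same substitution gives $\SDR_{n,-,a}(t,t) = (1-t^2)^k$ and $\SDR_{n,-,d}(t,t) = 0$, so $\SDR_n(t) = t(1-t^2)^k$.

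There is no real obstacle here since all the work has been done in Theorem \ref{thm:main_theorem_signed_peakvalley_enumerate_over_type_d}; the only thing to verify is the univariate-bivariate bookkeeping, namely the factor of $t$ arising from the ``$+1$'' in the definition of $\altr_D$. The entire proof is thus a one-line specialization once the bivariate formulas are in hand.
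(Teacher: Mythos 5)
Your proposal is correct and is exactly the paper's argument: the paper also obtains the corollary by setting $p=q=t$ in Theorem \ref{thm:main_theorem_signed_peakvalley_enumerate_over_type_d}, summing the $a$- and $d$-parts, and multiplying by $t$ to account for the $+1$ in $\altr_D = \pkk_D + \vly_D + 1$. Your arithmetic in both the even and odd cases matches the stated formulas.
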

Moving to the multiplicity of $(1+t)$ as a divisor of 
$R_n^D(t)$, Gao and Sun in \cite{Gaosunaltruntyped} %
proved a refined version involving $R_n^{D,>}(t)$. 
\comment{
\begin{theorem}[Gao and Sun]
\label{thm:wilf-type-d-counterpart}
For positive integers $n$, the polynomial $R_n^D(t)$ is divisible by 
$(1 + t)^m$, where $m = \floor{(n -1)/2}$. 
\end{theorem}
We begin by first defining the polynomials
\begin{equation*}
R_n^{D,+}(t) = \sum_{\pi \in \DD_n^+} 
t^{\altr_B(\pi)}=\sum_{i=1}^n R_{n,i}^{D,+} t^i \hspace{5 mm} 
\mbox{ and } \hspace{5 mm}
R_n^{D,-}(t)= \sum_{\pi \in \DD_n^-} t^{\altr_B(\pi)}
=\sum_{i=1}^n R_{n,i}^{D,-} t^i. 
\end{equation*}
}
Gao and Sun 
considered the following polynomials: 
\begin{eqnarray*}
R_n^{D,>}(t) & = & \sum_{\pi \in \DD_n, \pi_1>0} 
t^{\altr_D(\pi)} = \sum_{i=1}^n R_{n,i}^{D,>} t^i ,\\
R_n^{B-D,>}(t) & = &  \sum_{\pi \in \BB_n-\DD_n, \pi_1>0} 
t^{\altr_D(\pi)}=\sum_{i=1}^n R_{n,i}^{B-D,>} t^i.
\end{eqnarray*}
In \cite[Corollary 2.3]{Gaosunaltruntyped} 
they proved the following result regarding the polynomial
$R_n^{D,>}(t)$. 
\begin{theorem}[Gao and Sun]
\label{thm:Gao_sun_type_dfirstcoordinatepositive}
For positive integers $n$, we have
\begin{equation}
\label{eqn:difference_dfirstpositive_minus_bminusd_first_positive}
R_n^{D,>}(t) - R_n^{B-D,>}(t)  =\begin{cases}
t(1-t)(1-t^2)^{k-1} 
		& \text {when $n = 2k$ is even},\\
t(1-t^2)^k 
		& \text {when $n = 2k+1$ is odd}.
\end{cases}  
\end{equation}
\end{theorem}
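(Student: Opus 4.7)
The plan is to apply the sign-flip bijection $\sflip$ (sending $\pi \in \BB_n$ to $\overline{\pi}$, obtained by flipping all signs) to reduce the desired difference to a signed enumeration we have already established. First observe that $\sflip$ preserves $\altr_D$ (flipping all signs interchanges peaks and valleys at each position, leaving their total count unchanged), switches the sign of $\pi_1$, and alters the parity of $|\Negs(\pi)|$ precisely when $n$ is odd. Hence $\sflip$ restricts to a bijection on each of $\DD_n$ and $\BB_n-\DD_n$ when $n$ is even, but swaps these two sets when $n$ is odd.

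For $n=2k$ even, $\sflip$ yields $R_n^{D,>}(t) = R_n^{D,<}(t)$ and $R_n^{B-D,>}(t) = R_n^{B-D,<}(t)$, so the left-hand side equals $\tfrac{1}{2}\bigl[R_n^D(t) - R_n^{B-D}(t)\bigr]$. For $n=2k+1$ odd, $\sflip$ instead gives $R_n^{B-D,>}(t) = R_n^{D,<}(t)$, and the left-hand side simplifies to $R_n^{D,>}(t) - R_n^{D,<}(t)$. In both cases the target has been rewritten as a signed enumeration: over $\BB_n$ weighted by $(-1)^{|\Negs(\pi)|}$ in the even case, and over $\DD_n$ weighted by $\sign(\pi_1)$ in the odd case.

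The next step is to identify these signed enumerations with $\SDR_n(t)$ from Corollary \ref{cor:univariate_signed_altrun_over_type_d}; namely, one aims to prove $R_n^D(t) - R_n^{B-D}(t) = \SDR_n(t)$ for $n$ even and $R_n^{D,>}(t) - R_n^{D,<}(t) = \SDR_n(t)$ for $n$ odd. Each identity can be attacked via sign-reversing involutions analogous to those in Lemmas \ref{lem:contributiontobivpeakvalleyfrom_n_nminus1_separate}--\ref{lem:contributiontotypedfromnminustwobminusd}: partition permutations by the positions of $\pm n$ and $\pm(n-1)$, construct involutions which cancel contributions from most classes, and verify that the surviving inductive class (a type $D$ analogue of $T_{n,-,a}$) satisfies a recursion of the form $F_n(t) = (1-t^2)F_{n-2}(t)$ with appropriate base cases. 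Combined with the closed form from Corollary \ref{cor:univariate_signed_altrun_over_type_d}, this delivers the claimed $t(1-t)(1-t^2)^{k-1}$ and $t(1-t^2)^k$.

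The main obstacle lies in this last step. The involutions constructed in the Type D section (swapping $\pm n$ with $\pm(n-1)$ and the related maps $f$, $g$, $h_1$, $h_3$) all preserve $|\Negs|$ while flipping the parity of $\inv_D$; they are tailored to the signature $(-1)^{\inv_D}$ and do not immediately cancel the new signatures $(-1)^{|\Negs|}$ or $\sign(\pi_1)$. The hard part will therefore be to craft modified involutions, or compose two of the existing ones, so as to flip the desired signature while still preserving both $\altr_D$ and the first-letter constraint $\pi_1>0$. Once such involutions are in hand, the rest of the argument, namely invoking $\sflip$ and Corollary \ref{cor:univariate_signed_altrun_over_type_d}, becomes routine.
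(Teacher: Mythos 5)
There is a genuine gap, and you have in fact flagged it yourself. Your $\sflip$ reductions are correct: for $n=2k$ even, $\sflip$ fixes $\DD_n$ and $\BB_n-\DD_n$ setwise while swapping $\pi_1>0$ with $\pi_1<0$, so the left-hand side equals $\tfrac12\bigl[R_n^D(t)-R_n^{B-D}(t)\bigr]$; for $n$ odd it gives $R_n^{B-D,>}(t)=R_n^{D,<}(t)$, so the left-hand side equals $R_n^{D,>}(t)-R_n^{D,<}(t)$. But this is only a reformulation. The entire content of the theorem now sits in the two identities $R_n^D(t)-R_n^{B-D}(t)=\SDR_n(t)$ ($n$ even) and $R_n^{D,>}(t)-R_n^{D,<}(t)=\SDR_n(t)$ ($n$ odd), i.e.\ in evaluating generating functions signed by $(-1)^{|\Negs(\pi)|}$ and by $\sign(\pi_1)$ respectively — and, as you correctly observe, none of the involutions $f$, $g$, $h_1$, $h_3$ from the type B/D sections reverses either of these signatures (they all preserve $|\Negs|$ and are not designed around the sign of $\pi_1$). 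Saying that the hard part ``will be to craft modified involutions'' is a statement of the problem, not a proof; no candidate involution, no cancellation set, and no inductive class playing the role of $T_{n,-,a}$ is actually exhibited. As it stands nothing is proved.

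Two further cautions. First, this statement is not proved in the paper at all: it is imported verbatim from Gao and Sun \cite[Corollary 2.3]{Gaosunaltruntyped}, so there is no internal proof to fall back on. Second, the identity $R_n^D(t)-R_n^{B-D}(t)=2t(1-t)(1-t^2)^{k-1}$ that you need in the even case is precisely Theorem \ref{thm:Gao_sun_type_dfirstcoordinateall}, which the paper \emph{derives from} the Gao--Sun theorem by exactly the $\sflip$ manipulations you use. So if you were tempted to close your gap by citing that theorem, the argument would be circular. To make this work you must give an independent sign-reversing-involution (or other) proof of the two signed enumerations above — for instance, an analogue of the ``identity class'' argument (restricting to permutations with $|\pi_i|=i$ and cancelling everything outside that class) adapted to the signatures $(-1)^{|\Negs|}$ and $\sign(\pi_1)$ — and that work has not been done.
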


We define two more polynomials:
\begin{eqnarray*}
R_n^{D,<}(t) & = & \sum_{\pi \in \DD_n, \pi_1<0} 
t^{\altr_D(\pi)}=\sum_{i=1}^n R_{n,i}^{D,<} t^i ,\\
R_n^{B-D,<}(t) & = &\sum_{\pi \in \BB_n-\DD_n, \pi_1<0} 
t^{\altr_D(\pi)}=\sum_{i=1}^n R_{n,i}^{B-D,<} t^i.
\end{eqnarray*}

\magenta{We denote  $\BB_n^> = \{ \pi \in \BB_n, \pi_1 > 0\}$.
Similarly, we define 
$\BB_n^< = \{ \pi \in \BB_n, \pi_1 < 0\}$, 
$\DD_n^> = \{ \pi \in \DD_n, \pi_1 > 0\}$ and
$\DD_n^< = \{ \pi \in \DD_n, \pi_1 < 0\}$.}

\begin{remark}
\label{rem:firstnegativeequalsfirstpositive-dtype}
Gao and Sun do not explicitly state the following, but their
results imply that $R_n^{D,>}(t)$ and $R_n^{(B-D),>}(t)$
are divisible by $(1+t)^m$ where $m = \nmhalf$.  
Using the map $\sflip$, it is easy to see that 
$R_n^{D,<}(t)= R_n^{D,>}(t)$ when $n$ is even.  
When $n$ is odd, the same map gives us $R_n^{D,<}(t) =
R_n^{B-D,>}(t)$.
Combining with the first 
we get that 
both $R_n^{D,<}(t)$ and  $R_n^{D,>}(t)$ 
are divisible by $(1+t)^m$.
Therefore, when $n$ is even,
the polynomial $R_n^D(t)$ 
is divisible by  $2(1+t)^m$.  When $n$ is
odd, $R_n^D(t)$ is divisible by $(1+t)^m.$
\end{remark}

\comment{
\begin{theorem}[Gao and Sun]
\label{thm:wilf-type-d-counterpart}
For positive integers $n$, the polynomial $R_n^D(t)$ is divisible by 
$(1 + t)^m$, where $m = \floor{(n -1)/2}$. 
\end{theorem}
}

The polynomials $R_n^D(t)$, $R_n^{B-D}(t)$, $R_n^{B,+}(t)$ and
$R_n^{B,-}(t)$ are related to each other as we show in Theorem
\ref{thm:altrunoverBplusequalsaltrunoverD}.  Towards
proving that, we first prove the following.

\begin{theorem}
\label{thm:Gao_sun_type_dfirstcoordinateall}
For positive integers $n$, we have
$$R_n^{D}(t) - R_n^{B-D}(t) =\begin{cases}
2t(1-t)(1-t^2)^{k-1} 
		& \text {when $n = 2k$ is even},\\
0 & \text {when $n$ is odd}.
\end{cases}  $$
  \end{theorem}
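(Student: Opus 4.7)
The plan is to reduce the identity to Theorem~\ref{thm:Gao_sun_type_dfirstcoordinatepositive} by using the sign-flipping bijection $\sflip$ on the first letter alone, decomposing each of $R_n^D(t)$ and $R_n^{B-D}(t)$ according to the sign of $\pi_1$. First I would write
\[
R_n^D(t) = R_n^{D,>}(t) + R_n^{D,<}(t), \qquad R_n^{B-D}(t) = R_n^{B-D,>}(t) + R_n^{B-D,<}(t),
\]
and then express the ``$<$'' polynomials in terms of the ``$>$'' polynomials. Define the map $\sflip_1$ that flips only the sign of $\pi_1$ (this is a variant of $\sflip$ applied in a single coordinate; equivalently, one can use the full $\sflip$ as done in Remark~\ref{rem:firstnegativeequalsfirstpositive-dtype} but restricted). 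This map clearly preserves $\altr_D$ because all peaks and valleys depend only on relative order of consecutive entries, and flipping $\pi_1$ changes $|\Negs(\pi)|$ by exactly $1$.

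Next I would track whether $\sflip_1$ lands inside $\DD_n$ or in $\BB_n \setminus \DD_n$, which depends solely on the resulting parity of $|\Negs|$. When $n$ is even or odd, the map $\pi \mapsto \sflip_1(\pi)$ takes $\DD_n^>$ to $\DD_n^<$ or $(\BB_n-\DD_n)^<$ respectively; more carefully, since the parity of $|\Negs|$ always flips under $\sflip_1$, we get bijections
\[
\DD_n^> \leftrightarrow (\BB_n-\DD_n)^<, \qquad (\BB_n-\DD_n)^> \leftrightarrow \DD_n^<,
\]
which preserve $\altr_D$. Hence $R_n^{D,<}(t) = R_n^{B-D,>}(t)$ and $R_n^{B-D,<}(t) = R_n^{D,>}(t)$ for all $n$.

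Substituting these relations gives
\[
R_n^D(t) - R_n^{B-D}(t) = R_n^{D,>}(t) + R_n^{B-D,>}(t) - R_n^{B-D,>}(t) - R_n^{D,>}(t) = 0,
\]
which would seem to prove the ``$0$'' case and contradict the even case. Here is the subtle point and the main obstacle: I must use the correct $\sflip$, namely the one from the proof of Lemma~\ref{lem:relation_between_ending_in_ascent_and_ending_in_descent} which flips all $n$ signs, because flipping only $\pi_1$ changes the number of peaks/valleys at position $1$ (recall $\pi_0 := 0$ in the type B peak/valley definition, so the sign of $\pi_1$ does matter at the left boundary). Using the correct $\sflip$ that negates every coordinate, the parity of $|\Negs|$ changes by $n$ mod $2$. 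Thus $\sflip$ is a bijection $\DD_n^> \leftrightarrow \DD_n^<$ and $(\BB_n-\DD_n)^> \leftrightarrow (\BB_n-\DD_n)^<$ when $n$ is even, but $\DD_n^> \leftrightarrow (\BB_n-\DD_n)^<$ and $(\BB_n-\DD_n)^> \leftrightarrow \DD_n^<$ when $n$ is odd (as already recorded in Remark~\ref{rem:firstnegativeequalsfirstpositive-dtype}). Since $\sflip$ preserves $\altr_D$ (direction changes at every interior index are preserved and $\pi_0 = 0$ is unchanged but the sign of $\pi_1$ flips in a way consistent at both ends), I would verify the $\altr_D$-preservation carefully as the key technical step.

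Finally, I would combine these with Theorem~\ref{thm:Gao_sun_type_dfirstcoordinatepositive}. For $n = 2k$ even, $R_n^{D,<}(t) = R_n^{D,>}(t)$ and $R_n^{B-D,<}(t) = R_n^{B-D,>}(t)$, so
\[
R_n^D(t) - R_n^{B-D}(t) = 2\bigl(R_n^{D,>}(t) - R_n^{B-D,>}(t)\bigr) = 2t(1-t)(1-t^2)^{k-1}.
\]
For $n = 2k+1$ odd, $R_n^{D,<}(t) = R_n^{B-D,>}(t)$ and $R_n^{B-D,<}(t) = R_n^{D,>}(t)$, so
\[
R_n^D(t) - R_n^{B-D}(t) = \bigl(R_n^{D,>}(t) + R_n^{B-D,>}(t)\bigr) - \bigl(R_n^{B-D,>}(t) + R_n^{D,>}(t)\bigr) = 0,
\]
completing the proof. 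The main obstacle, as noted, is confirming that $\sflip$ preserves $\altr_D$ despite the convention $\pi_0 = 0$; everything else is bookkeeping over the two parity cases.
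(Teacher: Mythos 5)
Your proposal is correct and follows essentially the same route as the paper: apply the all-coordinates $\sflip$ map, note that it preserves $\altr_D$ (it swaps type B peaks and valleys, leaving their total unchanged) while sending $\DD_n^>$ to $\DD_n^<$ or to $(\BB_n-\DD_n)^<$ according to the parity of $n$, and then combine with Theorem~\ref{thm:Gao_sun_type_dfirstcoordinatepositive}. The only difference is cosmetic --- you solve for $R_n^{D,<}(t)$ and $R_n^{B-D,<}(t)$ individually, whereas the paper works directly with the difference $R_n^{D,<}(t)-R_n^{B-D,<}(t)$ --- and your initial detour through a ``flip only $\pi_1$'' map is correctly discarded before the final argument.
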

  
\begin{proof}
We use the map  
$\sflip: \BB_n^> \mapsto \BB_n^<.$
When $n$ is even, it is easy to see that
$\sflip: \{\pi \in \DD_n: \pi_1>0\} \mapsto \{\pi \in \DD_n: \pi_1<0\}$ 
and further the map is from $\BB_n^> -\DD_n^>$ to $\BB_n^< - \DD_n^<$.
That is, 
$\sflip: \{\pi \in \BB_n-\DD_n: \pi_1>0\} \mapsto 
\{\pi \in \BB_n-\DD_n: \pi_1<0\}$. 
Thus, we have
$$R_n^{D,<}(t) - R_n^{B-D,<}(t) =R_n^{D,>}(t) - R_n^{B-D,>}(t) =
t(1-t)(1-t^2)^\frac{n-2}{2}.$$
  	
When $n$ is odd, $\sflip$ is from $\DD_n^>$ to $\BB_n^< - \DD_n^<$.
and from $\BB_n^>-\DD_n^>$ to $\DD_n^<$.
Thus, we have
$$R_n^{D,<}(t) - R_n^{B-D,<}(t) = R_n^{B-D,>}(t)-R_n^{D,>}(t) =
-t(1-t^2)^\frac{n-2}{2}.$$
  	
Hence, we get
\begin{equation}
\label{eqn:difference_dfirstnegative_minus_bminusd_first_negative}
R_n^{D,<}(t) - R_n^{B-D,<}(t)  =\begin{cases}
  	t(1-t)(1-t^2)^{k-1} 
		& \text {when $n = 2k$ is even},\\
  	-t(1-t^2)^k 
		& \text {when $n = 2k+1$ is odd}.
  	\end{cases}  
  	\end{equation}
Summing up \eqref{eqn:difference_dfirstpositive_minus_bminusd_first_positive} 
and 
\eqref{eqn:difference_dfirstnegative_minus_bminusd_first_negative} 
completes the proof.
  \end{proof}

With this result, we immediately get the following. 
  
\begin{theorem}
\label{thm:altrunoverBplusequalsaltrunoverD}
For positive integers $n$, 
$$R_n^{B,+}(t)=R_n^D(t), \hspace{15 mm} R_n^{B,-}(t)=R_n^{B-D}(t).$$
\end{theorem}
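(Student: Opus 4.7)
The plan is to extract the two identities from a $2\times 2$ linear system whose ingredients are already assembled in the excerpt. First I would write down the two ``partition'' identities
$$R_n^{B,+}(t) + R_n^{B,-}(t) \;=\; R_n^B(t) \;=\; R_n^D(t) + R_n^{B-D}(t),$$
which simply reflect the two different ways of splitting $\BB_n$ (by parity of $\inv_B$ on the one hand, by membership in $\DD_n$ on the other). By definition of the signed polynomial I also have
$$\SBR_n(t) \;=\; R_n^{B,+}(t) - R_n^{B,-}(t).$$

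The key step is to observe that Corollary~\ref{cor:univariate_signed_altrun_over_type_b} and Theorem~\ref{thm:Gao_sun_type_dfirstcoordinateall} give literally the same closed form: both $\SBR_n(t)$ and $R_n^D(t)-R_n^{B-D}(t)$ equal $2t(1-t)(1-t^2)^{k-1}$ when $n=2k$ is even, and both vanish when $n$ is odd. Hence
$$R_n^{B,+}(t) - R_n^{B,-}(t) \;=\; R_n^D(t) - R_n^{B-D}(t).$$

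Adding this to the partition identity yields $2R_n^{B,+}(t) = 2R_n^D(t)$, and subtracting it yields $2R_n^{B,-}(t) = 2R_n^{B-D}(t)$; dividing by $2$ gives both claims simultaneously. There is no real obstacle here: the entire content of the theorem is the coincidence of the two closed-form expressions noted above, and once that coincidence is pointed out, the two equalities of the theorem drop out of the linear system in one line. The only thing worth emphasizing in the write-up is that the coincidence is \emph{not} term-by-term on the level of individual permutations (the sign $(-1)^{\inv_B(\pi)}$ in $\SBR_n(t)$ does not match the indicator of $\DD_n$ versus $\BB_n-\DD_n$ element-wise, as Corollary~\ref{cor:invb_invd_ddn_bbn} makes clear), but holds only after summation over all of $\BB_n$.
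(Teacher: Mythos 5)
Your proposal is correct and is essentially the paper's own argument: the paper likewise notes that both pairs sum to $R_n^B(t)$ and that, by Corollary \ref{cor:univariate_signed_altrun_over_type_b} and Theorem \ref{thm:Gao_sun_type_dfirstcoordinateall}, their differences coincide, whence the equalities follow. Your closing remark that the coincidence holds only after summation (not permutation-by-permutation) is a nice clarification but does not change the route.
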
 
\begin{proof}
Since the sum of both quantities on the left hand sides and the right
	hand sides is $R_n^B(t)$, they are equal.  
From Corollary \ref{cor:univariate_signed_altrun_over_type_b} and 
Theorem \ref{thm:Gao_sun_type_dfirstcoordinateall}, their differences
are also equal.  This completes the proof.
\end{proof}

We move onto our next result, and give counterparts of Theorem
\ref{thm:divisibilty_of_alternatingrunpolynomial_by_highpower_oneplust}
to $D^{\pm}$ and $(B-D)^{\pm}$.  Though the first result in 
Theorem \ref{thm:divisibiltyfortyped} is due to Gao and Sun, 
our proof is different as it involves Theorem 
\ref{thm:Gao_sun_type_dfirstcoordinateall} and the type B polynomials.

\begin{theorem}
\label{thm:divisibiltyfortyped}
For positive integers $n$, the polynomials $R_n^{D}(t)$ and 
$R_n^{B-D}(t)$ are  divisible by $(1+t)^m$, where 
$m = \nmhalf$. Further, the polynomials  $R_n^{D, \pm}(t)$ 
and $R_n^{B-D, \pm}(t)$ are also divisible by $(1+t)^m$ 
where $m= \lfloor (n-1)/2\rfloor$. 
\end{theorem}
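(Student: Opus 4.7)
The plan is to bootstrap directly from the results already in hand; essentially every ingredient needed is assembled in the preceding statements.

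For the first half, I would note that by Theorem \ref{thm:altrunoverBplusequalsaltrunoverD} we have $R_n^D(t)=R_n^{B,+}(t)$ and $R_n^{B-D}(t)=R_n^{B,-}(t)$. Theorem \ref{thm:divisibiltyforplusminuspolynomialstypeB} then tells us each of $R_n^{B,\pm}(t)$ is divisible by $(1+t)^{\nmhalf}$, so the first claim follows at once.

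For the second half, the plan is to use the identity
$$R_n^{D,\pm}(t)=\frac{R_n^D(t)\pm \SDR_n(t)}{2},$$
where $\SDR_n(t)$ is the univariate signed polynomial computed in Corollary \ref{cor:univariate_signed_altrun_over_type_d}. I would split by parity: when $n=2k$, Corollary \ref{cor:univariate_signed_altrun_over_type_d} gives $\SDR_n(t)=2t(1-t)(1-t^2)^{k-1}$, which is divisible by $(1+t)^{k-1}=(1+t)^{\nmhalf}$; when $n=2k+1$, it gives $\SDR_n(t)=t(1-t^2)^k$, which is divisible by $(1+t)^{k}=(1+t)^{\nmhalf}$. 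Combined with the divisibility of $R_n^D(t)$ already established, we see $(1+t)^{\nmhalf}$ divides $R_n^D(t)\pm \SDR_n(t)$ in $\mathbb{Z}[t]$; since $(1+t)^{\nmhalf}$ is monic, it divides the quotient $R_n^{D,\pm}(t)$ in $\mathbb{Z}[t]$ as well.

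Finally, for $R_n^{B-D,\pm}(t)$ the plan is to invoke the set-theoretic observation made earlier in the paper, namely $(\BB_n-\DD_n)^+=\BB_n^--\DD_n^-$ and $(\BB_n-\DD_n)^-=\BB_n^+-\DD_n^+$. These translate directly into the polynomial identities
$$R_n^{B-D,+}(t)=R_n^{B,-}(t)-R_n^{D,-}(t),\qquad R_n^{B-D,-}(t)=R_n^{B,+}(t)-R_n^{D,+}(t).$$
The type B pieces on the right are divisible by $(1+t)^{\nmhalf}$ by Theorem \ref{thm:divisibiltyforplusminuspolynomialstypeB}, and the type D pieces by the previous step, so the difference inherits the same divisibility.

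No genuine obstacle arises: all the content lives in the earlier theorems and in the explicit formula of Corollary \ref{cor:univariate_signed_altrun_over_type_d}. The only care required is bookkeeping to confirm that, in both parities of $n$, the exponent of $(1+t)$ appearing in $R_n^D(t)$ matches the one appearing in $\SDR_n(t)$, which it does (both equal $\nmhalf$); once that matches, monicity of $(1+t)^{\nmhalf}$ removes the nuisance factor of $2$ in the denominator and we are done.
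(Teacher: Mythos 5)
Your proposal is correct and follows essentially the same route as the paper: Theorem \ref{thm:altrunoverBplusequalsaltrunoverD} plus Theorem \ref{thm:divisibiltyforplusminuspolynomialstypeB} for $R_n^D(t)$ and $R_n^{B-D}(t)$, the averaging identity with $\SDR_n(t)$ from Corollary \ref{cor:univariate_signed_altrun_over_type_d} for $R_n^{D,\pm}(t)$, and the identities $R_n^{B-D,+}(t)=R_n^{B,-}(t)-R_n^{D,-}(t)$, $R_n^{B-D,-}(t)=R_n^{B,+}(t)-R_n^{D,+}(t)$ for the last piece. The only difference is that you spell out the parity bookkeeping and the monicity remark that the paper leaves implicit.
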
 

\begin{proof}
Using Theorem \ref{thm:divisibiltyforplusminuspolynomialstypeB} and
Theorem \ref{thm:altrunoverBplusequalsaltrunoverD},
we get the result for $R_n^B(t)$ and $R_n^{B-D}(t)$. 
By Corollary
\ref{cor:univariate_signed_altrun_over_type_d}, we get 
the refinement for $R_n^{D \pm}(t)$.
Moreover, as $R_n^{B-D,+}(t) = R_n^{B,-}(t) -  R_n^{D,-}(t)$ 
and  $R_n^{B-D,-}(t) = R_n^{B,+}(t) -  R_n^{D,+}(t)$, 
by Theorem 
\ref{thm:divisibiltyforplusminuspolynomialstypeB}, 
the polynomials $R_n^{B-D, \pm}(t)$ are also 
divisible by $(1+t)^m$. 
\end{proof}

\comment{  
\ogreen{Gao and Sun prove a moment inequality for the 
coefficients $R_{n,k}^{D,>}$ and $R_{n,k}^{D,<}$.  Adding these 
two gives a moment inequality for $R_{n,k}^D$.  Thus they
actually prove a slight generalization of the moment inequalities.
It would be good to mention this in the introduction and 
repeat it here.
}
}

\comment{
\blue{$\pm$ versions below}

Define the following $4$ subsets of $\BB_n$. 

\begin{equation*}
\DD_n^{>,+} = \DD_n^> \cap \DD_n^+, \hspace{5 mm} 
\DD_n^{>,-} = \DD_n^> \cap \DD_n^-, \hspace{5 mm} 
\DD_n^{<,+} = \DD_n^< \cap \DD_n^+, \hspace{5 mm} 
\DD_n^{<,-} = \DD_n^< \cap \DD_n^-. 
\end{equation*}

We also define the following corresponding $\altr_B$ enumerating polynomials. 

\begin{eqnarray*}
        R_n^{D,>,+} (t) & = & \sum _{\pi \in \DD_n^{>,+}} t^ {\altr_D(\pi)}= \sum_{i=1}^n R_{n,i}^{D,>,+} t^i, \\  R_n^{D,>,-} (t) & = & \sum _{\pi \in \DD_n^{>,-}} t^ {\altr_D(\pi)}=\sum_{i=1}^n R_{n,i}^{D,>,-} t^i, \\
        R_n^{D,<,+} (t) & = & \sum _{\pi \in \DD_n^{<,+}} t^ {\altr_D(\pi)}=\sum_{i=1}^n R_{n,i}^{D,<,+} t^i, \\
        R_n^{D,<,-} (t) & = &\sum _{\pi \in \DD_n^{<,-}} t^ {\altr_D(\pi)}=\sum_{i=1}^n R_{n,i}^{D,<,-} t^i. 
\end{eqnarray*}

We start with the following Lemma. 

\begin{lemma}
        \label{lem:equality_dbl_rstrctd_poly_d}
        
        For positive integers $n$ we have, 
        \begin{eqnarray}
        \label{eqn:dbl_rstrct_rltn_even_d}
        R_n^{D,>,+} (t) & = & R_n^{D,<,+} (t) \hspace{5 mm} \mbox{and} \hspace{5 mm}   R_n^{D,>,-} (t)  =  R_n^{D,<,-} (t) \hspace{10 mm} \mbox{ when $n$ is even.}  \\
        \label{eqn:dbl_rstrct_rltn_odd_d}
        R_n^{D,>,+} (t) & = & R_n^{D,<,-} (t) \hspace{5 mm} \mbox{and} \hspace{5 mm}  R_n^{D,>,-} (t)  =  R_n^{D,<,+} (t) \hspace{10 mm} \mbox{ when $n$ is odd.} 
        \end{eqnarray}
\end{lemma}

\begin{proof}
        The proof, as of Lemma \ref{lem:equality_dbl_rstrctd_poly_b}, directly follows by using the $\sflip$ map. 
\end{proof}


Therefore, for even positive integers $n$ we immediately have the following result. The proof is identical to the proof of Theorem \ref{thm:dbl_rstrct_div_b_even}.

\begin{theorem}
\label{thm:dbl_rstrct_div_d_even}
For even positive integers $n=2k$, the four polynomials 
$R_n^{D,>,+} (t)$, $R_n^{D,>,-} (t)$, $R_n^{D,<,+} (t)$ and $R_n^{D,<,-} (t)$ i
are divisible by $(1+t) ^{k-1}$. 
\end{theorem}
}

%
  
\subsection{Moment type identities in $\DD_n^{\pm}$ and $(\BB_n-\DD_n)^{\pm}$}
\label{subsec:type-d-moment-equalities}

Though Gao and Sun in \cite[Theorem 1.4]{Gaosunaltruntyped} 
do not explicitly mention the second part of Theorem
\ref{thm:sum_of_kth_powers_of_odd_equals_sum_of_kth_powers_signed_type_d}, 
it is easy to infer the second part from 
Lemma \ref{lem:chow-ma-powers-alt} and Theorem 
\ref{thm:sum_of_kth_powers_of_odd_equals_sum_of_kth_powers_signed_type_d}.
The following is thus known.

\begin{theorem}[Gao and Sun]
\label{thm:sum_of_kth_powers_of_odd_equals_sum_of_kth_powers_signed_type_d}
For $n \geq 2k+3$, 
\begin{eqnarray*}
1^kR_{n,1}^{D,>}+3^kR_{n,3}^{D,>}+5^kR_{n,5}^{D,>}+\cdots  
& = & 
2^kR_{n,2}^{D,>}+4^kR_{n,4}^{D,>}+6^kR_{n,6}^{D,>}+\cdots, \\
1^kR_{n,1}^{B-D,>}+3^kR_{n,3}^{B-D,>}+5^kR_{n,5}^{B-D,>}+\cdots  
& = &
2^kR_{n,2}^{B-D,>}+4^kR_{n,4}^{B-D,>}+6^kR_{n,6}^{B-D,>}+\cdots.
\end{eqnarray*}
\end{theorem}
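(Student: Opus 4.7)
The plan is short: this theorem is a direct application of the Chow--Ma result (Lemma \ref{lem:chow-ma-powers-alt}) to the two polynomials $R_n^{D,>}(t)$ and $R_n^{B-D,>}(t)$ once we know each of them is divisible by a high enough power of $(1+t)$. The needed divisibility is already recorded in Remark \ref{rem:firstnegativeequalsfirstpositive-dtype}, which observes that Gao and Sun's \cite[Corollary 2.3]{Gaosunaltruntyped} (our Theorem \ref{thm:Gao_sun_type_dfirstcoordinatepositive}) implies that both $R_n^{D,>}(t)$ and $R_n^{B-D,>}(t)$ are divisible by $(1+t)^m$, where $m = \nmhalf$.

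First I would invoke Remark \ref{rem:firstnegativeequalsfirstpositive-dtype} to obtain the factorization $(1+t)^m \mid R_n^{D,>}(t)$ and $(1+t)^m \mid R_n^{B-D,>}(t)$. Next, I would apply Lemma \ref{lem:chow-ma-powers-alt} to each polynomial in turn: that lemma converts a $(1+t)^m$-divisibility statement for a polynomial $f(t)=\sum_i f_i t^i$ into the moment-type identity $\sum_{i \text{ odd}} i^{k} f_i = \sum_{i \text{ even}} i^{k} f_i$, valid for all positive integers $k \le m-1$.

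Finally, I would check the index bound: we need $k \le m-1$, i.e., $k \le \lfloor (n-1)/2\rfloor - 1$, which is equivalent to $\lfloor (n-1)/2 \rfloor \ge k+1$, and hence to $n \ge 2k+3$ — this is exactly the range in the statement. Taking $f(t) = R_n^{D,>}(t)$ gives the first identity and $f(t) = R_n^{B-D,>}(t)$ gives the second.

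There is no real obstacle: the heavy lifting (the strong $(1+t)^m$-divisibility) is done by Gao and Sun and is already black-boxed in Remark \ref{rem:firstnegativeequalsfirstpositive-dtype}, while the moment-extraction step is an off-the-shelf use of Lemma \ref{lem:chow-ma-powers-alt}. The only small care is to keep track of the exponent $m$ and match the arithmetic bound $n \ge 2k+3$ against $k \le m-1$.
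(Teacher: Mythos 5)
Your proposal is correct and matches the route the paper itself indicates: the paper presents this theorem as known, citing Gao and Sun for the first identity and noting that the second is inferred by combining Lemma \ref{lem:chow-ma-powers-alt} with the $(1+t)^{\nmhalf}$-divisibility of $R_n^{B-D,>}(t)$ recorded in Remark \ref{rem:firstnegativeequalsfirstpositive-dtype}. Your arithmetic check that $k \le m-1$ with $m=\nmhalf$ is equivalent to $n \ge 2k+3$ is also right, so nothing is missing.
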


From Lemma \ref{lem:chow-ma-powers-alt} and 
Remark \ref{rem:firstnegativeequalsfirstpositive-dtype}
the following slightly
weaker identity follows.

\begin{corollary}
\label{thm:moment_identities_type_d}
For positive integers $n$ and positive integers $k$ with $n \geq 2k+3$, 
\begin{eqnarray*}
\label{eqn:sum_of_kth_powers_of_odd_equals_sum_of_kth_powers_of_even_type_d}
1^kR_{n,1}^D+3^kR_{n,3}^D+5^kR_{n,5}^D+\cdots 
& = & 
2^kR_{n,2}^D+4^kR_{n,4}^D+6^kR_{n,6}^D+\cdots, 
\end{eqnarray*}
\end{corollary}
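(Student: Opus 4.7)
The plan is to apply Lemma \ref{lem:chow-ma-powers-alt} directly to the polynomial $R_n^D(t) = \sum_{i=1}^n R_{n,i}^D \, t^i$. That lemma converts divisibility of the form $(1+t)^m \mid f(t)$ into alternating-moment identities of exactly the shape claimed, provided $k \leq m-1$. So the only ingredient I need is a divisibility statement with a sufficiently large exponent.

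First I would pull the needed divisibility out of Remark \ref{rem:firstnegativeequalsfirstpositive-dtype}, which asserts that $(1+t)^m$ divides $R_n^D(t)$ with $m = \lfloor (n-1)/2 \rfloor$ (this in turn comes from the Gao--Sun divisibility of $R_n^{D,>}(t)$ combined with the $\sflip$-identity $R_n^{D,<}(t) = R_n^{D,>}(t)$ when $n$ is even and $R_n^{D,<}(t) = R_n^{B-D,>}(t)$ when $n$ is odd). Next I would check the hypothesis of Lemma \ref{lem:chow-ma-powers-alt}: since $n \geq 2k+3$, we have
$$m = \lfloor (n-1)/2 \rfloor \geq \lfloor (2k+2)/2 \rfloor = k+1,$$
so $k \leq m-1$ as required.

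With both pieces in place, Lemma \ref{lem:chow-ma-powers-alt} applied to $f(t) = R_n^D(t)$ with $f_i = R_{n,i}^D$ immediately yields
$$1^k R_{n,1}^D + 3^k R_{n,3}^D + 5^k R_{n,5}^D + \cdots = 2^k R_{n,2}^D + 4^k R_{n,4}^D + 6^k R_{n,6}^D + \cdots,$$
which is the claimed identity. There is essentially no obstacle here beyond checking that the exponent in the divisibility is large enough relative to $k$; the inequality $n \geq 2k+3$ is exactly tuned to guarantee this. (Note the ``slightly weaker'' qualifier in the statement refers to the fact that one loses information by summing $R_n^{D,>}$ and $R_n^{D,<}$ together, not to any additional difficulty in the proof.)
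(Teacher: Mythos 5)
Your proposal is correct and follows exactly the paper's route: the paper derives this corollary by combining Lemma \ref{lem:chow-ma-powers-alt} with the divisibility $(1+t)^{\lfloor (n-1)/2\rfloor} \mid R_n^{D}(t)$ recorded in Remark \ref{rem:firstnegativeequalsfirstpositive-dtype}, and your check that $n \geq 2k+3$ forces $k \leq m-1$ is the only verification needed.
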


From Lemma 
\ref{lem:chow-ma-powers-alt} and Theorem   
\ref{thm:divisibiltyfortyped}, 
the following refinement of Corollary
\ref{thm:moment_identities_type_d} is straightforward.

\begin{theorem}
\label{thm:moment-ids-typed-pm}
For positive integers $n$ and positive integers $k$ with $n \geq 2k+3$, 
\begin{equation*}
\label{eqn:sum_of_kth_powers_of_odd_equals_sum_of_kth_powers_of_even_sign_type_d_plus}
1^kR_{n,1}^{D, \pm }+3^kR_{n,3}^{D, \pm}+5^kR_{n,5}^{D, \pm}+\cdots  
=  
2^kR_{n,2}^{D, \pm}+4^kR_{n,4}^{D, \pm}+6^kR_{n,6}^{D, \pm}+\cdots 
\end{equation*}
\end{theorem}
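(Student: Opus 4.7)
The plan is to derive Theorem \ref{thm:moment-ids-typed-pm} as an immediate corollary of Lemma \ref{lem:chow-ma-powers-alt} applied to the polynomial $R_n^{D,\pm}(t)$. This mirrors exactly the one-line derivations of Corollary \ref{thm:moment_identities_type_b} from Theorem \ref{thm:wilf-type-b-counterpart} and of Theorem \ref{thm:sum_of_kth_powers_of_odd_equals_sum_of_kth_powers_signed_type_b_evenodd} from Theorem \ref{thm:divisibiltyforplusminuspolynomialstypeB}, both of which the paper has already carried out.

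First I would verify that the numerical hypothesis of Lemma \ref{lem:chow-ma-powers-alt} is met. Setting $f(t) = R_n^{D,\pm}(t) = \sum_i R_{n,i}^{D,\pm}\, t^i$, Theorem \ref{thm:divisibiltyfortyped} furnishes $(1+t)^m \mid f(t)$ with $m = \lfloor (n-1)/2 \rfloor$. The standing assumption $n \geq 2k+3$ gives $n-1 \geq 2k+2$, hence $m \geq k+1$, so $k \leq m-1$, which is exactly the inequality Lemma \ref{lem:chow-ma-powers-alt} requires.

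Then I would directly invoke Lemma \ref{lem:chow-ma-powers-alt} with $f_i = R_{n,i}^{D,\pm}$ to obtain
$$1^k R_{n,1}^{D,\pm} + 3^k R_{n,3}^{D,\pm} + \cdots = 2^k R_{n,2}^{D,\pm} + 4^k R_{n,4}^{D,\pm} + \cdots,$$
which is precisely the claimed identity. There is no genuine obstacle: the substantive content is sealed inside Theorem \ref{thm:divisibiltyfortyped}, which was itself obtained from Corollary \ref{cor:univariate_signed_altrun_over_type_d} via the bivariate signed computation of Theorem \ref{thm:main_theorem_signed_peakvalley_enumerate_over_type_d}. The passage from a divisibility statement on $R_n^{D,\pm}(t)$ to a moment-type identity is the standard Stirling-number manoeuvre packaged in Lemma \ref{lem:chow-ma-powers-alt}: one writes $s^k = \sum_{r=1}^k S(k,r)\, s(s-1)\cdots(s-r+1)$, multiplies by $R_{n,s}^{D,\pm}\, t^{s-k}$, sums over $s$ to express $\sum_s s^k R_{n,s}^{D,\pm} t^{s-k}$ as a linear combination of the derivatives $f^{(r)}(t)$, and evaluates at $t=-1$, where each $f^{(r)}(-1)$ vanishes for $r \leq m-1$ by the divisibility hypothesis. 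Since this entire argument already appears (for the symmetric group and type B cases) in the excerpt, the proof reduces to a single sentence citing Lemma \ref{lem:chow-ma-powers-alt} and Theorem \ref{thm:divisibiltyfortyped}.
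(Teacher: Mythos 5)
Your proposal is correct and follows exactly the paper's route: the paper deduces this theorem in one line from Lemma \ref{lem:chow-ma-powers-alt} together with the divisibility of $R_n^{D,\pm}(t)$ by $(1+t)^{\lfloor (n-1)/2\rfloor}$ from Theorem \ref{thm:divisibiltyfortyped}, and your verification that $n\geq 2k+3$ yields $k\leq m-1$ is the same hypothesis check.
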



A signed refinement of Theorem 
\ref{thm:sum_of_kth_powers_of_odd_equals_sum_of_kth_powers_signed_type_d}
is also true.  As mentioned earlier, we refer the reader to 
\cite[Theorem 18]{dey-siva-grp-action-bona-wilf} 
for this.

\subsection{Alternating permutations in $\BB_n^{\pm}$, $\DD_n^{\pm}$}
\label{subsec:alt_perms-bd}  
Type B analogues of Theorem \ref{thm:egf_for_alt_perm_symmetricgroup} 
are known.  A permutation $\pi = \pi_1, \pi_2, \cdots, \pi_n \in \BB_n$ 
is called alternating 
if $\pi_1 > \pi_2 < \pi_3 > \pi_4 < \cdots.$  That is, 
$\pi_i < \pi_{i+1}$ for $i$ 
even, and $\pi_i > \pi_{i+1}$ for $i$ odd. Let, $\Alt_n^B$ be the set of 
alternating permutations in $\BB_n$ and let $E_n^B= |\Alt_n^B|$.  
The following counterpart of Andr{\'e}'s result is due to 
Steingr{\'i}msson \cite[Theorem 34]{steingrimsson-indexed-perms}.
\begin{theorem}[Steingr{\'i}msson]
\label{thm:egf_for_alt_perm_hyperoctahedralgroup}
For positive integers $n,$ we have
\begin{equation}
\label{eqn:egf_for_alt_perm_hyperoctahedralgroup}
  	\sum_{n=0}^{\infty} {E_n^B}\frac{x^n}{n!}	
  	=  \sec 2x+ \tan 2x. 
  	\end{equation}
  \end{theorem}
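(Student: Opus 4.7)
The plan is to reduce the claimed EGF identity to an equivalent numerical identity, match both sides to a Riccati initial value problem, and verify the resulting coefficient recursion combinatorially via a max-absolute-value split. The main obstacle will be the boundary bookkeeping.

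First, expanding the target EGF gives
\[
\sec 2x + \tan 2x \;=\; \sum_{n \geq 0} E_n \frac{(2x)^n}{n!} \;=\; \sum_{n \geq 0} 2^n E_n \frac{x^n}{n!},
\]
so by Theorem \ref{thm:egf_for_alt_perm_symmetricgroup} the claim is equivalent to the numerical identity $E_n^B = 2^n E_n$ for every $n \geq 0$. Set $f(x) := \sum_{n \geq 0} E_n^B x^n/n!$ and $f_0(x) := \sec 2x + \tan 2x$. From $\sec' = \sec \tan$, $\tan' = \sec^2$, and $\sec^2 = \tan^2 + 1$ one obtains Andr\'e's identity $2(\sec y + \tan y)' = (\sec y + \tan y)^2 + 1$; substituting $y = 2x$ shows that $f_0$ satisfies the Riccati equation $f_0' = f_0^2 + 1$ with $f_0(0) = 1$. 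Since this ODE has a unique power-series solution, it suffices to verify the initial values $E_0^B = 1$ and $E_1^B = 2$ together with the convolution recursion
\[
E_{n+1}^B \;=\; \sum_{k=0}^{n} \binom{n}{k} E_k^B E_{n-k}^B \qquad (n \geq 1).
\]

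Next I would prove this convolution recursion combinatorially by splitting $\pi \in \Alt_{n+1}^B$ at the entry of maximum absolute value. Since $\{|\pi_1|, \ldots, |\pi_{n+1}|\} = [n+1]$, exactly one of $\{n+1, -(n+1)\}$ appears in $\pi$. If $n+1$ appears, it must occupy a peak of the down-up alternating pattern, i.e., an odd position $j = 2m+1$; writing $\pi = \pi^L \cdot (n+1) \cdot \pi^R$, the left block $\pi^L$ is a down-starting type B alternating permutation of even length $2m$ and the right block $\pi^R$ is an up-starting type B reverse-alternating permutation of length $n - 2m$, each built on a chosen $\binom{n}{2m}$-subset of absolute values. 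If $-(n+1)$ appears, it sits at a local minimum, i.e., at an even position $j = 2m$, giving a down-starting alternating $\pi^L$ of odd length $2m-1$ and a down-starting alternating $\pi^R$ of length $n - 2m + 1$. The restrict-then-rescale map on signed values is an order-isomorphism on $\pm S \to [\pm|S|]$, so it preserves the alternating structure, and the involution $\pi \mapsto -\pi$ on $\BB_n$ is a bijection between type B alternating and reverse-alternating permutations, identifying both counts with $E_k^B$. Summing the even-$k$ contributions from the peak split and the odd-$k$ contributions from the valley split then recovers the full convolution $\sum_{k=0}^n \binom{n}{k} E_k^B E_{n-k}^B$.

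The main obstacle is the boundary bookkeeping. The $n = 0$ base case $E_1^B = 2$ must be verified directly since it is not fully captured by the decomposition: the permutation $(-1) \in \Alt_1^B$ is invisible to both splits (the peak split requires $n+1$ in $\pi$, while the valley split requires an even interior position), and this missing unit is exactly what the constant $+1$ in $f_0' = f_0^2 + 1$ accounts for. One must also check that the endpoint cases $j = 1$ and $j = n+1$, in which one of $\pi^L, \pi^R$ is empty, produce contributions consistent with the convention $E_0^B = 1$. These verifications are routine but need to be organized so that the even-$k$ and odd-$k$ summation ranges from the two splits line up precisely with the binomial convolution on $\{0, 1, \ldots, n\}$.
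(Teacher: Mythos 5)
Your argument is correct, but note that the paper does not prove this statement at all: it is quoted from Steingr\'imsson \cite[Theorem 34]{steingrimsson-indexed-perms} and used as a black box, so any proof you give is necessarily ``a different route.'' Your route is a type~B adaptation of Andr\'e's classical argument: reduce to the Riccati equation $f'=f^2+1$, $f(0)=1$, and verify the convolution $E_{n+1}^B=\sum_{k=0}^n\binom{n}{k}E_k^B E_{n-k}^B$ ($n\geq 1$) by splitting $\pi\in\Alt_{n+1}^B$ at the unique entry of absolute value $n+1$, with the two signs of that entry supplying the even-$k$ (peak) and odd-$k$ (valley) halves of the convolution; the bookkeeping you flag (the extra $+1$ at $n=0$ accounting for the permutation $(-1)$, the empty-block endpoints, the $\pi\mapsto-\pi$ bijection between alternating and reverse-alternating elements, and the negation-equivariant order isomorphism used to rescale blocks) all checks out. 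That said, your opening reduction already contains a much shorter proof that you then abandon: the identity $\sec 2x+\tan 2x=\sum_n 2^nE_n\,x^n/n!$ reduces the theorem to $E_n^B=2^nE_n$, and this is immediate, since an element of $\BB_n$ is determined by choosing one of $\{i,-i\}$ for each $i\in[n]$ ($2^n$ ways) together with the relative order of the resulting $n$ values (an arbitrary pattern in $\SSS_n$), and the alternating condition depends only on that pattern. So each of the $E_n$ alternating patterns lifts to exactly $2^n$ elements of $\Alt_n^B$, and the Riccati/convolution machinery, while valid, is not needed.
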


In the type A case, we had done enumeration with respect
to the type (that is, ascent/descent) of both the first 
and the last pair of elements.
As we do not enumerate based on these two pairs for the type B and 
type D cases, we need to do a little more work. 
Define $E_n^{B,+}= |\Alt^B_n \cap \BB_n^+|$ and $E_n^{B,-}= 
|\Alt^B_n \cap \BB_n^-|.$  Define $\Alt^D_n= \Alt^B_n \cap \DD_n$ 
and  $\Alt^{B-D}_n= \Alt^B_n \cap (\BB_n- \DD_n).$ 
Let $E_n^{D}= |\Alt^D_n|$ and $E_n^{B-D}= |\Alt^{B-D}_n|.$ 
We start with the following Lemma.
 
\begin{lemma}
\label{lem:difference_of_bplus_bminus_alternating_perms_typeb}
For positive integers $n$, we have  $E_n^{B,+}= E_n^{B,-}= E_n^{D}
=E_n^{B-D}=  E_n^{B}/2$. 
\end{lemma}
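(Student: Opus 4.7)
The plan is to exhibit a single involution $\phi$ on $\Alt_n^B$ which simultaneously witnesses all the claimed equalities. Define $\phi(\pi)$ by locating the unique index $i$ with $|\pi_i| = 1$ and replacing $\pi_i$ by $-\pi_i$, leaving the remaining entries untouched. This map is obviously its own inverse; the content is that $\phi$ maps $\Alt_n^B$ into itself, and that it has the desired effect on $|\Negs|$ and on the parity of $\inv_B$.

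First I would verify that $\phi(\pi) \in \Alt_n^B$. The key observation is that $\pm 1$ are minimal in absolute value, so any neighbor $\pi_{i-1}$ or $\pi_{i+1}$ of $\pi_i$ (when it exists) satisfies $|\pi_{i\pm 1}| \geq 2$. If $i$ is an interior index and $\pi_i$ is a valley, then $\pi_{i-1} > \pi_i$ and $\pi_i < \pi_{i+1}$ combined with $\pi_i \in \{1,-1\}$ force $\pi_{i-1}, \pi_{i+1} \geq 2$; both inequalities then survive the replacement of $\pi_i$ by $-\pi_i$, since $\pm 1 < 2 \leq \pi_{i \pm 1}$. The peak case is symmetric, giving $\pi_{i-1}, \pi_{i+1} \leq -2$. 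The boundary cases $i = 1$ and $i = n$ involve only one neighbor and are handled identically, splitting on the parity of $n$ to determine whether the remaining pair $(\pi_{n-1}, \pi_n)$ must be an ascent or a descent.

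Once $\phi$ is established as a well-defined involution on $\Alt_n^B$, the rest is immediate. By \cite[Lemma 3]{siva-sgn_exc_hyp}, flipping the sign of exactly one coordinate changes the parity of $\inv_B$, so $\phi$ is a bijection between $\Alt_n^B \cap \BB_n^+$ and $\Alt_n^B \cap \BB_n^-$, yielding $E_n^{B,+} = E_n^{B,-}$. At the same time, $\phi$ changes $|\Negs(\pi)|$ by exactly one, so it is also a bijection between $\Alt_n^D$ and $\Alt_n^{B-D}$, yielding $E_n^D = E_n^{B-D}$. Combined with the partitions
$$\Alt_n^B \;=\; (\Alt_n^B \cap \BB_n^+) \sqcup (\Alt_n^B \cap \BB_n^-) \;=\; \Alt_n^D \sqcup \Alt_n^{B-D},$$
all four quantities equal $E_n^B/2$.

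The main obstacle is purely the case analysis verifying that $\phi$ preserves the alternating pattern; once $|\pi_i|=1$ is exploited to force $|\pi_{i\pm 1}|\geq 2$, each subcase collapses to a one-line sign comparison, so no conceptual difficulty remains.
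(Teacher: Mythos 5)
Your proposal is correct and is essentially the paper's own argument: the paper defines exactly the same map (flip the sign of the entry of absolute value $1$), notes it preserves the alternating property and toggles both the parity of $\inv_B$ and the parity of $|\Negs|$, and concludes all four counts equal $E_n^B/2$. The only difference is that you spell out the verification that the alternating pattern is preserved, which the paper leaves as "easy to note."
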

 
\begin{proof}
Let, $\pi= \pi_1, \pi_2, \dots, \pi_j=x,\dots, \pi_n \in \Alt^B_n \cap \BB_n^+$ 
where $|x|= 1$.
Define $f: \Alt^B_n \cap \BB_n^+ \mapsto \Alt^B_n \cap \BB_n^- $ by
$$f( \pi_1, \pi_2, \dots, \pi_j=x,\dots, \pi_n)  = \pi_1, \pi_2, \dots, 
\pi_j= \overline{x},\dots, \pi_n.$$ That is, $f$ flips the sign of the 
smallest element in absolute value.  Clearly, $f$ is a bijection.
When $\pi$ is an alternating permutation, it is easy to note that  
$f(\pi)$ is also an alternating permutation.
Moreover, $f(\pi) \in \BB_n^+$ iff $\pi \in \BB_n^-$.  
Therefore, $E_n^{B,+}= E_n^{B,-}= E_n^{B}/2$.  We need an identical
map but now defined on the set $\Alt^D_n$.  Thus, define 
$g: \Alt^D_n  \mapsto \Alt^{B-D}_n $
by 
$$g( \pi_1, \pi_2, \dots, \pi_j=x,\dots, \pi_n)  = \pi_1, \pi_2, \dots, \pi_j= \overline{x},\dots, \pi_n.$$
Clearly, $g$ is also a bijection and further $g(\pi)$
is an alternating permutation. With these maps, the proof is complete. 
\end{proof}
 
An immediate consequence of Lemma
\ref{lem:difference_of_bplus_bminus_alternating_perms_typeb}
is the following.  Here, we assume $E_0^{B,+}=1, E_0^{B,-}=0$ and 
$E_0^{D}=1, E_0^{B-D}=0$.
 
 \begin{theorem}
 \label{thm:egf_for_alt_perm_hyperoctahedralgroupplusandtyped}
 We have the following exponential generating functions.
 \begin{eqnarray}
\label{eqn:egf_for_alt_perm_hyperoctahedralgroupplusminus}
\sum_{n=0}^{\infty} {E_n^{B, \pm}}\frac{x^n}{n!}	
& =  &(\sec 2x+ \tan 2x \pm 1 )/2,   \\
\label{eqn:egf_for_alt_perm_demihyperoctahedralgroup}
\sum_{n=0}^{\infty} {E_n^{D}}\frac{x^n}{n!}	
& =  & (\sec 2x+ \tan 2x +1 )/2,\hspace{3 mm}
\sum_{n=0}^{\infty} {E_n^{B-D}}\frac{x^n}{n!} 	
=  (\sec 2x+ \tan 2x -1 )/2.
 \end{eqnarray}
 \end{theorem}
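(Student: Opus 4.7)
The plan is to reduce everything to Steingr\'{i}msson's formula in Theorem \ref{thm:egf_for_alt_perm_hyperoctahedralgroup} together with Lemma \ref{lem:difference_of_bplus_bminus_alternating_perms_typeb}, which already does almost all the work. Lemma \ref{lem:difference_of_bplus_bminus_alternating_perms_typeb} tells us that for $n \geq 1$,
$$E_n^{B,+} = E_n^{B,-} = E_n^D = E_n^{B-D} = E_n^B/2,$$
so the four exponential generating functions we must compute differ from $\frac{1}{2}\sum_{n \geq 0} E_n^B x^n/n!$ only in their constant terms.

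First, I would split the target EGF as $\sum_{n \geq 0} E_n^{B,\pm} x^n/n! = E_0^{B,\pm} + \sum_{n \geq 1} E_n^{B,\pm} x^n/n!$ and apply Lemma \ref{lem:difference_of_bplus_bminus_alternating_perms_typeb} to the tail to obtain
$$\sum_{n \geq 0} E_n^{B,\pm}\frac{x^n}{n!} = E_0^{B,\pm} + \frac{1}{2}\sum_{n \geq 1} E_n^{B}\frac{x^n}{n!} = E_0^{B,\pm} + \frac{1}{2}\bigl(\sec 2x + \tan 2x - 1\bigr),$$
where in the last step I use Theorem \ref{thm:egf_for_alt_perm_hyperoctahedralgroup} and subtract the $n=0$ contribution $E_0^B = 1$. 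Substituting the stipulated initial values $E_0^{B,+}=1$ and $E_0^{B,-}=0$ gives $\tfrac{1}{2}(\sec 2x + \tan 2x + 1)$ and $\tfrac{1}{2}(\sec 2x + \tan 2x - 1)$ respectively, which is exactly \eqref{eqn:egf_for_alt_perm_hyperoctahedralgroupplusminus}.

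The identities \eqref{eqn:egf_for_alt_perm_demihyperoctahedralgroup} follow by the same bookkeeping applied to the bijection $g: \Alt_n^D \to \Alt_n^{B-D}$ in the proof of Lemma \ref{lem:difference_of_bplus_bminus_alternating_perms_typeb}, using the conventions $E_0^D = 1$ and $E_0^{B-D}=0$. The only step that requires any care is making sure the $n=0$ term is handled correctly, since Lemma \ref{lem:difference_of_bplus_bminus_alternating_perms_typeb} is a purely combinatorial bijective statement that applies for $n \geq 1$ (there is no letter whose sign one can flip when $n=0$), whereas the stated EGF identities incorporate the asymmetric initial conditions. There is no real obstacle here beyond this bookkeeping; the combinatorial content has been absorbed into Lemma \ref{lem:difference_of_bplus_bminus_alternating_perms_typeb} and Steingr\'{i}msson's theorem.
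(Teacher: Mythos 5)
Your proposal is correct and follows essentially the same route as the paper: both arguments combine Steingr\'{i}msson's Theorem \ref{thm:egf_for_alt_perm_hyperoctahedralgroup} with Lemma \ref{lem:difference_of_bplus_bminus_alternating_perms_typeb} and then account for the asymmetric $n=0$ conventions (the paper phrases this as $E_n^{B,\pm}=\tfrac{1}{2}E_n^B\pm\tfrac{1}{2}(E_n^{B,+}-E_n^{B,-})$ with the difference series equal to the constant $1$, while you split off the $n=0$ term directly, which is the same bookkeeping).
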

 
\begin{proof}
We have
\begin{eqnarray*}
\sum_{n=0}^{\infty} {E_n^{B \pm}}\frac{x^n}{n!}	
& = & 
\frac{1}{2}\sum_{n=0}^{\infty} {E_n^B}\frac{x^n}{n!} \pm \frac{1}{2} 
\sum_{n=0}^{\infty}\left( E_n^{B,+} - E_N^{B,-} \right) \frac{x^n}{n!} \nonumber \\
&  = & \frac{1}{2}\big( \sec 2x+ \tan 2x \big) \pm\frac{1}{2}  = \frac{1}{2}\big(\sec 2x+\tan 2x \pm 1\big). 
 	\end{eqnarray*}
We have used Theorem \ref{thm:egf_for_alt_perm_hyperoctahedralgroup} 
and Lemma
\ref{lem:difference_of_bplus_bminus_alternating_perms_typeb} in the 
second line. The proof of 
\eqref{eqn:egf_for_alt_perm_demihyperoctahedralgroup} follows from 
Lemma 
\ref{lem:difference_of_bplus_bminus_alternating_perms_typeb}.
 \end{proof}
 
We now move on to alternating permutations in the positive elements 
of Type D Coxeter Groups. 
Define  $\Alt^{D,+}_n= \Alt^D_n \cap \DD_n^{+}$  and 
$\Alt^{D,-}_n = \Alt^D_n \cap \DD_n^{-}$  and let
$E_n^{D,+}= |\Alt^{D,+}_n|$ and $E_n^{D,-}= |\Alt^{D,-}_n|.$  Similarly,
define  $\Alt^{B-D,+}_n= \Alt^{B-D}_n \cap (\BB_n-\DD_n)^{+}$  and 
$\Alt^{B-D,-}_n = \Alt^{B-D}_n \cap (\BB_n - \DD_n)^{-}$  and let
$E_n^{B-D,+}= |\Alt^{B-D,+}_n|$ and $E_n^{B-D,-}= |\Alt^{B-D,-}_n|.$
 
\begin{lemma}
\label{lem:difference_of_bplus_bminus_alternating_perms_typed}
For positive integers $n \geq 2$, we have  $E_n^{D,+}= E_n^{D,-}=  \frac{1}{2} E_n^{D}$.  
We also have $E_n^{B-D,+}= E_n^{B-D,-}=  \frac{1}{2}  E_n^{B-D}$.  
\end{lemma}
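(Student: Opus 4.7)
(Proposal.) The plan is to build an involution $\psi$ on $\Alt^D_n$ ($n \geq 2$) that flips the parity of $\inv_D$; this immediately yields $E_n^{D,+}=E_n^{D,-}$, and combined with $E_n^{D,+}+E_n^{D,-}=E_n^D$ gives $E_n^{D,\pm}=\tfrac12 E_n^D$. The $(\BB_n-\DD_n)$ statement will then follow by transferring $\psi$ through the bijection $g$ of Lemma \ref{lem:difference_of_bplus_bminus_alternating_perms_typeb}: a single sign flip changes both $\inv_B$ parity and $|\Negs|$ parity by one, so by Corollary \ref{cor:invb_invd_ddn_bbn} it preserves $\inv_D$ parity, and $g$ therefore restricts to bijections $\Alt^{D,\pm}_n \to \Alt^{B-D,\pm}_n$.

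The involution will be defined as follows. For $\pi\in\Alt^D_n$, let $i_1$ and $i_2$ be the unique positions with $|\pi_{i_1}|=1$ and $|\pi_{i_2}|=2$, and set
$$
\psi(\pi)_{i_1}=-\pi_{i_2},\qquad \psi(\pi)_{i_2}=-\pi_{i_1},\qquad \psi(\pi)_k=\pi_k \text{ for } k\neq i_1,i_2.
$$
Clearly $\psi^2=\id$; since $\psi$ flips two signs simultaneously, $|\Negs|$ parity is preserved and $\psi$ maps $\DD_n$ to $\DD_n$.

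For the parity flip, I will decompose $\psi$ as (1) the transposition of values at positions $i_1$ and $i_2$, followed by (2) sign-flips at both $i_1$ and $i_2$. Every position $k\notin\{i_1,i_2\}$ has $|\pi_k|\geq 3$, so $\pi_k\notin[-2,2]$. Because no intermediate value separates $\pi_{i_1}$ and $\pi_{i_2}$, step (1) changes only the pair $(i_1,i_2)$ in $\inv_A$ and leaves every term of the middle sum $|\{p<q:-\pi_p>\pi_q\}|$ and $|\Negs|$ unchanged (one uses $-x>y\iff -y>x$ for the pair $(i_1,i_2)$ itself), so step (1) flips $\inv_B$ parity. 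Step (2) then flips $\inv_B$ parity twice (each sign flip contributes one, by \cite[Lemma~3]{siva-sgn_exc_hyp}), so altogether $\psi$ flips $\inv_B$ parity while preserving $|\Negs|$ parity; Corollary \ref{cor:invb_invd_ddn_bbn} gives the claimed flip in $\inv_D$ parity.

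The main obstacle will be verifying $\psi(\pi)\in\Alt^B_n$. Since $\{|\psi(\pi)_{i_1}|,|\psi(\pi)_{i_2}|\}=\{1,2\}$, every peak/valley condition at $i_1$ or $i_2$ involving an outer neighbor $\pi_k$ with $|\pi_k|\geq 3$ reduces to $\pi_k\geq 3$ or $\pi_k\leq -3$, and so remains valid regardless of which value in $\{\pm 1,\pm 2\}$ is placed at $i_1$ or $i_2$. At the pair $(i_1,i_2)$ itself, the substitution $(a,b)\mapsto(-b,-a)$ preserves ascents and descents since $a>b\iff -b>-a$. The delicate subcase will be the adjacent one $|i_1-i_2|=1$, where the two special positions are each other's neighbor; a short case analysis on the four possible sign patterns of $(\pi_{i_1},\pi_{i_2})$ confirms that the alternation persists, completing the proof.
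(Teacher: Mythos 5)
Your map $\psi$ is exactly the map $h$ in the paper's proof (replace the entries of absolute value $1$ and $2$ by the negatives of each other's values), and your verification that it is an alternation-preserving, $\inv_D$-parity-reversing involution on $\Alt^D_n$ is correct, just more detailed than the paper's terse version. The only cosmetic difference is in the $\BB_n-\DD_n$ statement, where the paper applies the same map $h$ directly to $\Alt^{B-D}_n$ while you transfer the result through the sign-flip bijection $g$; both routes are valid.
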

  
\begin{proof}
Let, $\pi= \pi_1, \pi_2, \dots, \pi_i=x,\dots,\pi_j=y,\dots, \pi_n 
\in \Alt^{D,+}_n $ 
where $|x|= 1$ and $|y|= 2$. 
Define $h: \Alt^{D,+}_n  \mapsto \Alt^{D,-}_n $ by
$$h(\pi_1, \pi_2, \dots, \pi_i=x,\dots,\pi_j=y,\dots, \pi_n )  = \pi_1, \pi_2, \dots, 
\pi_i=\overline{y},\dots,\pi_j=\overline{x},\dots, \pi_n$$
The map $h$ is clearly a bijection.
It is easy to see that $h(\pi)$ is an alternating permutation whenever
$\pi$ is.  It is also clear that $h(\pi) $ and $\pi$ have opposite parity of 
$\inv_D$. The same map $h$ applied on $\pi \in \BB_n-\DD_n$ 
flips parity of $\inv_D$ and shows that 
$E_n^{B-D,+} = E_n^{B-D,-} = \frac{1}{2} E_n^{B-D}$.

The proof is complete.
\end{proof}
  
Using Lemma
\ref{lem:difference_of_bplus_bminus_alternating_perms_typed}, 
we have the following result. The proof is so similar 
to the proof of Theorem \ref{thm:egf_for_alt_perm_hyperoctahedralgroupplusandtyped}
and so we omit a proof and merely state our result. Here, 
we assume $E_0^{D,+}=1$ and $E_0^{D,-}=0$.  We also 
assume that $E_n^{B-D,+} = E_n^{B-D,-} = 0$.
 
\begin{theorem}
\label{thm:egf_for_alt_perm_demihyperoctahedralgroupplusminus}
We have
\begin{eqnarray*}
\label{eqn:egf_for_alt_perm_demihyperoctahedralgroupplus}
\sum_{n=0}^{\infty} {E_n^{D, \pm}}\frac{x^n}{n!}	
&  = &   \big(\sec 2x+ \tan 2x\pm2x + \delta^{\pm} \big)/4, 
\mbox{where $\delta^+ = 3$ and $\delta^- = -1$,} \\
\sum_{n=0}^{\infty} {E_n^{B-D, \pm}}\frac{x^n}{n!}	
&  = &   \big(\sec 2x+ \tan 2x -1 \pm x  \big)/2.
\end{eqnarray*}
\end{theorem}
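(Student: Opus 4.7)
The plan is to mirror the short derivation of Theorem \ref{thm:egf_for_alt_perm_hyperoctahedralgroupplusandtyped}, using Lemma \ref{lem:difference_of_bplus_bminus_alternating_perms_typed} in place of Lemma \ref{lem:difference_of_bplus_bminus_alternating_perms_typeb} and \eqref{eqn:egf_for_alt_perm_demihyperoctahedralgroup} in place of Theorem \ref{thm:egf_for_alt_perm_hyperoctahedralgroup}. For each of the two families I would split the desired EGF as
\begin{equation*}
\sum_{n \geq 0} E_n^{D,\pm}\, \frac{x^n}{n!} \;=\; \frac{1}{2}\sum_{n \geq 0} E_n^{D}\, \frac{x^n}{n!} \;\pm\; \frac{1}{2}\sum_{n \geq 0} \bigl( E_n^{D,+} - E_n^{D,-} \bigr)\frac{x^n}{n!},
\end{equation*}
and analogously for $B-D$. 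The first summand on the right is already supplied by \eqref{eqn:egf_for_alt_perm_demihyperoctahedralgroup}, so everything reduces to evaluating the ``discrepancy'' series.

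The key observation is that by Lemma \ref{lem:difference_of_bplus_bminus_alternating_perms_typed}, $E_n^{D,+} - E_n^{D,-} = 0$ for every $n \geq 2$, and likewise $E_n^{B-D,+} - E_n^{B-D,-} = 0$ for every $n \geq 2$. Hence each discrepancy series collapses to a polynomial of degree at most one, whose two coefficients can be read off directly from the boundary data: the stated conventions handle $n = 0$, while for $n = 1$ the sets $\DD_1 = \{1\}$ and $\BB_1 - \DD_1 = \{\overline{1}\}$ are trivial to analyse (a one-letter word is vacuously alternating, and $\inv_D$ of a single-letter word is zero, placing the unique element in the $+$ class in each case).

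Substituting the resulting linear polynomials and the closed forms from \eqref{eqn:egf_for_alt_perm_demihyperoctahedralgroup} into the half-sum/half-difference identity above, and collecting the constant and linear corrections, produces the four stated EGFs. In particular, the $\delta^+ = 3$ and $\delta^- = -1$ in the $D$-case are exactly what emerges when the polynomial $1 + x$ (the discrepancy in the $D$-family) is combined with the constant $+1$ appearing in the EGF of $E_n^D$, after dividing by two; the analogous combination governs the $B-D$ family.

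The main obstacle here is not conceptual but purely bookkeeping: Lemma \ref{lem:difference_of_bplus_bminus_alternating_perms_typed} is stated only for $n \geq 2$, so one must be careful at the two boundary values and must track the parity of $\inv_D$ on those boundary elements. Once these few integers are pinned down and the formal identities are combined, the theorem becomes an algebraic manipulation of truncated Taylor series with no remaining combinatorial content.
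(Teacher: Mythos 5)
Your approach is exactly the one the paper intends (the paper omits this proof, declaring it structurally identical to that of Theorem \ref{thm:egf_for_alt_perm_hyperoctahedralgroupplusandtyped}): write each signed EGF as half the unsigned EGF plus or minus half the discrepancy series, invoke Lemma \ref{lem:difference_of_bplus_bminus_alternating_perms_typed} to annihilate the discrepancy for $n\geq 2$, and pin down the $n=0,1$ terms from the stated conventions and from $\DD_1=\{1\}$, $\BB_1-\DD_1=\{\ol{1}\}$. Your boundary data are also correct: the $D$-discrepancy series is $1+x$ and the $(B-D)$-discrepancy series is $x$, and the $D$-case does come out as $\big(\sec 2x+\tan 2x\pm 2x+\delta^{\pm}\big)/4$ with $\delta^{+}=3$, $\delta^{-}=-1$.

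However, your closing claim that the computation ``produces the four stated EGFs'' is false for the $B-D$ pair. Carrying your own recipe through gives
\[
\sum_{n\geq 0} E_n^{B-D,\pm}\frac{x^n}{n!}
=\frac{1}{2}\cdot\frac{\sec 2x+\tan 2x-1}{2}\pm\frac{1}{2}\,x
=\frac{\sec 2x+\tan 2x-1\pm 2x}{4},
\]
which is not the stated $\big(\sec 2x+\tan 2x-1\pm x\big)/2$. The printed formulas cannot be right: the two stated $B-D$ series sum to $\sec 2x+\tan 2x-1$, which is twice the EGF of $E_n^{B-D}$ given in \eqref{eqn:egf_for_alt_perm_demihyperoctahedralgroup}, and the stated $+$ series has $x$-coefficient $3/2$, which cannot equal the integer $E_1^{B-D,+}$. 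So the theorem's second display contains a typo (the denominator should be $4$ and the linear correction $2x$); your method is sound and in fact detects this, but you should report the corrected formula rather than assert agreement with the one as printed.
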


\subsection{Springer permutations in $\BB_n^{\pm}$ and $\DD_n^{\pm}$ }

A {\it snake} of type $B$ is a signed permutation $\pi_1,\pi_2,\cdots,\pi_n \in \BB_n$ 
such that $0<\pi_1>\pi_2< \pi_3 > \cdots$. Let, $\Snake_n^B$ be the set of 
snakes in $\BB_n$ and let $S_n^B= |\Snake_n^B|$. The numbers $S_n^B$ are also 
called Springer numbers. 
Springer in \cite{springer-remarks-combin}
derived the following generating function:
$$S(x)=\sum_{n\geq 0}S^B_n\frac{x^n}{n!}=\frac{1}{\cos x-\sin x}.$$

Define $\Snake^{B,+}_n=\Snake^B_n \cap \BB_n^+$ to be the set of 
positive type B snakes and let $S_n^{B,+}= |\Snake^{B,+}_n|.$ 
Similarly, let  $\Snake^{B,-}_n= \Snake^B_n \cap \BB_n^-$ and 
let $S_n^{B,-}= |\Snake^{B,-}_n|.$   Further, 
define $\Snake^D_n= \Snake^B_n \cap \DD_n$ and 
$\Snake^{B-D}_n= \Snake^B_n \cap (\BB_n- \DD_n).$  Let 
$S_n^{D}= |\Snake^D_n|$ and $S_n^{B-D}= |\Snake^{B-D}_n|.$
Gao and Sun in  \cite[Corollary 2.5]{Gaosunaltruntyped}
proved the following.

\begin{theorem}[Gao and Sun]
	\label{thm:dif_snakes_d_bminusd}
	For positive integers $n$, we have
	\begin{eqnarray*}
		S_n^{D} - S_n^{B-D}& = & \begin{cases}
			1 & \text {if $n=0,1$ (mod 4)},\\
			-1 & \text {if $n=2,3$  (mod 4)}.
		\end{cases}
	\end{eqnarray*}
\end{theorem}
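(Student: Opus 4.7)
The plan is to identify snakes as the extremal permutations contributing to the leading monomial of certain polynomials already studied in this paper, and then read off the required difference from the explicit formula of Theorem \ref{thm:Gao_sun_type_dfirstcoordinatepositive}.

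First I would observe that a snake $\pi = \pi_1, \pi_2, \ldots, \pi_n \in \BB_n$ satisfies $\altr_B(\pi) = n$, the maximum possible value: the zigzag condition $0 = \pi_0 < \pi_1 > \pi_2 < \pi_3 > \cdots$ places every index $i \in [n-1]$ into $\Pkk_B(\pi) \cup \Vly_B(\pi)$. Conversely, if $\altr_B(\pi) = n$ and $\pi_1 > 0$, then $\pi$ changes direction at every interior index and the initial inequality $0 = \pi_0 < \pi_1$ forces the snake pattern. Hence
$$\Snake_n^B \;=\; \bigl\{\pi \in \BB_n : \pi_1 > 0 \text{ and } \altr_B(\pi) = n\bigr\}.$$
Because $\altr_D(\pi) = \altr_B(\pi)$ for every $\pi \in \BB_n$, this identification yields $S_n^D = [t^n] R_n^{D,>}(t)$ and $S_n^{B-D} = [t^n] R_n^{B-D,>}(t)$, so the identity I need to verify reduces to
$$S_n^D - S_n^{B-D} \;=\; [t^n]\bigl(R_n^{D,>}(t) - R_n^{B-D,>}(t)\bigr).$$

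Next I would plug in the closed form from Theorem \ref{thm:Gao_sun_type_dfirstcoordinatepositive} and extract the leading coefficient. For $n = 2k$ even, one has $R_n^{D,>}(t) - R_n^{B-D,>}(t) = t(1-t)(1-t^2)^{k-1}$, and a straightforward binomial expansion shows that the only contribution to $[t^{2k}]$ comes from the term $-t \cdot (-t^2)^{k-1}$, giving $[t^{2k}] = (-1)^k$; this is $+1$ when $k$ is even (so $n \equiv 0 \pmod 4$) and $-1$ when $k$ is odd (so $n \equiv 2 \pmod 4$). For $n = 2k+1$ odd, $R_n^{D,>}(t) - R_n^{B-D,>}(t) = t(1-t^2)^k$, and the coefficient of $t^{2k+1}$ is likewise $(-1)^k$, giving $+1$ for $n \equiv 1 \pmod 4$ and $-1$ for $n \equiv 3 \pmod 4$.

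There is essentially no serious obstacle here: once snakes are recognized as the top-degree terms of $R_n^{D,>}(t)$ and $R_n^{B-D,>}(t)$, the theorem reduces to a single binomial expansion applied to a polynomial already computed earlier. The only point that needs a moment of care is the equivalence of the two descriptions of $\Snake_n^B$, which is a one-line check from the definition of $\altr_B$ and the convention $\pi_0 = 0$.
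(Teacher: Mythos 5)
Your argument is correct. Note first that this paper does not actually prove Theorem \ref{thm:dif_snakes_d_bminusd}: it is quoted from Gao and Sun, so there is no internal proof to compare against. Your route --- identifying $\Snake_n^B$ with $\{\pi \in \BB_n : \pi_1 > 0 \mbox{ and } \altr_B(\pi) = n\}$ and reading off $S_n^{D} - S_n^{B-D}$ as the coefficient of $t^n$ in $R_n^{D,>}(t) - R_n^{B-D,>}(t)$ via Theorem \ref{thm:Gao_sun_type_dfirstcoordinatepositive} --- is sound, and your coefficient extraction $(-1)^k$ in both parities matches the claimed values. The point that genuinely needs the care you give it is the restriction to $\pi_1 > 0$: elements with $\altr_B(\pi) = n$ and $\pi_1 < 0$ are the ``reverse snakes,'' and their presence is exactly why one must use $R_n^{D,>}$ and $R_n^{B-D,>}$ rather than the unrestricted polynomials (indeed $R_n^{D}(t) - R_n^{B-D}(t) = 0$ for odd $n$ by Theorem \ref{thm:Gao_sun_type_dfirstcoordinateall}, whereas $S_n^{D} - S_n^{B-D} = \pm 1$). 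Be aware, however, that what you have produced is a deduction of one quoted Gao--Sun corollary from another quoted Gao--Sun result, so it is essentially a reconstruction of their own derivation rather than an independent proof; within the logical structure of this paper that is acceptable, since Theorem \ref{thm:Gao_sun_type_dfirstcoordinatepositive} is stated earlier as a known input. An alternative internal route, more in the spirit of the paper's sign-reversing involutions, would combine Lemma \ref{lem:dif_snakes_oddeven_b} with the identification $S_n^{B,+}=S_n^{D}$ and $S_n^{B,-}=S_n^{B-D}$, but the paper obtains that identification by invoking Theorem \ref{thm:dif_snakes_d_bminusd} itself, so it cannot be used without circularity.
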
	

We start by proving the following lemma.

\begin{lemma}
\label{lem:dif_snakes_oddeven_b}
For positive integers $n$, we have
\begin{eqnarray*}
S_n^{B,+} - S_n^{B,-}& = & \begin{cases}
1 & \text {if $n=0,1$ (mod 4)},\\
-1 & \text {if $n=2,3$ (mod 4)}.
\end{cases}
\end{eqnarray*}
\end{lemma}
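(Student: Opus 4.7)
The plan is to reduce the signed snake count to extracting a single monomial coefficient from the bivariate polynomials in Theorem \ref{thm:main_theorem_signed_peakvalley_enumerate_over_type_b}. The key observation is that snakes (with the convention $\pi_0 = 0$) are precisely the elements of $\BB_n$ that achieve the maximum $\pkk_B + \vly_B = n-1$, and the subset they inhabit (ending in ascent or descent) is forced by the parity of $n$.

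First, I would compute $(\pkk_B, \vly_B)$ for a snake directly from the definition $0 < \pi_1 > \pi_2 < \pi_3 > \cdots$. The step from index $i-1$ to $i$ is an ascent exactly when $i$ is odd, so peaks sit at the odd interior indices and valleys at the even interior indices. For $n = 2k+1$ the final step is an ascent, placing snakes in $\BB_{n,-,a}$ with $(\pkk_B, \vly_B) = (k, k)$; for $n = 2k$ the final step is a descent, placing snakes in $\BB_{n,-,d}$ with $(\pkk_B, \vly_B) = (k, k-1)$.

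Next, I would show that snakes are the \emph{only} elements of $\BB_{n,-,a}$ (when $n$ is odd) or $\BB_{n,-,d}$ (when $n$ is even) with $\pkk_B + \vly_B = n-1$. Achieving this maximum forces every interior index to be a peak or a valley, hence the sequence of step directions must alternate strictly. The direction of the final step is fixed by which subset we are in, and walking back through the alternation then forces the initial step (from $\pi_0 = 0$ to $\pi_1$) to be an ascent in both parity cases, giving $\pi_1 > 0$, which is the snake condition.

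Combining these observations, for $n = 2k+1$ odd,
$$S_n^{B,+} - S_n^{B,-} \;=\; [p^k q^k]\,\SBR_{n,-,a}(p,q) \;=\; [p^k q^k](1-pq)^k \;=\; (-1)^k,$$
and for $n = 2k$ even,
$$S_n^{B,+} - S_n^{B,-} \;=\; [p^k q^{k-1}]\,\SBR_{n,-,d}(p,q) \;=\; [p^k q^{k-1}](1-p)(1-pq)^{k-1} \;=\; (-1)^k.$$
In both cases $(-1)^k = +1$ iff $k$ is even, which happens iff $n \equiv 0, 1 \pmod{4}$, matching the stated dichotomy. The main obstacle is the uniqueness claim in the third paragraph; once it is in place, everything else is just reading off a coefficient from an explicit product formula already at hand.
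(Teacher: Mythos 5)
Your argument is correct, but it is a genuinely different route from the paper's. The paper proves Lemma \ref{lem:dif_snakes_oddeven_b} by a direct sign-reversing involution on snakes: for a snake outside the set $\BB_n^>(\id)$ of permutations with $|\pi_i|=i$ and $\pi_1=1$, it flips the sign of the entry whose absolute value is the smallest ``non-fixed'' index, which preserves snakeness and flips the parity of $\inv_B$; the count then reduces to the unique snake $1,\ol{2},3,\ol{4},\dots$ in $\BB_n^>(\id)$, whose sign gives the $\pm 1$. You instead read the answer off Theorem \ref{thm:main_theorem_signed_peakvalley_enumerate_over_type_b}: since $\pkk_B+\vly_B\le n-1$ with equality exactly for the alternating sequences $0,\pi_1,\dots,\pi_n$, and since membership in $\BB_{n,-,a}$ (for $n$ odd) or $\BB_{n,-,d}$ (for $n$ even) pins down which of the two alternating patterns occurs and forces $\pi_1>0$, the snakes are precisely the permutations contributing to the top-degree monomial $p^kq^k$ of $(1-pq)^k$, respectively $p^kq^{k-1}$ of $(1-p)(1-pq)^{k-1}$, and both coefficients equal $(-1)^k$, which matches the stated $n\bmod 4$ dichotomy. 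Your extremal characterization is sound (I checked both parities, including that the opposite alternating pattern lands in the other subset and so does not contaminate the coefficient), and your approach is the exact type B analogue of what the paper itself does for alternating permutations in $\SSS_n$ via Remark \ref{rem:Alt_perm_and_highest_coef_in_signedaltrun_poly} and Lemma \ref{lem:rel_between_alt_and_altplusminus}; it buys economy by reusing the main theorem, whereas the paper's involution is self-contained, independent of the bivariate machinery, and is the template reused for the type D refinements that follow.
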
	

\begin{proof}
Let $\id = 1,2,3,4,\dots,n-1,n \in \BB_n^>$ be the identity permutation. Consider the 
set $\BB_n^>(\id) = \{\pi \in \BB_n^> : \pi_1=1 \mbox{ and }  |\pi_i|=i \mbox{ for } i>1 \}$. 
For positive integers $n$, clearly, $|\BB_n^>(\id)| = 2^{n-1}$.
Let $\pi= \pi_1, \pi_2, \dots, \pi_n \in \Snake_n^{B,+} \cap (\BB_n - \BB_n^>(\id))$. 
As $\pi \notin \BB_n^>(\id)$, there exists $j<n$ such that $|\pi_j| \neq j$. 
Let $k$ be the smallest index such that $|\pi_k| \neq k$.  Let $r > k$ be 
the index with $|\pi_r|=k$.  Note that $r \geq 2$ and $r < n$.  As $k$ is the smallest non-fixed point 
in absolute value, we have $|\pi_i|=i$ for $1 \leq i \leq k-1$.  As $2 \leq r < n$, both 
$\pi_{r-1}$ and $\pi_{r+1}$ exist and we have $|\pi_{r-1}| > k$ and $|\pi_{r+1}| > k$. Define 
$f: \Snake_n^{B,+} \cap (\BB_n - \BB_n^>(\id) ) \mapsto \Snake_n^{B,-} \cap (\BB_n - \BB_n^>(\id) )$ 
by $$f( \pi_1, \pi_2, \dots, \pi_r=k,\dots, \pi_n)  = \pi_1, \pi_2, \dots, 
\pi_r= \overline{k},\dots, \pi_n.$$ 
When $\pi$ is snake, it is easy to observe that $f(\pi)$ is also a snake. 
Further, as $\pi \in \BB_n^+$, we have $f(\pi) \in \BB_n^-.$  Thus $f$ is a well defined map and
clearly, it is a bijection. Therefore, we have 
$|\Snake_n^{B,+} \cap  ( \BB_n - \BB_n^>(\id)| = |\Snake_n^{B,-} \cap  ( \BB_n - \BB_n^>(\id)|. $ Thus, 
$$S_n^{B,+} - S_n^{B,-}= |\Snake_n^{B,+} \cap \BB_n^>(\id)|-  |\Snake_n^{B,-} \cap \BB_n^>(\id)|. $$ 

We now count the number of snakes in the set $\BB_n^>(\id)$. 
Let $\pi$ be a snake in $\BB_n^>(\id)$.  We have $\pi_1=1$. 
As $\pi$ is a snake in $\BB_n^>(\id)$, we have $\pi_2 < \pi_1$.  As $\pi_2 \in \{2, \ol{2}\}$, 
$\pi_2= \ol{2}$. Similarly, 
$\pi_3= 3$.  Continuing in this way, $\pi$ must have $\pi_r=r$ when $r$ is odd and 
$\pi_r=\ol{r}$ when $r$ is even.  Thus, for any positive integer $n$, the set 
$\BB_n^>(\id)$ has only 
$1$ snake, namely $ \pi= 1, \ol{2},3,\ol{4},5,\ol{6},\dots$. The permutation 
$\pi  \in \BB_n^+$ 
if and only if $\pi$ has even number of negative entries which happens if and only if $n$ 
is $0,1$ (mod 4). The proof is complete. 
\end{proof}

With initial values $S_0^{B,+}=1$, $S_0^{B,-}=0$, $S_0^{D}=1$ and $S_0^{B-D}=0$,
we get the following consequence of Lemma \ref{lem:dif_snakes_oddeven_b}. 

\begin{theorem}
\label{thm:egfforsnakesb}
We have the following exponential generating functions.
\begin{eqnarray}
\label{eqn:egf_snakes_typeb_plus}
\sum_{n=0}^{\infty} {S_n^{B,+}} \frac{x^n}{n!}
& =  & \frac{\cos ^2 x }{\cos x - \sin x},   \hspace{25 mm}
\sum_{n=0}^{\infty} {S_n^{B,-}} \frac{x^n}{n!}
 =   \frac{\sin ^2 x }{\cos x - \sin x}, \\
\label{eqn:egf_snakes_typed}
\sum_{n=0}^{\infty} {S_n^{D}}\frac{x^n}{n!}	
& =  & \frac{\cos ^2 x }{\cos x - \sin x},\hspace{25 mm}
\sum_{n=0}^{\infty} {S_n^{B-D}}\frac{x^n}{n!} 	
=  \frac{\sin ^2 x }{\cos x - \sin x}.
\end{eqnarray}
\end{theorem}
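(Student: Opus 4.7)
The plan is to combine three ingredients: Springer's generating function $\sum_{n\geq 0} S_n^B \frac{x^n}{n!} = \frac{1}{\cos x - \sin x}$, the sign-difference formula of Lemma \ref{lem:dif_snakes_oddeven_b} for $S_n^{B,+} - S_n^{B,-}$, and Gao--Sun's Theorem \ref{thm:dif_snakes_d_bminusd} for $S_n^D - S_n^{B-D}$. Since $S_n^{B,+} + S_n^{B,-} = S_n^B$ and $S_n^D + S_n^{B-D} = S_n^B$, it suffices to compute the exponential generating function of each difference; the four target generating functions can then be recovered by averaging the difference generating function with $\frac{1}{\cos x - \sin x}$ and halving.

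First I would observe that the common difference sequence (with initial values $S_0^{B,+}=1,S_0^{B,-}=0$) takes value $+1$ when $n \equiv 0,1 \pmod 4$ and $-1$ when $n \equiv 2,3 \pmod 4$. Splitting the resulting exponential generating function according to the residue of $n$ mod $4$ and grouping the even and odd terms separately identifies
\begin{equation*}
\sum_{n \geq 0} (S_n^{B,+} - S_n^{B,-}) \frac{x^n}{n!} = \Bigl(1 - \tfrac{x^2}{2!} + \tfrac{x^4}{4!} - \cdots\Bigr) + \Bigl(x - \tfrac{x^3}{3!} + \tfrac{x^5}{5!} - \cdots\Bigr) = \cos x + \sin x.
\end{equation*}

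Next, adding and subtracting this identity with Springer's formula and dividing by $2$ yields
\begin{equation*}
\sum_{n \geq 0} S_n^{B,+} \frac{x^n}{n!} = \frac{1}{2}\left(\frac{1}{\cos x - \sin x} + \cos x + \sin x\right).
\end{equation*}
Putting the right-hand side over the common denominator $\cos x - \sin x$ and using $(\cos x + \sin x)(\cos x - \sin x) = \cos^2 x - \sin^2 x = \cos 2x$ together with the double-angle identity $1 + \cos 2x = 2\cos^2 x$, this simplifies to $\frac{\cos^2 x}{\cos x - \sin x}$. The analogous manipulation, using $1 - \cos 2x = 2\sin^2 x$, gives the formula for $S_n^{B,-}$. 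By Theorem \ref{thm:dif_snakes_d_bminusd} the difference $S_n^D - S_n^{B-D}$ has the same closed form as $S_n^{B,+} - S_n^{B,-}$, so the identical arithmetic with $(S_n^D, S_n^{B-D})$ in place of $(S_n^{B,+}, S_n^{B,-})$ establishes the two generating functions in \eqref{eqn:egf_snakes_typed}.

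No step is a real obstacle: the only combinatorial input is Lemma \ref{lem:dif_snakes_oddeven_b} (and its Gao--Sun counterpart for type D), both of which are already established. Everything else reduces to identifying a standard Maclaurin series and one round of trigonometric simplification via the double-angle identities, which should be routine.
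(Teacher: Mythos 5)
Your proposal is correct and follows essentially the same route as the paper: average Springer's generating function with the difference EGF $\cos x + \sin x$ obtained from Lemma \ref{lem:dif_snakes_oddeven_b}, then simplify via the double-angle identities, and handle the type D pair by the identical computation using Theorem \ref{thm:dif_snakes_d_bminusd}. The only cosmetic difference is that the paper deduces $S_n^{B,+}=S_n^D$ and $S_n^{B,-}=S_n^{B-D}$ outright instead of redoing the arithmetic, and leaves the final trigonometric simplification implicit.
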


\begin{proof}
We first consider \eqref{eqn:egf_snakes_typeb_plus}.
We have
\begin{eqnarray*}
\sum_{n=0}^{\infty} {S_n^{B, \pm}}\frac{x^n}{n!}	& = & 
\frac{1}{2}\sum_{n=0}^{\infty} {S_n^B}\frac{x^n}{n!} \pm 
\frac{1}{2} \sum_{n=0}^{\infty}\big( S_n^{B+} - S_N^{B-} \big) \frac{x^n}{n!} \nonumber \\
		&  = & \frac{1}{2}\left[ \frac {1}{\cos x - \sin x} \pm (1 + x - \frac{x^2}{2!}-\frac{x^3}{3!}+\dots) \right] \\
		& = & \frac {1}{2}\left[ \frac {1}{\cos x - \sin x} \pm (\cos x+ \sin x) \right]. 
	\end{eqnarray*}
We have used Lemma \ref{lem:dif_snakes_oddeven_b} in the second line. This completes the proof 
of \eqref{eqn:egf_snakes_typeb_plus}. 
Theorem \ref{thm:dif_snakes_d_bminusd} and Lemma \ref{lem:dif_snakes_oddeven_b}
imply that $S_n^{B,+} = S_n^D$ and $S_n^{B,-} = S_n^{B-D}$.
This remark implies \eqref{eqn:egf_snakes_typed}, completing the proof.
\end{proof}

\vspace{2 mm}
	
We move to Snakes of type D.  
Define $\Snake_n^{D,+}= \Snake^D_n \cap \DD_n^+$ and $\Snake_n^{D,-}= \Snake^D_n \cap \DD_n^-$ and 
let $S_n^{D,+}= |\Snake_n^{D,+}|$, $S_n^{D,-}= |\Snake_n^{D,-}|$.  
Similarly, define $\Snake_n^{B-D,+} = 
\Snake_n^{B-D} \cap (\BB_n - \DD_n)^+$ and $\Snake_n^{B-D,-} = 
\Snake_n^{B-D} \cap (\BB_n - \DD_n)^-$ and let 
$S_n^{B-D,+}= |\Snake_n^{B-D,+}| $ and  $S_n^{B-D,-}= |\Snake_n^{B-D,-}|$. 	
We partition snakes in $D_n$ into the following subsets:
\begin{enumerate} 
	\item $L_n^1= \{ \pi \in \Snake^{D}_n :
|\pos_{\pm n}(\pi) - \pos_{\pm (n-1)}(\pi)| > 1  \}.$
	\item $L_n^2= \{ \pi \in \Snake^{D}_n : 
|\pos_{\pm n}(\pi) - \pos_{\pm (n-1)}(\pi)| =1  \mbox{ and }  \pi_n \notin \{ \pm (n-1), \pm  n  \} \}.$
	\item $L_n^3= \{ \pi \in \Snake^{D}_n : 
|\pos_{\pm n}(\pi) - \pos_{\pm (n-1)}(\pi)| =1  \mbox{ and }  \pi_n \in \{ \pm (n-1), \pm  n \}, \sign(\pi_{n-1}) \neq \sign(\pi_{n})  \}.$
	\item $L_n^4= \{ \pi \in \Snake^{D}_n :
|\pos_{\pm n}(\pi) - \pos_{\pm (n-1)}(\pi)| =1  \mbox{ and }  \pi_n \in \{ \pm (n-1), \pm  n \}, \sign(\pi_{n-1}) = \sign(\pi_{n})  \}.$
\end{enumerate}
We next show that the number of snakes in $L_n^i \cap \DD_n^{+} $ 
and $L_n^i \cap \DD_n^{-} $ are equal when $1 \leq i \leq 3$. 

\begin{lemma}
\label{lem:three_equal_plus_minus}
When $n \geq 3$, we have 
\begin{equation*}
| L_n^1 \cap \DD_n^{+}|  =  | L_n^1  \cap \DD_n^{-}|, \hspace{3 mm}
| L_n^2 \cap \DD_n^{+}|  =  | L_n^2  \cap \DD_n^{-}|  \mbox{ and }
| L_n^3 \cap \DD_n^{+}|  =  | L_n^3  \cap \DD_n^{-}|.
\end{equation*}
\end{lemma}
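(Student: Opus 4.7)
The plan is to construct, for each $i \in \{1,2,3\}$, a fixed-point-free involution on $L_n^i$ that preserves the snake property, preserves $\DD_n$-membership (i.e., the parity of $|\Negs|$), and flips the parity of $\inv_D$. Such an involution pairs every element of $L_n^i \cap \DD_n^+$ with a unique element of $L_n^i \cap \DD_n^-$, which is the equality we want.

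For $L_n^1$, where $\pm n$ and $\pm(n-1)$ sit at non-adjacent positions $i$ and $j$ with $|i-j|>1$, I would reuse the involution $g$ introduced in the proof of Lemma \ref{lem:contributiontobivpeakvalleyfrom_n_nminus1_separate} (swap the two large letters if their signs agree, negate-and-swap them otherwise). Since $\pm n$ and $\pm(n-1)$ are the two largest entries in absolute value and their positions are not adjacent, the peak/valley status of every index of $\pi$ is unaffected by $g$; hence $g$ preserves the sets $\Pkk_B(\pi)$ and $\Vly_B(\pi)$ and therefore the snake property itself. Furthermore $g$ preserves $|\Negs|$, so it restricts to $L_n^1 \cap \DD_n$, and by Corollary \ref{cor:invb_invd_ddn_bbn} the fact that $g$ flips the parity of $\inv_B$ immediately flips the parity of $\inv_D$.

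For $L_n^2$ and $L_n^3$, where the two largest-in-absolute-value letters occupy consecutive positions $k$ and $k+1$ (interior for $L_n^2$, at the end for $L_n^3$), I would use the involution $\psi : (\pi_k,\pi_{k+1}) \mapsto (-\pi_{k+1},-\pi_k)$ that fixes all other entries. A short case analysis, split on the parity of $k$ (which dictates whether position $k$ is a peak or a valley in a snake) and on which of $\pm n, \pm(n-1)$ appears first, shows that within any snake these two adjacent entries must carry opposite signs, and that the four snake-admissible configurations form two $\psi$-orbits of size two. In each orbit, $\psi$ preserves $|\Negs|$ (the multiset of signs at positions $k$ and $k+1$ is unchanged), and it preserves the snake property, since the peak/valley character at each of $k$ and $k+1$ is determined by the absolute maximum/minimum argument dominating the neighbors $\pi_{k-1}$ and $\pi_{k+2}$. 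The change $\inv_D(\psi(\pi))-\inv_D(\pi)$ collapses to the type-$D$ contribution of the single pair $(k,k+1)$, namely the indicator $[-\pi_k > \pi_{k+1}]$, which flips between the two configurations of each orbit; all other contributions to $\inv_A$ and to the extra type-$D$ sum cancel, because $\pm n$ and $\pm(n-1)$ dominate every remaining entry in absolute value.

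The main obstacle is the case analysis in the $L_n^2$ and $L_n^3$ parts: one must verify, for each of the four admissible snake configurations at positions $k,k+1$, both that $\psi$ preserves the snake property across the boundary (involving $\pi_{k-1}$ and, for $L_n^2$, also $\pi_{k+2}$) and that the net change in $\inv_D$ is odd. Both checks are routine once the four configurations are enumerated; after that, the three equalities of the lemma follow by counting $g$- and $\psi$-orbits on $L_n^1$, $L_n^2$, and $L_n^3$ respectively.
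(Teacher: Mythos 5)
Your proposal is correct and follows essentially the same route as the paper: the paper also reuses the map $g$ from Lemma \ref{lem:contributiontobivpeakvalleyfrom_n_nminus1_separate} for $L_n^1$, and for $L_n^2$ and $L_n^3$ it uses exactly your negate-and-swap involution $(\pi_k,\pi_{k+1}) \mapsto (\overline{\pi_{k+1}},\overline{\pi_k})$ on the two adjacent largest-in-absolute-value letters, after noting that the snake condition (resp.\ the definition of $L_n^3$) forces these two entries to have opposite signs. Your direct computation of the $\inv_D$ parity flip replaces the paper's appeal to the earlier type B lemmas together with Corollary \ref{cor:invb_invd_ddn_bbn}, but this is a cosmetic difference, not a different argument.
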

\begin{proof}
For the first part, let 
$\pi = \pi_1, \dots ,\pi_{i-1},\pi_i= x, \pi_{i+1}, \dots, 
\pi_j=y, \dots, \pi_n \in L_n^1 \cap \DD_n^{+}$, where 
$\{|x|,|y|\} =\{n-1,n\}$. 
Define $g: L_n^1 \cap \DD_n^{+} \mapsto L_n^1 \cap \DD_n^{-} $ by 
$$g(\pi)= \begin{cases}
\pi_1, \dots, ,\pi_{i-1},\pi_i= y, \pi_{i+1}, \dots, \pi_j=x, \dots, \pi_n & \text {if $\sign(x)= \sign(y)$}, \\
\pi_1, \dots, ,\pi_{i-1},\pi_i= \overline{y}, \pi_{i+1}, \dots, \pi_j=\overline{x}, \dots, \pi_n & \text {if $\sign(x) \neq  \sign(y)$}.
\end{cases}$$ 
When $\pi$ is a snake, it is easy to see that $g(\pi)$ is also a 
snake. The map $g$ clearly flips the parity of $\inv_D$. 
This completes the proof of the first part.

For the second part, let 
$\pi = \pi_1, \dots ,\pi_{i-1},\pi_i= x, \pi_{i+1}=y, \dots, 
\pi_n \in L_n^2 \cap \DD_n^{+}$, where $\{|x|,|y|\} =\{n-1,n\}$. 
Clearly, $n,n-1$ are the largest two elements in absolute value and they
are not at the end of $\pi$.  As $\pi$ is a snake, $x$ and $y$ 
are must be of opposite sign. 
We consider the map 
$f:L_n^2 \cap \DD_n^{+} \mapsto L_n^2 \cap \DD_n^{-}$ as follows.
$$f(\pi)= 
\pi_1, \dots, ,\pi_{i-1},\pi_i= \ol{y}, \pi_{i+1}=\ol{x}, \dots, \pi_j=x, \dots, \pi_n.$$
As $\pi$ is snake, clearly $f(\pi)$ is also a snake. Moreover, $f$ 
flips the parity of $\inv_D$. This completes the proof of the second
part. 

For the final part, the same map $f$, that we used 
in the second part works here as well. This completes the 
proof.
\end{proof}

\begin{theorem}
	\label{thm:dif_snakes_oddeven_d}
	For positive integers $n \geq 3$, we have
	\begin{eqnarray}
	\label{eqn:snakesjumpby2}
S_{n}^{D,+} - S_{n}^{D,-} & = & S_{n-2}^{D,-} - S_{n-2}^{D,+}, \\
		\label{eqn:snakesbminusdjumpby2}	
		 S_{n}^{B-D,+} - S_{n}^{B-D,-} & = & S_{n-2}^{B-D,-} - S_{n-2}^{B-D,+}.
\end{eqnarray}
With initial values, we have 
	\begin{eqnarray}
	\label{eqn:snakedplusminus}
		S_n^{D,+} - S_n^{D,-}& = & \begin{cases}
			1 & \text {if $n=0,1$ (mod 4)},\\
			-1 & \text {if $n=2,3$ (mod 4)}.
		\end{cases} \\
	\label{eqn:snakebminusdplusminus}	
		S_n^{B-D,+} - S_n^{B-D,-} & = & 0.
	\end{eqnarray}
\end{theorem}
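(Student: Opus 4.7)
The plan is to prove the two recurrences \eqref{eqn:snakesjumpby2} and \eqref{eqn:snakesbminusdjumpby2} first and then deduce the closed forms by induction jumping in steps of $2$. By Lemma \ref{lem:three_equal_plus_minus}, the sets $L_n^1, L_n^2, L_n^3$ each contain equally many snakes in $\DD_n^+$ as in $\DD_n^-$, so the entire difference $S_n^{D,+}-S_n^{D,-}$ is determined by $L_n^4$. Moreover, the maps $g$ and $f$ used in that lemma swap or negate entries in a way that preserves the parity of $|\Negs|$ (either by swapping two entries of equal sign or by negating both members of a sign-flipped pair), so verbatim versions of Lemma \ref{lem:three_equal_plus_minus} hold when $\DD_n^\pm$ is replaced by $(\BB_n-\DD_n)^\pm$, reducing $S_n^{B-D,+}-S_n^{B-D,-}$ to the corresponding $L_n^4$-analogue.

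For a snake $\pi=\pi_1,\ldots,\pi_{n-2},x,y$ in $L_n^4$, we have $\{|x|,|y|\}=\{n-1,n\}$ and $\sign(x)=\sign(y)$. The snake condition at position $n-2$ together with the bound $|\pi_{n-2}|\leq n-2$ is very restrictive: when $n$ is even, the required $\pi_{n-2}<\pi_{n-1}$ rules out the all-negative extension and forces $x=n$, $y=n-1$; when $n$ is odd, the required $\pi_{n-2}>\pi_{n-1}$ rules out the all-positive extension and forces $x=-n$, $y=-(n-1)$. In either case there is a unique admissible extension, and direct bookkeeping of $\inv_D$ contributions from the new index pairs $(i,n-1)$, $(i,n)$, and $(n-1,n)$ shows that the appended block adds exactly $1$ to $\inv_D$ when $n$ is even and $4n-7$ when $n$ is odd, both odd. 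Hence the extension flips the parity of $\inv_D$. Since the two appended letters share a sign, the parity of $|\Negs|$ is preserved, so the construction is a bijection between snakes in $\DD_{n-2}^{\pm}$ and snakes in $L_n^4\cap \DD_n^{\mp}$, yielding $|L_n^4\cap \DD_n^{\pm}|=S_{n-2}^{D,\mp}$ and thereby \eqref{eqn:snakesjumpby2}. The identical argument in $\BB_n-\DD_n$ gives \eqref{eqn:snakesbminusdjumpby2}.

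Then \eqref{eqn:snakedplusminus} and \eqref{eqn:snakebminusdplusminus} follow by induction from the recurrences, using the small base cases $n\in\{0,1,2,3\}$, which are checked by direct inspection (for example, $S_2^{D,-}=1$ from $(2,1)$, while the two snakes $(2,-1)$ and $(1,-2)$ in $\BB_2-\DD_2$ contribute one each to $S_2^{B-D,+}$ and $S_2^{B-D,-}$ giving difference $0$). The main obstacle I expect is carefully tabulating the $\inv_D$ contributions of the extension: this requires separately counting the $\inv_A$-type pairs and the extra $-\pi_i>\pi_j$ pairs across the three kinds of new index pairs. Once this calculation is done, the rest is an immediate consequence of the partition and Lemma \ref{lem:three_equal_plus_minus}.
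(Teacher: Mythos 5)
Your proposal follows essentially the same route as the paper's proof: reduce the difference to the set $L_n^4$ via Lemma \ref{lem:three_equal_plus_minus}, observe that the snake condition forces the unique extension $n,n-1$ (for $n$ even) or $\overline{n},\overline{n-1}$ (for $n$ odd), check that this extension flips the parity of $\inv_D$ while preserving the parity of $|\Negs|$, and then induct from the small base cases. Your explicit tally of the $\inv_D$ increment ($1$ when $n$ is even, $4n-7$ when $n$ is odd) and your remark that the maps of Lemma \ref{lem:three_equal_plus_minus} preserve $|\Negs|$ modulo $2$ (justifying the verbatim $\BB_n-\DD_n$ version) are correct and simply make explicit what the paper leaves to the reader.
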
	

\begin{proof}
We first consider \eqref{eqn:snakesjumpby2}. 
By Lemma \ref{lem:three_equal_plus_minus}, we have 
\begin{equation}
\label{eqn:snakesjumpby2s4}
S_{n}^{D,+} - S_{n}^{D,-} = | L_n^4 \cap \DD_n^{+}| -  | L_n^4 \cap \DD_n^{-}|
\end{equation}

Let $\pi \in L_n^4$. Then, as $\pi$ is a snake in $\DD_n$ and 
$\sign(x) =  \sign(y)$, we get that $\pi''$ is also a snake in $\DD_{n-2}$. We now consider two cases. 

\vspace{2 mm}

{\bf When n is even:}

\vspace{2 mm}

We denote $\pi''$ as $\sigma$ and let 
$\sigma= \sigma_1, \sigma_2, \dots, \sigma _{n-2} \in 
\Snake_{n-2}^{D,+}.$ 
The only way to get $\pi \in L_n^4 \cap \DD_n$ is by 
adding the string $n, n-1$ at the end. That is, by getting 
$\pi= \sigma_1, \sigma_2, \dots, \sigma _{n-2}, n, n-1$. 
We clearly have $\inv_D(\pi) = \inv_D(\pi'') + 1$.
As $\sigma \in \DD_{n-2}^{+}$, we have 
$\pi \in \DD_n^{-}$. Thus, all permutations in 
$\Snake_{n-2}^{D,+}$ 
contribute exactly one permutation to $L_n^4 \cap \DD_n^{-}$. 
In an identical manner one can show that  all the permutations in 
$\Snake_{n-2}^{D,-}$ 
contribute exactly one permutation to $S_n^4 \cap \DD_n^{+}$. 
Thus, we have 
\begin{equation} 
\label{eqn:jumpby2neven}
|L_n^4 \cap \DD_n^{+}| -  | L_n^4 \cap \DD_n^{-}| = S_{n-2}^{D,-} - S_{n-2}^{D,+}.
\end{equation}

{\bf When n is odd:}

\vspace{2 mm}

We again denote $\pi''$ as $\sigma$ and let 
$\sigma= \sigma_1, \sigma_2, \dots, \sigma _{n-2} \in \Snake_{n-2}^{D,-}.$
Clearly, one can get $\pi \in L_n^4 \cap \DD_n$ only by 
adding  $\ol{n}, \ol{n-1}$ at the end. That is 
$\pi= \sigma_1, \sigma_2, \dots, \sigma _{n-2}, \ol{n}, \ol{n-1}$. 
Again, it is clear that the parity of $\inv_D(\pi)$ and 
$\inv_D(\sigma)$ are different.
As $\sigma \in \DD_{n-2}^{+}$, we have
$\pi \in \DD_{n}^{-}$. Thus, all permutations in 
$\Snake_{n-2}^{D,+}$ give rise to exactly one
permutation to $L_n^4 \cap \DD_n^{-}$.  As before,  
all permutations in $\Snake_{n-2}^{D,-}$ 
give rise to exactly one permutation in 
$L_n^4 \cap \DD_n^{+}$. Thus, we have 
\begin{equation} 
\label{eqn:jumpby2nodd}
|L_n^4 \cap \DD_n^{+}| -  | L_n^4 \cap \DD_n^{-}| = S_{n-2}^{D,-} - S_{n-2}^{D,+}.
\end{equation}

Thus, by \eqref{eqn:jumpby2neven} and \eqref{eqn:jumpby2nodd}, we get that for any positive integer $n$,
\begin{equation}
\label{eqn:jumpby2nbothoddeven}
| L_n^4 \cap \DD_n^{+}| -  | L_n^4 \cap \DD_n^{-}| = S_{n-2}^{D,-} - S_{n-2}^{D,+}.
\end{equation}
By \eqref{eqn:snakesjumpby2s4} and \eqref{eqn:jumpby2nbothoddeven}, the proof of \eqref{eqn:snakesjumpby2} is complete. In an identical manner, one can prove \eqref{eqn:snakesbminusdjumpby2}.
%
The following base cases are easy to see.  When $n=1,$ 
 we have $S_{1}^{D,+}=1$, $S_{1}^{D,-}=0$, $S_{1}^{B-D,+}=0$ and $S_{1}^{B-D,-}=0$. Thus, 
 $S_{1}^{D,+}-S_{1}^{D,-}=1$ and
  $S_{1}^{B-D,+}-S_{1}^{B-D,-}=0$.  
 When $n=2$, we have $S_{2}^{D,+}=0$, $S_{2}^{D,-}=1$, $S_{2}^{B-D,+}=1$ and $S_{2}^{B-D,-}=1$. 
 Thus, 
 $S_{2}^{D,+}-S_{2}^{D,-}=-1$ and
 $S_{2}^{B-D,+}-S_{2}^{B-D,-}=0$.  
Using the recurrences \eqref{eqn:snakesjumpby2} and \eqref{eqn:snakesbminusdjumpby2}, the proof is complete.  
\end{proof}

From Lemma \ref{lem:dif_snakes_oddeven_b}, we have the following. The proof of Theorem \ref{thm:egfforsnakesd}
is identical to the proof of Theorem \ref{thm:egfforsnakesb} and therefore we omit this. Here we assume
$S_0^{D,+}=1$, $S_0^{D,-}=0$, $S_0^{B-D,+}=0$ and $S_0^{B-D,-}=0$.

\begin{theorem}
	\label{thm:egfforsnakesd}
	We have the following exponential generating functions.
	\begin{eqnarray}
	\label{eqn:egf_snakes_typed_plus}
	\sum_{n=0}^{\infty} {S_n^{D,+}} \frac{x^n}{n!}
	& =  & \frac{2\cos ^2 x - \sin ^2 x }{2(\cos x - \sin x)},   \hspace{25 mm}
	\sum_{n=0}^{\infty} {S_n^{D,-}} \frac{x^n}{n!}
	=   \frac{\sin ^2 x }{2(\cos x - \sin x)}, \\
	\label{eqn:egf_snakes_typebminusd}
	\sum_{n=0}^{\infty} {S_n^{B-D, \pm }}\frac{x^n}{n!}	
	& =  & \frac{\sin ^2 x }{2(\cos x - \sin x)}.
	\end{eqnarray}
\end{theorem}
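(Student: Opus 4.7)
The plan is to mimic the proof of Theorem \ref{thm:egfforsnakesb}, splitting each EGF into an ``average'' part and a ``difference'' part. First I would write
\begin{equation*}
\sum_{n \geq 0} S_n^{D,\pm} \frac{x^n}{n!} = \frac{1}{2}\sum_{n \geq 0} S_n^{D}\frac{x^n}{n!} \pm \frac{1}{2}\sum_{n \geq 0}\bigl(S_n^{D,+} - S_n^{D,-}\bigr)\frac{x^n}{n!},
\end{equation*}
and similarly for $B - D$. The first EGF on the right hand side is already determined by \eqref{eqn:egf_snakes_typed} in Theorem \ref{thm:egfforsnakesb}, namely $\cos^2 x/(\cos x - \sin x)$ for $S_n^D$ and $\sin^2 x/(\cos x - \sin x)$ for $S_n^{B-D}$. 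The second EGF on the right hand side is determined by Theorem \ref{thm:dif_snakes_oddeven_d}: for the $D$-case the difference sequence $S_n^{D,+} - S_n^{D,-}$ is the $4$-periodic $+1,+1,-1,-1,\ldots$ (by \eqref{eqn:snakedplusminus}), whose EGF I would recognize as $\cos x + \sin x$; for the $B-D$-case the difference is identically $0$ by \eqref{eqn:snakebminusdplusminus}.

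Substituting, the $D$-case EGFs become
\begin{equation*}
\frac{1}{2}\!\left[\frac{\cos^2 x}{\cos x - \sin x} \pm (\cos x + \sin x)\right] = \frac{\cos^2 x \pm (\cos x + \sin x)(\cos x - \sin x)}{2(\cos x - \sin x)},
\end{equation*}
which after using $(\cos x + \sin x)(\cos x - \sin x) = \cos^2 x - \sin^2 x$ simplifies to $(2\cos^2 x - \sin^2 x)/(2(\cos x - \sin x))$ for the $+$-case and to $\sin^2 x/(2(\cos x - \sin x))$ for the $-$-case. For the $B-D$-case, the vanishing of the difference gives both $S_n^{B-D,\pm}$ the EGF $\frac{1}{2}\cdot\frac{\sin^2 x}{\cos x - \sin x}$, matching the claim.

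The only genuine content beyond routine algebra is the justification of the values used for the difference sequences, which is precisely Theorem \ref{thm:dif_snakes_oddeven_d}; that theorem has already been proved via the partition of type $D$ snakes into the four sets $L_n^1,\ldots,L_n^4$ and sign-reversing involutions on the first three. The main obstacle is therefore bookkeeping: I need to verify carefully the initial conditions $S_0^{D,+}=1$, $S_0^{D,-}=0$, $S_0^{B-D,\pm}=0$ stated with the theorem and check that the identifications of small-$n$ base cases from Theorem \ref{thm:dif_snakes_oddeven_d} are consistent with the power series expansions of the stated generating functions around $x=0$, so that the periodic sequence $+1,+1,-1,-1,\ldots$ really does sum to $\cos x + \sin x$ as claimed.
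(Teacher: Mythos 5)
Your proposal is correct and follows exactly the route the paper intends: the paper omits the proof of Theorem \ref{thm:egfforsnakesd} precisely because it is the same average-plus-difference decomposition used for Theorem \ref{thm:egfforsnakesb}, combining the EGFs from \eqref{eqn:egf_snakes_typed} with the difference sequences of Theorem \ref{thm:dif_snakes_oddeven_d} (whose $4$-periodic $+1,+1,-1,-1$ pattern indeed has EGF $\cos x+\sin x$). Your algebraic simplifications and the check of the $n=0$ conventions are all accurate.
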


\bibliographystyle{acm}
\bibliography{main}
\end{document}